\documentclass[11pt]{amsart}

\usepackage{upgreek}

\usepackage{ytableau}

\usepackage{amsmath, amsxtra, amsthm, amssymb,mathtools,bm,amsfonts}
\usepackage{mathabx}
\usepackage{mathrsfs}
\usepackage[normalem]{ulem}
\usepackage[mathscr]{euscript}
\usepackage{graphicx}
\usepackage{url}
\usepackage{color}
\usepackage{bbm}
\newcommand{\stkout}[1]{\ifmmode\text{\sout{\ensuremath{#1}}}\else\sout{#1}\fi}
\usepackage{tikz-cd}
\usepackage{tikz}
\newcommand{\circled}[1]{
    \tikz[baseline=(char.base)]{
        \node[shape=circle, 
              draw,           
              inner sep=2pt,  
              fill=white      
             ] (char) {$#1$};
    }
}

\usepackage[T1]{fontenc}
\graphicspath{/Figures/}

\usepackage{enumitem}
\usepackage{comment}
\usepackage[pdftex,hidelinks,backref=page]{hyperref}
\hypersetup{
    colorlinks,
    citecolor=magenta,
    filecolor=magenta,
    linkcolor=blue,
    urlcolor=black
}
\usepackage{cleveref}

\usepackage{xcolor}
\usepackage[colorinlistoftodos]{todonotes}
\renewcommand{\emptyset}{\varnothing}

\newcommand{\QQ}{\mathbb Q}
\newcommand{\RR}{\mathbb R}
\newcommand{\CC}{\mathbb C}

\newcommand{\ZZ}{\mathbb Z}

\newcommand{\id}{\mathrm{id}}

\theoremstyle{definition}
\newtheorem{thm}{Theorem}[section]
\newtheorem{cor}[thm]{Corollary}
\newtheorem{lem}[thm]{Lemma}

\newtheorem{prop}[thm]{Proposition}
\newtheorem{defn}[thm]{Definition}

\newtheorem{eg}[thm]{Example}
\newtheorem{rem}[thm]{Remark}
\newtheorem{obs}[thm]{Observation}

\newtheorem{question}[thm]{Question}
\newtheorem{maintheorem}{Theorem}	\newtheorem{fact}[thm]{Fact}

\numberwithin{equation}{section}
\allowdisplaybreaks

\newcommand{\ar}[1]
{{\xrightarrow{#1}}}

\newcommand{\qsym}[2][]{
{\ifx&#1&%
  {\operatorname{QSym}_{#2}}
\else
  {{}^{#1}\!\operatorname{QSym}_{#2}}
\fi}
} 

\newcommand{\eqsym}[2][]{
{\ifx&#1&%
  {\operatorname{EQSym}_{#2}}
\else
  {{}^{#1}\!\operatorname{EQSym}_{#2}}
\fi}
} 

\newcommand{\qseq}[2][]{
{\ifx&#1&%
  {\operatorname{QSeq}_{#2}}
\else
  {{}^{#1}\!\operatorname{QSeq}_{#2}}
\fi}
}

\newcommand{\qsymide}[2][]{
{\ifx&#1&%
  {\operatorname{QSym}_{#2}^+}
\else
  {{}^{#1}\!\operatorname{QSym}_{#2}^+}
\fi}
} 

\newcommand{\eqsymide}[2][]{
{\ifx&#1&%
  {\operatorname{EQSym}_{#2}^+}
\else
  {{}^{#1}\!\operatorname{EQSym}_{#2}^+}
\fi}
} 

\newcommand{\sym}[1]{\operatorname{Sym}_{#1}} 
\newcommand{\compatible}[2][]{
{\ifx&#1&%
  {\mathcal{C}(#2)}
\else
  {\mathcal{C}^{m}(#2)}
\fi}
} 
\newcommand{\suchthat}{\;|\;}

\newcommand{\schub}[1]{\mathfrak{S}_{#1}} 
\newcommand{\des}[1]{\operatorname{Des}(#1)} 

\date{}
\newcommand{\cat}[1]{\operatorname{Cat}_{#1}} 

\newcommand{\idem}{\operatorname{id}} 
\newcommand{\Perm}{\operatorname{Perm}} 
\newcommand{\rope}[1]{\mathsf{R}_{#1}} 

\newcommand{\fl}[1]{\mathrm{Fl}_{#1}}
\newcommand{\GL}{\operatorname{GL}}
\newcommand{\Pl}{\operatorname{Pl}}

\definecolor{ao}{rgb}{0.0, 0.5, 0.0}

\newcommand{\hhmp}{\mathrm{QFl}}

\newcommand{\wt}[1]{\widetilde{#1}} 
\newcommand{\wh}[1]{\widehat{#1}} 

\newcommand{\tl}{\textbf{t}}
\newcommand{\xl}{\textbf{x}}

\newcommand{\NC}{\operatorname{NC}}

\newcommand{\mc}[1]{\mathcal{#1}}
\newcommand{\inv}[1]{\operatorname{Inv}(#1)}
\newcommand{\invnc}[1]{\operatorname{Inv}_{\NC}(#1)} 
\newcommand{\invncR}[1]{\operatorname{Inv}_{\NC}^R(#1)} 

\renewcommand\emph[1]{\textcolor{blue}{\textit{#1}}} 

\newcommand{\nc}{\operatorname{nc}} 
\newcommand{\Sort}{\operatorname{Sort}} 

\newcommand{\movetoid}[2]{\overleftarrow{[#1,#2]}}

\newcommand{\skips}[1]{\operatorname{Skips}(#1)}

\newcommand{\fskips}[1]{\operatorname{FSkips}(#1)}

\newcommand{\uskips}[1]{\operatorname{USkips}(#1)}

\newcommand{\cfl}{\operatorname{CFl}}

\newcommand{\cov}{\operatorname{Covers}}

\newcommand{\fs}{\operatorname{fs}}

\newcommand{\ufs}{\operatorname{ufs}}

\newcommand{\Complex}
{\operatorname{Complex}}

\newcommand{\GKM}
{\operatorname{GKM}}

\newcommand{\Cay}
{\operatorname{Cayley}}

\newcommand{\Simple}{\mathsf{S}} 
\newcommand{\Reflections}{\mathsf{T}} 
\newcommand{\pluckervanishing}[1]{\operatorname{PV}_{#1}}

\newcommand{\Edges}[1]{\mc{E}(#1)} 
\newcommand{\Maxchains}[1]{\mc{M}(#1)} 

\newcommand{\sort}{\pi_\downarrow}

\newcommand{\upsort}{\pi_\uparrow}

\newcommand{\Xc}{\mathring{X}}

\newcommand{\Q}[1]{\mathrm{Char}(#1)}
\newcommand{\Qvee}[1]{\mathrm{Char}^{\vee}(#1)}

\newcommand{\nctocluster}{{\operatorname{Clust}^+}}
\newcommand{\regweight}{{\lambda_{\mathrm{reg}}}}
\newcommand{\wnaught}{w_{\circ}}
\newcommand{\graph}{\mathcal{G}} 
\newcommand{\Cone}[1]{\operatorname{Cone}(#1)}

\definecolor{col1}{RGB}{210,90,90}
\definecolor{col2}{RGB}{90,190,90}
\definecolor{col3}{RGB}{90,110,210}

\title{The Coxeter Flag Variety}

\author{Nantel Bergeron}
\address{Dept. of Math. and Stat., York University, Toronto, ON M3J 1P3, Canada}
\email{\href{mailto:bergeron@yorku.ca}{bergeron@yorku.ca}}

\author{Lucas Gagnon}
\address{Department of Mathematics,  University of Southern California, Los Angeles, CA 90089, USA}
\email{\href{mailto:lgagnon@usc.edu}{lgagnon@usc.edu}}

\author{Hunter Spink}
\address{Department of Mathematics,
University of Toronto, Toronto, ON M5S 2E4, Canada}
\email{\href{mailto:hunter.spink@utoronto.ca}{hunter.spink@utoronto.ca}}

\author{Vasu Tewari}
\address{Department of Mathematical and Computational Sciences, University of Toronto Mississauga, Mississauga, ON L5L 1C6, Canada}
\email{\href{mailto:vasu.tewari@utoronto.ca}{vasu.tewari@utoronto.ca}}

\begin{document}

\begin{abstract}
For a Coxeter element $c$ in a Weyl group $W$, we define the $c$-Coxeter flag variety $\cfl_c\subset G/B$ as the union of left-translated Richardson varieties $w^{-1}X^{wc}_w$.  
This is a complex of toric varieties whose geometry is governed by the lattice $\NC(W,c)$ of $c$-noncrossing partitions. We show that $\cfl_c$ is the common vanishing locus of the generalized Pl\"ucker coordinates indexed by $W\setminus\NC(W,c)$. 
We also construct an explicit affine paving of $\cfl_c$ and identify the $T$-weights of each cell in terms of $c$-clusters.  This paving gives a GKM description of $H^\bullet(\cfl_c)$ and $H^\bullet_{T_{ad}}(\cfl_c)$ in terms of the induced Cayley subgraph on $\NC(W,c)$, and we show these rings are naturally isomorphic for different choices of $c$.  
In type~$\mathrm{A}$, this recovers the quasisymmetric flag variety for a special $c$, and for general $c$ we show the cohomology ring has a presentation as permuted quasisymmetric coinvariants.
\end{abstract}

\maketitle
\setcounter{tocdepth}{1}
\tableofcontents

\section{Introduction}

Let $G$ be a reductive group with opposite Borel subgroups $B, B^{-} \subseteq G$ and maximal torus $T = B \cap B^{-}$.  
The  \emph{generalized flag variety} is the coset space $G/B$, which is a projective algebraic variety.  
The Weyl group $W = N_{G}(T)/T$ permutes the $T$-stable subvarieties in $G/B$ and indexes the cells in the Bruhat decomposition
\[
G/B = \bigsqcup_{w \in W} \Xc^{w} 
\qquad\text{with}\qquad
\Xc^{w} \coloneq BwB/B \cong \mathbb{A}^{\ell(w)}.
\]
One of the main ways in which the geometry of $G/B$ connects with the combinatorics of $W$ and the associated Bruhat order $\le_B$ is via Schubert, opposite Schubert, and Richardson varieties
\[
X^{w}\coloneqq \overline{BwB/B},
\qquad
X_{w}\coloneqq \overline{B^{-}wB/B},
\qquad\text{and}\qquad
X^{v}_{u} \coloneqq X^{v} \cap X_{u}\text{ for }u\le_B v.
\]

Let $c \in W$ be a \emph{Coxeter element}, by which we mean the product, in any order, of the simple reflections of $W$ determined by $B$. We will show later that $w\le_B wc$ is equivalent to $\ell(w)+\ell(c)=\ell(wc)$ and we will write $w\cdot c$ to represent the product $wc$ and the assertion that the product is length additive.
We define a \emph{$c$-Coxeter Richardson variety} to be a Richardson variety of the form
\[
X_{w}^{w\cdot c} \subseteq G/B,
\]
which we show is a toric variety of dimension $\ell(c)$.
In this article we introduce an equidimensional complex of toric varieties that we call the \emph{$c$-Coxeter flag variety}  
\[
\cfl_{c} \coloneq \bigcup w^{-1} X^{w\cdot c}_{w} \subseteq G/B.
\]
Surprisingly, many $w\in W$ produce exactly the same $w^{-1}X^{w\cdot c}_w$, so we can view this translation process as removing certain redundancies from the set of Coxeter Richardson varieties.
We will show that $\cfl_c$ is a geometric realization of Coxeter--Catalan phenomena in algebraic combinatorics.  

Beginning with Reiner's 1997 paper~\cite{Re97}, developments in algebraic combinatorics have demonstrated that the Catalan numbers $\frac{1}{n+1}\binom{2n}{n}$ are the ``Type $A$'' member in the family of \emph{$W$-Catalan numbers $\cat{W}$}; see e.g.~\cite{STW25}.  
These numbers enumerate interesting combinatorial structures for each $W$, many of which vary nontrivially over the choice of a Coxeter element $c \in W$. 
We will need three examples of this ``Coxeter--Catalan'' phenomenon.  
\begin{enumerate}[parsep=1.5ex,partopsep=1ex, label = (\Alph*)]
\item The lattice of \emph{$c$-noncrossing partitions $\NC(W, c) \subseteq W$} comprises all prefixes $\tau_1\cdots \tau_i$ of minimal length reflection factorizations $c=\tau_1\cdots \tau_n$; this type-independent definition comes from the theory of Artin groups and the $K(\pi, 1)$ conjecture, see~\cite{Be03, BW08} and~\cite[Chapter 1]{Arm06}.

\item The \emph{(combinatorial) $c$-clusters $\mathrm{Cl}_{c}$} are subsets of the almost positive roots corresponding to $g$-vectors for a cluster algebra; Fomin--Zelevinsky~\cite{FZ03, FZ03b} introduced combinatorial clusters to record cluster algebraic data, and Reading~\cite{Reading2007} generalized this notion to arbitrary $c$.

\item The \emph{$c$-Cambrian congruence} $\equiv_{c}$ on $W$ is a lattice quotient of the right weak order $\le_{R}$; introduced by Reading~\cite{Reading2006} as a generalization of the Tamari lattice and the associahedron, it provides an interface between noncrossing partitions and clusters~\cite{Reading2007, RS11}.  

\end{enumerate}
For a fixed $W$ the size of each set above is independent of $c$, and we have
\begin{align*}
\cat{W} \coloneqq |\NC(W, c)| = |\mathrm{Cl}_{c}| = |W \big/ \equiv_{c}|,
\end{align*}
which we take as our definition of the $W$-Catalan number. There is also a well-known formula for $\cat{W}$ using the fundamental degrees of $W$-invariants ~\cite{CelliniPapi}.

Our results show that the families in (A)--(C) anticipate the structure of $\cfl_{c}$.  
We begin with a description of $\cfl_{c}$ in terms of the generalized Pl\"{u}cker coordinates on $G/B$, among which we denote by $\Pl_{w}$ the extremal coordinate indexed by $w \in W$.

\begin{maintheorem}[{\Cref{thm:everying_about_cfl}}]
\label{maintheorem:PlVanish}
We have $\displaystyle \cfl_{c} = \bigcap_{w \in W \setminus \NC(W, c)} \{\Pl_{w} = 0\}$.
\end{maintheorem}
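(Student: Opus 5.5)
Both sides are closed and $T$-stable, so I will prove the two inclusions separately. For the forward inclusion I will work one piece $w^{-1}X^{w\cdot c}_w$ at a time; for the reverse inclusion I will stratify by Bruhat cells and reduce to a statement about fixed points.

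\textbf{Step 1: moment polytopes of the pieces.} First I describe the torus fixed points of each piece. The fixed points of $X^{w\cdot c}_w$ are the Bruhat interval $[w,wc]$. For a Coxeter element $c$ this interval should be Boolean-like. It is the image of the interval $[e,c]$ under the map sending each element to a suitable product. Translating by $w^{-1}$ gives the set $w^{-1}[w,wc]$.

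\textbf{Step 2: forward inclusion.} The claim is that every fixed point $w^{-1}u$, with $u\in[w,wc]$, lies in $\NC(W,c)$. To see this, write $u=w t_1\cdots t_k$, where $t_1\cdots t_k$ is a subword of the Bruhat chain from $w$ to $wc$. This expresses $w^{-1}u$ as a product of reflections that is a prefix of a reflection factorization of $c$, since $w^{-1}(wc)=c$. Hence $w^{-1}u\in\NC(W,c)$.

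Next, $w^{-1}X^{w\cdot c}_w$ is a toric variety, namely the closure of a $T$-orbit. On such a variety the extremal coordinate $\Pl_v$ vanishes identically exactly when $v$ is not a vertex of its moment polytope. Therefore every $\Pl_v$ with $v\notin\NC(W,c)$ vanishes on the piece. This gives $\subseteq$.

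\textbf{Step 3: reverse inclusion.} Let $Z$ be the vanishing locus on the right-hand side, and stratify $Z$ by Bruhat cells $\Xc^v$. For a point $x\in Z$, take the unique $T$-fixed point in $\overline{T x}$ that is minimal, i.e.\ the limit as a generic cocharacter degenerates $x$.

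I will use the paving structure. For each $v\in\NC(W,c)$, the set $Z\cap \Xc^v$ is cut out inside $\Xc^v\cong\mathbb A^{\ell(v)}$ by the vanishing of $\Pl_u$ for $u\le_B v$ with $u\notin\NC$. I want to show that this set equals a coordinate affine subspace, spanned by the $T$-weights coming from the relevant $c$-cluster. I also want to show that this subspace lies in some $w^{-1}X_w^{w\cdot c}$, taking $w$ from the $c$-Cambrian class attached to $v$.

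Concretely, I will choose coordinates on $\Xc^v$ via $U_v$. The Plücker coordinates $\Pl_{vs_\beta}$ for the reflections $s_\beta$ giving covers are linear in the root coordinates, up to lower-order terms. Killing those with $vs_\beta\notin\NC$ forces the corresponding root coordinates to vanish, and triangularity then propagates the vanishing.

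\textbf{Main obstacle.} The hardest step is this last one: showing that the vanishing of the non-$\NC$ extremal coordinates cuts $\Xc^v$ exactly down to the expected cell, with no extra components. I would first try an induction on $\ell(v)$ using Demazure-type fibrations $\Xc^{v}\to \Xc^{vs}$ combined with Reading's sortable-element recursion for $c$ (passing between $c$ and $scs$). Both sides of the statement should behave compatibly under this recursion.

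Finally, comparing dimensions cell by cell with the paving of $\cfl_c$ would finish the argument. Both sides are then unions of the same cells, so the reverse inclusion follows and $\cfl_c=Z$.
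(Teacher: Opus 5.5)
Your forward inclusion is correct and is the paper's argument: Step~1 of Theorem~\ref{thm:HasseUnion} together with Fact~\ref{fact:BasicPlucker}. Make explicit that $\ell(wc)-\ell(w)=n=\ell_{\Reflections}(c)$, so the labels of a maximal chain of $[w,wc]$ through $u$ form a \textit{minimal} reflection factorization of $c$. Toricity is not needed for this direction, since $\Pl_v|_X\equiv 0$ for any $T$-invariant $X$ with $v\notin X^T$. Your Step~1 claim that $[w,wc]$ is an image of $[\idem,c]$ is false in general: for $c=s_1s_3s_2$ in $S_4$ one translate has $10$ elements (Figure~\ref{fig:two-models-panels}). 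You never use it, though.

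The reverse inclusion has a genuine gap at the elimination step. On $\Xc^v\cong N_v$ the function $\Pl_{\tau v}$ is not linear in the root coordinates; in the example of Section~\ref{sec:pluckerstuff}, $\Pl_{(2\,4)\wnaught}=e(df-e)$. You also need it for every $\tau\in\inv{v}$ with $\tau v\notin\NC(W,c)$, not only for Bruhat covers; there, $(2\,4)\wnaught$ has length $3$ and so is not covered by $\wnaught$. What is true is Lemma~\ref{lem:PluckerPower}, proved by a weight count. On a coordinate subspace $N_D$ in which $\alpha=r(\tau)$ spans an extreme ray of $\Cone{D}$, $\Pl_{s_\alpha v}$ restricts to a nonzero multiple of $x_\alpha^{m}$ with $m=-(\alpha^\vee,v\regweight)>0$.

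So triangular elimination works only if the roots of $r(\inv{v})\setminus r(\invnc{v})$ can be removed one at a time, each an extreme ray of the cone spanned by the roots not yet removed. Such an order exists precisely because $r(\invnc{v})$ is strongly closed: its nonnegative cone contains no other roots. This is the cluster input you are missing (Fact~\ref{fact:invncClosed}). Up to the linear automorphism $-v$ of $\Phi$, $r(\invnc{v})$ is the positive part $\nctocluster(v)$ of a $c$-cluster, and the $c$-cluster fan is a complete simplicial fan with every almost positive root as a ray. Without this, a root of $r(\inv{v})\setminus r(\invnc{v})$ lying in $\Cone{r(\invnc{v})}$ could never be isolated, and the argument stalls. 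Even with it, you only get $Z\cap\Xc^v\subseteq\Xc^v_{\NC}$, not equality.

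The second gap is the containment $\Xc^v_{\NC}\subseteq\cfl_c$. ``Comparing dimensions cell by cell with the paving of $\cfl_c$'' is circular, because that paving is part of the same theorem and has no independent source. Likewise, the Cambrian description of the relevant $w$ (Theorem~\ref{thm:BruhatMaxIsNC}) is proved in the paper only downstream of this theorem, so you cannot invoke it without an independent proof.

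The paper supplies the missing existence statement by counting, in four steps.
\begin{enumerate}
\item Each translate $\movetoid{w}{w\cdot c}$ contains exactly one $c$-decreasing maximal chain (Corollary~\ref{prop:uniquecdec}, Dyer's EL-labeling transported by $x\mapsto w^{-1}x$).
\item Every maximal chain of $\NC(W,c)$ lies in some translate (Step~2 of Theorem~\ref{thm:HasseUnion}). With Corollary~\ref{cor:numbdecreasing}, this gives at least $\cat{W}^+$ distinct translates, hence at least $\cat{W}^+$ distinct pieces $w^{-1}X^{w\cdot c}_w$; their fixed-point sets differ, and each is irreducible of dimension $n$ by Theorem~\ref{thm:CoxeterRichIsToric}.
\item By the forward inclusion and the elimination step, all these pieces lie in $\bigcup_{v\in\NC(W,c)}X^v_{\NC}$. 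Since $|\invnc{v}|\le n$ with equality iff $v$ is fully supported, the $n$-dimensional members of this union are exactly the $\cat{W}^+$ varieties $X^v_{\NC}$ with $v\in\NC(W,c)^+$. Hence each such $X^v_{\NC}$ equals some $w^{-1}X^{w\cdot c}_w$.
\item For $v$ not fully supported, you must also pass to the flag variety $L/B_L\cong LB/B$ of the Levi subgroup for the standard parabolic $W'$ supporting $v$. Lemma~\ref{lem:w'c'tow'c} then shows that $w'\in W'$ with $\ell(w'c')=\ell(w')+\ell(c')$ gives $\ell(w'c)=\ell(w')+\ell(c)$, hence $(w')^{-1}X^{w'\cdot c'}_{w'}\subseteq(w')^{-1}X^{w'\cdot c}_{w'}\subseteq\cfl_c$.
\end{enumerate}
Your proposed induction via Demazure fibrations and the $c\leftrightarrow scs$ recursion is not carried out, so as written the reverse inclusion is unproved.
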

The ``thick matroid strata'' where extremal P\"{u}cker coordinates vanish was first studied in \cite{GGMS87, GelSer87} and plays an important role in combinatorial algebraic geometry, but these strata exhibit notorious combinatorial complexity. 
In fact, Mn\"ev universality~\cite{Mn88} shows that even in type $A$ Grassmannians the vanishing loci for extremal P\"{u}cker coordinates can realize arbitrary singularities, so the tractability of $\cfl_{c}$ is noteworthy.

Theorem~\ref{maintheorem:PlVanish} implies that the $T$-fixed points $(\cfl_{c})^{T}$ are the $c$-noncrossing partitions $\NC(W, c)$, and $\cfl_c$ is the union of all $T$-invariant subvarieties $X$ with $X^T\subset \NC(W,c)$.  In particular, the intersection of $\cfl_{c}$ with a Schubert cell $\Xc^{w}$ is nonempty if and only if $w \in \NC(W, c)$.  We call these \emph{Coxeter Schubert cells}
\[
\Xc^u_{\NC}\coloneqq \cfl_c\cap \Xc^{u} \qquad\text{for $u \in \NC(W, c)$}.
\]
In the following result, we use a bijection $\mathrm{Clust}: \NC(W, c) \to \mathrm{Cl}_{c}$ of Biane--Josuat-Verg\`es~\cite{BJV19}, and write $\nctocluster(u)$ for the subset of positive roots in $\mathrm{Clust}(u)$; see Section~\ref{sec:clusternoncrossing}.

\begin{maintheorem}[{\Cref{thm:everying_about_cfl}}]\label{maintheorem:Paving}
The Coxeter Schubert cells give a $T$-stable affine paving of $\cfl_{c}$, 
\[
\cfl_{c} = \bigsqcup_{u \in \NC(W, c)} \Xc^u_{\NC},
\qquad\text{and as a $T$-representation}\qquad
\Xc^u_{\NC}\cong \bigoplus_{\alpha\in \nctocluster(u)} \mathbb{C}_{-u\cdot \alpha}.
\]
\end{maintheorem}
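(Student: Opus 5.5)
Here is the plan. The statement has two parts: the cells $\Xc^u_{\NC}$ for $u\in\NC(W,c)$ partition $\cfl_c$, and each cell is an affine space whose $T$-weights are $-u\cdot\alpha$ for $\alpha\in\nctocluster(u)$.

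\textbf{Partition.} I would get this from Theorem~\ref{maintheorem:PlVanish}. Suppose $x\in\Xc^w$. Then $\Pl_w(x)\neq 0$, because $wB$ is the unique $T$-fixed point of $\Xc^w$ and the extremal coordinate $\Pl_w$ is nonvanishing on the whole $B$-orbit $BwB/B$. So if $x\in\cfl_c$, the vanishing description forces $w\in\NC(W,c)$. Hence $\cfl_c$ is the disjoint union of the sets $\cfl_c\cap\Xc^u$ with $u\in\NC(W,c)$. Each such set is $T$-stable because $\cfl_c$ is.

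\textbf{Affine structure of a cell.} Identify $\Xc^u\cong U_u:=\prod_{\beta\in\inv{u^{-1}}}U_\beta$ via $g\mapsto g\,uB$. Here $\inv{u^{-1}}$ is the set of positive roots $\beta$ with $u^{-1}\beta<0$. Under this chart $T$ acts by conjugation, so the coordinate on $U_\beta$ has weight $\beta$.

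I would then show that $\Xc^u_{\NC}$ is the coordinate subspace $\prod_{\beta\in S_u}U_\beta$ for a specific set $S_u\subseteq\inv{u^{-1}}$, and that $S_u=\{-u\alpha:\alpha\in\nctocluster(u)\}$. This has the right shape: for $\alpha>0$ with $u\alpha<0$, the root $-u\alpha$ is an inversion of $u^{-1}$.

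Concretely, I would use the translates in the definition of $\cfl_c$. The point $uB$ lies in $w^{-1}X_w^{w\cdot c}$ exactly when $w\le_B wu\le_B wc$. Among the pieces through $uB$, I would find those of top dimension near $uB$. For these, $w^{-1}X^{wc}_w\cap \Xc^u$ should be the closure of the $T$-orbit germ spanned by the $T$-curves from $uB$ that move upward in Bruhat order within $\NC(W,c)$. Each such curve has a direction $u t$ with $t$ a reflection, and it stays in $\cfl_c$ only if $ut\in\NC(W,c)$. The weights of the curves entering from $\Xc^u$ toward lower elements would then be the roots $-u\alpha$. By the Biane--Josuat-Verg\`es construction, these should be exactly the positive part $\nctocluster(u)$ of the cluster $\mathrm{Clust}(u)$.

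Because each Coxeter Richardson variety is toric of dimension $\ell(c)$, its intersection with the cell $\Xc^u$ is an open affine toric chart intersected with a $B$-orbit. I would argue this intersection is a linear coordinate subspace in the $U_u$ coordinates. The reason is that its weights are linearly independent, since a cluster is a basis of the root lattice.

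\textbf{Main obstacle.} The hard step is showing that $\cfl_c\cap\Xc^u$ is one coordinate subspace rather than a union of several. Equivalently, the toric pieces through $uB$ must together contribute only the single set of weights $S_u$, of size $|\nctocluster(u)|$. To handle this, I would first try to exhibit one element $w$, for example the $c$-sortable representative attached to $u$ through the Cambrian congruence, with two properties:
\begin{itemize}
\item $w^{-1}X^{wc}_w\cap\Xc^u$ contains every other piece's intersection with $\Xc^u$;
\item this intersection is already the full coordinate subspace $\prod_{\beta\in S_u}U_\beta$.
\end{itemize}
I would verify these weight-by-weight using the Pl\"ucker vanishing of Theorem~\ref{maintheorem:PlVanish}. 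The linear coordinate $x_\beta$ on $U_\beta$ is, up to lower-order terms, a Pl\"ucker coordinate $\Pl_{s_\beta u}$. So the condition $s_\beta u\notin\NC(W,c)$ should force $x_\beta=0$, and the remaining coordinates should be free.
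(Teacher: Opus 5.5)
Your skeleton matches the paper's, and your first step is fine. It only needs the easy inclusion $\cfl_c\subseteq\pluckervanishing{\NC(W,c)}$, which follows from $\cfl_c^T=\NC(W,c)$. Do not lean on the reverse inclusion of Theorem~A: the paper proves it \emph{through} this statement.

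The real content is $\cfl_c\cap\Xc^u=U_{r(\invnc{u})}uB/B$, and both containments have gaps.

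For ``$\subseteq$'', linear independence of the weights does not make a $T$-stable subvariety a coordinate subspace. For example, $\{(a,b,ab)\}\subset\mathbb{C}_\alpha\oplus\mathbb{C}_\beta\oplus\mathbb{C}_{\alpha+\beta}$ is $T$-stable and smooth with independent tangent weights, yet not linear. Nor is $\Pl_{s_\beta u}$ a power of $x_\beta$ ``up to lower-order terms''. It is homogeneous of $T$-weight $k\beta$ and can contain any monomial $\prod x_\gamma^{a_\gamma}$ with $\sum a_\gamma\gamma=k\beta$; the paper's $S_4$ example has $\Pl_{(2\,4)\wnaught}=e(df-e)$. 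So its vanishing alone need not force $x_\beta=0$.

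The missing ingredient is that $r(\invnc{u})$ is \emph{strongly closed}: no other root lies in its nonnegative cone. This comes from the cluster fan being a fan, not from clusters being lattice bases. It lets you order $r(\inv{u})\setminus r(\invnc{u})$ so that each root, when its turn comes, is an extreme ray of the cone spanned by the roots not yet eliminated. On the locus where the earlier coordinates vanish, $\Pl_{s_\beta u}$ is then a nonzero constant times $x_\beta^{-(\beta^\vee,u\regweight)}$, and the equations cut out exactly the coordinate subspace. (Also, the $T$-curves from $uB$ inside $\Xc^u$ go \emph{down} to $\tau u$ with $\tau\in\inv{u}$, not up.)

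For ``$\supseteq$'', which you rightly call the main obstacle, you give no argument. It does not follow from the Pl\"ucker equations you list: you would need every $\Pl_v$ with $v\notin\NC(W,c)$, not only $v=\tau u$, to vanish on the subspace. Invoking the reverse inclusion of Theorem~A here would be circular. The paper argues globally, in three steps.
\begin{enumerate}
\item Suppose $u$ is the Bruhat-maximum of $\movetoid{w}{wc}$. Then $w^{-1}X^{wc}_w\cap\Xc^u$ is closed in $\Xc^u$ and irreducible of dimension $n$. By the first containment it lies in $U_{r(\invnc{u})}uB/B\cong\mathbb{A}^{|\invnc{u}|}$ with $|\invnc{u}|\le n$. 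So the two coincide and $u\in\NC(W,c)^+$.
\item Every $u\in\NC(W,c)^+$ arises this way, and this is proved by counting rather than by exhibiting $w$. Each translate contains exactly one $c$-decreasing maximal chain (Dyer's EL-labeling moved by $w^{-1}$). Every maximal chain of $\NC(W,c)$ lies in some translate. There are $\cat{W}^+$ $c$-decreasing chains, so there are at least $\cat{W}^+$ distinct translates. Hence there are at least that many distinct Bruhat-maxima in the $\cat{W}^+$-element set $\NC(W,c)^+$.
\item Your one-piece strategy cannot reach $u\notin\NC(W,c)^+$, since such $u$ is never the Bruhat-maximum of a translate. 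There you must pass to the minimal standard parabolic $W'$ with $u\in\NC(W',c')^+$, and take $w'\in W'$ with $u$ the Bruhat-maximum of $\movetoid{w'}{w'c'}$. Then check $\ell(w'c)=\ell(w')+\ell(c)$, and use $X^u_{\NC}=(w')^{-1}X^{w'c'}_{w'}\subseteq(w')^{-1}X^{w'c}_{w'}\subseteq\cfl_c$.
\end{enumerate}
Your Cambrian candidate, a $w$ with $\nc_c(\sort(w^{-1}\wnaught))=u$, is in fact correct. Proving that directly is a separate, long argument via forced and unforced skips, which you do not supply.
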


An immediate consequence of Theorem~\ref{maintheorem:Paving} is that $H_{\bullet}(\cfl_{c})$ has a free basis indexed by the closure of each Coxeter Schubert cell, which we call a \emph{Coxeter Schubert variety}
\[
X^u_{\NC} = \overline{\Xc^u_{\NC}} \qquad\text{for $u \in \NC(W, c)$}.
\]
Therefore we have
$$\dim H_{2i}(\cfl_c)=\#\{ C \in \mathrm{Cl}_{c} \suchthat \text{$C$ contains $i$ positive roots}\},$$
see also \Cref{rem:ChapotonFTriangle}. 
The irreducible components are those $X^u_{\NC}$ indexed by $u\in \NC(W,c)$ where $\mathrm{Clust}(u)=\nctocluster(u)$. These are the \emph{fully supported $c$-noncrossing partitions} $\NC(W,c)^+\subset \NC(W,c)$, those which do not lie in any subgroup generated by a strict subset of the simple reflections, enumerated by the lesser-known \emph{positive $W$-Catalan numbers} $\cat{W}^+\coloneqq |\NC(W,c)^+|$. 

The Coxeter Schubert varieties are $W$-translates of toric Richardson varieties; to specify which ones, we use Reading's $c$-sortable combinatorics~\cite{Reading2007}; see Section~\ref{sec:csortability}.  
In particular there is a canonical way to assign each $x \in W$ a noncrossing partition $\nc_c(\sort(x))\in \NC(W,c)$ which is bijectively determined by the Cambrian class of $x$.

\begin{maintheorem}[\Cref{sec:Bruhatmax}]
    \label{maintheorem:BruhatMax}
    For each $u \in \NC(W, c)$ there is a unique $c'\le_B c$ (the subproduct of simple reflections in the minimal standard parabolic subgroup containing $u$) such that
    \[
    X^u_{\NC} = w^{-1}X^{w\cdot c'}_w
    \qquad\text{for every $w$ with $\nc_c(\sort(w^{-1}\wnaught))= u$}.
    \]
    In particular, $w,w'\in W$ correspond to the same $X^u_{\NC}$ if and only if $ w^{-1}\wnaught \equiv_c (w')^{-1}\wnaught$.
\end{maintheorem}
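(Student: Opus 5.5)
Throughout, write $\mathcal{Y}_w \coloneqq w^{-1}X^{w\cdot c}_w$. The proof rests on the torus-fixed points. Since $X^{w\cdot c}_w$ is a toric Richardson variety, its $T$-fixed points are the Bruhat interval $[w, wc]$. Because $\ell(wc)=\ell(w)+\ell(c)$ and $c$ is a product of distinct simple reflections, this interval consists exactly of the elements $w\,c_J$, where $c_J$ runs over the subwords of $c$. So $(\mathcal{Y}_w)^T=\{c_J\}\subseteq \NC(W,c)$, using the fact that subwords of $c$ are noncrossing partitions.

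More importantly, $\mathcal{Y}_w$ is an irreducible $T$-stable toric variety of dimension $\ell(c)$ inside $\cfl_c$. By \Cref{maintheorem:Paving}, $\cfl_c$ is paved by the Coxeter Schubert cells, so $\mathcal{Y}_w$ must equal $X^u_{\NC}$ for the unique $u$ whose cell is open in $\mathcal{Y}_w$. This $u$ is the unique Bruhat-maximal $T$-fixed point of $\mathcal{Y}_w$ that lies in a cell of full dimension. The weaker form of the statement, with $c'$ replaced by a subword and $\dim X^u_{\NC}$ possibly smaller, needs the same treatment for parabolic Coxeter elements. So I will work with $c'$ equal to $c$ restricted to the support of $u$, and apply the argument inside the standard parabolic subgroup $W_{J}$ together with its coset decomposition.

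The key steps, in order, are as follows.

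First, I compute the tangent weights of $\mathcal{Y}_w$ at the fixed point $e$, or more generally at a fixed point $u$. The weights of $X^{w\cdot c}_w$ at $w$ are $\{-w\beta\}$, where $\beta$ runs over the inversions created by right multiplication by $c$, that is, the roots $\beta_i$ attached to the reduced word of $c$. After translating by $w^{-1}$, these become the roots $\beta_i$ themselves. Knowing the weights at every vertex determines the moment polytope, so it determines which fixed point $u$ has its attracting cell open in $\mathcal{Y}_w$.

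Second, I match these weights with the cluster data of \Cref{maintheorem:Paving}: the open cell is the one for which $\{-u\alpha : \alpha\in\nctocluster(u)\}$ equals the weight set of $\mathcal{Y}_w$ at $u$. Here I would invoke Reading's theory. For $x=w^{-1}w_\circ$, the $c$-sortable element $\sort(x)$ has cover reflections and inversions whose associated cluster, via Reading's map together with the Biane--Josuat-Vergès bijection $\mathrm{Clust}$, is exactly $\mathrm{Clust}(\nc_c(\sort(x)))$. The moment cone of $\mathcal{Y}_w$ at the relevant vertex should be the Cambrian cone containing $x$.

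Third, I deduce the final clause. The varieties $\mathcal{Y}_w$ and $\mathcal{Y}_{w'}$ coincide exactly when they determine the same $u$, that is, when $\nc_c(\sort(w^{-1}w_\circ))=\nc_c(\sort(w'^{-1}w_\circ))$. Since $\nc_c\circ\sort$ is constant on Cambrian classes and injective on them, this happens if and only if $w^{-1}w_\circ\equiv_c w'^{-1}w_\circ$.

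The main obstacle is the second step: identifying the toric variety $\mathcal{Y}_w$ with a specific Cambrian cone, or equivalently a cluster cone. I would first try an induction on $\ell(w)$ that follows Reading's recursive characterization of $c$-sortable elements, splitting on whether the initial letter $s$ of $c$ is a left descent of $x$. In the case where $s$ is a descent, the variety transforms by $s$ and $c$ is replaced by $scs$. In the other case, the problem reduces to the parabolic subgroup $W_{\langle s\rangle}$, and this is exactly where the smaller subword $c'$ appears.
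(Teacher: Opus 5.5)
Your reduction is sound and is the one the paper uses. $w^{-1}X^{w\cdot c}_w\subseteq\cfl_c$ is irreducible of dimension $n$, so the paving forces $w^{-1}X^{w\cdot c}_w=X^u_{\NC}$ with $u$ the Bruhat-maximum of $\movetoid{w}{w\cdot c}$ (\Cref{cor:wwcequalsuNC}, \Cref{cor:newguy}). Your last step, deducing the ``in particular'' from the bijectivity of $\nc_c$ on $\Sort(W,c)$, is also fine.

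The gap is the step in between, which is the entire content of the theorem: you never show that this maximum equals $\nc_c(\sort(w^{-1}\wnaught))$. Saying the moment cone ``should be the Cambrian cone containing $x$'' restates the goal, and the induction along Reading's recursion is only announced. As sketched, the induction also cannot work naively, because neither translation nor conjugation by $s$ intertwines the relevant intervals. Take $S_3$ with $c=s_1s_2$, $s=s_1$, $w=\idem$. The translated interval for $c$ at $w$ is $\{\idem,s_1,s_2,s_1s_2\}$, with maximum the Coxeter element $c$. For $scs=s_2s_1$ at $ws=s_1$ it is $\{\idem,s_2,s_2s_1,s_1s_2s_1\}$, with maximum the reflection $s_1s_2s_1$. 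So there is no sense in which ``the variety transforms by $s$''.

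Two of your preliminary claims are also false, and the first contradicts what you want to prove. The interval $[w,wc]$ is not $\{wc_J\}$: for $c=s_1s_2$ and $w=s_2$ one has $ws_2=\idem\notin[w,wc]$, and $\movetoid{w}{wc}=\{\idem,s_1,s_1s_2,s_2s_1s_2\}$. If $\movetoid{w}{w\cdot c}$ were always the set $[\idem,c]$ of subwords of $c$, then \Cref{cor:shapeequiv} would give $w^{-1}X^{w\cdot c}_w=X^c_{\idem}$ for every $w$, hence $u=c$ always.

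Likewise, the translated roots $s_1\cdots s_{i-1}(r(s_i))$ are the edge directions along the single chain $w\lessdot ws_1\lessdot ws_1s_2\lessdot\cdots$ from the proof of \Cref{thm:CoxeterRichIsToric}. They are not the tangent weights at $w$: at $w=\idem$ the edges go to the $s_i$, in simple-root directions. In any case, local data at the bottom vertex would not locate the top vertex.

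What you need is a mechanism that pins down the top vertex from Cambrian data. The paper does this in two steps.
\begin{enumerate}
\item \Cref{thm:Equivwwc} builds, from the skip reflections $\phi_i$ of $c^{-1}\sort(w_{op})$, an explicit maximal chain in $[w,wc]$ with labels $\psi_i=\phi_1\cdots\phi_{i-1}\phi_i\phi_{i-1}\cdots\phi_1$. After swapping commuting labels this is the $c$-decreasing chain, so $\movetoid{w}{w\cdot c}$ depends only on the Cambrian class of $w_{op}$.
\item \Cref{thm:BruhatMaxIsNC} uses the factorizations $u=\fs_1\cdots\fs_k$ and $u^{-1}c=(c^{-1}\ufs_1c)\cdots(c^{-1}\ufs_{n-k}c)$ from \Cref{prop:specificcoverfactorizations}, together with \Cref{lem:covstop} and the Boolean-interval lemmas. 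These show that $u\in\movetoid{w}{w\cdot c}$ and that its $n$ neighbors there are the $\tau u$ with $\tau\in\invnc{u}$, all Bruhat-below $u$. The Coxeter matroid property then makes $u$ the maximum.
\end{enumerate}
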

Further aspects of the combinatorics are elaborated in Section~\ref{sec:Results}, where we describe the complex of moment polytopes associated to $\cfl_{c}$.

\medskip

We conclude this portion of the introduction by sketching some results and questions about the cohomology ring of $\cfl_{c}$.  Each of the following generalizes to---and depends upon---similar statements about torus-equivariant cohomology, where in all cases the torus in question is the adjoint torus $T_{ad} \coloneq T/Z(G)$.  
We omit these results for brevity.

First recall that in addition to the classical $W$-coinvariant presentation of $H^{\bullet}(G/B)$ due to Borel, there is a presentation of $H^{\bullet}(G/B)$ as a GKM-type graph cohomology ring for the Cayley graph of $W$.
Moreover, Billey's formula~\cite{Bil99} defines a distinguished basis of Schubert classes 
\[
\schub{w} \in H^{\bullet}(G/B)
\qquad\text{for $w \in W$}
\]
which are dual to the homology classes of the Schubert varieties. 

We find similar results for $\cfl_{c}$, presenting $H^{\bullet}(\cfl_{c})$ as the GKM ring for the Hasse diagram of $\NC(W, c)$, which is a subgraph of the Cayley graph.  This allows us to prove the following result, which makes use of the fact~\cite[Proposition 3.16]{Hum90} that all Coxeter elements are conjugate.  

\begin{maintheorem}[\Cref{cor:coho_iso}]
\label{maintheorem:indepofc}
The cohomology ring of the Coxeter flag variety is independent of the choice of Coxeter element: if $c$ and $c' = wcw^{-1}$ are both Coxeter elements then there is an associated isomorphism
\[
\Psi_{c, w}: H^\bullet(\cfl_c)\cong H^\bullet(\cfl_{c'}).
\]
Furthermore $\Psi_{c', v} \circ \Psi_{c, w} = \Psi_{c, vw}$ when these maps are well-defined.
\end{maintheorem}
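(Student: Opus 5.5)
The isomorphism will come from the GKM description of $H^\bullet(\cfl_c)$, obtained from the $T$-stable affine paving of Theorem~\ref{maintheorem:Paving}. We build it first on $T_{ad}$-equivariant cohomology and then pass to ordinary cohomology. By the paving, $\cfl_c$ is equivariantly formal, and restriction to fixed points embeds $H^\bullet_{T_{ad}}(\cfl_c)$ into $\bigoplus_{u\in\NC(W,c)} \mathrm{Sym}(\mathfrak{t}^*)$. The image is cut out by GKM conditions along the one-dimensional $T$-orbits. By Theorem~\ref{maintheorem:PlVanish}, these orbits are exactly the $T$-curves of $G/B$ joining $u$ and $ut$ with both endpoints in $\NC(W,c)$, where $t$ is a reflection. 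So the GKM graph is the induced subgraph of the Cayley graph of $W$ (with reflection generators) on $\NC(W,c)$, and the edge $u$---$ut$ is labeled by the root $\pm u(\alpha_t)$. The ring is the set of tuples $(f_u)$ with $f_u-f_{ut}\in u(\alpha_t)\,\mathrm{Sym}(\mathfrak t^*)$ for each such edge.

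To compare $c$ and $c'=wcw^{-1}$, we use conjugation. The map $x\mapsto wxw^{-1}$ carries reflections to reflections and preserves reflection length. It therefore sends minimal reflection factorizations of $c$ to those of $c'$, and restricts to a bijection $\NC(W,c)\to\NC(W,c')$. It also sends an edge $u$---$ut$ to the edge $wuw^{-1}$---$wuw^{-1}\cdot wtw^{-1}$, so the induced graphs are isomorphic.

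The edge labels do not match under this map. The label changes from $u(\alpha_t)$ to $wuw^{-1}(\alpha_{wtw^{-1}})=\pm wu(\alpha_t)$. The fix is to twist the fixed-point polynomials by $w$. We define
\[
\Psi_{c,w}\big((f_u)_u\big)_{wuw^{-1}} \coloneqq w\cdot f_u ,
\]
with $W$ acting on $\mathrm{Sym}(\mathfrak t^*)$. The divisibility condition $f_u-f_{ut}\in u(\alpha_t)$ becomes $w f_u-w f_{ut}\in wu(\alpha_t)$, which is the required condition for $c'$ up to sign. Hence $\Psi_{c,w}$ is a ring isomorphism of GKM rings, and it is $w$-semilinear over $\mathrm{Sym}(\mathfrak t^*)$. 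It sends $\mathfrak t^*$-multiples to $\mathfrak t^*$-multiples, so it maps the ideal generated by positive-degree constants onto itself. It therefore descends to $H^\bullet(\cfl_c)=H^\bullet_{T_{ad}}/(\mathfrak t^*)$, using equivariant formality from the paving. The composition rule $\Psi_{c',v}\circ\Psi_{c,w}=\Psi_{c,vw}$ follows directly from the formula, since conjugations compose and the $W$-action on polynomials is a group action.

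The main obstacle is establishing the GKM description itself. Two things need care. First, the one-dimensional $T$-orbits in $\cfl_c$ must be exactly the $T$-curves of $G/B$ whose two fixed points lie in $\NC(W,c)$. Theorem~\ref{maintheorem:PlVanish} should give this: such a curve has fixed points $u,ut$, its closure is the $\mathbb P^1$ on which only $\Pl_u$ and $\Pl_{ut}$ are nonzero, so it lies in $\cfl_c$ if and only if $u,ut\in\NC(W,c)$. Second, the GKM theorem must apply to the singular space $\cfl_c$. I would verify the localization statement directly from the affine paving. Each cell $\Xc^u_{\NC}$ is a $T$-representation whose weights $-u\cdot\alpha$, for $\alpha\in\nctocluster(u)$, are pairwise non-proportional. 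By the standard Goresky--Kottwitz--MacPherson argument for $T$-stable pavings with isolated fixed points, this gives the injectivity and the edge-condition image. One can also argue via a Billey-type basis supported upward in Bruhat order, which shows the image is exactly the GKM ring. The equivariant formality and weight data needed for this are supplied by Theorem~\ref{maintheorem:Paving}.
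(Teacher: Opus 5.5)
Your proposal is correct and takes the paper's route. The paper first identifies $H^\bullet_{T_{ad}}(\cfl_c)$ with the graph cohomology of the induced Cayley subgraph on $\NC(W,c)$ via the good affine paving (\Cref{thm:cfl_coho}, resting on \Cref{thm:GKM}). It then uses exactly your twisted conjugation map $(f_u)_u\mapsto (w\cdot f_{w^{-1}vw})_{v}$, which preserves $H^+_{T_{ad}}(pt)$ and hence descends to $H^\bullet(\cfl_c)$. Your GKM sketch leaves one point implicit: over $\ZZ$, the cell characters must be primitive as well as pairwise non-proportional. This is precisely why the paper passes to $T_{ad}$, since roots are primitive in $\ZZ\Phi$.
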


We also show, using a combinatorial argument, that $H^{\bullet}(\cfl_{c})$ has a distinguished basis---dual to a homology basis of translated Richardson varieties described in Theorem~\ref{maintheorem:BruhatMax}---which we call \emph{Coxeter Schubert classes} and denote by
\[
\schub{u}^{\NC}\in H^\bullet(\cfl_c) \qquad\text{for $u \in \NC(W, c)$}.
\]
Surprisingly, the maps $\Psi_{c, w}$ in Theorem~\ref{maintheorem:indepofc} do not map Coxeter Schubert classes to Coxeter Schubert classes, making this basis heavily dependent on $c$.

For a specific $c$ in type $\mathrm{A}$ we have a complete combinatorial understanding of the $\schub{u}^{\NC}$ \cite{BGNST1,BGNST2} (see the discussion after the statement of \Cref{maintheorem:TypeA}). We understand very little about the combinatorics of $\schub{u}^{\NC}$ outside of this case, and leave the reader with a list of questions to generalize results in \cite{BGNST1,BGNST2} for what we might aspirationally call ``Coxeter Schubert calculus''. 

\begin{question}
\label{ques:Billey}
Is there an analogue of Billey's formula for the $\schub{u}^{\NC}$ in the graph cohomology ring?
\end{question}

\begin{question}
\label{ques:ShubToFor}
For $\iota:\cfl_c\hookrightarrow G/B$ the inclusion map, is there a combinatorially positive interpretation of the coefficients $a^u_v$ in
\[
\iota^{\ast} \schub{v} = \sum_{u\in \NC(W,c)}a^u_v\schub{u}^{\NC}\in H^\bullet(\cfl_c)?
\]
These numbers equivalently decompose the classes $[X^{u}_{\NC}]=\sum a^u_v[X^v]$ in the Schubert homology basis $[X^{v}]$ of $H_{\bullet}(G/B)$, and are therefore positive for geometric reasons.
\end{question}

\begin{question}
\label{ques:products}
How do Coxeter Schubert polynomials multiply?  Are the coefficients $c^{w,\NC}_{u,v}$ in
\[
\schub{u}^{\NC}\schub{v}^{\NC}=\sum_{w\in \NC(W,c)}c^{w,\NC}_{u,v}\schub{w}^{\NC}\in H^\bullet(\cfl_c)
\]
positive, and is there a combinatorial witness to this fact? Because it is a reducible variety, $\cfl_c$ lacks Poincar\'e duality and to our knowledge does not have a geometric reason to be true.
\end{question}

As remarked above, there are $T_{ad}$-equivariant versions of these latter questions involving the notion of Graham-positivity \cite{Gra01}.

\subsection{Type $\mathrm{A}$}  

We now highlight some special features that appear when $W = S_{n+1}$, e.g. when $G = \GL_{n+1}$.  
In type $\mathrm{A}$, $G/B$ is isomorphic to the variety of complete flags in $n$-space. Denoting $\sym{n+1}=\mathbb{Z}[x_1,\ldots,x_{n+1}]^{S_{n+1}}$, Borel's theorem~\cite{Bor53} gives an isomorphism
\begin{equation}
\label{eqn:flagcohomology}
H^{\bullet}(G/B) \cong \mathbb{Z}[x_1,\ldots,x_{n+1}]/\langle f(x_1,\ldots,x_{n+1})-f(0,\ldots,0)\suchthat f\in \sym{n+1}\rangle,
\end{equation}
by identifying $x_{1}, \ldots, x_{n+1}$ with the negative Chern roots of the tautological flag.

We first recall our prior work with P.~Nadeau in~\cite{BGNST1, BGNST2}.  Here, the Coxeter flag variety for $c = (n+1\,\cdots\, 2\,1)$ was studied under the name \emph{quasisymmetric flag variety}, due to the following result. 

\begin{thm}[{\cite[Theorem A]{BGNST2}}]
\label{thm:quasi}
For $G = \GL_{n+1}$, $W = S_{n+1}$, and $c = (n+1\,\cdots\,2\,1)$, we have
\[
H^{\bullet}(\cfl_{c}) \cong \mathbb{Z}[x_1,\ldots,x_{n+1}]/\langle f(x_1,\ldots,x_{n+1})-f(0,\ldots,0)\suchthat f\in \qsym{n+1}\rangle
\]
where $\qsym{n}$ is the ring of quasisymmetric polynomials of Gessel \cite{Ges84} and Stanley \cite{StThesis}.
\end{thm}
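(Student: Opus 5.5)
This statement is \cite[Theorem A]{BGNST2}, and within the present paper it serves as a recalled input rather than something reproved. If one wanted to derive it from the general machinery here, the plan below would do so.

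First, compute $H^\bullet(\cfl_c)$ via the GKM description on the Hasse diagram of $\NC(S_{n+1},c)$. Because $\cfl_c$ has an affine paving (Theorem~\ref{maintheorem:Paving}), the restriction $H^\bullet(G/B)\to H^\bullet(\cfl_c)$ is surjective. This is the step I expect to need the most care. I would deduce it from the equivariant statement: the restriction map in GKM rings sends equivariant Schubert classes $\schub{w}$ to classes whose supports are upper sets. Combined with an upper-triangularity argument relative to the Coxeter Schubert classes $\schub{u}^{\NC}$, this gives surjectivity. Composing with Borel's presentation \eqref{eqn:flagcohomology}, $H^\bullet(\cfl_c)$ becomes a quotient of $\mathbb{Z}[x_1,\ldots,x_{n+1}]$.

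Second, show that the quasisymmetric ideal lies in the kernel. For $c=(n+1\,\cdots\,2\,1)$, every $u\in\NC(W,c)$ is a noncrossing partition with respect to this cyclic order. The restriction of a polynomial $f$ to the fixed point $u$ is $f(x_{u(1)},\ldots,x_{u(n+1)})$ with the equivariant variables specialized appropriately. One checks that for $f\in\qsym{n+1}$ homogeneous of positive degree, this restriction is the same at every noncrossing fixed point, and therefore vanishes nonequivariantly. A convenient way to verify this is through the monomial quasisymmetric basis together with the block structure of noncrossing permutations: each cycle consists of consecutive-in-cyclic-order values, so quasisymmetry (invariance under order-preserving shifts of variables) is exactly the needed symmetry.

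Third, compare ranks. The quasisymmetric coinvariant ring is free of rank equal to the Catalan number, by Aval--Bergeron--Bergeron, with Hilbert series given by the Narayana-type count. By Theorem~\ref{maintheorem:Paving}, $H^\bullet(\cfl_c)$ is free with graded ranks equal to the number of clusters with $i$ positive roots, and these counts agree with that Hilbert series. A surjection between free graded $\mathbb{Z}$-modules of equal finite rank in each degree is an isomorphism, which completes the argument. The main obstacle remains the surjectivity of restriction together with the integral freeness of the quotient; the vanishing check is routine combinatorics.
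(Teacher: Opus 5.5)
Your skeleton (surject from Borel's ring, show the quasisymmetric ideal dies, compare ranks) is reasonable. Step~3 is fine once Steps~1--2 are in place, using an integral form of the Aval--Bergeron--Bergeron Hilbert series together with the ballot-number count in Remark~\ref{rem:ChapotonFTriangle}. The gap is Step~1: surjectivity is not proved, and your mechanism cannot prove it.

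\textbf{Step 1.} An affine paving does not force $\iota^{\ast}$ to be onto. In type $\mathrm{B}_2$ the paper has the same paving and GKM structure, yet $\cfl_c$ has Betti numbers $1,2,3$ against $1,2,2,2,1$ for $G/B$. The upper-triangularity argument breaks for a concrete reason. For $u\in\NC(W,c)$, the restricted Schubert class has $u$-localization $\pm\prod_{\tau\in\inv{u}}t_{r(\tau)}$, of degree $\ell(u)$. The flowup class $\wt{\schub{u}^{\NC}}$ has $u$-localization $\prod_{\tau\in\invnc{u}}t_{r(\tau)}$, of degree $|\invnc{u}|$. So in the triangular expansion of $\iota^{\ast}\wt{\schub{u}}$ in the Coxeter Schubert basis, the diagonal coefficient is $\pm\prod_{\tau\in\inv{u}\setminus\invnc{u}}t_{r(\tau)}$. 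This is a non-unit, and it dies modulo $H^+_T(pt)$ whenever $\invnc{u}\neq\inv{u}$; for example $\iota^{\ast}\schub{\wnaught}=0$ for $n\ge 2$ by degree. An argument using only this triangular structure would equally ``prove'' the false type-$\mathrm{B}_2$ statement. What you actually need is a type-A-specific supply of preimages. For each $u\in\NC_{n+1}$ you need a polynomial in $\ZZ[x_1,\ldots,x_{n+1}][t_1,\ldots,t_{n+1}]$ whose localizations form a flowup class with the correct diagonal entry. That is exactly what the double forest polynomials of \cite{BGNST1,BGNST2} provide, and it is the real content of the cited theorem. The present paper does not reprove it; it imports it, as recalled in Section~\ref{sec:typeAmain}.

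\textbf{Step 2.} Your vanishing argument is false as stated: localizations of a positive-degree quasisymmetric polynomial are not constant on $\NC_{n+1}$. For $n+1=3$, take $f=x_1^2x_2+x_1^2x_3+x_2^2x_3\in\qsym{3}$. Its localizations at $\idem$ and at $s_1\in\NC_3$ differ by $t_1t_2(t_1-t_2)$. Non-equivariantly, you must show that $(f(t_{u(1)},\ldots,t_{u(n+1)}))_{u\in\NC_{n+1}}$ lies in $H^+_T(pt)\cdot H^\bullet_T(\cfl_c)$. The natural route is to lift $f$ to an equivariantly quasisymmetric $\tilde f(\xl;\tl)$ with $\tilde f(\xl;0)=f$. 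One then checks that $\tilde f(\xl;\tl)-\tilde f(\tl;\tl)$ localizes to zero on $\NC_{n+1}$, which is the equivariant quasisymmetry theory of \cite{BGNST1}, not a routine monomial check. So both substantive steps reduce to the ingredients that make up the proof in \cite{BGNST2}, and as written your proposal does not establish the theorem.
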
 

Combining Theorem~\ref{thm:quasi} with our results, particularly Theorem~\ref{maintheorem:indepofc}, we see that each Coxeter flag variety in type $\mathrm{A}$ corresponds to a nonstandard definition of quasisymmetry.  

\begin{maintheorem}[\Cref{cor:permutedqsym}]
\label{maintheorem:TypeA}
    For $G/B=\fl{n+1}$ and any Coxeter element $c\in S_{n+1}$, the restriction map
$H^\bullet(\fl{n+1})\to H^\bullet(\cfl_c)$ is surjective.  Moreover, if
$c=(w(n+1)\,w(n)\,\cdots\,w(1))$ as a cycle, then this map realizes $H^\bullet(\cfl_c)$ as the further quotient of \eqref{eqn:flagcohomology} as
\[
H^\bullet(\cfl_c)\cong
\mathbb{Z}[x_1,\ldots,x_{n+1}]\Big/
\Big\langle
f(x_{w(1)},\ldots,x_{w(n+1)})-f(0,\ldots,0)\ \Big|\ f\in \qsym{n+1}
\Big\rangle,
\]
the ring of \emph{permuted quasisymmetric coinvariants}.
\end{maintheorem}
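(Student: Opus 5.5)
We prove Theorem~\ref{maintheorem:TypeA} by reducing to the special Coxeter element $c_0=(n+1\,\cdots\,2\,1)$, where Theorem~\ref{thm:quasi} applies, and transporting along the isomorphisms of Theorem~\ref{maintheorem:indepofc}. Any Coxeter element $c=(w(n+1)\,w(n)\,\cdots\,w(1))$ equals $wc_0w^{-1}$ as a permutation. So we obtain the isomorphism $\Psi_{c_0,w}:H^\bullet(\cfl_{c_0})\cong H^\bullet(\cfl_c)$.

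The plan has three steps.

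\textbf{Step 1: identify $\Psi$ geometrically.} The natural candidate comes from left translation by a representative $\dot w\in N(T)$. Indeed $\dot w\,\cfl_{c_0}$ is $T$-stable, and by Theorem~\ref{maintheorem:PlVanish} it is the vanishing locus of $\Pl_{wv}$ for $v\notin\NC(W,c_0)$. This is not $\cfl_c$ in general, because $w\NC(W,c_0)\neq \NC(W,c)$; conjugation, not left multiplication, is what carries $\NC(W,c_0)$ to $\NC(W,c)$. So $\Psi_{c_0,w}$ should be the GKM-level map. On functions on fixed points it sends $(f_u)_{u\in\NC(W,c_0)}$ to $(w\cdot f_{w^{-1}u'w})_{u'\in\NC(W,c)}$. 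This is compatible with edges, since the Hasse diagram edges $u\to ut$ are conjugated to $u'\to u'(wtw^{-1})$ with weights transformed by $w$. I will take this description from the proof of Theorem~\ref{maintheorem:indepofc}. The key check is how it interacts with the restriction $\iota^*$ from the GKM ring of $W$. The equivariant classes $x_i$ restrict at a fixed point $u$ to $t_{u(i)}$. Under the conjugation map, the class with value $t_{u(i)}$ at $u$ goes to the class with value $t_{wu(i)}=t_{u'w(i)}$ at $u'=wuw^{-1}$, which is the restriction of $x_{w(i)}$. So $\Psi$ intertwines $\iota_{c_0}^*(f(x_1,\dots,x_{n+1}))$ with $\iota_c^*(f(x_{w(1)},\dots,x_{w(n+1)}))$, and it acts on the equivariant parameters $t$ by $w$.

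\textbf{Step 2: pass to ordinary cohomology.} Working $T_{ad}$-equivariantly and then setting equivariant parameters to zero, the restriction to ordinary cohomology commutes with $\Psi$ and with $\iota^*$. Step~1 then gives a commutative square. The top row is $\mathbb{Z}[x]\to H^\bullet(\cfl_{c_0})$. The bottom row is $\mathbb{Z}[x]\to H^\bullet(\cfl_c)$. The vertical map on $\mathbb{Z}[x]$ is the ring automorphism $x_i\mapsto x_{w(i)}$.

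\textbf{Step 3: conclude.} By Theorem~\ref{thm:quasi}, the top row is surjective with kernel generated by $f-f(0)$ for $f\in\qsym{n+1}$. Both vertical maps are isomorphisms, so the bottom row is surjective and its kernel is the image of that ideal, namely $\langle f(x_{w(1)},\dots,x_{w(n+1)})-f(0)\rangle$. Surjectivity of $H^\bullet(\fl{n+1})\to H^\bullet(\cfl_c)$ follows because $\mathbb{Z}[x]$ surjects onto $H^\bullet(\fl{n+1})$ by Borel's theorem \eqref{eqn:flagcohomology}.

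The main obstacle is Step~1: verifying that $\Psi_{c_0,w}$ genuinely intertwines the images of the Chern-root classes, with the correct direction $w$ versus $w^{-1}$ and the correct cycle convention. One subtlety is that Theorem~\ref{maintheorem:indepofc} may define $\Psi$ only for $w$ realized by a sequence of conjugations by simple reflections. In that case I would compose elementary steps $c\mapsto scs$ using the cocycle property $\Psi_{c',v}\circ\Psi_{c,w}=\Psi_{c,vw}$, and check the intertwining one simple reflection at a time, where it is a direct GKM computation. A second subtlety is that the ordinary cohomology must be computed from the equivariant GKM ring. This relies on the equivariant formality supplied by the affine paving of Theorem~\ref{maintheorem:Paving}.
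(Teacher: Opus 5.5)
Your proposal is correct and is essentially the paper's proof. Both use the conjugation isomorphism $\wt{\Psi_{c,w}}$ together with the automorphism $x_i\mapsto x_{w(i)}$, $t_i\mapsto t_{w(i)}$. The paper builds the commutative square directly on the equivariant Borel-type presentation $\mathbb{Z}[x][t]/\eqsymide{n+1}$ from \cite{BGNST2} and only then quotients by $(t_1,\ldots,t_{n+1})$. You instead check the intertwining on the generators $x_i$ and then invoke the non-equivariant Theorem~\ref{thm:quasi}.

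One small slip: the classes with $(x_i)_u=t_{u(i)}$ live in $T$-equivariant cohomology, not $T_{ad}$-equivariant cohomology. This is why the paper switches to $T$ in type $\mathrm{A}$, and it is harmless here because $T\to T_{ad}$ splits.
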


For $c = (n+1\,\cdots\,2\,1)$, we were able to answer Question~\ref{ques:Billey} very concretely and to give a (highly recursive) combinatorial solution to Questions~\ref{ques:ShubToFor} and~\ref{ques:products} that confirmed positivity.  
Moreover, it was possible to realize each Coxeter Schubert class in the Borel presentation by defining a family of \emph{forest polynomials}, paralleling the Lascoux--Schutzenberger model of Schubert polynomials for the  Schubert classes \cite{LS82}.  

\begin{question}
For $G = \GL_{n+1}$, do the Coxeter Schubert classes $\schub{u}^{\NC}$ have nice polynomial representatives in $x_1,\ldots,x_{n+1}$?
\end{question}

In other types, we can be certain the analogous question has a negative answer: the $2$nd Betti numbers of $\cfl_{c}$ exceed those of $G/B$ in types $B$ and $C$, so the canonical map $\iota^{\ast}$ from Borel's model to $H^{\bullet}(\cfl_{c})$ cannot be surjective; see Remark~\ref{rem:ChapotonFTriangle}.

\subsection{Toric complexes}

The class of $c$-Coxeter Richardson varieties has been considered elsewhere owing to its connection to the permutohedral toric variety, which is the closure $\overline{T \cdot x}$ of a generic $T$-orbit in $G/B$.  

For $W = S_{n+1}$ and $c = (n+1\,\cdots\,2\,1)$, this connection was first observed in purely cohomological terms by~\cite{And10}, and then later as a regular toric degeneration of $\overline{T \cdot x}$ into $c$-Coxeter Richardson varieties by the authors of~\cite{HHMP,Li24}.  
This result was greatly generalized in~\cite{KSSB}, where it was shown that in any type and for any Coxeter element $c$ there is a degeneration of $\overline{T \cdot x}$ into the union 
\[
\operatorname{HHMP}_{c}
\coloneqq 
\bigcup X^{w\cdot c}_w.
\]

Each of these results relies on the fact that the moment polytopes for the $c$-Coxeter Richardson varieties form a regular subdivision of the generalized flag permutahedron 
\[
\mathrm{Perm}_{W} \coloneqq \operatorname{conv}\{w \cdot \rho \suchthat w \in W\},
\]
where $\rho \in \Q{T}$ is the Weyl vector.   Figure~\ref{fig:hhmp_more_general} illustrates this in type~$\mathrm{A}_3$: for $c=s_3s_2s_1$, the two polytopes of the same color are translates of one another.
Defant--Sherman-Bennett--Williams~\cite{DSBW25} studied a triangulation refining this subdivision which is determined by the combinatorics of noncrossing partitions and $c$-sortability.

\begin{figure}[!ht]
  \centering
  \includegraphics[
  width=0.48\textwidth]{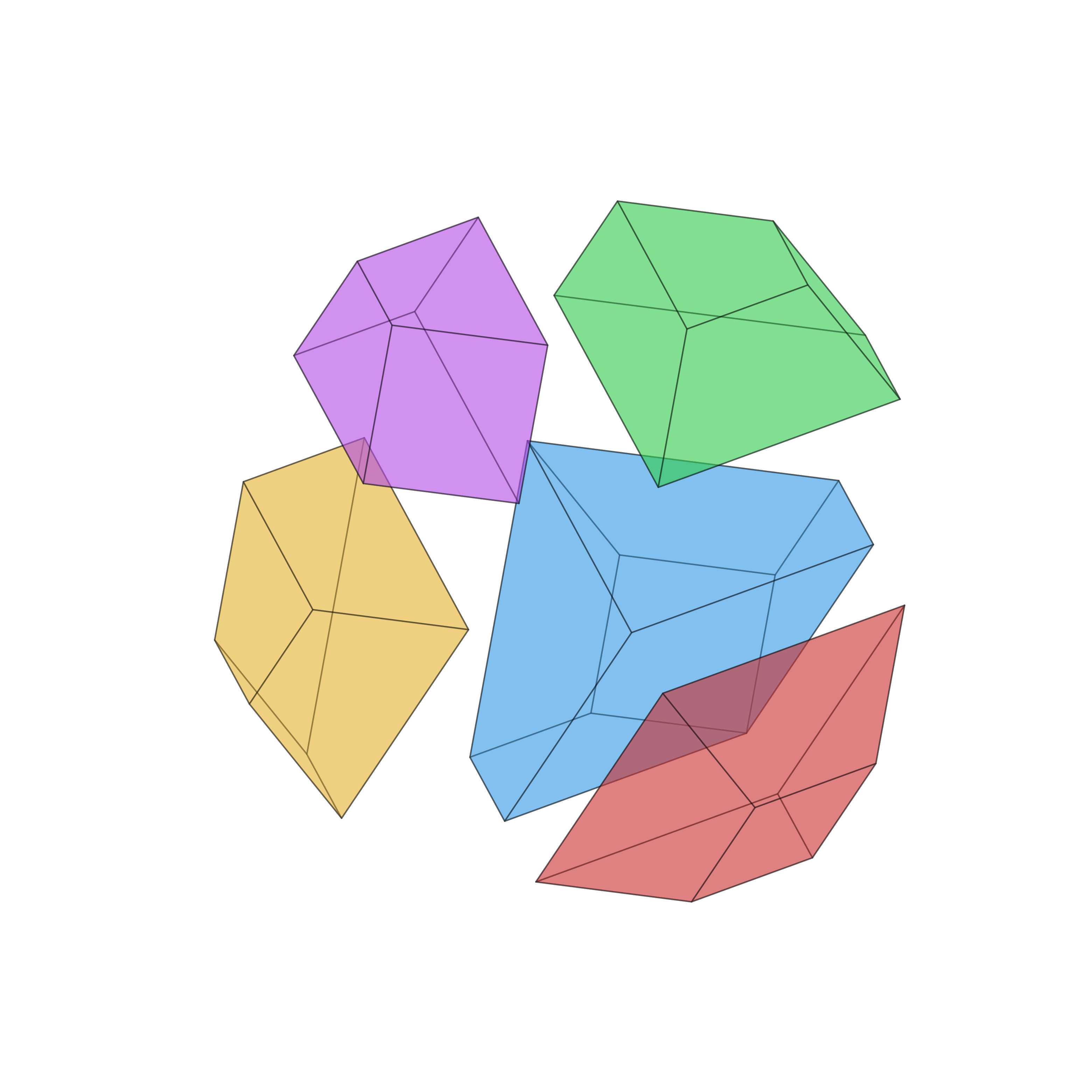}\hfill
  \includegraphics[width=0.48\textwidth]{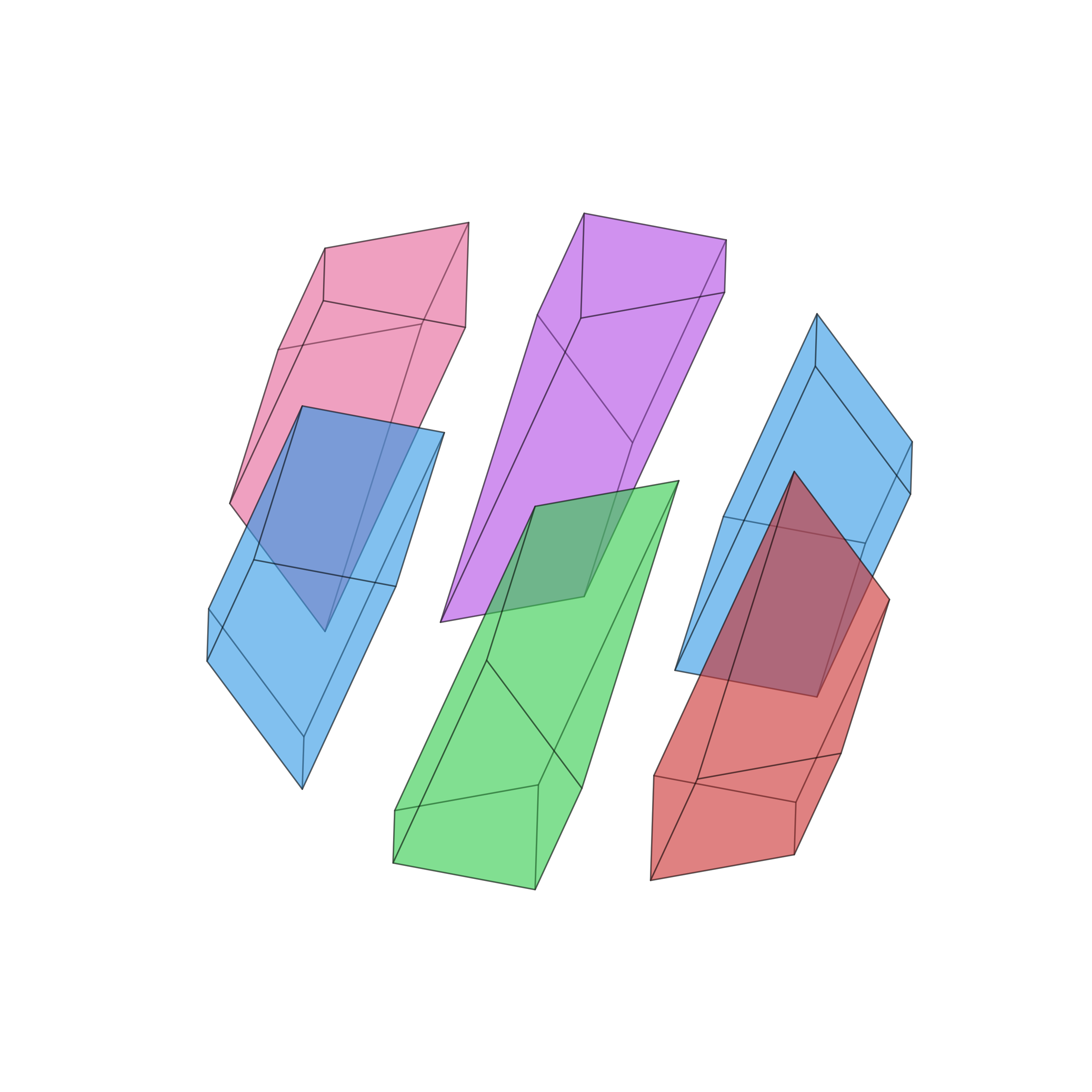}
  \caption{$\operatorname{HHMP}$ decomposition of the type $\mathrm{A}_3 $ permutahedron corresponding to $c=s_1s_3s_2$ (left) and $c=s_3s_2s_1$ (right). 
  The corresponding Coxeter flag varieties are each unions of five $3$-dimensional toric varieties, whose moment polytopes are translates of the depicted polytopes (the two blue polytopes on the right are identified  under this translation). See {\Cref{fig:two-models-panels}} for how the polytopes on the left assemble to a polytopal complex whose face lattice is $\cfl_{s_1s_3s_2}$.}
  \label{fig:hhmp_more_general}
\end{figure}

Our approach also makes use of moment polytopes, and in particular the ``moment complex'' comprising all moment polytopes for translated $c$-Coxeter Richardson varieties.  
This necessitates some combinatorial overlap between our results and~\cite{DSBW25}, and in particular~\Cref{thm:Equivwwc} is proved independently in ~\cite[Theorem~1.9 and Corollary~1.10]{DSBW25}.

\subsection{Paper structure}

After Section~\ref{sec:Results}, which reviews the moment complex of $\cfl_{c}$, we recall preliminary material on posets and Coxeter groups (Section~\ref{section:comb_prelims}), reductive groups (Section~\ref{section:coxlie_prelims}) and the geometry of $G/B$ (Section~\ref{section:Goe_prelim}).  
Section~\ref{section:Plucker} concerns Pl\"{u}cker coordinates, including a mix of new and folklore results that we present in a single exposition.  
The study of $\cfl_{c}$ begins in earnest in Section~\ref{section:noncrossingandBruhet}, where we relate the $T$-fixed points of $c$-Coxeter Richardson varieties to noncrossing partitions, and in Section~\ref{section:CFLc} where we study $\cfl_{c}$ as a toric complex using these combinatorics.  
In Section~\ref{sec:ClusterCharts} we introduce cluster charts and prove both Theorems~\ref{maintheorem:PlVanish} and~\ref{maintheorem:Paving}.  
Sections~\ref{section:cohomology},~\ref{section:DualityBases}, and~\ref{sec:typeAmain} contain a full account of our cohomological results, including Theorems~\ref{maintheorem:indepofc} and~\ref{maintheorem:TypeA}.  
Finally, Sections~\ref{sec:csortability},~\ref{section:CharacterizeBruhatIntervals}, and~\ref{sec:Bruhatmax} resolve the problem of determining when two $c$-Coxeter Richardson varieties are equal after translation, proving Theorem~\ref{maintheorem:BruhatMax}.  
In Appendices~\ref{appendix:GL},~\ref{appendix:SO},~\ref{appendix:C}, and~\ref{appendix:D}, we apply Sections~1--9 to the classical groups of the corresponding type and construct explicit examples of a $c$-Coxeter flag variety in each case.  

\subsection*{Acknowledgements} 
We thank Melissa Sherman-Bennett for a stimulating talk at the IAS workshop ``Combinatorics of Enumerative Geometry'' highlighting the results of \cite{KSSB}.
We are grateful to Allen Knutson for several enlightening conversations, and L.G.~is grateful to Arun Ram for help with Chevalley groups. 
We also thank Nathan Williams for sharing results from \cite{DSBW25} ahead of their appearance on the arXiv, and for answering our questions.

\section{Detailed structure of the moment complex}
\label{sec:Results}

In this section we describe the moment complex of $\cfl_c$, which underpins the combinatorics. We will consider $\cfl_{c}$ as a \emph{toric complex}, which we take to mean a union of projective toric varieties $\bigcup Y_i$ in a common projectivized $T$-representation, each of which determines a moment polytope $\mu(Y_i)$ as the convex hull of the characters associated to the $T$-fixed points $Y_i^T$.

\begin{rem}
Similar terms and ideas are present in the literature; see for example~\cite{BrunRomer, CasbiMasoomiYakimov, Sundbo}.  
As no source gives both a clear definition and a satisfactory name, we take the above out of expediency.
\end{rem}
The generalized Pl\"ucker embedding $\Pl$ realizes $\cfl_c$ as a toric complex. The $T$-fixed points of a $c$-Coxeter Richardson variety $X^{w\cdot c}_{w}$ come from the Bruhat interval $[w, w\cdot c]$. The moment polytope is the $\ell(c)$-dimensional twisted Bruhat interval polytope 
\[
P_{[w, w\cdot c]} = \operatorname{conv}\big( [w, w\cdot c] \cdot \rho \big)
\qquad\text{where $\rho$ is the Weyl vector},
\]
see Section~\ref{subsec:CoxMat}.  
Translation by $w^{-1}$ in $G/B$ also translates the moment polytope by $w^{-1}$, so the moment polytopes determined by $\cfl_c$ are all $w^{-1} P_{[w, w\cdot c]}$ and their lower-dimensional faces. These translated twisted Bruhat interval polytopes and their faces have remarkably special combinatorics.
\begin{thm}[\Cref{thm:polyposSect8}]\label{thm:polyposSect2}
    Each $w^{-1}P_{[w,w\cdot c]}$ (and its faces) are $c$-polypositroids in the sense of Lam--Postnikov \cite[\S 13]{LP20}.
\end{thm}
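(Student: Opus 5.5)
The plan is to check the two defining conditions of a $c$-polypositroid in the sense of Lam--Postnikov~\cite[\S 13]{LP20} separately for $w^{-1}P_{[w,w\cdot c]}$. Condition (i) is that every edge is parallel to a root. Condition (ii) is that every facet normal lies in the distinguished finite set of $c$-admissible weights. For the faces, I will rely on the fact that faces of a $c$-polypositroid are again $c$-polypositroids. If that cannot be cited cleanly, I will prove it along the way: the argument below runs equally well for the faces, since faces of Bruhat interval polytopes are again Bruhat interval polytopes (Tsukerman--Williams), and in our setting these are again twisted subintervals of $[w,w\cdot c]$.

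Condition (i) I would get geometrically. The polytope $P_{[w,w\cdot c]}$ is the moment polytope of the toric Richardson variety $X^{w\cdot c}_w$. Its edges are the images of the $T$-invariant curves joining $T$-fixed points $x,y\in[w,w\cdot c]$, and such curves exist only when $y=xt$ for a reflection $t$. Hence each edge is parallel to $x\rho-xt\rho$, which is a multiple of the root $x\beta_t$. Left translation by $w^{-1}$ acts linearly on $\Q{T}$ and preserves the root system, so the edges of $w^{-1}P_{[w,w\cdot c]}$ are still root directions.

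Condition (ii) is the real content and the step I expect to be the main obstacle. I would proceed as follows.
\begin{enumerate}
\item Identify the vertex set. After translation, the vertices of $w^{-1}P_{[w,w\cdot c]}$ are $\{w^{-1}x\rho : x\in[w,w\cdot c]\}$. Using the result of Section~\ref{section:noncrossingandBruhet} relating $T$-fixed points of $c$-Coxeter Richardson varieties to noncrossing partitions, the elements $w^{-1}x$ form a subset of $\NC(W,c)$, and the covering relations inside this subset are multiplication by reflections in prefixes of reduced $T$-factorizations of $c$.
\item Identify the facets. A facet is the moment polytope of a $T$-stable divisor, namely a Richardson subvariety $X^{v}_{u}$ with $[u,v]\subseteq[w,w\cdot c]$ of length $\ell(c)-1$. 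I expect that the two extreme types of subinterval, $[w,w\cdot c s]$ and $[w\cdot s',w\cdot c]$ with $s$ final and $s'$ initial simple reflections, together with their translates, exhaust the facets up to the symmetry of the interval.
\item Compute the facet normals. For each facet, I would write down a linear functional that is maximized exactly on it. The natural candidates are the weights $w^{-1}x\,\omega_i$ for suitable $x$ and fundamental weights $\omega_i$, because $\langle \omega_i,\cdot\rangle$ cuts out faces of Bruhat interval polytopes along parabolic cosets.
\item Match the normals to the definition. Finally I would show that these weights are exactly Lam--Postnikov's $c$-admissible normals, which I expect to be weights of the form $c^{k}\omega_i$, or $u\,\omega_i$ with $u$ a $c$-noncrossing prefix. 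The matching should follow from the noncrossing description of the vertices: the facet is $\{\langle \lambda,\cdot\rangle=\max\}$ with $\lambda$ determined by the reflection prefix at which the chain in $\NC(W,c)$ leaves a parabolic coset.
\end{enumerate}

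If matching the normals to the precise definition proves messy, my fallback is to verify the equivalent characterization in~\cite{LP20}: a polytope with root-parallel edges is a $c$-polypositroid if and only if its vertex set is closed under the relevant $c$-twisted alcove inequalities. I would check that characterization directly on the vertex set $\{w^{-1}x\rho\}$ supplied by step~1.
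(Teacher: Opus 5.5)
Your treatment of condition (i) is fine; the paper builds it into the phrase ``Coxeter matroid polytope''. Your reduction to faces is also the paper's: it reduces to the top-dimensional $w^{-1}P_{[w,w\cdot c]}$ because faces of $(W,c)$-polypositroids are again $(W,c)$-polypositroids.

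The gap is condition (ii). Steps 2--4 are expectations rather than an argument, and step 2 is wrong as stated. Since $[w,w\cdot c]$ is toric, each face $P_{[u',v']}$ has dimension $\ell(v')-\ell(u')$. So by \Cref{prop:invToricSub} the facets of $w^{-1}P_{[w,w\cdot c]}$ are $w^{-1}P_{[w,v]}$ for \textit{every} coatom $v$ and $w^{-1}P_{[u,w\cdot c]}$ for \textit{every} atom $u$ of $[w,w\cdot c]$. These are not in general obtained by deleting an initial or final simple letter of $c$. Already for $\mathrm{B}_2$ with $c=s_0s_1$ and $w=\mathrm{id}$, your list gives $P_{[\mathrm{id},s_0]}$ and $P_{[s_0,s_0s_1]}$ but misses $P_{[\mathrm{id},s_1]}$ and $P_{[s_1,s_0s_1]}$, and no symmetry of the interval recovers them.

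Steps 3--4 rest on a guessed description of the admissible normals and on a matching that ``should follow'' with no mechanism. The $c^k\omega_i$ part of your guess is right. The part ``$u\omega_i$ with $u$ a noncrossing prefix'' is false: in $\mathrm{A}_3$ with $c=s_1s_2s_3$ one gets $(\mathrm{id}-c)s_2\omega_2=\alpha_1+\alpha_3$, which is not a root. The fallback ``$c$-twisted alcove'' characterization is not something you can cite as stated, and you give no way to check it.

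The missing idea is a short linear-algebra computation that needs no explicit description of the normals. Given a facet, choose a maximal chain $w=x_0\lessdot_B x_1\lessdot_B\cdots\lessdot_B x_n=w\cdot c$ through the corresponding coatom $x_{n-1}$ (or atom $x_1$), with $x_i=x_{i-1}t_i$. By \Cref{thm:HasseUnion}, $c=t_1\cdots t_n$ is a minimal reflection factorization. The translated vertices satisfy
\[
w^{-1}x_{i-1}\regweight-w^{-1}x_i\regweight\;\parallel\;\beta_i\coloneqq t_1\cdots t_{i-1}\,r(t_i),
\]
and one checks that $c=s_{\beta_n}\cdots s_{\beta_1}$, so $\beta_1,\dots,\beta_n$ are linearly independent.

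The facet contains the $n$ vertices of $n-1$ consecutive steps of this chain. Hence its normal $\vec n$ is orthogonal to, and therefore fixed by, the corresponding $n-1$ reflections.
\begin{itemize}
\item For $w^{-1}P_{[w,x_{n-1}]}$ this gives $c\vec n=s_{\beta_n}\vec n$, i.e.\ $(\mathrm{id}-c)\vec n=(\vec n,\beta_n^\vee)\beta_n$.
\item For $w^{-1}P_{[x_1,w\cdot c]}$, use a Hurwitz move to write $c=s_\gamma s_{\beta_n}\cdots s_{\beta_2}$, where $s_\gamma$ is the conjugate of $s_{\beta_1}$ by $s_{\beta_n}\cdots s_{\beta_2}$. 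This gives $(\mathrm{id}-c)\vec n\parallel\gamma$.
\end{itemize}
This is exactly the defining condition of a $(W,c)$-polypositroid, and it is how the paper proves the statement.
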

The remainder of this section will consider how we can glue these polypositroids together to create a polytopal complex that encodes the structure of $\cfl_c$.

Our results show that the set of vertices and edges in the totality of the moment polytopes of $\cfl_c$ are the Hasse diagram of $\NC(W,c)$ under the Kreweras order, corresponding to a decomposition of $\NC(W,c)$ (\Cref{thm:HasseUnion}) into translates of certain Bruhat intervals:
\begin{align*}
\NC(W,c)=\bigcup w^{-1}[w,w\cdot c].
\end{align*}

We would like to upgrade this statement about the $1$-skeleton of the ``union'' of the moment polytopes to describe a higher dimensional polytopal ``moment complex'' that faithfully encodes the combinatorics of how the $T$-orbit closures fit together in $\cfl_c$. 
We cannot do this by simply taking the union of the polytopes directly as there are spurious overlaps. In parallel with the face--orbit correspondence for toric varieties, we would like
\begin{enumerate}
    \item The dimension $k$ faces of the polytopal complex biject with $k$-dimensional $T$-orbits.
    \item Face containment corresponds to $T$-orbit closure containment.
\end{enumerate}
One issue that could arise in creating such a complex is that there could be two toric varieties in the complex with the exact same moment polytope, such as if the toric complex was the union of two general $T$-orbit closures in a projectivized $T$-representation. 
$\cfl_c$ has a remarkable property that allows us to carefully glue the overlapping polytopes together in a way that achieves the intended goal. In \Cref{thm:FirstCFlFacts} we show if $Y,Z\subset \cfl_c$ then
$$Y^T\subset Z^T \Longleftrightarrow Y\subset Z.$$
In particular, the facial structure we are attempting to achieve is detected entirely at the level of vertex sets. We can therefore define an abstract polytopal complex whose vertex set is $\NC(W,c)$ and edge set is $\Edges{\NC(W,c)}$ which encodes the combinatorial structure of $\cfl_c$:
\begin{align*}
\Complex(\cfl_c)=\left(\bigsqcup w^{-1}P_{[w,w\cdot c]}\right)/\sim,
\end{align*}
where $\sim$ glues two faces when they share the same vertex set. To clarify our intentions, consider the right panel of \Cref{fig:hhmp_original_and_translated.pdf}, consisting of the three polytopes which glue to form $\Complex(\cfl_c)$. Even though the polytopes overlap 
in a diamond shape, the above gluing process says we should only identify the two bolded edges. 
See Figure~\ref{fig:two-models-panels} for another example.

\begin{figure}[!htb]
    \centering
    \includegraphics[width=0.8\textwidth]{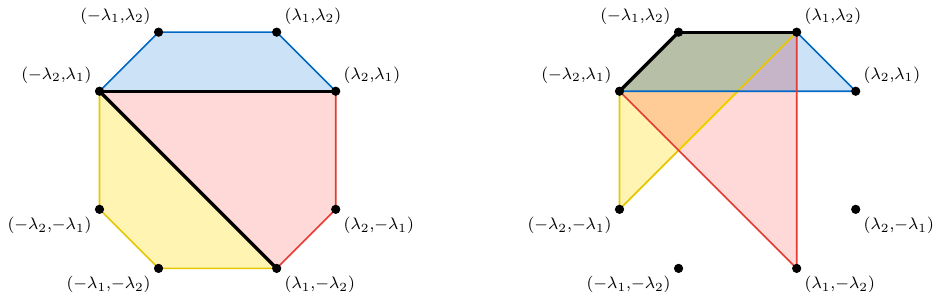}
    \caption{The blue, red, and yellow trapezoids $P_{[\idem, c]}$, $P_{[s_1, s_{0} s_{1} s_{0}]}$, and $P_{[c, \wnaught]}$ comprising the HHMP subdivision of the type $\mathrm{B}_{2}$-permutahedron for $c = s_{0}s_{1}$ (left) and the intersecting polytopes $P_{[\idem, c]}$, $s_{1}P_{[s_1, s_{0} s_{1} s_{0}]}$, and $c^{-1} P_{[c, \wnaught]}$ (right), where we only glue the bold edges.  
The elements $s_{1} s_{0}$ and $\wnaught$ outside of $\NC(\mathrm{B}_{2}, c)$ correspond to the vertices $s_{1}s_{0} \cdot (\lambda_{1}, \lambda_{2}) = (\lambda_2, -\lambda_1)$ and $\wnaught \cdot (\lambda_{1}, \lambda_{2}) = (-\lambda_{1}, -\lambda_{2})$}
    \label{fig:hhmp_original_and_translated.pdf}
\end{figure}

\begin{figure}[!htb]
\centering
\setlength{\tabcolsep}{1pt} 

\begin{tabular}{ccccc}
\includegraphics[width=0.19\textwidth]{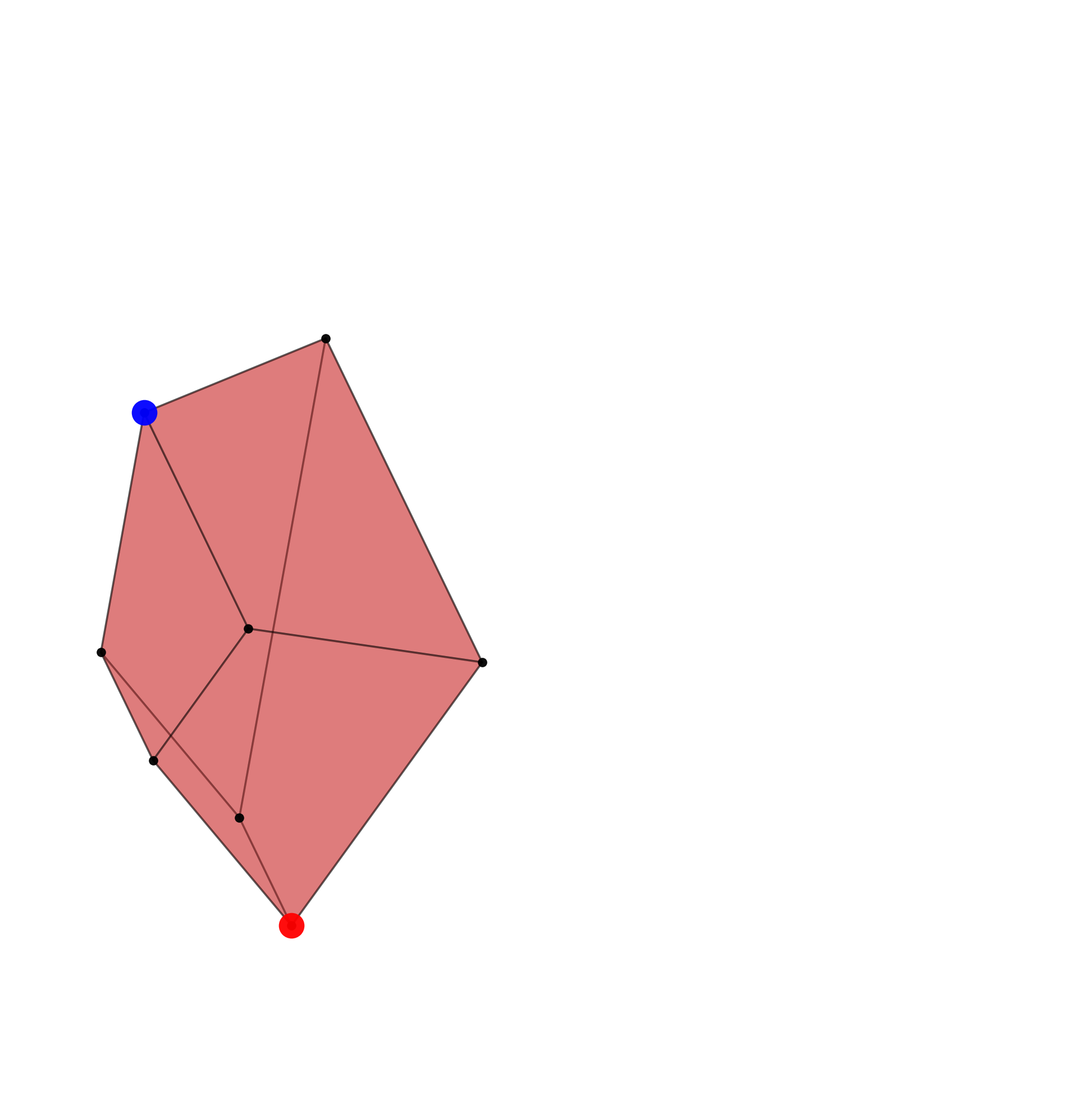} &
\includegraphics[width=0.19\textwidth]{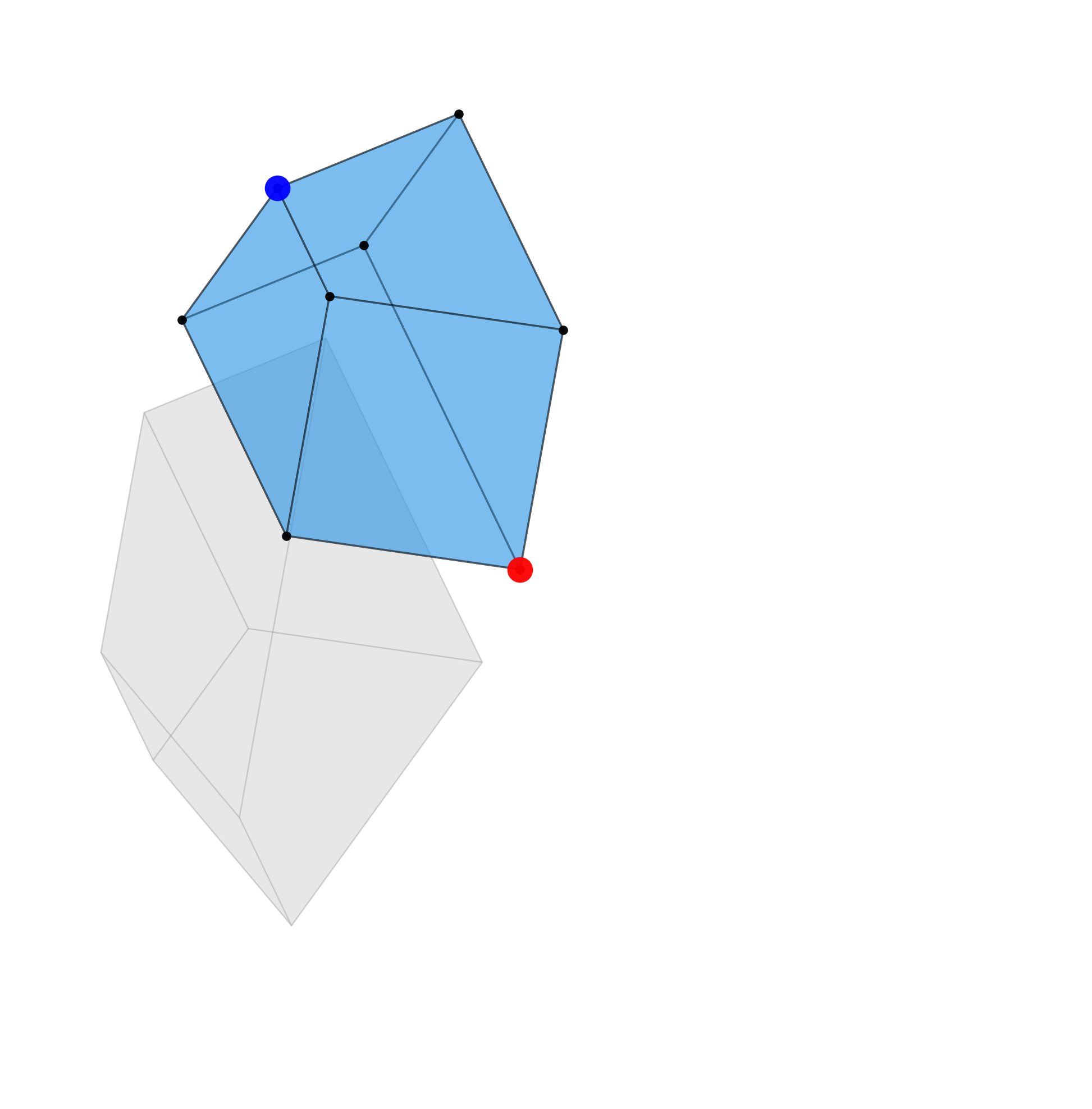} &
\includegraphics[width=0.19\textwidth]{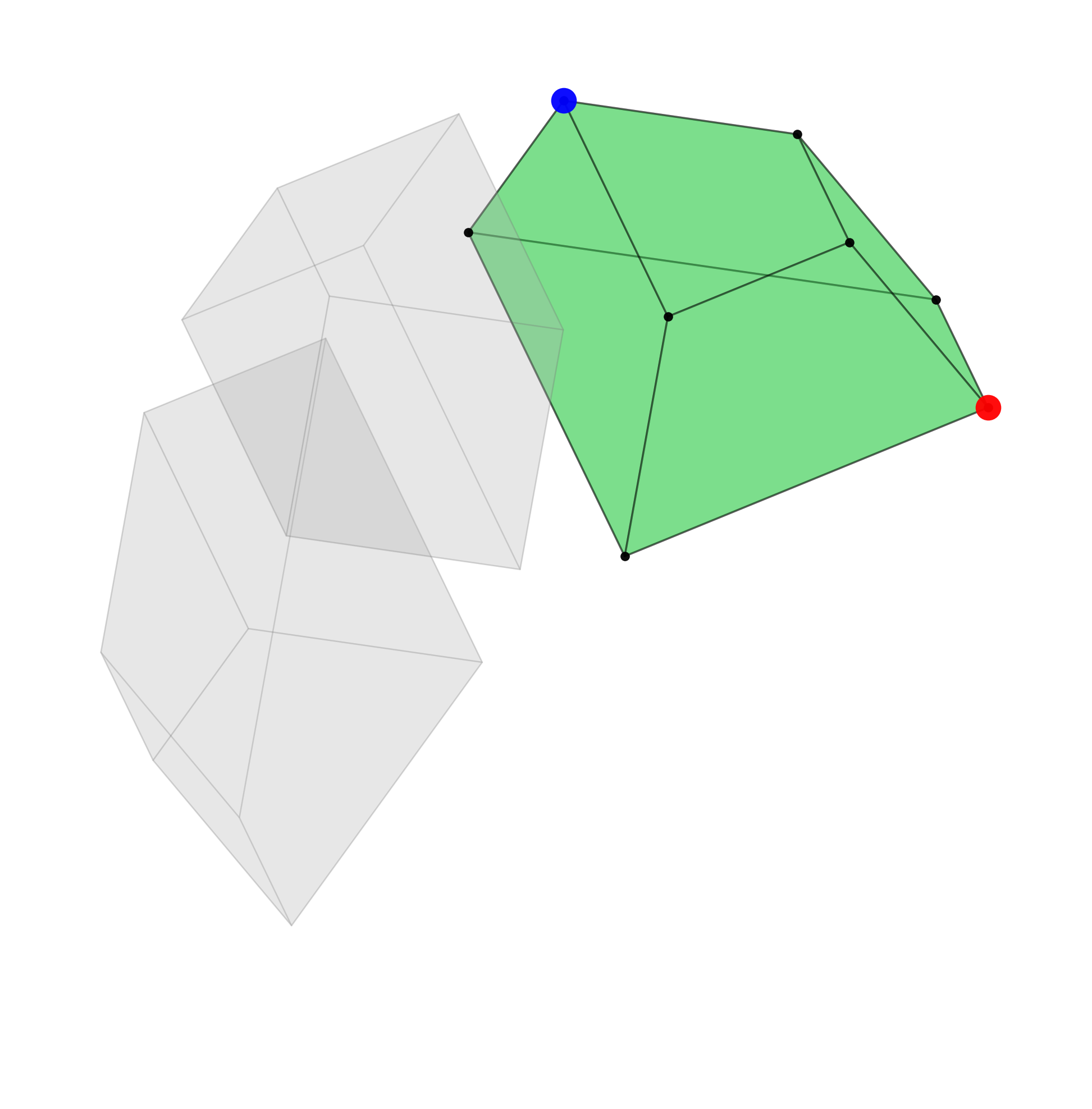} &
\includegraphics[width=0.19\textwidth]{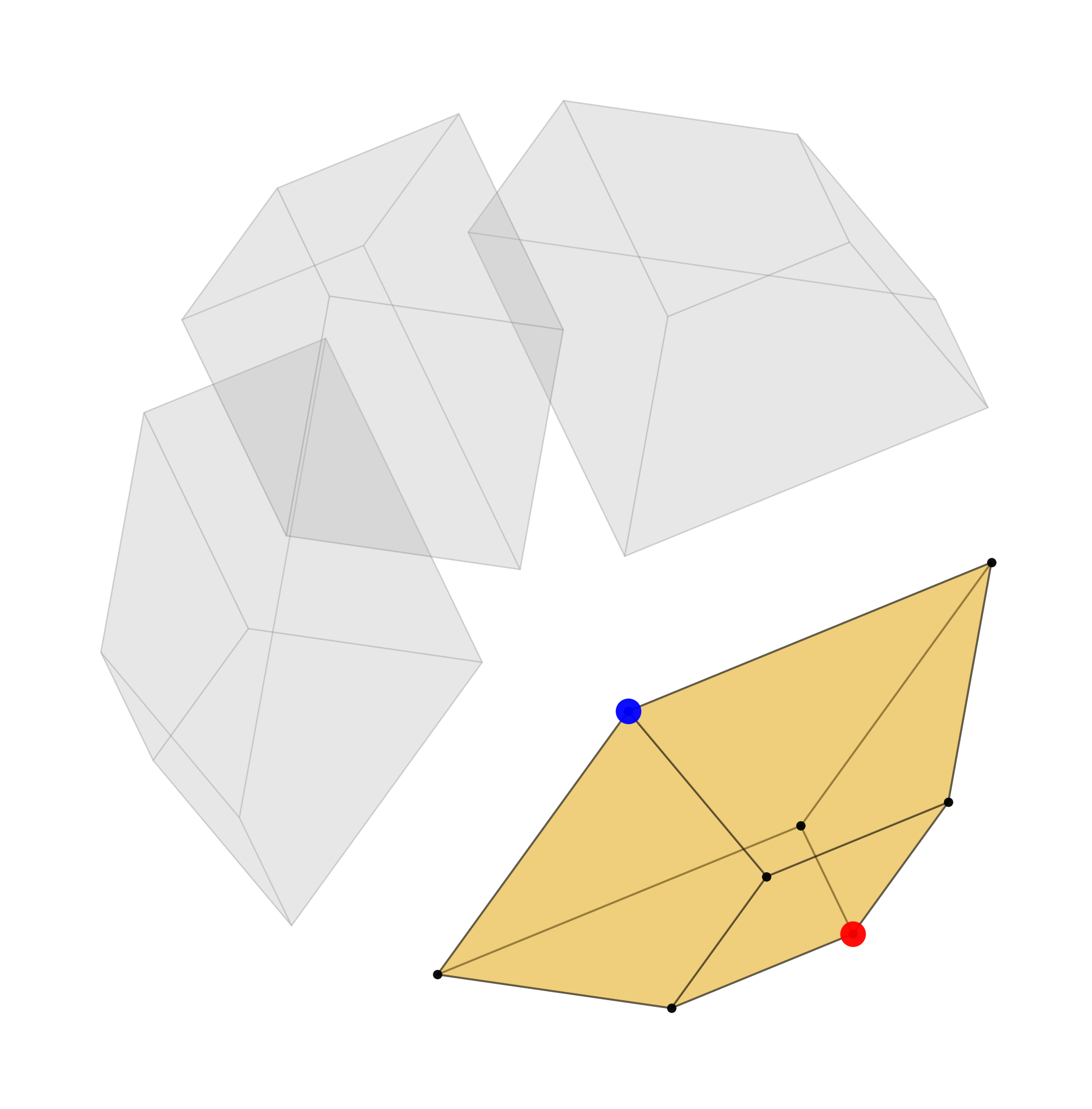} &
\includegraphics[width=0.19\textwidth]{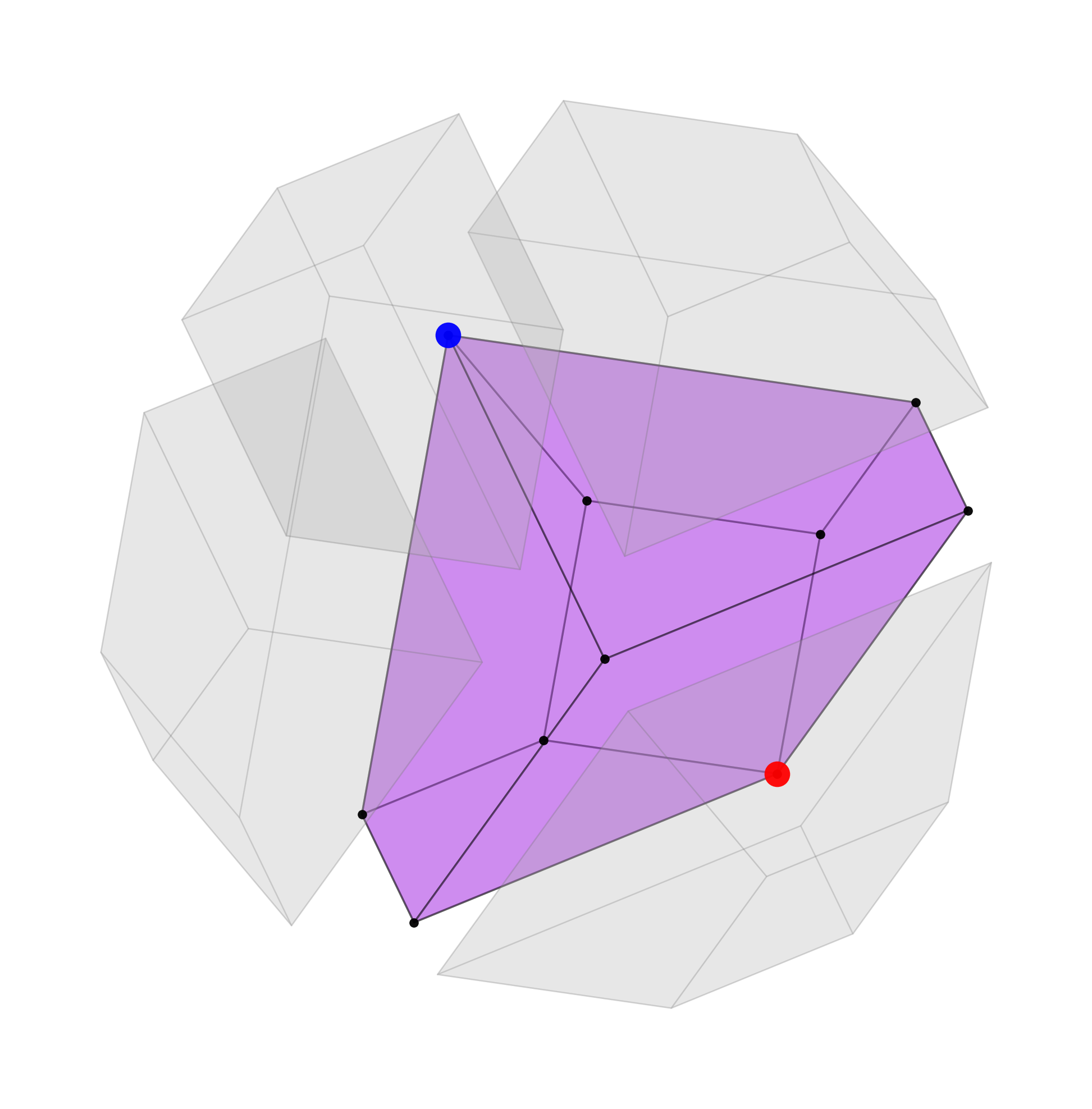} \\
\includegraphics[width=0.19\textwidth]{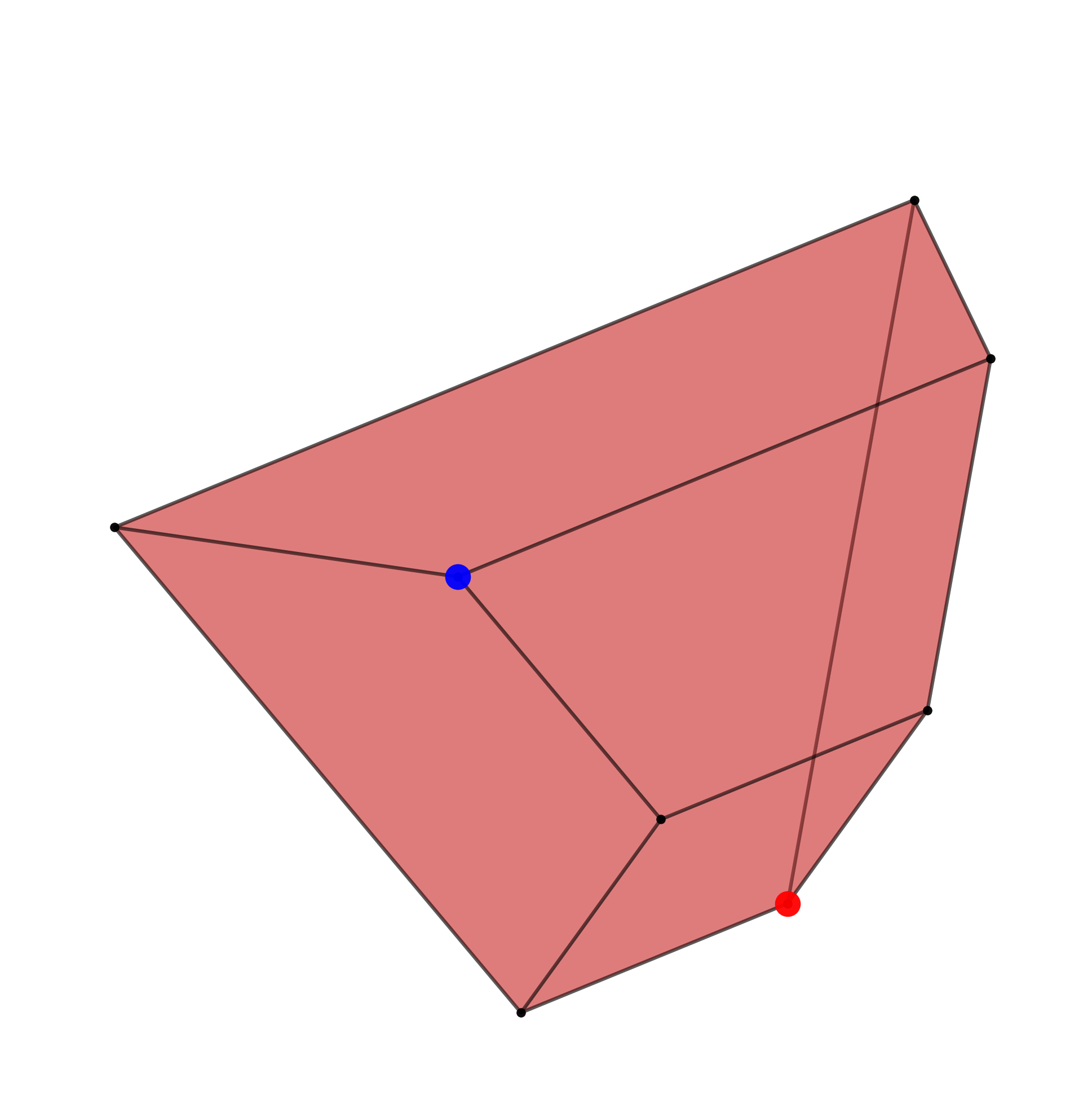} &
\includegraphics[width=0.19\textwidth]{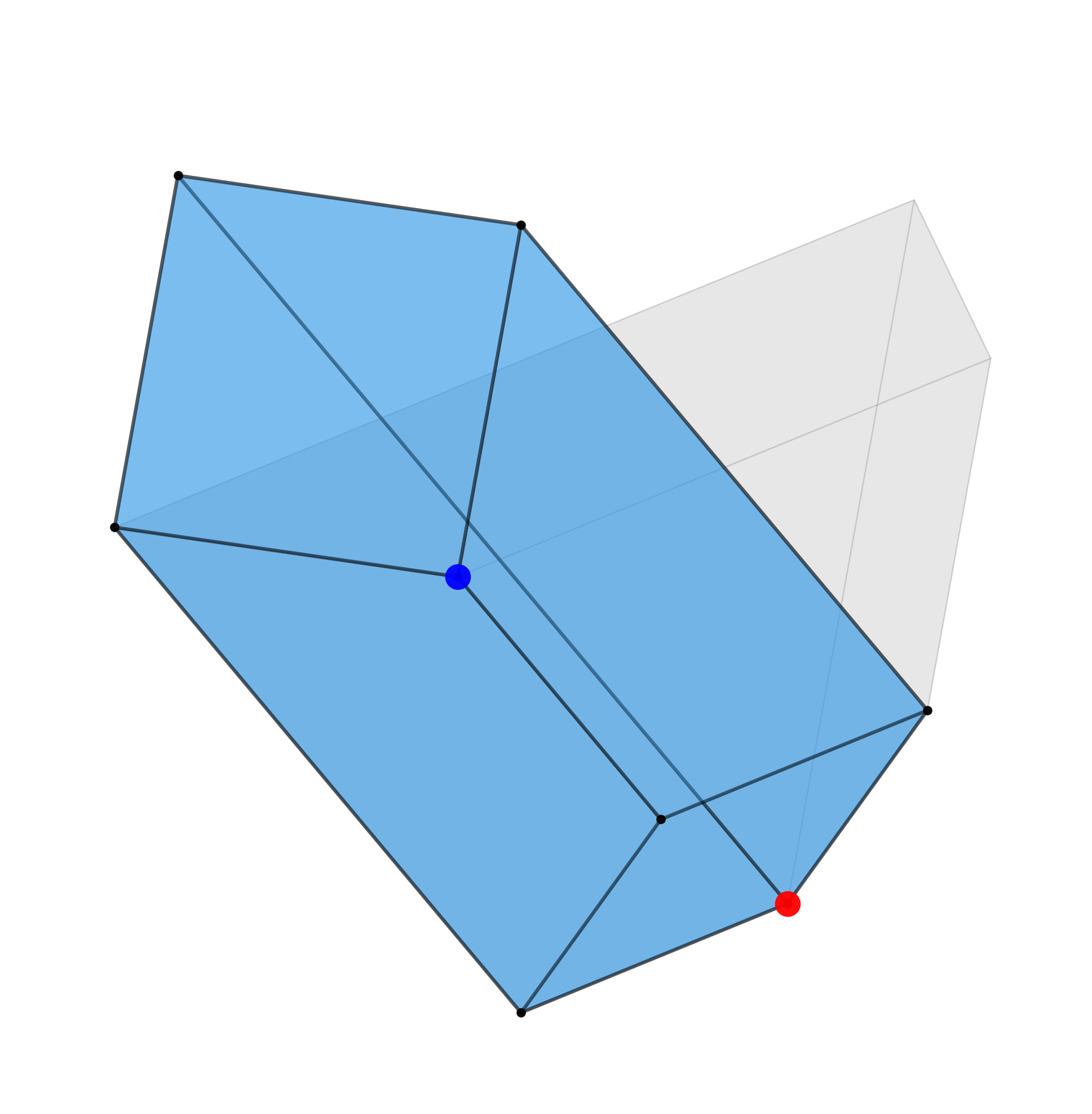} &
\includegraphics[width=0.19\textwidth]{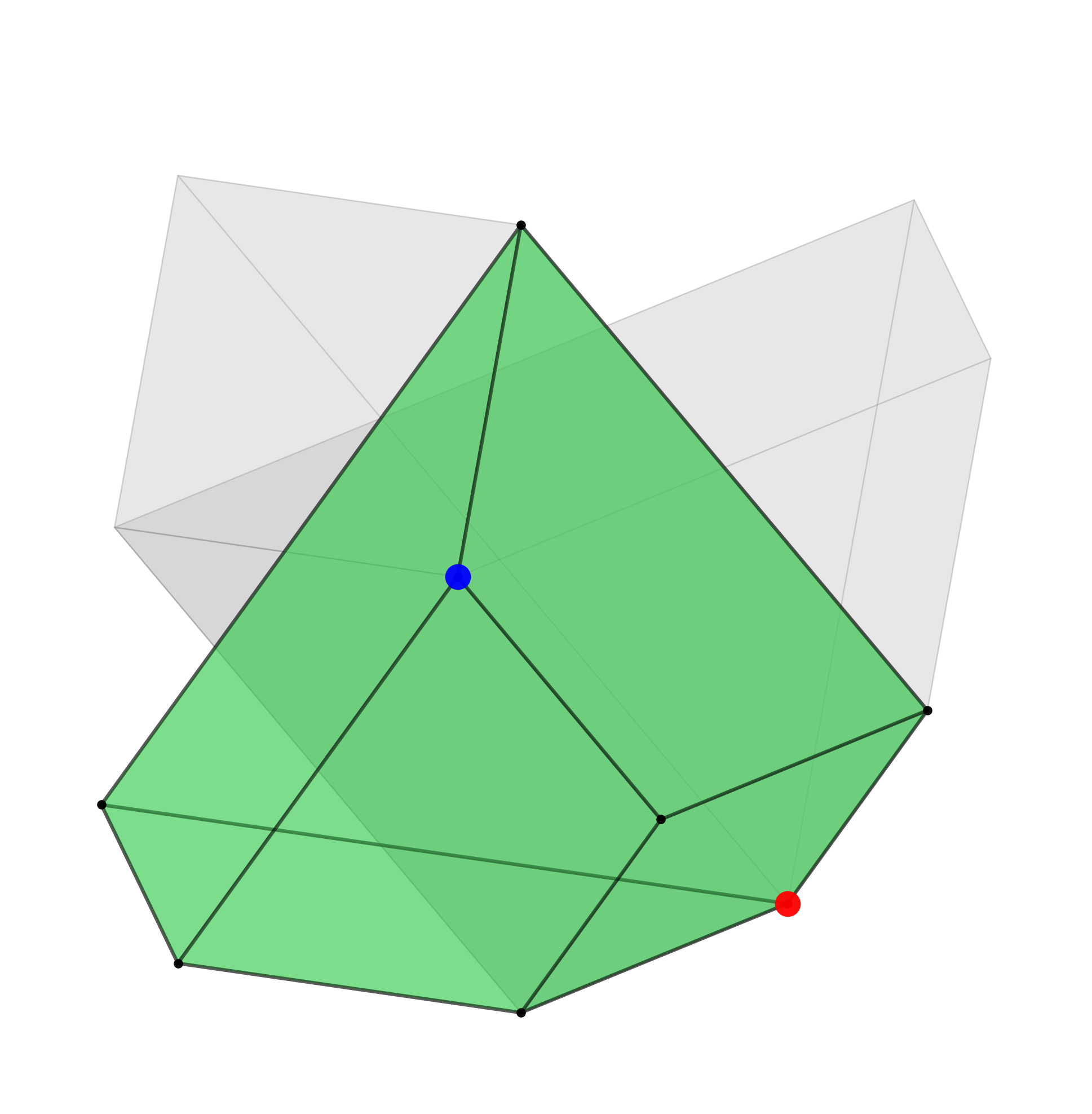} &
\includegraphics[width=0.19\textwidth]{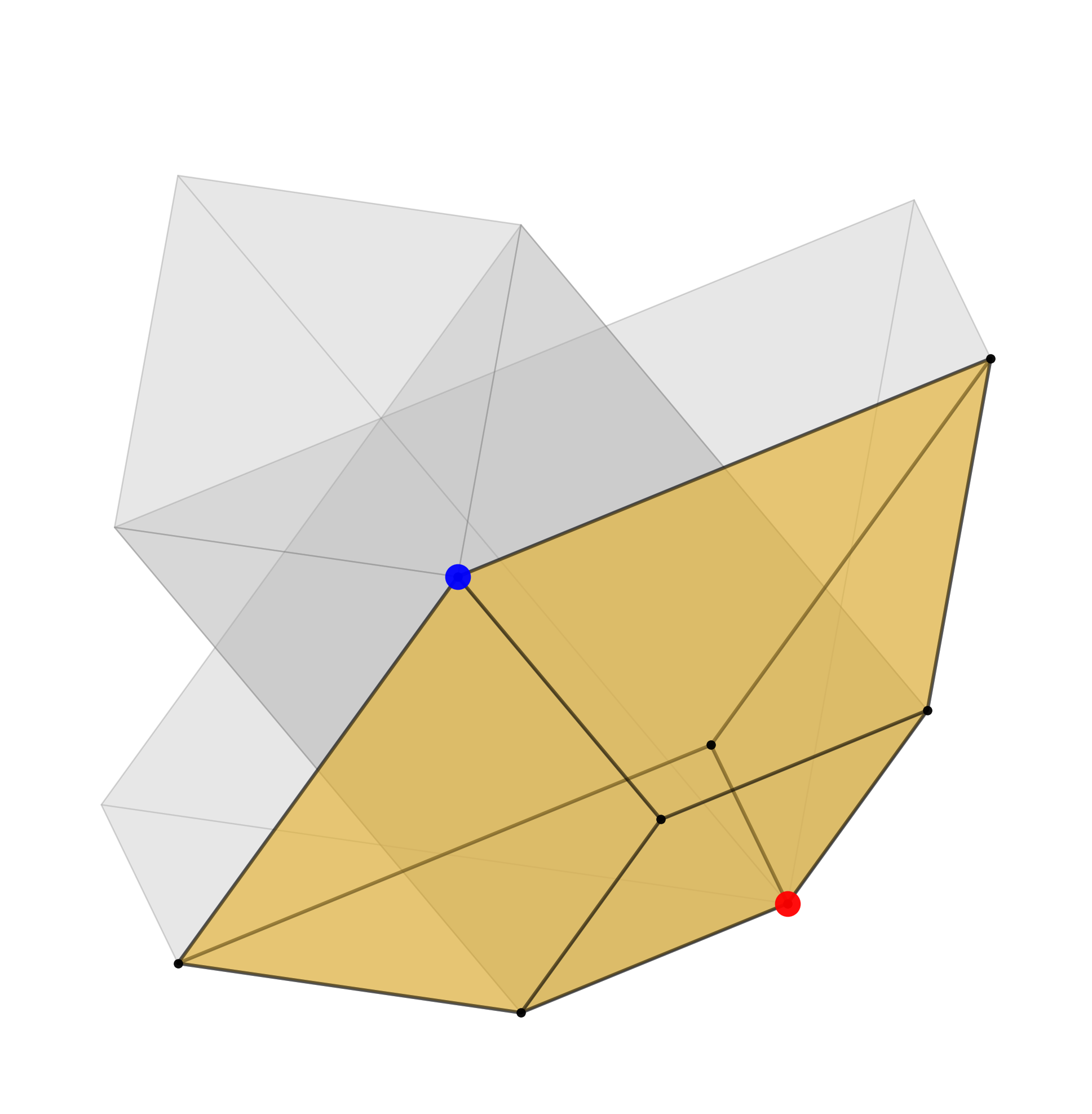} &
\includegraphics[width=0.19\textwidth]{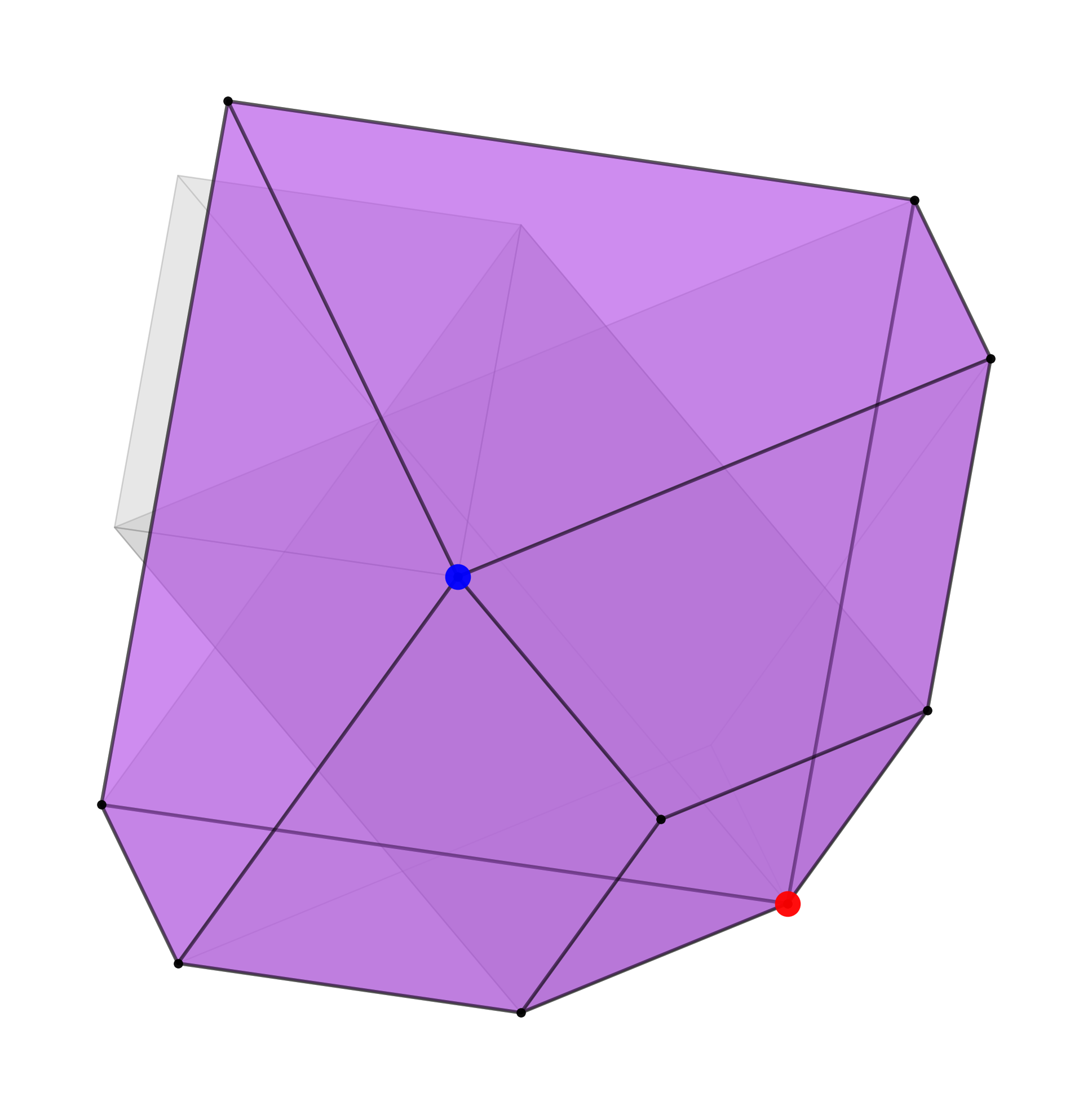}
\end{tabular}

\caption{Cumulative overlay of the moment polytopes for $c=s_1s_3s_2$; The top row is an exploded view of the ``HHMP'' decomposition  and  the bottom is the polytopal complex. The moment polytopes $P_{[w,wc]}$ from left to right come from $w$ equal to $2314$, $2413$, $1423$, $1234$, and $1324$. The first four have the same face lattice as a cube, and the last has the face lattice of the $10$ vertex tetragonal trapezohedron. The red and blue dots indicate respectively the images of $w$ and $w\cdot c$ either before or after translation by $w^{-1}$.}
\label{fig:two-models-panels}
\end{figure}

The top-dimensional faces of $\Complex(\cfl_{c})$ correspond to the $\cat{W}^+$-many irreducible components of $\cfl_c$, which does not depend on the choice of $c$. 
This is in contrast to the HHMP subdivisions of $\operatorname{Perm}_{W}$ as illustrated in \Cref{fig:hhmp_more_general}.

Finally, we note that the affine paving of $\cfl_{c}$ in Theorem~\ref{maintheorem:Paving} induces a decomposition of the moment complex $\Complex(\cfl_c)$ into ``open half-cubes'' around simple vertices. In fact, the more careful statement \Cref{thm:everying_about_cfl} shows that we can build $\Complex(\cfl_c)$ one half-cube at a time, in such a way that each intermediate complex is closed. By collapsing these half-cubes in reverse order we show that $\Complex(\cfl_c)$ is contractible.

\section{Combinatorial Preliminaries}
\label{section:comb_prelims}

We recall standard Coxeter group notation and refer to \cite{BjBr05,Bou98,Hum90} for background. 
Let $\mathbb{E}$ be a Euclidean space with inner product $(\cdot, \cdot)$, and $\Phi \subseteq \mathbb{E}$ a finite, crystallographic root system with a fixed choice of positive roots $\Phi^{+}$ and simple base $\Delta \subseteq \Phi^{+}$.  
Let $W$ be the group generated by the set of reflections $\Reflections = \{s_{\beta} \suchthat \beta \in \Phi^{+}\}$, which is a finite, crystallographic Coxeter group.

The association between positive roots and reflections is a bijection, and for $\tau\in \Reflections$ we denote the corresponding positive root by $r(\tau)\in \Phi^+$. Let $\Simple = \{s_{\alpha} \:|\; \alpha \in \Delta\}$ be the \emph{simple reflections} in $W$ associated to $\Delta$, so that $\Reflections = \{wsw^{-1}\suchthat s\in \Simple, w \in W\}$.  Denote the rank of $W$ by
\[
n = |\Simple| = |\Delta|.
\]
Note that we do not require $n$ to be the dimension of the ambient space $\mathbb{E}$.  

A \emph{reflection subgroup $W' \subseteq W$} is a subgroup generated by a subset of $\Reflections$.  
Each reflection subgroup is the Coxeter group for a sub-root system $\Phi' \subseteq \Phi$.  
A reflection subgroup $W' \subseteq W$ is a \emph{standard parabolic subgroup} if it can be generated by a subset $\Simple' \subseteq \Simple$; in this case each Coxeter element $c \in W$ determines a Coxeter element $c' \in W'$ by multiplying the elements of $\Simple'$ in the same order as in $c$.

All posets $P$ we consider in this article are finite and graded, with unique minimal and maximal elements. 
We denote the partial order on $P$ by $<_P$.
 We refer the reader to standard combinatorial texts (cf.~\cite{St12}) for any undefined terminology in the context of posets.
As is standard in combinatorics, we will identify a poset with its Hasse diagram whenever necessary. The underlying partial order will be clear from context.

\subsection{Bruhat order}

A \emph{reduced word} $\bm w=(s_1^{\bm w},\ldots, s_\ell^{\bm w})$ for $w\in W$ is any minimal-length factorization
$$w=s^{\bm w}_1\cdots s^{\bm w}_{\ell}\text{ with }s^{\bm w}_{1},\ldots,s^{\bm w}_{\ell}\in \Simple.
$$
The \emph{length} of $w$ is $\ell=\ell(w)$, the number of simple reflections in any such factorization. 
We shall call $t\in \Reflections$ an \emph{inversion} if $\ell(tw)<\ell(w)$, and denote the set of inversions of $w$ by $\inv{w}$.
We have $|\inv{w}|=\ell(w)$, and for any reduced word $\bm w$ a complete set of inversions for $w$ may be produced by $\tau^{\bm w}_j=s_{1}^{\bm w}\cdots s_{j}^{\bm w}\cdots s_{1}^{\bm w}$ for $1 \le j \le \ell(w)$, so that
\begin{align}
\label{eq:reflection_factorization}
    \tau_j^{\bm w}w=s_{1}^{\bm w}\cdots \wh{s_{j}^{\bm w}}\cdots s_{\ell}^{\bm w},
\end{align}
where $\hat{\cdot}$ denotes omission from the product.  
Accordingly, $w$ has an \emph{inversion factorization} 
\[
w=\tau^{\bm w}_\ell\cdots \tau^{\bm w}_1.
\]
The positive roots associated to the inversions of $w$ can be described as
$r(\inv{w})=\Phi^+\cap w(\Phi^-).$

The \emph{Bruhat order $\leq_B$} on $W$ is the transitive closure of the cover relations $$tw\lessdot_B w\text{ for }t\in \inv{w}.$$
Equivalently $v\leq_B w$ provided that a reduced word for $v$ appears as a subsequence inside any (equivalently, all) reduced words for $w$.

The Bruhat order has a unique maximum element $\wnaught \in W$, and $\inv{\wnaught}=\Reflections$. 
We write $[u,v]$ for the Bruhat interval between $u$ and $v$, considered as a subposet, and note that $[u,v]$ is isomorphic to $[u^{-1},v^{-1}]$ and anti-isomorphic to both $[\wnaught v,\wnaught u]$ and $[v\wnaught,u\wnaught]$  under the obvious identifications.

\subsection{Right weak order}
\label{sec:rightorder}

We call $s\in \Simple$ a \emph{descent} of $w\in W$ if $\ell(ws)<\ell(w)$. 
Equivalently, $s$ is a descent of $w$ if there exists a reduced word for $w$ whose last letter is $s$.
We denote the set of descents of $w$ by $\des{w}$. 

The \emph{(right) weak order $\leq_R$} on $W$ is the transitive closure of the cover relations $ws\lessdot_R w$ for $s\in \des{w}$.
In terms of reduced words, we have $u\leq_R v$ in weak order if some reduced word of $u$ is a prefix of some reduced word for $v$, or $\ell(v)=\ell(u)+\ell(u^{-1}v)$.  
Note that $u \le_{R} v$ implies $u \le_{B} v$. If $a\le_R ab$ then $\ell(ab)=\ell(a)+\ell(b)$ -- we write $a\cdot b$ both for the product $ab$ and the assertion that the product is length additive.

\subsection{Absolute order, Coxeter elements, and noncrossing partitions}

A \emph{minimal reflection factorization} of $w\in W$ is any factorization
$$w=\tau_{i_1}\cdots \tau_{i_k}\text{ with }\tau_{i_1},\ldots,\tau_{i_k}\in \Reflections$$
with as few terms as possible.  
The \emph{absolute length} of $w\in W$ is $\ell_{\Reflections}(w)=k$,
the number of reflections in any such factorization, which is also, 
$$
\ell_{\Reflections}(w)= n - \operatorname{dim}(\mathrm{Fix}(w)),
$$
where $\mathrm{Fix}(w)$ is the subspace in the defining representation for $W$ fixed by $w$.  
The \emph{absolute order $\leq_{\Reflections}$} on $W$ is defined by:
\[
    v\leq_{\Reflections} w \Longleftrightarrow \ell_{\Reflections}(w)=\ell_{\Reflections}(v)+\ell_{\Reflections}(v^{-1}w).
\]

A \emph{Coxeter element} $c$ of $W$ is any product of all the simple reflections taken in any order
\[
c=s_{1}^{\bm c}\cdots s_{n}^{\bm c}\text{ with }\Simple=\{s_1^{\bm c},\ldots,s_n^{\bm c}\}.
\]
Then $\ell_{\Reflections}(c)=n$ and the \emph{$c$-noncrossing partitions} are defined to be the elements
$$\NC(W,c)=\{u\suchthat u\leq_T c\}.$$
Two equivalent definitions of $\NC(W, c)$ that we will use later are:
\begin{itemize}
    \item the set of prefixes $\tau_{1}\cdots \tau_{\ell}$ of minimal length factorizations $\tau_1\cdots \tau_n$ of $c$, or
    \item the set of subproducts $\tau_{i_1}\cdots \tau_{i_\ell}$ of minimal length factorizations $\tau_1\cdots \tau_n$ of $c$,
\end{itemize}
with the equivalence following from applying Hurwitz moves $ab=b(b^{-1}ab)=(aba^{-1})a$.

The absolute order gives $\NC(W,c)$ the lattice structure known as the \emph{Kreweras lattice} \cite{Bi97, Kre72}.
This lattice is, up to isomorphism, independent of the choice of Coxeter element.   
Indeed, one need only observe that all Coxeter elements are conjugate \cite[Proposition 3.16]{Hum90} and that absolute order is invariant under conjugation.

The subset $\NC(W,c)^+\subset \NC(W,c)$ of \emph{fully supported $c$-noncrossing partitions} are those which do not lie in the subgroup generated by a strict subset of the simple reflections $\Simple$. 
The number $|\NC(W,c)^+|=\cat{W}^+$ is called the positive $W$-Catalan number, and is again independent of $c$. 

\subsection{EL-labelings and Reflection orders}
\label{sec:el_labeling}

We let $\Edges{P}$ denote the set of edges in the Hasse diagram of $P$ and $\Maxchains{P}$ denote the set of maximally refined chains.
An \emph{edge-labeling} of $P$ is a labeling of $\Edges{P}$ by a totally ordered set.  
The following foundational notion is due to Bj\"orner \cite{Bjo80}.
\begin{defn}
    An \emph{EL-labeling} is an edge-labeling of $P$ such that there is a unique increasing chain in $\Maxchains{[s,t]}$ for each interval $[s,t]$ in $P$ and, furthermore, this chain is lexicographically smaller than all other chains in the interval.
\end{defn}
The following result shows that EL-labeling carries topological information about $P$\footnote{In fact, this result was first discovered by Richard Stanley under weaker hypotheses.}.
\begin{fact}[{\cite[Theorem 2.7]{Bjo80}}]
\label{fact:mobius}
    For $P$ a finite graded EL-labeled poset with $\hat{0}$ and $\hat{1}$, the number of decreasing maximal chains is the absolute value $|\mu_P(\hat{0},\hat{1})|$ of the M\"obius function.
\end{fact}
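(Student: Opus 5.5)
We prove this by combining Philip Hall's chain-counting formula for the M\"obius function with an inclusion--exclusion over rank sets. The EL-labeling enters only to identify the inclusion--exclusion terms with counts of maximal chains having a prescribed descent set.

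Let $r$ be the rank of $P$, so every maximal chain $\hat 0=x_0\lessdot x_1\lessdot\cdots\lessdot x_r=\hat 1$ has length $r$ because $P$ is graded. Write $\lambda_i$ for the label of the edge $x_{i-1}\lessdot x_i$. The \emph{descent set} of the chain is $\{i\in[r-1] \suchthat \lambda_i>\lambda_{i+1}\}$.

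\textbf{Step 1 (Hall's theorem).} For $S\subseteq[r-1]$, let $\alpha(S)$ be the number of chains $\hat 0<y_1<\cdots<y_{|S|}<\hat 1$ whose elements have ranks exactly the elements of $S$. Philip Hall's theorem states that $\mu_P(\hat 0,\hat 1)$ is the reduced Euler characteristic of the order complex of the open interval $(\hat 0,\hat 1)$. Since every chain in $(\hat 0,\hat 1)$ occupies a unique rank set $S$, this gives
\[
\mu_P(\hat 0,\hat 1)=\sum_{S\subseteq[r-1]}(-1)^{|S|+1}\alpha(S).
\]
This identity follows by expanding $\mu=(\zeta-1+1)^{-1}$, i.e.\ writing $\mu=\sum_k(-1)^k(\zeta-\delta)^k$, so it is a standard input.

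\textbf{Step 2 (the key use of the EL-labeling).} We claim that $\alpha(S)$ equals the number of maximal chains of $P$ whose descent set is contained in $S$. Fix a chain $\hat 0<y_1<\cdots<y_{|S|}<\hat 1$ with rank set $S$. Each consecutive interval $[y_j,y_{j+1}]$ contains a unique increasing maximal chain, by the definition of an EL-labeling. Concatenating these produces a maximal chain of $P$ whose descents can occur only at the ranks in $S$.

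Conversely, take a maximal chain with descent set contained in $S$ and restrict it to the ranks in $S$. On each gap between consecutive elements of $S$ (and between $\hat 0$ or $\hat 1$ and the nearest such element), the chain has no descents, so it is the unique increasing chain of the corresponding interval. Hence the restriction map recovers exactly the concatenation construction, and the two maps are mutually inverse. This step is the only place the EL hypothesis is used, and it only needs the uniqueness half of the definition, not the lexicographic-minimality clause.

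\textbf{Step 3 (inversion).} Let $\beta(S)$ be the number of maximal chains with descent set exactly $S$. Step 2 says $\alpha(S)=\sum_{T\subseteq S}\beta(T)$, so M\"obius inversion on the Boolean lattice gives
\[
\beta(S)=\sum_{T\subseteq S}(-1)^{|S\setminus T|}\alpha(T).
\]
Taking $S=[r-1]$ and comparing with Step 1 yields
\[
\mu_P(\hat 0,\hat 1)=(-1)^{r}\beta([r-1]).
\]
Finally, $\beta([r-1])$ counts the maximal chains in which every consecutive pair of labels is a descent, i.e.\ the decreasing maximal chains. Taking absolute values proves the claim.

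The main obstacle is the sign bookkeeping in Step 3, together with the degenerate cases $r\le 1$. These should be checked directly: for $r=1$ there is exactly one maximal chain, it is vacuously decreasing, and $\mu_P(\hat 0,\hat 1)=-1$, so the count is $1=|\mu_P(\hat 0,\hat 1)|$ as required.
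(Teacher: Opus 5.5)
Your proof is correct, and it follows a different route from the one behind the citation. The paper gives no argument of its own. It quotes Bj\"orner and notes in a footnote that Stanley had the result earlier under weaker hypotheses.

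What you have written is essentially Stanley's argument for R-labelings:
\begin{itemize}
\item Hall's formula expresses $\mu_P(\hat 0,\hat 1)$ as an alternating sum of rank-selected chain counts $\alpha(S)$.
\item The labeling identifies $\alpha(S)$ with the number of maximal chains whose descent set lies in $S$.
\item Boolean inversion then gives $\mu_P(\hat 0,\hat 1)=(-1)^r\beta([r-1])$, and your signs are right.
\end{itemize}

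Bj\"orner's framework instead uses the lexicographic-minimality clause. It shows that listing maximal chains lexicographically is a shelling of the order complex of the open interval $(\hat 0,\hat 1)$. In that shelling, the decreasing chains are exactly the facets attached along their entire boundary. This gives a stronger conclusion: the proper part is homotopy equivalent to a wedge of that many $(r-2)$-spheres, and Hall's theorem then reads off $|\mu_P(\hat 0,\hat 1)|$.

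Your version is more elementary. It uses only uniqueness of increasing chains, which is precisely the ``weaker hypotheses'' of the footnote, and it yields the whole flag $h$-vector $\beta(S)$. It carries no topological information. For the paper's only use of the fact, counting $c$-decreasing chains in $\NC(W,c)$ via $\mu_{\NC(W,c)}(\idem,c)=\pm\cat{W}^+$, the enumerative statement is all that is needed.

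One convention should be made explicit. The converse direction of your Step 2 says that a segment with no descents is \emph{the} increasing chain of its interval. So if descents are strict ($\lambda_i>\lambda_{i+1}$), ``increasing'' in the EL definition must mean weakly increasing. Under the other common convention, where increasing chains are strictly increasing, you should call $i$ a descent when $\lambda_i\ge\lambda_{i+1}$ and count weakly decreasing chains. Otherwise Step 2 fails when labels repeat. For example, take a rank-$2$ interval with chains labeled $(1,2)$ and $(2,2)$: it has $\alpha(\emptyset)=1$ but two chains with no strict descents. In the paper's setting consecutive reflection labels along a chain are always distinct, so nothing changes there.
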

We will be interested in the EL-labelings of posets where the edge labels are reflections in $\Reflections$.
\begin{defn}[{\cite[\S 2.2]{Dy93}}]
    A total ordering $\prec$ of $\Reflections$ is a \emph{reflection ordering} if whenever $\alpha,\beta,\gamma\in \Phi^+$ are such that $\alpha $ is a positive linear combination of $\beta $ and $\gamma$ then either $t_{\beta} \prec t_{\alpha} \prec t_{\gamma}$ or $t_{\gamma} \prec t_{\alpha} \prec t_{\beta}$.    
\end{defn}

In Dyer's study of Kazhdan--Lusztig polynomials, he showed~\cite[Proposition 2.13]{Dy93} that every reflection order corresponds to a reduced word $\mathbf{\wnaught}$ for the longest element $\wnaught\in W$ by setting $\tau_{1}^{\mathbf{\wnaught}} \prec \tau_{2}^{\mathbf{\wnaught}} \prec \cdots \prec \tau_{N}^{\mathbf{\wnaught}}$, where $\tau_{i}^{\mathbf{\wnaught}}$ is the reflection defined in~\eqref{eq:reflection_factorization}.
 Furthermore, he produced the following natural EL-labeling of Bruhat intervals\footnote{The existence of EL-labelings for Bruhat intervals is originally due to Bj\"orner and Wachs \cite[Theorem 4.2]{BW82}, but they did not employ reflection orders.}.
 
\begin{thm}[{\cite[Proposition 4.3]{Dy93}}]
\label{thm:Dyer}
The edge labeling of a Bruhat interval $[u,v]$ where we label  $w\lessdot_B wt$ with the reflection $t$ is an EL-labeling for any reflection order $\prec$.
Consequently, there is a unique increasing (and decreasing) chain in $[u,v]$ with respect to any reflection ordering.
\end{thm}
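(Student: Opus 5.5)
We prove Dyer's theorem (Theorem~\ref{thm:Dyer}) via reflection subgroups and Fact~\ref{fact:mobius}-style lexicographic comparisons. By Dyer's correspondence, a reflection order $\prec$ comes from a reduced word $\bm{\wnaught}$. Equivalently, there is a linear functional picture: the initial segments $\{\tau_1^{\bm\wnaught},\dots,\tau_k^{\bm\wnaught}\}$ are exactly the inversion sets of the prefixes. The key structural input is that for any dihedral reflection subgroup $W'$ (generated by two reflections), the restriction of $\prec$ to $\Reflections\cap W'$ is again a reflection order for $W'$. This restriction property is immediate from the definition, since the positivity condition on triples of roots is inherited by the rank-two root subsystem $\Phi'$.

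The plan is in four steps.

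\textbf{Step 1 (rank two).} First treat dihedral groups directly. Here Bruhat intervals are explicit: every element above a given length lies above everything of smaller length. A reflection order of $I_2(m)$ is one of the two cyclic orderings of the positive roots in the plane. One checks by hand that in each interval $[u,v]$ exactly one maximal chain has increasing labels, and that this chain is lexicographically first.

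\textbf{Step 2 (existence of an increasing chain).} For general $W$, we prove existence of an increasing chain in $[u,v]$ by induction on $\ell(v)-\ell(u)$. Let $t$ be the $\prec$-minimal reflection with $u\lessdot_B ut\le_B v$. We claim the increasing chain of $[ut,v]$, given by induction, has first label $t'\succ t$. If instead $t'\prec t$, restrict to the dihedral reflection subgroup generated by $t,t'$. Using Dyer's result that $[u,ut t']$ meets the coset $uW'$ in a Bruhat interval of $W'$, Step~1 produces a cover $u\lessdot ut''$ with $t''\prec t$ and $ut''\le v$. This contradicts the minimality of $t$.

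\textbf{Step 3 (uniqueness and lexicographic minimality).} Uniqueness and lexicographic minimality follow by the same mechanism. Any increasing chain must start with the minimal available label; otherwise the same dihedral exchange yields a smaller atom. Induction then pins down the rest of the chain. Taking the lexicographically first chain and applying the same argument shows that it is increasing.

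\textbf{Step 4 (decreasing chains).} The decreasing statement follows by applying the result to the reversed order. Reversing $\prec$ is again a reflection order, corresponding to the reverse-ordered reduced word of $\wnaught$.

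The main obstacle is the rank-two reduction in Step 2: showing that the local configuration of $u$, $ut$, $ut'$ and the next element lives inside a single coset $uW'$ whose Bruhat order is the dihedral Bruhat order. This is where Dyer's theory of reflection subgroups is genuinely needed. To handle it, we would first try to use the fact that the subgraph of the Bruhat graph on $uW'$ is isomorphic to the Bruhat graph of the dihedral group $W'$ under its canonical Coxeter generators.
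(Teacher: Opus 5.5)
\textbf{There is a genuine gap: the uniqueness claim in Step~3.} Uniqueness is the real content of the theorem, and the step you flag as the main obstacle is not where the difficulty lies.

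Your rank-two input is sound. Step~2 does give existence of an increasing chain, via Dyer's isomorphism of Bruhat graphs on cosets together with the length count forcing the dihedral interval to have length two. It is even redundant: the last sentence of Step~3 already yields existence. At a descent $t_i\succ t_{i+1}$ of the lexicographically first chain, the other chain of the diamond $[x_{i-1},x_{i+1}]$ begins with a smaller label. So the lex-first chain has no descents, i.e.\ it is increasing.

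The sentence ``any increasing chain must start with the minimal available label; otherwise the same dihedral exchange yields a smaller atom'' does not work, for two reasons.
\begin{enumerate}
\item In Step~2 the contradiction came from having \emph{assumed} that $t$ was the smallest atom label. Here an atom $ut$ with $t\prec t_1$ is given from the outset, so exhibiting a smaller atom contradicts nothing.
\item The exchange is local. It acts inside a length-two interval $[x_{i-1},x_{i+1}]$ of one fixed chain, at a descent of that chain. An increasing chain $u\lessdot ut_1\lessdot ut_1t_2\lessdot\cdots$ has no descent. The competing atom $ut$ is in general not below $ut_1t_2$, so there is no diamond containing both $ut$ and $ut_1$ in which to exchange.
\end{enumerate}
What the dihedral analysis actually proves is that every rank-two interval has a unique increasing chain, and that this chain is lex-first. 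For graded posets in general this local condition does not force uniqueness in longer intervals. For example, take two length-three chains from $\hat 0$ to $\hat 1$ glued at their ends, both labelled $1,2,3$. To upgrade the local statement for Bruhat intervals, you would need a confluence argument in rank-three intervals. Those are no longer dihedral, so your rank-two machinery does not reach them.

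The paper does not prove this statement; it cites Dyer. Dyer's uniqueness comes from a global count, not a local exchange. For every reflection order $\prec$,
\[
\widetilde R_{u,v}(q)=\sum_{\Delta}q^{|\Delta|},
\]
where the sum runs over paths $\Delta$ from $u$ to $v$ in the Bruhat graph with $\prec$-increasing labels. Since $\widetilde R_{u,v}$ is monic of degree $\ell(v)-\ell(u)$, and the paths of that length are exactly the maximal chains of $[u,v]$, there is exactly one increasing maximal chain. Lex-minimality then follows from the local exchange above. Some counting input of this kind is what your proposal is missing.

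A minor slip in Step~4: the reverse of $\prec$ corresponds to the reversed word of $\bm{\wnaught}$ conjugated by $\wnaught$, not to the reversed word itself. For instance, in $S_3$ the word $s_1s_2s_1$ is a palindrome. However, reversal visibly preserves the reflection-order axiom, so the conclusion of Step~4 stands.
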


One distinguished reflection order which we will need is the \emph{$c$-reflection order}.  
The choice of Coxeter element $c$ determines a unique word for $\wnaught$, the \emph{$c$-sorting word}, which we recall in \Cref{sec:csortability}.  
Athanasiadis--Brady--Watt \cite{ABW07} show that the reflection order determined by the $c$-sorting word for $\wnaught$ gives an EL-labeling of $\NC(W,c)$.\footnote{The paper~\cite{ABW07} only establishes this labeling for bipartite Coxeter elements, but the more general claim holds. See, for instance, \cite[Appendix A]{NJV23}.}
\begin{thm}[{\cite{ABW07}}]
    The edge labeling of $\NC(W,c)$ where we label  $w\lessdot_{\Reflections} wt$ with the reflection $t$ is an EL-labeling for the $c$-reflection order.
\end{thm}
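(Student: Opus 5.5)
The statement is the theorem of Athanasiadis--Brady--Watt: labeling each cover $w\lessdot_{\Reflections} wt$ of $\NC(W,c)$ by $t$ gives an EL-labeling for the $c$-reflection order $\prec$. The plan is to reduce to the full interval $[\idem,c]$ and then argue by induction on rank using parabolic restriction.

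\textbf{Reduction to $[\idem,c]$.} Every interval $[u,v]$ in $\NC(W,c)$ is isomorphic, via $x\mapsto u^{-1}x$, to the interval $[\idem,u^{-1}v]$ in absolute order. The element $u^{-1}v$ is a Coxeter element of a parabolic (reflection) subgroup $W'$, and the edge labels are unchanged because right multiplication by $t$ commutes with the translation. The restriction of $\prec$ to the reflections of $W'$ is again a reflection order, since the defining condition concerns triples of roots lying in a common rank-two subsystem. So it suffices to show that for every parabolic Coxeter element $c'$ and this restricted order, $[\idem,c']$ has a unique increasing maximal chain and that chain is lexicographically first.

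\textbf{Existence and minimality.} Write the $c$-sorting word of $\wnaught$ as $c$ repeated, with letters dropped according to $c$-sortability. The first $n$ reflections in the order $\prec$ are the inversions $\tau_1,\dots,\tau_n$ of the prefix $s_1\cdots s_n=c$, and their product in increasing order is $\tau_n\cdots\tau_1=c$. Reversing the order of the product, or using the right-labeling convention, this gives an increasing chain in $[\idem,c]$. It is lexicographically first because $\tau_1=s_1$ is the $\prec$-minimum of all reflections, so any chain starts at least as high as this one. Inductively, after the first step we land in $[s_1,c]\cong[\idem,s_1c]$, and $s_1c$ is a Coxeter element of the parabolic subgroup generated by $\Simple\setminus\{s_1\}$. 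In that subgroup, the restricted order is the $s_1c$-reflection order, using the standard compatibility of $c$-sorting words with initial letters.

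\textbf{Uniqueness (the main obstacle).} Suppose $c=t_1\cdots t_n$ with $t_1\prec\cdots\prec t_n$. I would show $t_1=s_1$ by proving that every reflection factorization of $c$ involves a reflection outside the parabolic subgroup $W_{\langle s_1\rangle}$ only through a label $\preceq$ its first occurrence; this amounts to the fact that $s_1$ is the $\prec$-least reflection below $c$ not fixing the fundamental weight $\omega_1$. The first attempt would be to use the Hurwitz action. Repeatedly applying Hurwitz moves to bring the unique label not fixing $\omega_1$ to the front, one shows via the reflection-order axiom on rank-two subsystems that any increasing factorization must already have that label equal to $s_1$ in front. Once $t_1=s_1$, induction on rank gives uniqueness. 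The genuinely hard part is this rank-two bookkeeping for non-bipartite $c$; there I would follow the approach of the appendix of~\cite{NJV23}, which performs exactly this check.
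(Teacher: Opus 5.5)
The paper gives no argument of its own here: it cites \cite{ABW07} for bipartite $c$ and \cite[Appendix A]{NJV23} for general $c$. Your sketch also hands its hardest step to \cite{NJV23}, but the parts you do argue contain genuine errors.

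The first error is in the existence step, which picks the wrong chain. With the stated convention (the cover $w\lessdot_{\Reflections}wt$ is labeled $t$), the factorization $c=\tau_n\cdots\tau_1$ gives the chain $\idem\lessdot_{\Reflections}\tau_n\lessdot_{\Reflections}\tau_n\tau_{n-1}\lessdot_{\Reflections}\cdots\lessdot_{\Reflections}c$. Its labels read $\tau_n\succ\tau_{n-1}\succ\cdots\succ\tau_1$, so it is one of the $c$-decreasing chains. Reversing the product gives $\tau_1\cdots\tau_n=c^{-1}$, not $c$. For example, take $W=S_3$, $c=s_1s_2$, with $(1\,2)\prec(1\,3)\prec(2\,3)$. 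Your chain is $c=(1\,3)(1\,2)$, which is decreasing, while the increasing chain is $(1\,2),(2\,3)$. In the paper's figure for $c=s_1s_3s_2$ the increasing chain is $(1\,2),(3\,4),(2\,3)$. So the increasing chain is labeled by the simple reflections, which is what your own recursion (first label $s_1$, then pass to $[\idem,s_1c]$) would produce. As written, your chain does not even start with $\tau_1=s_1$, so your lexicographic argument does not apply to it.

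The more serious gap is that the compatibility between $\prec_c$ and the noncrossing parabolic subgroups never appears, and that compatibility is the real content of the theorem. Reducing $[u,v]$ to $[\idem,u^{-1}v]$ puts you in a parabolic subgroup $W'$ that is generally not standard. Knowing only that $\prec_c$ restricts to some reflection order on $W'$ does not help. In a dihedral group $I_2(m)$ with $m\ge 3$ and $c=ab$, the reflection order with $b\prec\cdots\prec a$ makes $m-1$ of the $m$ maximal chains of $[\idem,c]$ increasing; in $S_3$ with $(2\,3)\prec(1\,3)\prec(1\,2)$, two of the three chains are increasing. Your induction only passes to the standard parabolic $W_{\Simple\setminus\{s_1\}}$, so it never establishes what is needed. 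For every $w\in\NC(W,c)$, the canonical generators of $W_w$, taken in $\prec_c$-increasing order, must multiply to $w$; at rank two this says $u=ab$ with $a\prec_c b$. You also need the recursive version of this statement.

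Your uniqueness sketch does not supply this compatibility either, because its premise is false. A minimal factorization of $c$ may have several factors that do not fix $\omega_1$: in $S_3$, both factors of $c=(1\,3)(1\,2)$ move $\omega_1$. The statement that ``$s_1$ is the $\prec$-least reflection below $c$ not fixing $\omega_1$'' is vacuous, since $s_1$ is the global minimum. The implication you actually need is that an increasing factorization $c=t_1\cdots t_n$ forces $t_1=s_1$, and that is exactly where the compatibility enters. Because you defer it to \cite{NJV23}, your proposal, like the paper, ultimately rests on the citation rather than on the surrounding outline.
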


The M\"obius value of $\NC(W,c)$ is given by $\mu_{\NC(W,c)}(\idem,c)= \pm \cat{W}^+$; see \cite[Proposition 9]{Cha04}. 
We say that a maximal chain in $\NC(W,c)$ is \emph{$c$-decreasing} if it is decreasing under the edge labeling determined by the $c$-reflection order.
\Cref{fact:mobius} then implies the following.
\begin{cor}
\label{cor:numbdecreasing}
    In $\NC(W,c)$ there are $\cat{W}^+$-many $c$-decreasing maximal chains.
\end{cor}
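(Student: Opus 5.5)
The corollary follows from combining three earlier results. We use the Athanasiadis--Brady--Watt EL-labeling of $\NC(W,c)$, Fact~\ref{fact:mobius}, and Chapoton's evaluation $\mu_{\NC(W,c)}(\idem,c)=\pm\cat{W}^+$.

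First we check that Fact~\ref{fact:mobius} applies. The poset $\NC(W,c)$ is finite. It is graded by the absolute length $\ell_{\Reflections}$, since every cover $u\lessdot_{\Reflections} ut$ raises $\ell_{\Reflections}$ by one and every maximal chain from $\idem$ to $c$ has length $n$. It has minimum $\hat 0=\idem$ and maximum $\hat 1=c$. Every cover relation $u\lessdot_{\Reflections} u'$ in this interval has the form $u'=ut$ for a unique reflection $t=u^{-1}u'$, because $\ell_{\Reflections}(u^{-1}u')=1$. So the edge labeling by $t$ is well defined. By the theorem of \cite{ABW07} quoted above, this labeling is an EL-labeling with respect to the $c$-reflection order $\prec$.

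Fact~\ref{fact:mobius} then says that the number of decreasing maximal chains from $\idem$ to $c$ equals $|\mu_{\NC(W,c)}(\idem,c)|$. By definition, a decreasing chain for this labeling is exactly a $c$-decreasing maximal chain. Finally, \cite[Proposition 9]{Cha04} gives $\mu_{\NC(W,c)}(\idem,c)=\pm\cat{W}^+$. Taking absolute values yields exactly $\cat{W}^+$ such chains, which is the statement of the corollary.

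Two points need care. The first is that \cite{ABW07} is stated for bipartite $c$; as the footnote notes, we use the general-$c$ version from \cite[Appendix A]{NJV23}. The second is that Chapoton's Möbius computation must hold for arbitrary $c$, not a fixed one. This follows because $\NC(W,c)\cong\NC(W,c')$ as posets: Coxeter elements are conjugate, and conjugation preserves $\ell_{\Reflections}$ and hence absolute order. The Möbius function is a poset invariant, so its value does not depend on $c$.
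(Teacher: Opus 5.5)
Your proposal is correct and matches the paper's argument: the paper obtains the corollary by combining the Athanasiadis--Brady--Watt EL-labeling of $\NC(W,c)$ (valid for general $c$ via the cited appendix), Fact~\ref{fact:mobius}, and Chapoton's evaluation $\mu_{\NC(W,c)}(\idem,c)=\pm\cat{W}^+$. Your extra checks — gradedness, well-definedness of the edge labels, and the $c$-independence of the M\"obius value via conjugacy of Coxeter elements — spell out details the paper leaves implicit.
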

See Figure~\ref{fig:nc4_chains} which shows the $c$-decreasing maximal chains in the Hasse diagram of $\NC(S_4,s_1s_3s_2)$.

\begin{figure}[!ht]
    \centering
    \includegraphics[scale=1.1]{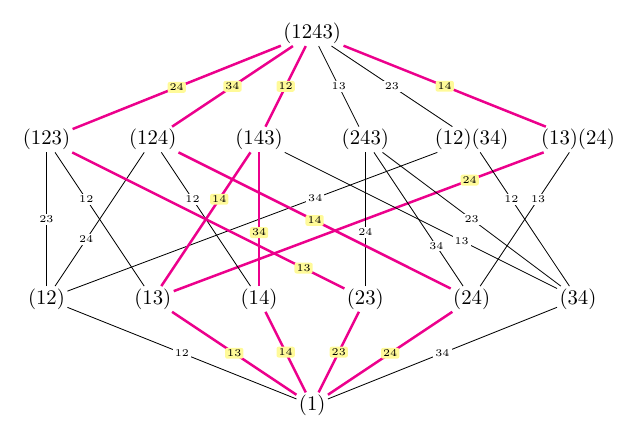}
    \caption{The Hasse diagram of $\NC(S_4,s_1s_3s_2)$ with the five decreasing chains in the $c$-reflection order $(1\, 2) \prec (3 \, 4)\prec (1\, 4) \prec (2\, 4) \prec (1\, 3) \prec (2\, 3)$ bolded.}
    \label{fig:nc4_chains}
\end{figure}

\subsection{Clusters and noncrossing inversions}
\label{sec:clusternoncrossing}

In~\cite[Theorems 1.9 and 1.13]{FZ03}, Fomin and Zelevinsky show that the cluster variables of a finite-type cluster algebra are in bijection with the almost-positive roots $\Phi_{\ge -1} = \Phi^{+} \cup \{-\alpha \suchthat \alpha \in \Delta\}$ for the corresponding root system.  
In~\cite[Section 7]{Reading2007}, Reading extends Fomin and Zelevinsky's result into a family of bijections parameterized by any pair $(W, c)$ for $W$ a finite Coxeter group and $c \in W$ a Coxeter element.  
Let
\[
\mathrm{Cl}_{c} = \{ \mathcal{F} \subseteq \Phi_{\ge -1} \suchthat \text{$\mathcal{F}$ is the image of a cluster under Reading's $c$-bijection}\}.
\]
We refer to the elements of $\mathrm{Cl}_c$ as \emph{$c$-clusters}, and define $\Cone{\mc{F}}$ to be the cone determined by the positive real span of the roots in $\mc{F}\in \mathrm{Cl}_c$.
The \emph{$c$-cluster fan} is the simplicial fan whose cones are $\Cone{\mc{F}}$ for $\mc{F} \in \mathrm{Cl}_{c}$.  
This generalizes, and is combinatorially isomorphic to, the cluster fan constructed by Fomin--Zelevinsky \cite[Theorem 1.10]{FZ03b}, but has additional orientation data $c$.

In what follows, we restrict our attention to the positive roots in each $c$-cluster, as in
\[
\mathrm{Cl}_c^+=\{\mathcal{F}\cap \Phi^{+} \suchthat \mathcal{F}\in \mathrm{Cl}_c\}.
\]
Each $c$-cluster $\mathcal{F}$ is determined by the positive roots that it contains, so this change sacrifices no combinatorial data.  
In fact, unlike $\mathrm{Cl}_c$, the sizes of the sets in $\mathrm{Cl}_c^+$ vary from $0$ to $n$. 
The cones generated by size $n$ elements of $\mathrm{Cl}_{c}^{+}$, together with all of their faces, give a simplicial fan, the \emph{positive $c$-cluster fan}, which subdivides the positive root cone $\operatorname{Cone}(\Delta)$ so that every positive root is the ray generator of a cone.

\begin{rem}
    Not every face of the positive $c$-cluster fan corresponds to an element of $\mathrm{Cl}_c^{+}$.
    Equivalently, $\mathrm{Cl}_c^{+}$ is not closed under taking subsets. 
    See \Cref{eg:B2examplewithdiagram}.
\end{rem}

\begin{defn}
    Given $u \in \NC(W, c)$, define the \emph{noncrossing inversion set} and \emph{right noncrossing inversion set} to be 
\[
\invnc{u}\coloneqq \{\tau\in \inv{u}\suchthat \tau u\in \NC(W,c)\}\text{ and }\invncR{u}\coloneqq \{u^{-1} \tau u \suchthat \tau\in \invnc{u}\}.
\]
\end{defn}
We can equivalently define the right noncrossing inversions as $\invncR{u}=\{\tau\in \Reflections\suchthat u\tau\lessdot_B u\text{ and }u\tau\in \NC(W,c)\}.$ In~\cite[Theorem 8.2, Lemma 8.3]{BJV19}, Biane and Josuat-Verg\`es define a bijection
\[
\begin{array}{rcl}
\nctocluster: \NC(W,c) &\to& \mathrm{Cl}_{c}^+\\
u &\mapsto& \{r(\tau) \suchthat \tau\in \invncR{u}\}.
\end{array}
\]
Clearly, restricting to $\NC(W,c)^+$ gives a bijection to the maximal cones in $\mathrm{Cl}_{c}^+$.

\begin{eg}
\label{eg:B2examplewithdiagram}
    Let us consider $W=\mathrm{B}_{2}$ with $c=s_0s_1$. Choose $\Delta=\{\alpha_0=\epsilon_1,\alpha_1=\epsilon_2-\epsilon_1\}$, so that the subset $\Phi^+$ of positive roots given by $\{\alpha_0,\alpha_1, \alpha_0+\alpha_1,2\alpha_0+\alpha_1\}$ corresponds respectively to the set $\Reflections$ of  reflections $\{s_0,s_1,s_1s_0s_1,s_0s_1s_0\}$. 
    The table below lists the image of the map $\nctocluster$ in the rightmost column.
See Figure~\ref{fig:b2_positive_cluster} for the root system of $\mathrm{B}_{2}$ with the maximal cones in the positive $c$-cluster fan highlighted. These correspond to the last three rows in the preceding table. Note that the rays $\alpha_0+\alpha_1$ and $2\alpha_0+\alpha_1$ do not correspond to positive $c$-clusters.
\begin{table}[ht]
\centering
\renewcommand{\arraystretch}{1.1}
\begin{tabular}{|c|c|c|c|}
\hline
$\operatorname{Sort}(W,c)$ & $\operatorname{NC}(W,c)$ & $\operatorname{Inv}_{\operatorname{NC}}$ & $\operatorname{Clust}^+$\\
\hline
$\operatorname{id}$ & $\operatorname{id}$ & $\varnothing$ & $\varnothing$\\
\hline
$s_0$ & $s_0$ &
$\{\,s_0\,\}$ &
$\{\,\alpha_0\,\}$\\
\hline
$s_1$ & $s_1$ &
$\{\,s_1\,\}$ &
$\{\,\alpha_1\,\}$\\
\hline
$s_0s_1$ & $s_0s_1s_0$ &
$\{\,s_0s_1s_0,\ s_1s_0s_1\,\}$ &
\textcolor{col1}{$\{\,2\alpha_0+\alpha_1,\ \alpha_0\,\}$}\\
\hline
$s_0s_1s_0$ & $s_1s_0s_1$ &
$\{\,s_1,\ s_1s_0s_1\,\}$ &
\textcolor{col2}{$\{\,2\alpha_0+\alpha_1,\ \alpha_0+\alpha_1\,\}$}\\
\hline
$s_0s_1s_0s_1$ & $s_0s_1$ &
$\{\,s_0,\ s_0s_1s_0\,\}$ &\textcolor{col3}{$\{\,\alpha_0+\alpha_1,\ \alpha_1\,\}$}
\\
\hline
\end{tabular}
\end{table}
\end{eg}
\begin{figure}
    \centering
    \includegraphics[width=0.35\linewidth]{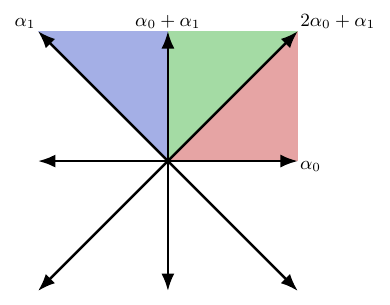}
    \caption{The $\mathrm{B}_{2}$ root system with its positive $c$-cluster fan highlighted, for $c=s_0s_1$. Note that the rays $\alpha_0+\alpha_1$ and $2\alpha_0+\alpha_1$ do not correspond to positive $c$-clusters.}
    \label{fig:b2_positive_cluster}
\end{figure}


\section{Representations of reductive groups}
\label{section:coxlie_prelims}

We now review facts about reductive groups and their representations; further details and definitions can be found in standard texts such as~\cite{Bo91, Hum75, Springer98}.
Let $G$ be a complex reductive group, so that $G$ is an algebraic group with a faithful completely reducible representation $V_{def}$.  
Let $T \subseteq G$ be a maximal algebraic torus and $B \subseteq G$ a Borel subgroup. 
The Weyl group is $W = N_{G}(T)/T$, where $N_{G}(T)$ denotes the normalizer of $T$ in $G$, and the opposite Borel $B^{-}$ is $\wnaught B \wnaught$.  

\subsection{Cartan decomposition}

Let $\Q{T}$ be the group of algebraic characters of $T$. 
The Weyl group $W$ acts on $\mathbb{E} = \QQ \otimes_{\ZZ} \Q{T}$, and we fix once and for all a $W$-invariant inner product $(\cdot, \cdot)$, making $\mathbb{E}$ a Euclidean space. 

Every finite-dimensional representation $V$ of $G$  determines a weight space decomposition
\[
V = \bigoplus_{\lambda \in \Q{T}} V_{\lambda}
\qquad\text{where}\qquad
V_{\lambda} = \{v \in V \suchthat \text{$h.v = \lambda(h)v$ for all $h \in T$}\}.
\]
Let $\mathfrak{g}$ be the Lie algebra of $G$.  The weight space decomposition of the adjoint representation on $\mathfrak{g}$ is the Cartan decomposition
\[
\mathfrak{g} = \mathfrak{g}^{T} \oplus \bigoplus_{\alpha \in \Phi} \mathfrak{g}_{\alpha}.
\]
The set $\Phi$ in the Cartan decomposition is a root system under an appropriate generalization of the Killing form on $\Q{T}$, see~\cite[\S 7.4]{Springer98}, and the corresponding Coxeter group is isomorphic to $W$. The Lie algebras $\mathfrak{b},\mathfrak{b}^-\subset \mathfrak{g}$ of $B$ and $B^-$ are
\[
\mathfrak{b}=\mathfrak{g}^T\oplus \bigoplus_{\alpha\in \Phi^+}\mathfrak{g}_{\alpha}
\qquad\text{ and }\qquad
\mathfrak{b}^-=\mathfrak{g}^T\oplus \bigoplus_{\alpha\in \Phi^-}\mathfrak{g}_{\alpha},
\]
which induces a decomposition $\Phi=\Phi^+\sqcup \Phi^-$ into the \emph{positive roots} and the \emph{negative roots}.

 Going forward, we fix a \emph{Chevalley basis} $\{H_{\lambda_{1}}, \ldots, H_{\lambda_{n}}\} \cup \{E_{\alpha} \suchthat \alpha \in \Phi\}\subset \mathfrak{g}$ with respect to $V_{\text{def}}$ with each $H_{\lambda_{i}} \in \mathfrak{g}^{T}$ and $E_{\alpha} \in \mathfrak{g}_{\alpha}$. 

\subsection{Chevalley presentation}
\label{sec:Greps}  
We now describe some key generators and relations of $G$.

We first identify the set $\Qvee{T} = \{\mu^{\vee} \in \mathbb{E} \suchthat \text{$(\lambda, \mu^{\vee}) \in \ZZ$ for all $\lambda \in \Q{T}$}\}$ with the cocharacters of $T$ by associating to each $\mu^{\vee} \in \Qvee{T}$ a \emph{one-parameter cocharacter subgroup}
\begin{equation}
\label{eq:one_param_subgroup}
h_{\mu^{\vee}}: \CC^{\times} \to T 
\qquad\text{such that for all $\lambda \in \Q{T}$,}\qquad
\lambda(h_{\mu^{\vee}}(x)) = x^{(\lambda, \mu^{\vee})}.
\end{equation}
In particular, we have one-parameter subgroups $h_{\alpha^{\vee}}$ for each \emph{coroot} $\alpha^{\vee} = 2 \alpha / (\alpha, \alpha)$, $\alpha \in \Phi$.

In order to define our second family of one-parameter subgroups, we note that the derivative of a rational representation of $G$ gives a representation of $\mathfrak{g}$ on the same space. 
In particular, we can identify each Chevalley generator $E_{\alpha} \in \mathfrak{g}_{\alpha}$ with its image in $\mathrm{End}_{\CC}(V_{\text{def}})$ under this representation.  
Define now the \emph{one-parameter root subgroup} for $\alpha \in \Phi$ by
\begin{equation}
\label{eq:one_param_root}
e_{\alpha}: \CC \to G \qquad e_{\alpha}(x) = \mathrm{exp}(xE_{\alpha}) = 1 + xE_{\alpha} + \frac{x^{2}}{2} E_{\alpha}^{2} + \cdots.
\end{equation}

The group generated by the $e_{\alpha}(\CC)$ is a semisimple subgroup of $G$, and together with $T$ these elements generate $G$. Before giving a presentation on generators and relations, we define one new family of elements: for $\alpha \in \Phi$, let 
\begin{align}
\label{equation:sdefn}
s_{\alpha}(x) &= e_{\alpha}(x)e_{-\alpha}(-x^{-1})e_{\alpha}(x), &&\text{for $x \in \CC^{\times}$.} 
\end{align}
Then each $s_{\alpha}(x)$ is a representative for $s_{\alpha}$ in $N_{G}(T)$.  Moreover, with the one-parameter subgroups defined in~\eqref{eq:one_param_subgroup}, the following relations hold for all $\alpha, \beta \in \Phi$ and $x, y \in \CC$:
\begin{align}
\label{equation:ee_rel1}
e_{\alpha}(x)e_{\alpha}(y) &= e_{\alpha}(x+y), \\
\label{equation:ee_rel2}
e_{\alpha}(x)^{-1}e_{\beta}(y)^{-1}e_{\alpha}(x)e_{\beta}(y) &=  \prod_{i\alpha + j\beta \in \Phi^{+}} e_{i\alpha + j\beta}(c_{i,j}x^{i}y^{j}), &&  \text{for $c_{i, j} \in \ZZ$ independent of $x, y$},\\
\label{equation:h_rel}
s_{\alpha}(x)s_{\alpha}(-1) &= h_{\alpha^{\vee}}(x), && \text{for $x \in \CC^{\times}$},\\
\label{equation:sh_rel}
s_{\alpha}(x)h_{\lambda^{\vee}}(y)s_{\alpha}(-x) &= h_{s_{\alpha}\lambda^{\vee}}(y), && \text{for all $\lambda^{\vee} \in \Qvee{T}$,}\\
\label{equation:se_rel}
s_{\alpha}(x)e_{\beta}(y)s_{\alpha}(-x) &= e_{s_{\alpha}\beta}( \pm x^{-( \alpha^{\vee}, \beta ) } y), && \text{for $\pm$ independent of $x, y$},\\
\label{equation:he_rel}
h e_{\alpha}(x) h^{-1} &= e_{\alpha}(\alpha(h)\;x), && \text{for all $h \in T$}.
\end{align}
We omit relations between different one-parameter subgroups of $T$, so the above is not a complete presentation for $G$.

\subsection{Weights and representations}

A \emph{dominant weight} for $\mathfrak{g}$ is an element of
\[
\Lambda^{+} = \{\lambda \in \mathbb{E} \suchthat \text{$(\lambda, \alpha^{\vee}) \in \ZZ_{\ge 0}$ for all $\alpha \in \Phi^{+}$} \}.
\]
Say that $\lambda \in \Lambda^+$ is \emph{regular} if $(\lambda, \alpha^{\vee}) > 0$ for each $\alpha \in \Phi^{+}$.  

Every dominant weight $\lambda \in \Lambda^{+}$ determines a finite-dimensional, irreducible $\mathfrak{g}$-module.  
When $\lambda \in \Lambda^{+} \cap \Q{T}$, this $\mathfrak{g}$-module integrates to a simple $G$-module $V^{\lambda}$.  The following facts, paired with the Chevalley presentation, will allow us to compute in these representations.

\begin{fact}
\label{fact:repfacts}
For any dominant integral weight $\lambda$ and $\mu\in \Q{T}$ we have:
\begin{enumerate}[label=(\arabic*)]
\item \label{item:repfacts1} if $\dot{w} \in N_{G}(T)$ represents $w \in W$, $v \mapsto \dot{w}v$ is a vector space isomorphism from $V_{\mu}^{\lambda}$ to $V_{w\cdot \mu}^{\lambda}$,  

\item for $\alpha \in \Phi$, $E_{\alpha}$ maps $V_{\mu}^{\lambda}$ to $V_{\mu + \alpha}^{\lambda}$, and
\item if $\mu \in \lambda + \ZZ_{\ge 0}\Phi^{+}$ then $\dim(V^{\lambda}_{\mu}) = \delta_{\lambda, \mu}$.
\end{enumerate}
\end{fact}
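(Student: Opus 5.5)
All three items are standard, and the plan is to derive each from the structure of $V^{\lambda}$ as a rational $G$-module. The main tools are the defining property of weight spaces, the relation \eqref{equation:he_rel}, and highest weight theory for $\mathfrak{g}$.

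For item (1), fix $\dot w\in N_G(T)$ representing $w$, and let $v\in V^{\lambda}_{\mu}$ and $h\in T$. Since $\dot w^{-1}h\dot w\in T$, we compute
\[
h\,\dot w v=\dot w\,(\dot w^{-1}h\dot w)\,v=\mu(\dot w^{-1}h\dot w)\,\dot w v=(w\cdot\mu)(h)\,\dot w v .
\]
The last equality is the definition of the $W$-action on $\Q{T}$, namely $(w\cdot\mu)(h)=\mu(\dot w^{-1}h\dot w)$; it is independent of the representative because $T$ is abelian. Hence $\dot w$ maps $V^{\lambda}_{\mu}$ into $V^{\lambda}_{w\cdot\mu}$. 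The same argument with $\dot w^{-1}$ gives a map back, and the two maps are inverse to each other, so this is a linear isomorphism.

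For item (2), differentiate \eqref{equation:he_rel} at $x=0$. This gives $hE_{\alpha}h^{-1}=\alpha(h)E_{\alpha}$ as operators on $V^{\lambda}$, since the $\mathfrak{g}$-action is the derivative of the $G$-action and is therefore $\mathrm{Ad}$-equivariant. For $v\in V^{\lambda}_{\mu}$ we then get $h E_{\alpha}v=\alpha(h)E_{\alpha}hv=(\alpha+\mu)(h)E_{\alpha}v$, so $E_{\alpha}v\in V^{\lambda}_{\mu+\alpha}$.

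Item (3) is the only one with content: it is the statement that $V^{\lambda}$ is a highest weight module of highest weight $\lambda$. Let $v_{\lambda}$ be a highest weight vector, i.e.\ a weight vector killed by all $E_{\alpha}$ with $\alpha\in\Phi^{+}$. By irreducibility $V^{\lambda}=U(\mathfrak{g})v_{\lambda}$. The PBW factorization $U(\mathfrak{g})=U(\mathfrak{n}^-)U(\mathfrak{g}^T)U(\mathfrak{n}^+)$ then gives $V^{\lambda}=U(\mathfrak{n}^-)v_{\lambda}$. By item (2), $U(\mathfrak{n}^-)v_{\lambda}$ is spanned by vectors $E_{-\beta_1}\cdots E_{-\beta_k}v_{\lambda}$ of weight $\lambda-\sum\beta_i$ with $\beta_i\in\Phi^+$. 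Consequently every weight of $V^{\lambda}$ lies in $\lambda-\ZZ_{\ge0}\Phi^{+}$, and the weight-$\lambda$ space is spanned by $v_{\lambda}$ alone, the case $k=0$.

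To finish, suppose $\mu\in\lambda+\ZZ_{\ge0}\Phi^{+}$ is also a weight, so $\mu\in\lambda-\ZZ_{\ge0}\Phi^{+}$ as well. Then $\lambda-\mu$ lies in both $\ZZ_{\ge0}\Phi^+$ and $-\ZZ_{\ge0}\Phi^+$. Pairing with the strictly positive functional $(\cdot,\rho^{\vee})$, which is positive on every positive root, forces $\lambda-\mu=0$. Hence $\dim V^{\lambda}_{\mu}=\delta_{\lambda,\mu}$.

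The only delicate point is justifying that $V^{\lambda}$ has a highest weight vector of weight exactly $\lambda$. This is part of the classification of finite-dimensional irreducible $\mathfrak{g}$-modules that the paper invokes when defining $V^{\lambda}$, so I will cite it rather than reprove it.
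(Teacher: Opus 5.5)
Your proof is correct. The paper gives no proof of this Fact; it is recorded as standard background, with the reader referred to the texts cited at the start of Section~\ref{section:coxlie_prelims}. What you have written is the standard textbook argument behind that citation: conjugation by $\dot w$ for (1), $\mathrm{Ad}$-equivariance of the differentiated representation for (2), and $V^{\lambda}=U(\mathfrak{n}^{-})v_{\lambda}$ together with a functional positive on $\Phi^{+}$ for (3). One step in (3) is left implicit: the $\lambda$-weight space is $\CC v_{\lambda}$ only because a nonempty sum of positive roots is nonzero. This follows from the same $\rho^{\vee}$-pairing you use at the end.
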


The \emph{extremal weights} of $V^{\lambda}$ are those of the form $w \lambda$ for $w \in W$.  The above properties imply that all nonzero weight spaces come from weights $\mu$ contained in the convex hull of the extremal weights, and the dimension of each extremal weight space is $1$.

\section{Geometric preliminaries}
\label{section:Goe_prelim}

We now review important facts about the \emph{generalized flag variety} $G/B$.  
First recall that the group $W$ permutes the $T$-orbits in $G/B$ by having $w \in W$ act as left multiplication by any representative in $N_{G}(T)$, and that the $T$-fixed point set $(G/B)^{T}$ is the $W$-orbit of $B$.

For $u\in W$, we define the \emph{Schubert cell} $\Xc^u\coloneqq BuB\subset G/B$ and \emph{Schubert variety} $X^u\coloneqq \overline{BuB}\subset G/B$. 
Similarly we define the \emph{opposite Schubert cell} $\Xc_u\coloneqq B^-uB\subset G/B$ and \emph{opposite Schubert variety} $X_u\coloneqq \overline{B^-uB}\subset G/B$. These varieties are related by the identity $\wnaught X^u=X_{\wnaught u}$. For $u\le_B v$, we then define the \emph{Richardson variety} $X^v_u\coloneqq X^v\cap X_u$. The $T$-fixed points of $X^v_u$ are given by the Bruhat interval $[u,v]$.

\subsection{The adjoint group}
\label{subsec:adjointgroup}

In this section, we explain why the space $G/B$ depends only on the root system $\Phi$ for $G$.  
It is nonetheless useful to allow varying choices of $G$, for example in the appendices.  
However, a few results in \Cref{subsec:RelGrass} rely on using the \emph{adjoint group} $G_{ad} \coloneqq G / Z(G)$, where $Z(G)$ denotes the center.  
This is a centerless semisimple reductive group with the same root system and Weyl group as $G$.  As $Z(G) \subseteq T$, $G_{ad}$ has Borel subgroup $B_{ad} = B/Z(G)$ and torus $T_{ad} = T/Z(G)$.

The \emph{adjoint torus} $T_{ad}$ has a rank $n$ character lattice
\[
\Q{T_{ad}} = \ZZ\Phi \subseteq \mathbb{E}.
\]
As $Z(G)$ acts trivially on $G/B$, the adjoint torus acts on $G/B$, and 
\[
G/B=G_{ad}/B_{ad}\qquad\text{as $T_{ad}$-varieties}.
\]
Hence every root system $\Phi$ has a unique generalized flag variety $G/B$ associated to it.

\subsection{Closed subsets of roots and the Bruhat decomposition}
\label{sec:closedsets}\label{sec:BruhatDecomp}

For any $u\in W$ we have a decomposition
$X^u=\bigsqcup_{w\le u}\Xc^w,$ and applying this for $u=\wnaught$ we obtain the \emph{Bruhat decomposition}
$$G/B=\bigsqcup_{w\in W} \Xc^w.$$
This decomposes $G/B$ into Schubert cells, with each $\Xc^w$ $T$-equivariantly isomorphic to the $T$-representation $ \bigoplus_{\tau\in \inv{w}}\mathfrak{g}_{r(\tau)}$.
This isomorphism is non-canonical and we recall how it arises. 

Recall that in~\eqref{eq:one_param_root} we defined a family of one-parameter subgroups $e_{\alpha}(\CC)$, $\alpha \in \Phi^{+}$, by
\[
e_{\alpha}(x) = \mathrm{exp}(xE_{\alpha}).
\]

Say that a subset $C \subseteq \Phi^{+}$ is \emph{closed} if $\alpha, \beta \in C$ and $\alpha + \beta \in \Phi$ implies that $\alpha + \beta \in C$. 
For a closed set $C$, choose an order $C = \{\beta_{1}, \beta_{2}, \cdots, \beta_{|C|}\}$ and define
\[
U_{C} = \left\{ e_{\beta_{1}}(a_{1})\cdots e_{\beta_{|C|}}(a_{|C|}) \suchthat a_{i} \in \CC \right\}
\qquad\text{and}\qquad
N_{C} = \bigoplus_{i = 1}^{|C|} \mathfrak{g}_{\beta_{i}}.
\]
We will apply this construction with
\[
C \subseteq r(\inv{w})=\Phi^+\cap w(\Phi^-),
\]
the set of positive roots corresponding to inversions of $w$.
By~\cite[Lemma 17, Lemma 34]{SteinbergBook}, $U_{C}$ is a unipotent subgroup of $B$ which is independent of the chosen order on $C$.  Moreover there is an (order-dependent) isomorphism of varieties
\begin{equation}
\label{eq:closediso}
\begin{array}{rcl}
N_{C} &\to& U_{C}wB/B \\
a_{1} E_{\beta_{1}} + \cdots + a_{|C|} E_{\beta_{|C|}} & \mapsto & e_{\beta_{1}}(a_{1})\cdots e_{\beta_{|C|}}(a_{|C|})wB/B
\end{array}
\end{equation} 
By relation~\eqref{equation:he_rel}, this isomorphism is $T$-equivariant with respect to the adjoint action on $N_{C}$ and left multiplication by $T$ on $U_{C}wB/B$.

In the extreme case that $C = r(\inv{w})$, we write $U_{w} = U_{r(\inv{w})}$ and $N_{w} = N_{r(\inv{w})}$, so that~\eqref{eq:closediso} gives a $T$-equivariant isomorphism
\[
N_{w} \cong U_wwB=\Xc^w.
\]

\begin{eg}
    We take notation as in \Cref{appendix:GL}. 
    For $\GL_3/B$ with $\alpha=\epsilon_1-\epsilon_2, \beta=\epsilon_2-\epsilon_3, \gamma=\epsilon_1-\epsilon_3$ we take bases of the corresponding weight spaces of $\mathfrak{g}$ to be the elementary matrices $E_{1,2}$, $E_{2,3}$, $E_{1,3}$ where $(E_{i,j})_{k,l}=\delta_{i,k}\delta_{j,l}$. Then
    $$e_\alpha(a)=\begin{bmatrix}1&a&0\\0&1&0\\0&0&1\end{bmatrix},\quad e_\beta(b)=\begin{bmatrix}1&0&0\\0&1&b\\0&0&1\end{bmatrix},\quad e_\gamma(c)=\begin{bmatrix}1&0&c\\0&1&0\\0&0&1\end{bmatrix}.$$
    Then two different parametrizations of $U_{\wnaught}$ are
$$e_\beta(b)e_\alpha(a)e_\gamma(c)=\begin{bmatrix}1&a&c\\0&1&b\\0&0&1\end{bmatrix},\text{ and } e_\alpha(a)e_\beta(b)e_\gamma(c)=\begin{bmatrix}1&a&c+ab\\0&1&b\\0&0&1\end{bmatrix}.$$
We caution the reader that the $T$-equivariant automorphism $(a,b,c)\mapsto (a,b,c+ab)$ between these two parametrizations does not preserve the linear structure on the $T$-representations.
\end{eg}

\subsection{$T$-invariant curves in $G/B$}

As a consequence of the Bruhat decomposition, the $T$-invariant curves in $G/B$ are given by the closures of the $T$-invariant lines $\{e_{\beta}(t)wB\suchthat t\in \mathbb{C}\}$ for $s_\beta\in \inv{w}$. We will need the following fact.

\begin{fact}
    We have $\{e_{\beta}(t)wB\suchthat t\in \mathbb{C}\}\cong \mathbb{A}^1$, its closure contains the unique additional point $s_\beta wB$, and  $\{e_{\beta}(t)wB\suchthat t\in \mathbb{C}\} \cup \{s_\beta wB\} \cong \mathbb{P}^1$.
\end{fact}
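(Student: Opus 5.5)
The plan is to reduce to a rank-one computation. Since $w\in N_G(T)$ acts on $G/B$ by automorphisms carrying the line $\{e_{\beta}(t)wB\}$ to $\{\dot w^{-1}e_\beta(t)\dot w B\}=\{e_{w^{-1}\beta}(\pm t)B\}$ by relation~\eqref{equation:se_rel}, it suffices to treat the $T$-stable curve through the base point $B$ in the direction $w^{-1}\beta$. Here $w^{-1}\beta$ is a negative root because $s_\beta\in\inv{w}$, i.e.\ $\beta\in\Phi^+\cap w(\Phi^-)$. Writing $\gamma=-w^{-1}\beta\in\Phi^+$, we are reduced to studying $\{e_{-\gamma}(t)B\}$ and showing that its closure adds exactly the point $s_\gamma B$. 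Translating back by $w$ gives $ws_\gamma B=s_\beta wB$.

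First, injectivity. The map $t\mapsto e_{-\gamma}(t)B$ is injective because $e_{-\gamma}(\CC)\cap B=\{1\}$, as $\mathfrak{g}_{-\gamma}\not\subset\mathfrak b$. Alternatively, one can use the isomorphism~\eqref{eq:closediso} at the level of $w$ with $C=\{\beta\}$, which is closed since $2\beta\notin\Phi$: it gives $\mathfrak{g}_\beta\cong U_{\{\beta\}}wB/B$ directly, so the line is a copy of $\mathbb{A}^1$ inside the cell $\Xc^w$.

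Next, the limit point. For $t\neq0$ I will use the identity~\eqref{equation:sdefn} rewritten as
\[
e_{-\gamma}(t)=e_{\gamma}(t^{-1})\,s_{-\gamma}(t)\,e_{\gamma}(t^{-1})^{-1}\cdot(\text{correction}),
\]
or more cleanly the rank-one identity $e_{-\gamma}(t)=e_\gamma(t^{-1})\,\dot s_\gamma\, h_{\gamma^\vee}(\ast)\,e_\gamma(\ast)$ obtained from~\eqref{equation:sdefn} and~\eqref{equation:h_rel}. This gives $e_{-\gamma}(t)B=e_\gamma(t^{-1})s_\gamma B$. As $t\to\infty$, this tends to $s_\gamma B$. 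Hence the closure contains $s_\gamma B$, and $\{e_{-\gamma}(t)B\}\cup\{s_\gamma B\}$ is covered by the two affine charts $t\mapsto e_{-\gamma}(t)B$ and $u\mapsto e_\gamma(u)s_\gamma B$, glued by $u=t^{-1}$. This is the standard atlas of $\mathbb{P}^1$, so the union is an embedded $\mathbb{P}^1$: it is closed, being the image of the proper map from $\mathrm{SL}_2/B_{\mathrm{SL}_2}$ induced by the root homomorphism $\mathrm{SL}_2\to G$ for $\gamma$. Closedness then shows no further points are added.

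The main obstacle is the rank-one identity with correct signs and torus factors. Rather than tracking constants through~\eqref{equation:sdefn}, I will verify it inside $\mathrm{SL}_2$ via the root homomorphism $\varphi_\gamma$ (whose existence follows from the Chevalley basis) using explicit $2\times2$ matrices: $\begin{psmallmatrix}1&0\\t&1\end{psmallmatrix}=\begin{psmallmatrix}1&t^{-1}\\0&1\end{psmallmatrix}\begin{psmallmatrix}0&-t^{-1}\\t&0\end{psmallmatrix}\begin{psmallmatrix}1&t^{-1}\\0&1\end{psmallmatrix}$, where the middle factor is a torus translate of $\dot s_\gamma$. Then I push this identity forward to $G/B$.
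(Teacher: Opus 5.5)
Your proposal is correct. The paper gives no proof of this statement to compare against: it records it as a standard Fact. The $\mathbb{A}^1$ part is implicit in the case $C=\{\beta\}$ of \eqref{eq:closediso}, and the closure statement is left to the usual rank-one theory. Your reduction by $\dot w^{-1}$ to the curve through $B$ in direction $w^{-1}\beta=-\gamma\in\Phi^-$, followed by the $\mathrm{SL}_2$ argument, is the standard proof of that fact.

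Three points are worth tidying.

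(i) The identity you single out as the main obstacle follows in one line from the paper's own relations. Solving \eqref{equation:sdefn} at $x=-t^{-1}$ and using \eqref{equation:ee_rel1} gives
\[
e_{-\gamma}(t)=e_{\gamma}(t^{-1})\,s_{\gamma}(-t^{-1})\,e_{\gamma}(t^{-1})\qquad (t\neq 0).
\]
Since $\gamma\in\Phi^+$, the last factor lies in $B$. Also $s_\gamma(-t^{-1})B=s_\gamma B$, because every $s_\gamma(x)$ represents $s_\gamma$. This is the same manipulation as in the proof of \Cref{lem:PluckerPower}. So the garbled ``(correction)'' display and the explicit $2\times 2$ matrices can go, though you still need the root homomorphism $\mathrm{SL}_2\to G$ for the properness/closedness step.

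(ii) You need to show the union is isomorphic to $\mathbb{P}^1$, not merely a bijective image of $\mathbb{P}^1$; such an image could be cuspidal. For this, say why your two charts are open embeddings.
\begin{itemize}
\item The first chart is the intersection of the curve with the open set $U^-B/B\cong U^-$, where it is the closed line $e_{-\gamma}(\CC)$. Note $s_\gamma B\notin U^-B/B$, since the only $T$-fixed point there is $B$.
\item By \eqref{equation:se_rel}, the second chart is $e_\gamma(u)\dot s_\gamma B=\dot s_\gamma e_{-\gamma}(au)B$ for a fixed $a\in\CC^\times$. So it is the $\dot s_\gamma$-translate of the first, and equals the intersection of the curve with $\dot s_\gamma U^-B/B$.
\end{itemize}
These two opens cover the curve and glue via $u=t^{-1}$. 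The same big-cell description also gives the isomorphism with $\mathbb{A}^1$; injectivity of $t\mapsto e_{-\gamma}(t)B$ alone would not.

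(iii) Conjugating by an arbitrary representative $\dot w$ yields $e_{w^{-1}\beta}(at)$ for some $a\in\CC^\times$, not necessarily $\pm t$. This is harmless.
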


Therefore, the $T$-invariant curves are the unique $T$-invariant $\mathbb{P}^1$'s which ``connect'' $T$-fixed points $u,v$ with $v=s_{\beta} u$ for some $\beta\in \inv{v}$. 
We present this more symmetrically as follows.

\begin{defn}
\label{defn:Puv}
    When $v=\tau u$ for some reflection $\tau$, we denote by $\mathbb{P}_{u,v}\subset G/B$ the unique $T$-invariant $\mathbb{P}^1$ which has $T$-fixed points $u$ and $v$.
\end{defn}
Thus if we draw the Cayley graph $\Cay(W,\Reflections)$ generated by reflections, every vertex corresponds to a $T$-fixed point and every edge $uv$ corresponds to $\mathbb{P}_{u,v}$. See for example \Cref{fig:GKM_S3}.
\begin{figure}[!ht]
    \centering
    \includegraphics[scale=1.1]{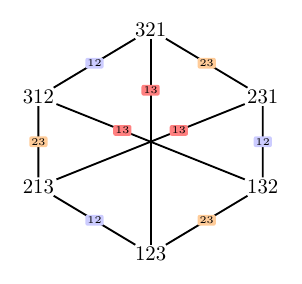}
    \caption{The Cayley graph of $\Cay(S_3,\{(12),(23),(13)\})$ generated by reflections. The vertices correspond to the six $T$-fixed points in $\GL_3/B$ and the edges correspond to the nine $T$-invariant $\mathbb{P}_{u,v}$'s in $\GL_3/B$.}
    \label{fig:GKM_S3}
\end{figure}

\section{Pl\"ucker functions and Pl\"ucker vanishing varieties}
\label{section:Plucker}

In this section we record facts about Pl\"ucker functions that will be used for the remainder of the paper. 
Most of the relevant literature focuses on the special case of type $\mathrm{A}$, or the related case of Grassmannians associated to minuscule roots -- to avoid any confusion we gather what is true in the fullest generality for $G/B$.

Fix, now and for the remainder of the paper, a regular dominant character 
\[
\regweight \in \Lambda^{+} \cap \Q{T}.
\]

\subsection{The Pl\"ucker embedding}

For $\lambda \in \Lambda^{+}\cap \Q{T}$, recall the simple $G$-module $V^{\lambda}$ from Section~\ref{sec:Greps}.  
The \emph{$\lambda$-Pl\"ucker map} is the map 
\[
\begin{array}{rcl}
\Pl^{\lambda}:G/B &\hookrightarrow &\mathbb{P}(V^{\lambda}) \\
gB & \mapsto & \langle g v_{\lambda} \rangle.
\end{array}
\]
When $\lambda = \regweight$, our fixed regular dominant character, the Pl\"{u}cker map is an embedding, which we call the \emph{Pl\"{u}cker embedding}, and we write
\[
\Pl \coloneq \Pl^{\regweight}.
\]
This embedding depends on the choice of $\regweight$, but our results are independent of this choice unless otherwise stated, so we will suppress it from the notation.

We now fix generators for each $1$-dimensional extremal weight space $V^\lambda_{w\lambda}$: for $w \in W$, let
\begin{equation}
\label{eq:weightbasis}
0\ne v_{w\lambda} \in V^{\lambda}_{w \lambda}.
\end{equation}
For any representative $\dot{w} \in N_{G}(T)$ of $w$, we have $\dot{w} v_{\lambda} \in \langle v_{w \lambda}\rangle$ by~\Cref{fact:repfacts}.

\begin{rem}
If the Weyl group $W$ embeds into $G$, as is the case with $G = \GL_{n}$, we can take the more straightforward definition $v_{w\lambda}=wv_\lambda$ without ambiguity. 
\end{rem}

Now define the \emph{$w$-Pl\"{u}cker function} $\Pl_{w}^{\lambda}$ as the coordinate function for $v_{w \lambda}$ with respect to the weight space decomposition of $V^{\lambda}$.  We will abuse notation and also write $\Pl_{w}^{\lambda}$ for the projective coordinate of $G/B$ under $\Pl^{\lambda}$; as we will only consider the vanishing of this coordinate, the arbitrary choices made in Equation~\eqref{eq:weightbasis} do not cause any issues.
For $\lambda = \regweight$, we write  $\Pl_{w}$.  

We now summarize some basic properties of Pl\"ucker functions for the Pl\"ucker embedding and their connection to $T$-fixed points.
\begin{fact}
\label{fact:BasicPlucker}
\leavevmode
\begin{enumerate}[label=(\arabic*)]
\item\label{5.2.1} For $t\in T$ we have $\Pl_w(tgB)= (w\regweight)(t)\Pl_w(gB)$.
 \item \label{5.2.2} For a $T$-invariant variety $X\subseteq G/B$, we have $X^T=\{wB \suchthat w\in W,\,\Pl_w|_X\not\equiv 0\}$.
    \item \label{5.2.3} We have $\Pl_w(gB)\ne 0\Longleftrightarrow wB\in \overline{T\cdot gB}$. 
    In particular the vanishing set $\{\Pl_w=0\}$ is independent of the choice of $\regweight$.
\end{enumerate}
\end{fact}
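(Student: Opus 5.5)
The three parts are best proved in order, with the weight computation for $V^{\regweight}$ as the common tool.

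For \ref{5.2.1}, I will use that $v_{w\regweight}$ spans the weight space $V^{\regweight}_{w\regweight}$, so the coordinate functional $\Pl_w$ is projection onto that weight space. If $gv_{\regweight}=\sum_\mu x_\mu$ is the weight decomposition, then $tgv_{\regweight}=\sum_\mu \mu(t)x_\mu$. Reading off the $w\regweight$-component gives $\Pl_w(tgB)=(w\regweight)(t)\Pl_w(gB)$, and this holds up to the common projective rescaling.

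For \ref{5.2.3}, I will first identify the $T$-fixed points of $\mathbb{P}(V^{\regweight})$ that lie on $\Pl(G/B)$: they are the points $\langle v_{w\regweight}\rangle$, which are the images of $wB$.

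1. Fix $gB$ and write $gv_{\regweight}=\sum_\mu x_\mu$. Let $S=\{\mu: x_\mu\neq 0\}$.
2. The orbit closure $\overline{T\cdot\langle gv_{\regweight}\rangle}$ is a projective toric variety. Its $T$-fixed points are exactly the points $\langle x_\mu\rangle$ with $\mu$ a vertex of $\operatorname{conv}(S)$. I will justify this with one-parameter subgroups: for a generic cocharacter $h_{\mu^\vee}$, the limit as $x\to 0$ of $h_{\mu^\vee}(x)\cdot\langle gv\rangle$ is the point $\langle x_\mu\rangle$, where $\mu$ minimizes $(\cdot,\mu^\vee)$ on $S$. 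Conversely, every fixed point of the orbit closure arises as such a limit.
3. Since $\Pl$ is a closed embedding, $\overline{T\cdot gB}$ maps isomorphically onto this orbit closure.
4. Its fixed points must therefore lie in $\Pl((G/B)^T)=\{\langle v_{u\regweight}\rangle\}$. So the vertices of $\operatorname{conv}(S)$ are extremal weights $u\regweight$.

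With this in hand, the two directions of \ref{5.2.3} are as follows.

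1. Suppose $\Pl_w(gB)\neq0$. Then $w\regweight\in S$. Every weight of $V^{\regweight}$ lies in $\operatorname{conv}(W\regweight)$, and $w\regweight$ is a vertex of that polytope (since $\regweight$ is regular). Hence $w\regweight$ is a vertex of $\operatorname{conv}(S)$, and by step 2 the point $wB$ lies in $\overline{T\cdot gB}$.
2. Suppose $wB\in\overline{T\cdot gB}$. Then $\langle v_{w\regweight}\rangle$ lies in the orbit closure, which is contained in $\mathbb{P}(\bigoplus_{\mu\in S}V_\mu)$. Therefore $w\regweight\in S$, that is, $\Pl_w(gB)\neq 0$.

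The statement about $\{\Pl_w=0\}$ then follows because the condition $wB\in\overline{T\cdot gB}$ makes no reference to $\regweight$.

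For \ref{5.2.2}, let $X$ be $T$-invariant. If $\Pl_w|_X\not\equiv0$, pick $gB\in X$ with $\Pl_w(gB)\neq0$. By \ref{5.2.3}, $wB\in\overline{T gB}\subseteq X$, using that $X$ is closed; I take $X$ closed, as is implicit. Conversely, if $wB\in X$, then $\Pl_w(wB)\neq0$ because $\dot w v_{\regweight}\in\langle v_{w\regweight}\rangle\setminus 0$. Finally, $(G/B)^T=W\cdot B$, so $X^T$ consists exactly of the points $wB$ with $\Pl_w|_X\not\equiv 0$.

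The main obstacle is the toric fact used in step 2 of \ref{5.2.3}: that the fixed points of a $T$-orbit closure in projective space are exactly the vertex coordinate points. I would prove this directly via the one-parameter limits and the Hilbert–Mumford-style description of orbit closures.
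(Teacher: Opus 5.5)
Your argument is correct, but it runs in the opposite order from the paper's and identifies the limit point differently.

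\textbf{The paper's route.} It proves (2) first. Given $x$ with $\Pl_w(x)\neq 0$, it takes a cocharacter sending $x$ to a limit at which every extremal Pl\"ucker coordinate except $\Pl_w$ vanishes. It then identifies that limit with $wB$ by a toric argument: the orbit closure of the limit can only contain the fixed point $wB$, and a projective toric variety with a single fixed point is a point. Part (3) is then deduced from (2) applied to $X=\overline{T\cdot gB}$, together with the equivariance in (1).

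\textbf{Your route.} You prove (3) directly inside $\mathbb{P}(V^{\regweight})$.
\begin{itemize}
\item Forward direction: all weights of $V^{\regweight}$ lie in $\operatorname{conv}(W\regweight)$, so $w\regweight$ is the unique minimizer of a suitable cocharacter on the support $S$. The limit is therefore literally $\langle v_{w\regweight}\rangle=\Pl(wB)$, and injectivity of $\Pl$ finishes.
\item Converse: the orbit stays inside the closed $T$-stable subspace $\mathbb{P}(\bigoplus_{\mu\in S}V_\mu)$.
\end{itemize}
You then deduce (2) from (3).

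This is more elementary. It avoids the ``single fixed point implies singleton'' lemma, and it makes explicit the convexity input the paper uses implicitly. (The paper's cocharacter only kills the other extremal coordinates because every weight of $V^{\regweight}$ lies in $\operatorname{conv}(W\regweight)$.)

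\textbf{Your ``main obstacle'' is not needed.} Nothing in your argument uses the converse half of your step 2 (that every fixed point of the orbit closure comes from a vertex of $\operatorname{conv}(S)$), nor your step 4. Only the easy direction enters: a vertex $\mu$ of $\operatorname{conv}(S)$ yields $\langle x_\mu\rangle$ as a one-parameter limit. That is a one-line computation with an integral cocharacter having $\mu$ as its unique minimizer on $S$, so no Hilbert--Mumford-type input is required.

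A minor point: $w\regweight$ is a vertex of $\operatorname{conv}(W\regweight)$ for any dominant weight, since orbit points lie on a sphere. Regularity is what you need for $\Pl$ to be injective, not for the vertex property.
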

\begin{proof}
    \ref{5.2.1} follows as $v_{w\regweight}$ belongs to the weight space for $w\regweight$. For~\ref{5.2.2} note that if $\Pl_w(x)\ne 0$, then because the extremal weights $w'\regweight$ are in convex position we are able to find a cocharacter $\psi$ of $T$ which limits $\lim_{t\to 0}\psi(t)x$ to a point where all Pl\"ucker functions vanish except for $\Pl_w$. 
    We claim that only $w\in W$ has this Pl\"ucker vanishing property -- indeed, any $T$-orbit closure in this vanishing set could only have $wB$ as a fixed point. But a projective toric variety with a single fixed point is a singleton, so this $T$-orbit closure must be $\{wB\}$ itself. \ref{5.2.3} follows from \ref{5.2.2} once we note that because $\Pl_w$ is  $T$-equivariant we have $\Pl_w(gB)=0\Longleftrightarrow \Pl_w(\overline{T\cdot gB})=0$. 
\end{proof}
\begin{rem}
    Even though $\{\Pl_w\}_{w\in W}$ is basepoint-free, and hence induces a map $G/B\to \mathbb{P}^{|W|-1}$, this map is not necessarily injective even for a well-chosen $\regweight$, see Appendix~\ref{appendix:GL}.
\end{rem}

\subsection{Relation to Grassmannian Pl\"ucker coordinates}
\label{subsec:RelGrass}
This section is logically independent of the remainder of the paper. 
Here we relate $\Pl$ to the Grassmannian Pl\"ucker coordinates that appear more commonly in the literature, assuming for ease of exposition in this section only that $G$ is semisimple so that $\Q{T}=\Lambda$ and $\mathbb{E}$ is spanned by the simple roots (see the discussion in \Cref{subsec:adjointgroup}).

Enumerate the simple roots in $\Phi$ as $\Delta = \{\alpha_{1}, \ldots, \alpha_{n}\}$, and let the \emph{fundamental weights} $\omega_{1}, \ldots, \omega_{n} \in \Lambda^{+}$ be defined by 
$
(\omega_{i}, \alpha_{j}^{\vee}) = \delta_{i, j}$
where $\delta_{i, j}$ is the Kronecker delta. Then $\Lambda^+$ is the nonnegative integral span of $\omega_1,\ldots,\omega_n$ so we can write
\[
\regweight = \sum k_{i} \omega_{i} \text{ with }k_{i} \in \ZZ_{>0}.
\]

The \emph{Grassmannian Pl\"ucker map} $\Pl^{\omega_i}:G/B\to \mathbb{P}(V^{\omega_i})$ is associated to $\omega_i$, and factors as
$$G/B\to G/P_i\hookrightarrow \mathbb{P}(V^{\omega_i})$$ where $P_i$ is associated to the maximal parabolic subgroup associated to $\omega_i$. 
We now observe that the Pl\"{u}cker coordinates associated to any weight, in particular $\regweight$, decompose as a product of Grassmannian Pl\"ucker coordinates.

\begin{obs}
    If $a,b\in \Lambda^+$ then
    $$\Pl^a_w\Pl^b_w\text{ is a constant nonzero multiple of }\Pl^{a+b}_w.$$
\end{obs}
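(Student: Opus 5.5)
The plan is to realize $V^{a+b}$ inside $V^a\otimes V^b$ and compare coordinates there. Since $a+b$ is the highest weight of $V^a\otimes V^b$, and that weight space $\langle v_a\otimes v_b\rangle$ is one-dimensional, the cyclic submodule $G\cdot(v_a\otimes v_b)$ is a copy of $V^{a+b}$. Call the resulting $G$-equivariant embedding $\iota$, normalized so that $\iota(v_{a+b})=v_a\otimes v_b$. For $g\in G$ we then get
\[
\iota(gv_{a+b}) = gv_a\otimes gv_b .
\]
So the Pl\"ucker point $\Pl^{a+b}(gB)$ is carried to the Segre product of $\Pl^a(gB)$ and $\Pl^b(gB)$.

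Next, identify the extremal weight spaces. The weight $w(a+b)$ is extremal in $V^a\otimes V^b$, so its weight space is one-dimensional. Indeed, a weight of $V^a\otimes V^b$ is $\mu+\nu$ with $\mu$ a weight of $V^a$ and $\nu$ a weight of $V^b$. Applying $w^{-1}$ reduces to the equation $\mu'+\nu'=a+b$ with $\mu'\le a$ and $\nu'\le b$ in dominance order, which forces $\mu'=a$ and $\nu'=b$. By Fact~\ref{fact:repfacts}, the weight space of $w(a+b)$ is therefore spanned by $v_{wa}\otimes v_{wb}$. It also contains $\iota(v_{w(a+b)})$, since $\iota$ preserves weights and $v_{w(a+b)}\neq 0$. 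Hence
\[
\iota(v_{w(a+b)}) = \kappa_w\, v_{wa}\otimes v_{wb}\qquad\text{for some }\kappa_w\in\CC^\times .
\]

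Now compare coordinates. Write $gv_a=\sum_\mu x_\mu$ and $gv_b=\sum_\nu y_\nu$ as sums of weight vectors. By the uniqueness just shown, the component of $gv_a\otimes gv_b$ in weight $w(a+b)$ is exactly $x_{wa}\otimes y_{wb}$. Its coefficient against $v_{wa}\otimes v_{wb}$ is $\Pl^a_w(gB)\,\Pl^b_w(gB)$. On the other hand, this component equals $\iota$ applied to the $w(a+b)$-component of $gv_{a+b}$, which is $\Pl^{a+b}_w(gB)\,\kappa_w\, v_{wa}\otimes v_{wb}$. Therefore
\[
\Pl^a_w\,\Pl^b_w = \kappa_w\,\Pl^{a+b}_w
\]
as functions on $G$, and hence as sections of the relevant line bundle on $G/B$.

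The main obstacle is the uniqueness of the decomposition $w(a+b)=\mu+\nu$. This rests on the standard fact that all weights of $V^a$ lie in $a-\ZZ_{\ge0}\Phi^+$, which is essentially Fact~\ref{fact:repfacts}(3) together with the convex-hull statement. The rest is bookkeeping with the chosen normalizations, whose ambiguity is absorbed into the constant $\kappa_w$.
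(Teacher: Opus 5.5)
Your proof is correct and follows the same route as the paper: embed $V^{a+b}\hookrightarrow V^a\otimes V^b$ via $v_{a+b}\mapsto v_a\otimes v_b$, so that $\Pl^{a+b}$ followed by this inclusion agrees with the Segre product of $(\Pl^a,\Pl^b)$. You also spell out a step the paper leaves implicit, namely that the $w(a+b)$-weight space of $V^a\otimes V^b$ is the line spanned by $v_{wa}\otimes v_{wb}$, which is what allows the extremal coordinates to be compared.
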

\begin{proof}
    The map
    $G/B\to \mathbb{P}(V^a)\times \mathbb{P}(V^b)\to \mathbb{P}(V^a\otimes V^b)$
    obtained by composing $(\Pl^a,\Pl^b)$ with the Segre embedding can also be written as $G/B\to \mathbb{P}(V^{a+b})\hookrightarrow \mathbb{P}(V^a\otimes V^b)$
    where the first map is $\Pl^{a+b}$ and the second map is the inclusion of $V^{a+b}\hookrightarrow V^a\otimes V^b$ with the one-dimensional weight space $V^{a+b}_{a+b}$ mapping isomorphically onto $(V^a\otimes V^b)_{a+b}$.
\end{proof}
\begin{cor}
    We have $\Pl^{\regweight}_w=\prod (\Pl^{\omega_i}_w)^{k_i}$.
\end{cor}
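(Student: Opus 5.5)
The plan is to deduce the corollary directly from the preceding Observation by induction on the total multiplicity $\sum_i k_i$. Write $\regweight=\sum_i k_i\omega_i$ with $k_i\in\ZZ_{>0}$. Each fundamental weight $\omega_i$ lies in $\Lambda^+$, and $\Lambda^+$ is closed under addition. We can therefore peel off one fundamental weight at a time, writing $\regweight=\omega_{i}+\lambda'$ with $\lambda'\in\Lambda^+$ (allowing $\lambda'=0$, where $V^0$ is the trivial representation and $\Pl^0_w$ is a nonzero constant).

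Applying the Observation with $a=\omega_i$ and $b=\lambda'$ gives $\Pl^{\omega_i}_w\Pl^{\lambda'}_w = \kappa\,\Pl^{\regweight}_w$ for some constant $\kappa\neq 0$. Iterating over the remaining summands of $\lambda'$ yields
\[
\Pl^{\regweight}_w=\kappa'\prod_i(\Pl^{\omega_i}_w)^{k_i}
\]
for a nonzero constant $\kappa'$.

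The only point needing care is the constant $\kappa'$. It arises from the arbitrary choice of generators $v_{w\lambda}$ in \eqref{eq:weightbasis} for each weight $\lambda$. We can rescale the generator $v_{w\regweight}$ to absorb $\kappa'$, so that the identity holds on the nose. This rescaling is legitimate because the Observation identifies $V^{a+b}_{w(a+b)}$ isomorphically with the line $(V^a\otimes V^b)_{w(a+b)}$, which is spanned by $v_{wa}\otimes v_{wb}$. Choosing $v_{w(a+b)}$ to be the preimage of $v_{wa}\otimes v_{wb}$ under this isomorphism makes the constant equal to $1$.

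Alternatively, if equality is only intended projectively, or up to the scalar ambiguity the paper has already declared harmless because only vanishing loci are used, the statement follows immediately. Either reading is a routine consequence of the Observation, so I expect no genuine obstacle. The only mild subtlety is making this normalization of weight vectors explicit.
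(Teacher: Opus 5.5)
Your proposal is correct and matches the paper's approach. The paper gives no separate proof: the corollary is the immediate iteration of the preceding Observation over the decomposition $\regweight=\sum k_i\omega_i$, with the equality understood up to a nonzero constant. That constant is harmless because only the vanishing loci of the Pl\"ucker functions are ever used. Your explicit normalization through $v_{wa}\otimes v_{wb}$, which makes the identity hold on the nose, is a valid refinement of this.
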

In particular by \Cref{fact:BasicPlucker}\ref{5.2.3}, the vanishing locus $\{\Pl_w=0\}\subset G/B$ decomposes as
\begin{equation}\label{eqn:npieces}\{\Pl_w=0\}=\bigcup_{i=1}^n \{\Pl_w^{\omega_i}=0\}\subset G/B.\end{equation}

\subsection{Pl\"ucker vanishing subvarieties}
\label{subsection:vanishingsubvariety}

For any $\mathcal{A}\subset W$, we define the \emph{Pl\"ucker vanishing subvariety}
$$\pluckervanishing{\mathcal{A}}\coloneqq \bigcap_{w\in W\setminus\mathcal A}\{\Pl_w=0\} \subset G/B.$$
We note that this variety is not necessarily irreducible, even when $|W\setminus \mathcal{A}|=1$ (see e.g. \eqref{eqn:npieces}).  

In the following result, say that a $T$-invariant subvariety $X\subset G/B$ is \emph{rigid} if
$$
Y^T\subset X^T \implies Y\subset X\text{ for any $T$-invariant subvariety }Y\subset G/B.
$$ 

\begin{thm}
\label{thm:PluVan}
Let $\mathcal{A} \subseteq W$.  
\begin{enumerate}[label=(\arabic*)]
\item \label{item1} $\pluckervanishing{\mathcal{A}}$ is the union of all $T$-orbit closures $Y\subset G/B$ with $Y^T\subset \mathcal{A}$.
\item \label{item2} The variety $\pluckervanishing{\mathcal{A}}$
is the unique rigid variety with $T$-fixed points given by $\mathcal{A}\subset W$.
\end{enumerate}
\end{thm}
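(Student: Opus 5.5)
The plan is to reduce everything to \Cref{fact:BasicPlucker}, which says that for $x=gB$ we have $\Pl_w(x)\neq 0$ exactly when $wB\in\overline{T\cdot x}$.

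For \ref{item1}, take $x\in G/B$ and set $Y=\overline{T\cdot x}$. By \Cref{fact:BasicPlucker}\ref{5.2.3}, $Y^T=\{w \suchthat \Pl_w(x)\neq 0\}$. Hence $x\in\pluckervanishing{\mathcal{A}}$ if and only if $\Pl_w(x)=0$ for all $w\notin\mathcal{A}$, which is the same as $Y^T\subset\mathcal{A}$.

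This gives both inclusions. If $x\in\pluckervanishing{\mathcal{A}}$, then $x$ lies in the orbit closure $\overline{T\cdot x}$, and that closure has fixed points in $\mathcal{A}$. Conversely, suppose $Y=\overline{T\cdot x}$ is any orbit closure with $Y^T\subset\mathcal{A}$, and let $y\in Y$. Then $\overline{T\cdot y}\subset Y$, so $(\overline{T\cdot y})^T\subset\mathcal{A}$, and therefore $y\in\pluckervanishing{\mathcal{A}}$. Note also that $\pluckervanishing{\mathcal{A}}$ is closed and $T$-invariant, since each $\Pl_w$ is $T$-semi-invariant by \Cref{fact:BasicPlucker}\ref{5.2.1}.

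For \ref{item2}, the argument has three steps.

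\textbf{Fixed points.} By \ref{item1} the fixed points of $\pluckervanishing{\mathcal{A}}$ are contained in $\mathcal{A}$. Conversely, each $w\in\mathcal{A}$ is itself a $T$-orbit closure $\{wB\}$ with fixed point set in $\mathcal{A}$, so it lies in $\pluckervanishing{\mathcal{A}}$. Thus $\pluckervanishing{\mathcal{A}}^T=\mathcal{A}$.

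\textbf{Rigidity.} Let $Y$ be a $T$-invariant subvariety with $Y^T\subset\mathcal{A}$, and let $y\in Y$. Since $Y$ is closed and $T$-invariant, $\overline{T\cdot y}\subset Y$, so its fixed points lie in $Y^T\subset\mathcal{A}$. By \ref{item1}, $y\in\pluckervanishing{\mathcal{A}}$, and hence $Y\subset\pluckervanishing{\mathcal{A}}$.

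\textbf{Uniqueness.} Suppose $X$ is rigid with $X^T=\mathcal{A}$. Applying rigidity of $X$ to $Y=\pluckervanishing{\mathcal{A}}$, whose fixed points equal $\mathcal{A}=X^T$, gives $\pluckervanishing{\mathcal{A}}\subset X$. In the other direction, $X$ is a $T$-invariant subvariety with $X^T=\mathcal{A}$, so rigidity of $\pluckervanishing{\mathcal{A}}$ gives $X\subset\pluckervanishing{\mathcal{A}}$. Hence $X=\pluckervanishing{\mathcal{A}}$.

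There is no serious obstacle here. The only point needing care is the standing convention that a $T$-invariant subvariety means a closed $T$-stable set, possibly reducible, so that it contains the orbit closures of its points. Once that is fixed, every step follows directly from \Cref{fact:BasicPlucker}.
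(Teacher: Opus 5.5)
Your proof is correct and follows essentially the same route as the paper: both obtain (1) directly from the equivalence $\Pl_w(y)\neq 0 \iff wB\in\overline{T\cdot y}$ of \Cref{fact:BasicPlucker}\ref{5.2.3}, and both get rigidity in (2) by applying (1) to the orbit closures $\overline{T\cdot y}\subset Y$, with uniqueness then following formally from rigidity. The only difference is that you also write out the check that $\pluckervanishing{\mathcal{A}}^T=\mathcal{A}$ and the two-sided uniqueness argument, both of which the paper leaves implicit.
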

\begin{proof}
For~\ref{item1}, every $T$-invariant variety is the union of the $T$-orbit closures contained within, so it suffices to show that if $\overline{T\cdot y}^T\subset \mathcal{A}$ then $\Pl_w (y)=0$ for all $w\not\in \mathcal{A}$. 
But 
\[\Pl_w (y)=0\Longleftrightarrow \Pl_w|_{\overline{T\cdot y}}\equiv 0 \Longleftrightarrow wB\not\in \overline{T\cdot y}.
\]

For~\ref{item2} we show that $\pluckervanishing{\mathcal{A}}$ is rigid; uniqueness then follows from rigidity.  
Indeed, if $Y^T\subset X^T$, then for $y\in Y$ we have $\overline{T\cdot y}^T\subset X^T$ and hence by~\ref{item1} we have $y\in \overline{T\cdot y}\subset X$.
\end{proof}

The next fact will be essential in our study of Pl\"{u}cker vanishing varieties in later sections.
\begin{fact}
\label{fact:rigidRich}
    For $u, v \in W$, we have $X^v_u=\pluckervanishing{[u,v]}$. More generally, for any $z\in W$ the translated Richardson variety $$zX^v_u=\pluckervanishing{z[u,v]}$$ is the Pl\"ucker vanishing variety associated to the translated Bruhat interval $\mathcal{A}=z[u,v]$, and is therefore rigid.
\end{fact}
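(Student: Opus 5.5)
The plan is to prove the untranslated statement $X^v_u=\pluckervanishing{[u,v]}$ first, and then deduce the translated version from the equivariance of Plücker coordinates under $W$. Throughout I use that $(X^v_u)^T=[u,v]$.

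For the inclusion $X^v_u\subseteq\pluckervanishing{[u,v]}$, take $y\in X^v_u$. Since $X^v_u$ is $T$-invariant and closed, $\overline{T\cdot y}\subseteq X^v_u$, and so $\overline{T\cdot y}^T\subseteq[u,v]$. By \Cref{thm:PluVan}\ref{item1} this gives $y\in\pluckervanishing{[u,v]}$.

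For the reverse inclusion, again by \Cref{thm:PluVan}\ref{item1}, it suffices to show the following: if $y\in G/B$ satisfies $\overline{T\cdot y}^T\subseteq[u,v]$, then $y\in X^v\cap X_u$.
\begin{itemize}
\item Write $y\in\Xc^w$ for the unique Schubert cell containing it.
\item Choose a generic dominant regular cocharacter $\psi$. Then $\lim_{t\to0}\psi(t)y=wB$, since $\psi$ contracts $\Xc^w\cong N_w$ to its fixed point; the sign convention is fixed so that the positive root spaces are contracted.
\item Hence $wB\in\overline{T\cdot y}$, so $w\in[u,v]$. In particular $w\le_B v$, and therefore $y\in\Xc^w\subseteq X^v$.
\item Symmetrically, write $y\in\Xc_{w'}=B^-w'B$ and use the opposite cocharacter $\psi^{-1}$. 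This gives $w'B\in\overline{T\cdot y}$, so $u\le_B w'$ and $y\in X_u$.
\end{itemize}
Thus $y\in X^v_u$, which gives the reverse inclusion.

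For the translate, pick a representative $\dot z\in N_G(T)$. By \Cref{fact:repfacts}\ref{item:repfacts1}, $\dot z$ maps $V_{w\regweight}$ onto $V_{zw\regweight}$. Hence $\Pl_{zw}(\dot z\,gB)$ is a nonzero multiple of $\Pl_w(gB)$, so $\dot z\{\Pl_w=0\}=\{\Pl_{zw}=0\}$. Intersecting over $w\notin[u,v]$ gives $zX^v_u=\pluckervanishing{z[u,v]}$. Rigidity then follows from \Cref{thm:PluVan}\ref{item2}.

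The only genuinely geometric step, and the one I expect to be the main obstacle, is the limit claim: that $\lim_{t\to0}\psi(t)y$ is the $T$-fixed point of the Bruhat cell containing $y$, and likewise for the opposite cell. I would prove it directly from the $T$-equivariant isomorphism $N_w\cong\Xc^w$ of \eqref{eq:closediso}. Under this isomorphism $\psi(t)$ acts on the coordinate $E_\beta$ by $t^{\langle\beta,\psi\rangle}$ with $\langle\beta,\psi\rangle>0$ for every $\beta\in r(\inv w)\subseteq\Phi^+$, so every coordinate tends to $0$. The opposite case is the same argument with $B^-$, using $\Xc_{w'}=\wnaught\Xc^{\wnaught w'}$.
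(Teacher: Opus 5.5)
Your proof is correct and follows essentially the same route as the paper. Both use the $W$-equivariance of Pl\"ucker coordinates to handle the translate, and both use a cocharacter that contracts each Bruhat cell $\Xc^w\cong N_w$ to $wB$ to show that a point whose orbit closure has its fixed points in the interval lies in the Schubert variety. The only cosmetic difference is in how the opposite Schubert condition is handled: the paper reduces to Schubert varieties via $X_u=\wnaught X^{\wnaught u}$ and $\pluckervanishing{\mathcal A}\cap\pluckervanishing{\mathcal B}=\pluckervanishing{\mathcal A\cap\mathcal B}$, then invokes the uniqueness in \Cref{thm:PluVan}\ref{item2}, whereas you rerun the limit argument with $\psi^{-1}$ on the opposite cells and prove both inclusions directly.
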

\begin{proof}
    We first show that our claim about translated Richardson varieties $zX^{v}_{u}$ reduces to the claim about un-translated $X^{v}_{u}$.  By~\Cref{fact:repfacts}\ref{item:repfacts1}, left translating any Pl\"{u}cker vanishing variety $\pluckervanishing{\mathcal{A}}$ by any representative of $w \in W$ gives the Pl\"{u}cker vanishing variety $\pluckervanishing{w\mathcal{A}}$.

    Next we note that our claim about un-translated Richardson varieties $X^{v}_{u}$ need only be verified for each Schubert variety $X^{v} = X^{v}_{\idem}$. 
    Indeed, this is an immediate consequence of the equalities $X^{v}_{u} = X^{v} \cap X_{u}$,  $X_{u} = \wnaught X^{\wnaught u}$, and $[u,v]=[\idem,v]\cap \wnaught[\idem, \wnaught u]$, once we note that for subsets $\mc{A},\mc{B}\subseteq W$ we have $\pluckervanishing{\mc{A}}\cap\pluckervanishing{\mc{B}} =\pluckervanishing{\mc{A}\cap \mc{B}}$.
    
    Finally, we prove the claim for un-translated Schubert varieties.  
    As the $T$-fixed points of $X^{v}$ are by definition elements of $[\idem, v]$, we need only prove that $X^{v}$ is rigid and apply \Cref{thm:PluVan}\ref{item2} in order to complete the proof.  
    To this end suppose that $Y^{T} \subseteq [\idem, v]$ for some $T$-invariant subvariety $Y$.  
    Each $x \in Y$ belongs to a Schubert cell $\Xc^w$, and because $\Xc^w\cong N_{w}$ any strictly antidominant cocharacter $\psi:\mathbb{C}^*\to T$ (i.e. one which pairs negatively with the roots in $\Phi^-$) has the property that $\lim_{t\to 0}\psi(t)\cdot x = wB$.
    It follows that $wB \in \overline{T\cdot x} \subseteq Y$.  
    By assumption on $Y$ we have $w \in [\idem, v]$, and therefore $x \in X^{v}$.
\end{proof}

Going forward, say that two Bruhat intervals $[u,v]$ and $[u',v']$ are \emph{shape equivalent} if $u'u^{-1}[u,v]=[u',v']$. 
An immediate application of \Cref{fact:rigidRich} is to geometrically interpret shape equivalence.
\begin{defn}\label{defn:movetoid}
Given a Bruhat interval $[u,v]$, we denote $\movetoid{u}{v}\coloneqq u^{-1}[u,v].$
\end{defn}

\begin{cor}\label{cor:shapeequiv} For $u, v, u',v' \in W$, the following are equivalent.
\begin{enumerate}
    \item $[u,v]$ and $[u',v']$ are shape equivalent.
    \item $\movetoid{u}{v}=\movetoid{u'}{v'}$
    \item $u^{-1}X^v_u=(u')^{-1}X^{v'}_{u'}$
\end{enumerate}
\end{cor}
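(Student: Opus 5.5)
The plan is to prove the cycle of implications (1) $\Rightarrow$ (2) $\Rightarrow$ (3) $\Rightarrow$ (1), using \Cref{fact:rigidRich} to turn statements about varieties into statements about sets of $T$-fixed points.

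For (1) $\Leftrightarrow$ (2), we just unwind \Cref{defn:movetoid}. Suppose $u'u^{-1}[u,v]=[u',v']$. Multiplying on the left by $(u')^{-1}$ gives
\[
u^{-1}[u,v]=(u')^{-1}[u',v'],
\]
which is exactly $\movetoid{u}{v}=\movetoid{u'}{v'}$. Conversely, multiplying the equality $\movetoid{u}{v}=\movetoid{u'}{v'}$ on the left by $u'$ recovers (1). Nothing beyond the group action of $W$ on itself is needed here.

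For (2) $\Leftrightarrow$ (3), we apply \Cref{fact:rigidRich} with $z=u^{-1}$ and with $z=(u')^{-1}$. This gives
\[
u^{-1}X^v_u=\pluckervanishing{\movetoid{u}{v}}
\quad\text{and}\quad
(u')^{-1}X^{v'}_{u'}=\pluckervanishing{\movetoid{u'}{v'}}.
\]
If (2) holds, the two Plücker vanishing varieties are the same, so (3) holds. If (3) holds, we compare $T$-fixed points. By \Cref{thm:PluVan}, each translated Richardson variety is the unique rigid variety with the given fixed-point set. More directly, the $T$-fixed points of $X^v_u$ are $[u,v]$, and left translation by a representative of $u^{-1}$ carries these fixed points to $u^{-1}[u,v]$. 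So equal varieties have equal fixed-point sets, which gives (2).

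The only point that needs care is that translation by $u^{-1}$ is well defined. Left multiplication by any representative in $N_G(T)$ gives the same result, because two representatives differ by an element of $T$ and $X^v_u$ is $T$-stable. This is the same observation already used in the proof of \Cref{fact:rigidRich}, so I do not expect any genuine obstacle.
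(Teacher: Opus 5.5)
Your proposal is correct and matches the paper's argument: (1)$\Leftrightarrow$(2) is a direct unwinding of the definition of $\movetoid{u}{v}$. For (2)$\Leftrightarrow$(3), \Cref{fact:rigidRich} identifies $zX^v_u$ with $\pluckervanishing{z[u,v]}$, and the converse direction is read off from the $T$-fixed points.
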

\begin{proof}
The first two are clearly equivalent, and the equivalence of (2) and (3) follows immediately from \Cref{fact:rigidRich}.
\end{proof}

\subsection{Coxeter matroids and moment polytopes}
\label{subsec:CoxMat}

Because the $T$-fixed points $wB\in G/B$ map to the coordinate lines $\langle v_{w\cdot \regweight}\rangle$ in $\mathbb{P}(V^\regweight)$, for an irreducible $T$-invariant subvariety $X\subset G/B$ the \emph{moment polytope} \cite{At82,GS82} is by definition
$$\mu(X)=\operatorname{conv}(\{w\cdot \regweight\suchthat w\in X^T\}).$$
In particular $\mu(G/B)$ is the \emph{$W$-permutahedron}
\[
\operatorname{Perm}_W\coloneqq \operatorname{conv}(\{w\cdot \regweight\suchthat w\in W\}).\]
Since $(X^v_u)^T=[u,v]$, the moment polytope of a Richardson variety is the \emph{twisted Bruhat interval polytope} \cite{TW15}
$$\mu(X^v_u)=P_{[u,v]}\coloneqq \operatorname{conv}(\{w\cdot \regweight \suchthat w\in [u,v]\})\subset \operatorname{Perm}_W.$$

A \emph{Coxeter matroid} $\mathcal{M}\subset W$ is a subset of $W$ such that $w\mathcal{M}$ has a unique Bruhat-maximum element for every $w\in W$, and a \emph{Coxeter matroid polytope} is a polytope $P\subset \Perm_W$ whose vertices are a subset of the vertices of $\Perm_W$ and whose edges are all parallel to roots of $W$. For example, the set $W$ is a Coxeter matroid, and the corresponding Coxeter matroid polytope is $\Perm_W$.

The Gelfand--Serganova theorem (see ~\cite[Theorem~6.3.1]{BGW} for a textbook treatment) gives a bijection between Coxeter matroids and Coxeter matroid polytopes given by
$$\mathcal{M}\mapsto P_{\mathcal{M}}\coloneqq \operatorname{conv}(\{w\cdot \regweight\suchthat w\in \mathcal{M}\})\subset \operatorname{Perm}_W.$$
\begin{fact}[{\cite[\S7 Theorem 1]{GelSer87}}]
The moment polytope $\mu(X)$ of an irreducible $T$-invariant variety $X\subset G/B$ is a Coxeter matroid polytope.
\end{fact}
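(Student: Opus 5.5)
The plan is to reduce to a single $T$-orbit closure and then read off the edges of its moment polytope from $T$-invariant curves in $G/B$, whose fixed points we know from \Cref{defn:Puv}.

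\textbf{Vertices.} The weights $w\cdot\regweight$ for $w\in W$ are the extremal weights of $V^{\regweight}$. Since $\regweight$ is regular, they are pairwise distinct and in convex position; they are precisely the vertices of $\Perm_W$. Hence every vertex of $\mu(X)=\operatorname{conv}\{w\cdot\regweight\suchthat w\in X^T\}$ is of the form $w\cdot\regweight$, and so is a vertex of $\Perm_W$.

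\textbf{Reduction to an orbit closure.} Let $X$ be irreducible and $T$-invariant. For each $w$, the locus $\{\Pl_w\neq 0\}\cap X$ is either empty or a dense open subset of $X$. So we can choose $x\in X$ lying in every such nonempty open set. Then \Cref{fact:BasicPlucker}\ref{5.2.2} and \ref{5.2.3} give
\[
X^T=\{w\suchthat \Pl_w|_X\not\equiv0\}=\{w\suchthat \Pl_w(x)\neq0\}=\overline{T\cdot x}^{\,T}.
\]
Therefore $\mu(X)=\mu(\overline{T\cdot x})$, and it suffices to treat $Y=\overline{T\cdot x}$.

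\textbf{Edges via invariant curves.} Write $x$ in coordinates with respect to the weight decomposition of $V^{\regweight}$, and let $S$ be the set of weights with nonzero coordinate. The polytope $P=\operatorname{conv}(S)$ contains $\mu(Y)$ and has the same vertices, so $P=\mu(Y)$. Let $E$ be an edge of $P$ with endpoints $u\cdot\regweight$ and $v\cdot\regweight$. Choose a cocharacter $\psi$ whose pairing is minimized on $P$ exactly along $E$. Then $y=\lim_{t\to0}\psi(t)x$ exists in $Y$ and has support $S\cap E$.
\begin{itemize}
\item The orbit $T\cdot y$ is one-dimensional, because the affine span of $S\cap E$ is a line.
\item The closure $C=\overline{T\cdot y}$ is an irreducible $T$-invariant curve in $G/B$.
\item By the same Pl\"ucker argument as above, $C^T=\{u,v\}$.
\end{itemize}
The $T$-invariant curves of $G/B$ are exactly the $\mathbb{P}_{u',v'}$ of \Cref{defn:Puv}. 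Hence $C=\mathbb{P}_{u,v}$ and $v=\tau u$ for some reflection $\tau$. Then
\[
v\cdot\regweight-u\cdot\regweight=\tau(u\cdot\regweight)-u\cdot\regweight=-(u\cdot\regweight,r(\tau)^\vee)\,r(\tau),
\]
so $E$ is parallel to the root $r(\tau)$.

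\textbf{Expected obstacle.} The main care point is the toric limit argument, since $Y$ need not be normal. It must be checked carefully that a one-parameter degeneration cuts out exactly the face $E$: the limit kills every coordinate whose weight pairs with $\psi$ above the minimum, and rescales projectively so that the coordinates on $E$ survive. It must also be checked that the closure of $T\cdot y$ contains no fixed points besides $u$ and $v$. This second point again follows from \Cref{fact:BasicPlucker}\ref{5.2.3}, because the only fixed points in $C$ are those $w$ with $\Pl_w(y)\neq 0$, and $w\cdot\regweight\in S\cap E$ forces $w\in\{u,v\}$.
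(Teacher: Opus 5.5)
Your proposal is correct and takes the same route as the paper's proof sketch: pass to a generic orbit closure $\overline{T\cdot x}$ with $\mu(X)=\mu(\overline{T\cdot x})$, identify each edge of the moment polytope with a $T$-invariant curve $\mathbb{P}_{u,\tau u}$, and conclude that the edge is parallel to $r(\tau)$. You fill in details the paper leaves implicit, namely the choice of generic $x$, the one-parameter degeneration onto the edge, and the check that $C^T=\{u,v\}$; the one step you state without justification, that $\operatorname{conv}(S)$ has the same vertices as $\mu(\overline{T\cdot x})$, follows from the same degeneration argument applied to a vertex of $\operatorname{conv}(S)$, which degenerates to a $T$-fixed point $wB$ with $\Pl_w(x)\neq 0$.
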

Since this result is not stated in this precise way in loc.~cit.~we include a sketch of the proof.
\begin{proof}[Proof Sketch]
    $\mu(X)=\mu(\overline{T\cdot x})$ for generic $x\in X$. Each edge of the moment polytope corresponds to a $T$-invariant curve in $\overline{T\cdot x}$, which is of the form $\mathbb{P}_{u,\tau u}$ for a reflection $\tau$, and the moment polytope of such a curve is a line segment in the direction $r(\tau)\in \Phi^+$.
\end{proof}
Because $(zX^v_u)^T=z[u,v]$, we deduce the following combinatorial corollary.
\begin{cor}
    For $u\le_B v$, the translated Bruhat interval $z[u,v]$ is a Coxeter matroid.
\end{cor}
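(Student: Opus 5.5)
The plan is to reduce to the fact just proved: the moment polytope of an irreducible $T$-invariant subvariety of $G/B$ is a Coxeter matroid polytope, combined with the Gelfand--Serganova bijection between Coxeter matroids and Coxeter matroid polytopes. Concretely, take $X = zX^v_u$ for $u\le_B v$. This is a translate of a Richardson variety, and Richardson varieties are irreducible, so $X$ is an irreducible $T$-invariant subvariety of $G/B$. Its $T$-fixed points are $X^T = z[u,v]$, as noted just above. By the preceding Fact, $\mu(X)=\operatorname{conv}(\{w\cdot\regweight \suchthat w\in z[u,v]\}) = P_{z[u,v]}$ is a Coxeter matroid polytope.

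The remaining step is to recover that the index set $z[u,v]$ itself is a Coxeter matroid, and not merely that some Coxeter matroid has the same convex hull. Here I would use that $\regweight$ is regular, so the points $w\cdot\regweight$ for $w\in W$ are pairwise distinct and in convex position. They are exactly the vertices of $\Perm_W$, and each is a vertex of any polytope spanned by a subset of them containing it. Hence the vertex set of $P_{z[u,v]}$ is precisely $\{w\cdot\regweight \suchthat w\in z[u,v]\}$.

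By Gelfand--Serganova, $P_{z[u,v]}=P_{\mathcal{M}}$ for a unique Coxeter matroid $\mathcal{M}$. The vertex set of $P_{\mathcal{M}}$ is likewise $\mathcal{M}\cdot\regweight$. Comparing vertex sets gives $\mathcal{M}=z[u,v]$.

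The only point requiring care is this identification of vertex sets. The bijection in Gelfand--Serganova is stated on subsets $\mathcal{M}$, and one must make sure that no element of $z[u,v]$ maps to a non-vertex of the hull. Convex position of the $W$-orbit of a regular weight (each $w\cdot\regweight$ is the unique maximizer of a suitable linear functional on the orbit) settles this. Everything else is a direct citation.
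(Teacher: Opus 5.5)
Your proposal is correct and follows the paper's argument: the paper obtains the corollary directly from $(zX^v_u)^T=z[u,v]$, the Gelfand--Serganova fact applied to the irreducible $T$-invariant variety $zX^v_u$, and the bijection $\mathcal{M}\mapsto P_{\mathcal{M}}$. Your vertex-set comparison, which uses that $\regweight$ is regular so the orbit points $w\cdot\regweight$ are distinct and in convex position, makes explicit the identification $\mathcal{M}=z[u,v]$ that the paper leaves implicit.
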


\section{Noncrossing partitions via translated Bruhat intervals}
\label{section:noncrossingandBruhet}
In this section we introduce the central combinatorial observation that will connect noncrossing partitions and Bruhat combinatorics.
Given a Bruhat interval $[u,v]$ recall that $\movetoid{u}{v}\coloneqq u^{-1}[u,v]$. 
We show the following.

\begin{thm}
\label{thm:HasseUnion}
Let $w\in W$ be such that $w \le_{B} wc$. 
Then $\movetoid{w}{wc}\subseteq \NC(W,c)$, and considered as an induced subposet of $(\NC(W,c),\le_{\Reflections})$, it is poset-isomorphic to the Bruhat interval $([w,wc],\le_B)$ via left multiplication by $w^{-1}$:
\[
[w,wc]\ \xrightarrow{\ \sim\ }\ \movetoid{w}{wc}
\qquad x\longmapsto w^{-1}x.
\]
Both intervals have rank $n$, and in particular $\ell(w) + \ell(c) = \ell(wc)$.

Moreover, every maximal chain in $\NC(W, c)$ lies in some translated Bruhat interval: for each
\[
\idem \lessdot_{\Reflections} \tau_{1} \lessdot_{\Reflections} \tau_{1}\tau_{2} \lessdot_{\Reflections} \cdots \lessdot_{\Reflections} \tau_{1}\tau_{2}\cdots\tau_{n} = c,
\]
there exists a $w \in W$ such that $w \lessdot_{B} w\tau_{1} \lessdot_{B} w\tau_{1}\tau_{2} \lessdot_{B} \cdots \lessdot_{B} w\tau_{1}\tau_{2}\cdots\tau_{n}=wc$.
\end{thm}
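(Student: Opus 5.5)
The plan is to compare Bruhat length with absolute length along a saturated chain. Since $w\le_B wc$ is assumed, $[w,wc]$ is a Bruhat interval, and any maximal chain in it has the form $w=x_0\lessdot_B x_1\lessdot_B\cdots\lessdot_B x_m=wc$. Each cover is right multiplication by a reflection, $x_i=x_{i-1}t_i$, and $m=\ell(wc)-\ell(w)$. Translating by $w^{-1}$ gives the chain $\idem, t_1, t_1t_2,\ldots,t_1\cdots t_m=c$.

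\textbf{Step 1: ranks.} First I show $m=n$. Because $c$ is a product of $m$ reflections, $m\ge \ell_{\Reflections}(c)=n$. For the upper bound I use that the lengths of $w$ and $wc$ differ by at most $\ell(c)=n$, so $m=\ell(wc)-\ell(w)\le n$. Hence $m=n$ and $\ell(wc)=\ell(w)+\ell(c)$, which is the claimed length additivity.

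\textbf{Step 2: containment and order preservation.} Since $m=n$, the factorization $c=t_1\cdots t_n$ is a minimal reflection factorization. Every $x\in[w,wc]$ lies on some maximal chain, so $w^{-1}x$ is a prefix $t_1\cdots t_i$ and therefore lies in $\NC(W,c)$; this gives $\movetoid{w}{wc}\subseteq\NC(W,c)$. The same argument applies to any subinterval $[x,y]\subseteq[w,wc]$. The reflection chain from $x$ to $y$ has length $\ell(y)-\ell(x)$, and it sits inside a minimal factorization of $c$, so it is minimal. Consequently $\ell_{\Reflections}(x^{-1}y)=\ell(y)-\ell(x)$ and $w^{-1}x\le_{\Reflections} w^{-1}y$. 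Thus the map $x\mapsto w^{-1}x$ is order preserving, and it preserves rank in the form $\ell(x)-\ell(w)=\ell_{\Reflections}(w^{-1}x)$.

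\textbf{Step 3: order reflection (main obstacle).} I must show that if $x,y\in[w,wc]$ satisfy $w^{-1}x\le_{\Reflections}w^{-1}y$, then $x\le_B y$. The tool I would use is Dyer's EL-labeling (Theorem \ref{thm:Dyer}) together with the geometry: $w^{-1}X^{wc}_w$ is $\pluckervanishing{\movetoid{w}{wc}}$ (Fact \ref{fact:rigidRich}), and its $T$-curves are the curves $\mathbb{P}_{w^{-1}x,w^{-1}y}$.

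The first attempt is a combinatorial induction on $\ell_{\Reflections}(x^{-1}y)$.
\begin{enumerate}
\item For a single reflection, $y=xt$ with both $x,y\in[w,wc]$. Then $x$ and $y$ are Bruhat comparable, and by Step 2 the absolute rank increases from $w^{-1}x$ to $w^{-1}y$, so $\ell(x)<\ell(y)$ and hence $x<_B y$.
\item In general, write $x^{-1}y=t_1\cdots t_k$ as a minimal reflection factorization. One then needs the intermediate elements $x t_1\cdots t_j$ to stay inside $[w,wc]$, and this requires a suitable choice of factorization.
\end{enumerate}
An alternative route to (2) uses that $\movetoid{w}{wc}$ is a Coxeter matroid whose polytope has edges parallel to roots. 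Its vertex set is convex within $\NC(W,c)$, which I would try to exploit through the lattice property of $\NC(W,c)$ and the rank count: an interval $[w^{-1}x,w^{-1}y]$ in $\NC$ of rank $k$ should be matched with a Bruhat interval $[x,y]$ of length $k$.

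\textbf{Step 4: the ``moreover'' part.} Given a maximal chain $\idem\lessdot_{\Reflections}\tau_1\lessdot_{\Reflections}\cdots\lessdot_{\Reflections}c$, I need $w$ with $w\tau_1\cdots\tau_{i}>_B w\tau_1\cdots\tau_{i-1}$ for every $i$. Each cover $z\lessdot_B zt$ in this sense means $\ell(z\tau)>\ell(z)$, i.e. $z(r(\tau))\in\Phi^+$. Writing $\beta_i$ for the root of $(\tau_1\cdots\tau_{i-1})\tau_i(\tau_1\cdots\tau_{i-1})^{-1}$, this is the requirement that $w\beta_i>0$ for all $i$.

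These $n$ roots $\beta_i$ are the left-inversion-type roots of the factorization, and since the factorization $\tau_1\cdots\tau_n=c$ is minimal they are linearly independent. Therefore some generic linear functional is positive on all of them, and a suitable $w$ sends the corresponding chamber to the dominant one, so that $w\beta_i>0$ for every $i$. The chain then has Bruhat length exactly $n$, which equals the rank by Step 1, so it is saturated and each step is a cover.
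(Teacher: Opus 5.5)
Your Steps 1, 2, the cover case of Step 3, and Step 4 match the paper's proof. The pieces are: the squeeze $n=\ell_{\Reflections}(c)\le m\le \ell(c)=n$; translating maximal chains to get $\movetoid{w}{wc}\subseteq\NC(W,c)$ and $\ell_{\Reflections}(w^{-1}x)=\ell(x)-\ell(w)$; the cover correspondence $x\lessdot_B y\iff w^{-1}x\lessdot_{\Reflections}w^{-1}y$; and choosing $w$ to make $n$ linearly independent roots positive. One small fix in Step 4: $\beta_i$ must be the possibly negative root $\tau_1\cdots\tau_{i-1}(r(\tau_i))$ itself, not the positive root of the conjugated reflection. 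Otherwise the condition $w\beta_i>0$ has the wrong sign in some coordinates.

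\textbf{The gap is Step 3(2), which you leave open.} For $x,y\in[w,wc]$, the hypothesis $w^{-1}x\le_{\Reflections}w^{-1}y$ is equivalent (by your rank identity) to $\ell(y)-\ell(x)=\ell_{\Reflections}(x^{-1}y)$. That alone does not give $x\le_B y$: in $S_4$, $x=s_1$ and $y=s_2s_3s_2$ satisfy it, yet $x\not\le_B y$. So the fact that both lie in the same interval has to be used. Your induction needs the intermediate elements to stay in $[w,wc]$, which you do not establish, and the Coxeter-matroid/lattice remark is not yet an argument. Without this step you have only a cover-preserving bijection, not an isomorphism onto the induced subposet.

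The paper's proof also stops at the cover correspondence and treats it as the isomorphism (``both posets are graded''). The later sections only use covers. So your Steps 1--3(1) already contain everything the paper actually argues, and the step you flagged is a real one.

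One way to close it is through the moment polytope rather than the lattice structure.
\begin{itemize}
\item By \Cref{thm:CoxeterRichIsToric} (whose proof only needs your Step 1) and \Cref{prop:invToricSub}: for $x=wu\in[w,wc]$, the polytope $F\coloneqq w^{-1}P_{[x,wc]}$ is a face of $Q\coloneqq w^{-1}P_{[w,wc]}$, and its vertices are exactly the points $v\regweight$ with $v\in w^{-1}[x,wc]$.
\item Take a maximal chain $x=x_i\lessdot_B\cdots\lessdot_B x_n=wc$ with $x_j=x_{j-1}t_j$. Its translated edge vectors are positive multiples of $ut_{i+1}\cdots t_{j-1}(r(t_j))$.
\item The factorization $u^{-1}c=t_{i+1}\cdots t_n$ is minimal by your Step 2. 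By the triangularity argument in the toric proof, these edge vectors span $u\operatorname{Im}(\idem-u^{-1}c)$. Hence $\operatorname{aff}(F)\supseteq u\regweight+u\operatorname{Im}(\idem-u^{-1}c)$.
\item Now let $v\in\movetoid{w}{wc}$ with $u\le_{\Reflections}v$. Then $u^{-1}v\le_{\Reflections}u^{-1}c$, and by the standard monotonicity of moved spaces along absolute order, $v\regweight-u\regweight=-u(\idem-u^{-1}v)\regweight\in u\operatorname{Im}(\idem-u^{-1}v)\subseteq u\operatorname{Im}(\idem-u^{-1}c)$.
\item Therefore $v\regweight\in Q\cap\operatorname{aff}(F)=F$. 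Since $v\regweight$ is a vertex of $Q$, it is a vertex of $F$, so $wv\in[x,wc]$ and $x\le_B wv$.
\end{itemize}
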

\begin{cor}
    $w\le_B wc$ is equivalent to $\ell(wc)=\ell(w)+\ell(c)$. In particular, in either of these two equivalent situations we may write the product $wc$ in the length additive  notation $w\cdot c$.
\end{cor}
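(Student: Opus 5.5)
The corollary follows almost directly from \Cref{thm:HasseUnion} together with the elementary relation between right weak order and Bruhat order recalled in \Cref{sec:rightorder}. I will prove the two implications separately.

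For the forward direction, suppose $w\le_B wc$. This is exactly the hypothesis of \Cref{thm:HasseUnion}, whose conclusion includes $\ell(w)+\ell(c)=\ell(wc)$. If one prefers to see where this equality comes from: the theorem says the interval $[w,wc]$ is poset-isomorphic to $\movetoid{w}{wc}\subseteq \NC(W,c)$ and has rank $n$. Its rank is $\ell(wc)-\ell(w)$, since Bruhat order is graded by length. Moreover $\ell(c)=n$, because a word using each simple reflection exactly once is reduced: its length is at least $\ell_{\Reflections}(c)=n$. Hence $\ell(wc)-\ell(w)=n=\ell(c)$.

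For the converse, suppose $\ell(wc)=\ell(w)+\ell(c)$. Concatenating a reduced word for $w$ with a reduced word for $c$ then gives a word for $wc$ of length $\ell(wc)$, which is therefore reduced. So a reduced word for $w$ is a prefix of a reduced word for $wc$, which means $w\le_R wc$. As noted in \Cref{sec:rightorder}, $u\le_R v$ implies $u\le_B v$, so $w\le_B wc$.

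The two conditions are thus equivalent. The final sentence of the corollary is then just the notational convention $a\cdot b$ from \Cref{sec:rightorder}, which applies precisely when the product is length additive.

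There is no real obstacle here: all the substance lies in \Cref{thm:HasseUnion}. The only point to keep straight is that the nontrivial implication is $w\le_B wc\Rightarrow$ length additivity. That is what the rank statement of the theorem supplies, whereas the reverse implication is formal.
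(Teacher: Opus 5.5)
Your proof is correct and matches the paper, which gives the corollary no separate proof and treats it as immediate from \Cref{thm:HasseUnion}. The forward implication is that theorem's conclusion $\ell(w)+\ell(c)=\ell(wc)$; the paper derives it by squeezing the chain length $\ell(wc)-\ell(w)$ between $\ell_{\Reflections}(c)=n$ and $\ell(c)=n$. The converse is the formal chain of implications, length additivity $\Rightarrow w\le_R wc\Rightarrow w\le_B wc$, recorded in \Cref{sec:rightorder}.
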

\begin{proof}[Proof of \Cref{thm:HasseUnion}]
We deal with the two halves of the statement separately. 
For the first half, fix $w\in W$ with $w \le_{B} wc$.

\smallskip
\noindent\textbf{Step 1: containment and induced-subposet identification.}
Since Bruhat order is graded by $\ell$, every $x\in[w,wc]$ lies on some maximal chain from $w$ to $wc$.  
Moreover, the product of the labels of each chain give a factorization of $c$ into reflections.  
Thus the length of this chain must be exactly $n$: it cannot be larger as $\ell(wc) \le \ell(w) + n$, and it cannot be smaller as $\ell_{\Reflections}(c) = n$.  
Thus translating such a chain by $w^{-1}$ yields a maximal chain in absolute order from $\idem$ to $c$ passing through
$w^{-1}x$. Hence $\movetoid{w}{wc}\subseteq \NC(W,c)$, and the length of this chain must be $\ell_{\Reflections}(c) = n$.

Moreover, along any maximal Bruhat chain
$w=x_0\lessdot_B x_1\lessdot_B\cdots\lessdot_B x_{\ell(c)}=wc$,
$w^{-1}$-translation gives a maximal absolute-order chain
$\idem \lessdot_{\Reflections} w^{-1}x_1\lessdot_{\Reflections}\cdots\lessdot_{\Reflections} c$.
In particular, for $x\in[w,wc]$:
\begin{equation}\label{eq:rankmatch}
\ell_{\Reflections}(w^{-1}x)=\ell(x)-\ell(w).
\end{equation}
Since both posets are graded (by $\ell$ on $[w,wc]$ and by $\ell_{\Reflections}$ on $\NC(W,c)$), the equality in 
\eqref{eq:rankmatch} forces the translation map $x\mapsto w^{-1}x$ to have the property that
for $x,y\in[w,wc]$,
\[
x\lessdot_B y \iff w^{-1}x \lessdot_{\Reflections} w^{-1}y.
\]
This is exactly the statement that $\movetoid{w}{wc}$ with the induced order from $\NC(W,c)$, is poset isomorphic to $[w,wc]$
via $x\mapsto w^{-1}x$.

\smallskip
\noindent\textbf{Step 2: every absolute chain comes from a translated Bruhat chain.}
Let $\idem \lessdot_{\Reflections} \tau_{1} \lessdot_{\Reflections} \tau_{1}\tau_{2} \lessdot_{\Reflections} \cdots \lessdot_{\Reflections} \tau_{1}\tau_{2}\cdots\tau_{n} = c$ be a maximal chain in $\NC(W, c)$. Define roots
\[
\beta_i \coloneqq  \tau_1\cdots\tau_{i-1}(r(\tau_i))\qquad(1\le i\le n).
\]
Since $s_{\beta_1}\cdots s_{\beta_{n}}=c$ is a minimal reflection factorization we have that $\beta_1,\dots,\beta_n$ are linearly independent.
In particular there exists $w\in W$ so that $w(\beta_i)\in \Phi^+ $ for $1\leq i\leq n$.
We claim that for this choice of $w$ we get a maximal chain in Bruhat order that we seek.
Setting $y_i\coloneqq w\tau_1\cdots\tau_i$ we have
\[
y_{i-1}(r(\tau_i)) = w(\beta_i)\in\Phi^+,
\]
so $y_{i-1}<_B y_i$ for all $i$. As there are $n=\ell(c)$ steps, the chain $w<_B y_1<_B\cdots<_B wc$ is maximal, and every consecutive relation corresponds to a cover.
In particular $wu\lessdot_B wv$. Also, the same length count gives $\ell(wc)=\ell(w)+\ell(c)$.

This proves the $(\Rightarrow)$ direction in the final equivalence of the theorem. The converse $(\Leftarrow)$ direction
is exactly the cover correspondence established in the first half.
\end{proof}

Recall that there is an EL-labeling of $\NC(W,c)$ where the edge $xy$ is labeled with the reflection $x^{-1}y=y^{-1}x$. Decreasing chains under the $c$-reflection order are called $c$-decreasing chains, and there are $\cat{W}^+$-many $c$-decreasing maximal chains in $\NC(W,c)$ (\Cref{cor:numbdecreasing}).
\begin{cor}
\label{prop:uniquecdec}
A translated interval $\movetoid{w}{w\cdot c}\subset \NC(W,c)$ contains a unique $c$-decreasing maximal chain. 
Furthermore each $c$-decreasing maximal chain in $\NC(W,c)$ is contained in $\movetoid{w}{w\cdot c}$.
\end{cor}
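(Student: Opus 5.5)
The plan is to compare edge labels on the two sides of the isomorphism in \Cref{thm:HasseUnion} and then apply Dyer's EL-labeling result (\Cref{thm:Dyer}). The key observation is that under the translation $x\mapsto w^{-1}x$, a Bruhat cover $x\lessdot_B xt$ in $[w,w\cdot c]$ (labeled by $t$ as in \Cref{thm:Dyer}) goes to the absolute-order cover $w^{-1}x\lessdot_{\Reflections} w^{-1}xt$. In $\NC(W,c)$ that cover is labeled by $(w^{-1}x)^{-1}(w^{-1}xt)=t$. So the right-multiplication labels coincide, and the isomorphism of \Cref{thm:HasseUnion} is label-preserving. Consequently, a maximal chain in $\movetoid{w}{w\cdot c}$ is $c$-decreasing, for the $c$-reflection order, exactly when the corresponding maximal chain of $[w,w\cdot c]$ is decreasing for that same order.

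For the first claim, I will use that the $c$-reflection order is a genuine reflection ordering: by Dyer it corresponds to the $c$-sorting reduced word of $\wnaught$. \Cref{thm:Dyer} then says the Bruhat interval $[w,w\cdot c]$ has a unique decreasing maximal chain for this order. Applying the EL property to the reverse of the order (also a reflection order) gives a unique increasing chain for the reverse, i.e.\ a unique decreasing chain. Transporting through the label-preserving isomorphism, $\movetoid{w}{w\cdot c}$ contains exactly one $c$-decreasing maximal chain of $\NC(W,c)$. Here I also use that maximal chains of $\movetoid{w}{w\cdot c}$ are maximal chains of $\NC(W,c)$, since both have rank $n$ with the same endpoints $\idem$ and $c$.

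For the second claim, take a $c$-decreasing maximal chain $\idem\lessdot_{\Reflections}\tau_1\lessdot_{\Reflections}\cdots\lessdot_{\Reflections}\tau_1\cdots\tau_n=c$. The second half of \Cref{thm:HasseUnion} produces $w$ with $w\lessdot_B w\tau_1\lessdot_B\cdots\lessdot_B wc$. Hence $w\le_B wc$, so $w\cdot c$ is length additive by the corollary to \Cref{thm:HasseUnion}. Translating by $w^{-1}$ puts the whole chain inside $\movetoid{w}{w\cdot c}$.

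The only delicate point I anticipate is the bookkeeping of labels: confirming that the label conventions (right multiplication $x^{-1}y$) match in both posets, and that "decreasing" in \Cref{thm:Dyer} applies for the same total order used on $\NC(W,c)$. The reversed-order remark handles the latter. Nothing else should require new input.
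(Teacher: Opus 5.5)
Your proposal is correct and follows the paper's own argument: transport Dyer's EL-labeling (\Cref{thm:Dyer}) through the label-preserving translation isomorphism of \Cref{thm:HasseUnion} to get uniqueness, then use the second half of \Cref{thm:HasseUnion} to place every $c$-decreasing maximal chain inside some $\movetoid{w}{w\cdot c}$. Your detour through the reversed reflection order is valid but not needed, since \Cref{thm:Dyer} as stated already asserts the uniqueness of the decreasing chain.
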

\begin{proof}
 By \Cref{thm:Dyer} there is a unique $c$-decreasing maximal chain in $[w,wc]$ under the edge labeling of $xy$ by $x^{-1}y=y^{-1}x$. 
 By Theorem~\ref{thm:HasseUnion}, left-multiplication by $w^{-1}$ gives a poset isomorphism between $\movetoid{w}{wc}$ and $[w,wc]$,  which in view of  $(wx)^{-1}wy=x^{-1}y$, induces an edge labeling on $\movetoid{w}{wc}$.
So we conclude there is a unique $c$-decreasing maximal chain in $\movetoid{w}{wc}$. 
Finally, Theorem~\ref{thm:HasseUnion} also implies every maximal chain in $\NC(W,c)$ belongs to at least one $\movetoid{w}{wc}$.
\end{proof}

\section{The Coxeter Flag Variety}
\label{section:CFLc}
In this section we introduce the $c$-Coxeter Richardson varieties and the $c$-Coxeter flag variety.

\subsection{$c$-Coxeter Richardson varieties}
A Bruhat interval $[u,v]$ is said to be \emph{toric} if the Richardson variety $X^v_u$ is a toric variety with respect to $T$. Because Richardson varieties are normal \cite[Proposition 1.22]{speyer2024richardson}, toric Richardson varieties are normal projective toric varieties, and so each such $X^v_u$ is the toric variety $X_\Sigma$ of the complete fan $\Sigma$ arising as the normal fan of the moment polytope.
\begin{prop}
\label{prop:invToricSub}
    Each face of  $P_{[u,v]}$ is a polytope of the form $P_{[u',v']}$ for a subinterval $[u',v']\subset [u,v]$. Furthermore the $T$-invariant subvarieties of $X^v_u$ with $[u,v]$ toric are exactly the Richardson varieties $X^{v'}_{u'}$ with  $[u',v']\subset [u,v]$.
\end{prop}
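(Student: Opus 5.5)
We will combine the orbit–face correspondence for normal projective toric varieties with the rigidity of translated Richardson varieties from \Cref{fact:rigidRich}. Throughout, $[u,v]$ is toric, so $X^v_u$ is a normal projective toric variety for $T$ with moment polytope $P_{[u,v]}$.

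First we recall the standard correspondence. For a normal projective toric variety $X$ with moment polytope $P$, there is an inclusion-preserving bijection between faces $F\subseteq P$ and $T$-invariant irreducible subvarieties $Y_F\subseteq X$, namely the $T$-orbit closures. Under this bijection $\mu(Y_F)=F$, and the $T$-fixed points of $Y_F$ are exactly the fixed points of $X$ whose moment images are vertices of $F$. Every $T$-invariant subvariety of $X$ is a union of such orbit closures. Since the $T$-fixed points of $X^v_u$ are the points $wB$ for $w\in[u,v]$, we get $Y_F^T=\{w\in[u,v]\suchthat w\regweight\in F\}$.

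The key step is to show that each $Y_F$ is a Richardson variety. Fix a face $F$ and choose a generic linear functional $\phi$ on $\Q{T}$. Let $u'$ and $v'$ be the vertices of $F$ minimizing and maximizing $\phi$. We aim to show $Y_F^T=[u',v']$.
\begin{itemize}
\item The paper's argument that $u'\le_B v'$ is left unproved. A natural route: $Y_F$ is irreducible and $T$-invariant in $G/B$, so it lies in a unique smallest Schubert variety $X^{v''}$ and a unique smallest opposite Schubert variety $X_{u''}$. This gives $Y_F\subseteq X^{v''}_{u''}$ with $u'',v''\in Y_F^T$, because the generic point of $Y_F$ lies in some cell $\Xc^{v''}$ and its limit under an antidominant cocharacter is $v''B\in Y_F$.
\item Taking $\phi$ to be the dominant and antidominant directions identifies $v''$ and $u''$ as the Bruhat-maximum and Bruhat-minimum of $Y_F^T$. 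Their existence uses the Coxeter matroid property of $\mu(Y_F)$, via the Gelfand–Serganova fact.
\item We then rename $u'=u''$ and $v'=v''$, so that $u'\le_B v'$ and $[u',v']\subseteq[u,v]$.
\end{itemize}

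It remains to prove $Y_F=X^{v'}_{u'}$. We have $Y_F\subseteq X^{v'}_{u'}\subseteq X^v_u$. The middle variety is irreducible and $T$-invariant in the toric variety $X^v_u$, so it equals $Y_{F'}$ for some face $F'\supseteq F$. Therefore $\mu(X^{v'}_{u'})=P_{[u',v']}=F'$ is a face.

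The main obstacle is the dimension or fixed-point comparison that forces $F=F'$. I would try the following argument first.
\begin{itemize}
\item Both $u'\regweight$ and $v'\regweight$ are vertices of $F$.
\item A face of $F'$ containing the vertices that extremize a dominant-generic functional $\phi$, and simultaneously the minimum $u'\regweight$, must be all of $F'$. This is because every vertex of $F'=P_{[u',v']}$ lies on a Bruhat chain from $u'$ to $v'$, and Bruhat covers $x\lessdot_B y$ in $[u',v']$ increase $\phi$ in root directions.
\item To make this precise, run the argument inductively on edges using $T$-curves $\mathbb{P}_{x,tx}$. If $F\ne F'$, some supporting functional of $F$ is constant on $F$ but not on $F'$. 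However, $F$ is itself the moment polytope of a $T$-orbit closure whose Bruhat min and max are $u'$ and $v'$. Rigidity (\Cref{fact:rigidRich}) then gives $X^{v'}_{u'}=\pluckervanishing{[u',v']}$. So it suffices to show $Y_F^T\supseteq[u',v']$.
\item That containment is the crux. I would prove it by induction along chains of $T$-curves inside the toric variety $X^{v'}_{u'}$, using that $Y_F$ is the closure of an orbit whose limits in both directions are $u'$ and $v'$.
\end{itemize}

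Finally, both claims follow from this. Faces of $P_{[u,v]}$ are exactly the polytopes $P_{[u',v']}$. Moreover, every $T$-invariant irreducible subvariety is some $Y_F=X^{v'}_{u'}$, and conversely each $X^{v'}_{u'}$ with $[u',v']\subseteq[u,v]$ is $T$-invariant and contained in $X^v_u$.
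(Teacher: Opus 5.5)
Your reduction is sound as far as it goes. The generic-point argument by itself shows $Y_F\subseteq X^{v'}_{u'}\subseteq X^v_u$, where $u',v'$ are the Bruhat-minimum and Bruhat-maximum of $Y_F^T$; the detour through a generic $\phi$ and Gelfand--Serganova is unnecessary. It then follows that $P_{[u',v']}=F'\supseteq F$ is a face.

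\textbf{The gap.} The step you call the crux, $Y_F^T\supseteq[u',v']$, is where all the content lies, and you do not prove it. It is equivalent to the following: no proper face of the toric polytope $P_{[u',v']}$ contains both $u'\regweight$ and $v'\regweight$. Equivalently again, the open Richardson variety $\Xc^{v'}\cap\Xc_{u'}$ is a single $T$-orbit; this set is exactly the points of $X^{v'}_{u'}$ whose orbit closure contains both $u'$ and $v'$. None of your three justifications gives this.
\begin{itemize}
\item That every vertex of $F'$ lies on a Bruhat chain from $u'$ to $v'$, with steps in root directions, says nothing about which vertices lie on a given face. For a general polytope a proper face can contain both the $\phi$-minimal and the $\phi$-maximal vertex (a lateral edge of a square pyramid does). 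So input specific to Bruhat interval polytopes is needed.
\item Rigidity (\Cref{fact:rigidRich}) bounds orbit closures from above by $X^{v'}_{u'}$, which you already know. It gives no lower bound on $Y_F^T$.
\item The dominant and antidominant limits of a point of $Y_F$ produce only the two fixed points $u'$ and $v'$. The $T$-curves of $X^{v'}_{u'}$ through $v'$ need not lie in $Y_F$, so ``induction along chains of $T$-curves'' has nothing to start from.
\end{itemize}

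\textbf{What the paper does.} The missing statement is exactly the Tsukerman--Williams theorem that every face of a (twisted) Bruhat interval polytope is again one, \cite[Theorem 7.13]{TW15}. Its proof needs genuinely Bruhat-order-specific input beyond the toric dictionary. The paper simply cites it for the first sentence. It then gets the second sentence as in your final paragraph: $Y_F$ and $X^{v'}_{u'}$ are orbit closures in $X^v_u$ with the same moment polytope, hence equal. So your argument becomes complete once you replace the crux by this citation.

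\textbf{Scope.} Your argument assumes $[u,v]$ is toric throughout. As stated, the first sentence is not restricted to toric intervals, and for those $X^v_u$ carries no orbit--face correspondence to work with.
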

\begin{proof}
    The first part is \cite[Theorem 7.13]{TW15}. The second part follows as the faces of the moment polytope are in bijection with $T$-invariant subvarieties.
\end{proof}

Now recall from the introduction that a $c$-Coxeter Richardson variety is one of the form $X^{w\cdot c}_{w}$.  

\begin{thm}
\label{thm:CoxeterRichIsToric}
Every $c$-Coxeter Richardson variety $X^{w\cdot c}_w$ is toric.
\end{thm}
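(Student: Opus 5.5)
The plan is to show that a generic $T$-orbit in $X^{w\cdot c}_w$ is dense. By Theorem~\ref{thm:HasseUnion} we know $\dim X^{w\cdot c}_w=\ell(wc)-\ell(w)=n=\ell(c)$. Richardson varieties are irreducible, so it suffices to produce a point $x\in X^{w\cdot c}_w$ whose $T$-orbit has dimension $n$. Equivalently, the $T$-weights of the tangent directions at a suitable point should span an $n$-dimensional space, or the moment polytope $\mu(\overline{T\cdot x})$ should be $n$-dimensional. The torus acting effectively here is $T_{ad}$, of rank $n$, so an $n$-dimensional orbit is exactly what we need.

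The cleanest route is through moment polytopes. Take $x$ generic in $X^{w\cdot c}_w$. Then $\overline{T\cdot x}$ is irreducible, and by Fact~\ref{fact:BasicPlucker}\ref{5.2.3} its $T$-fixed points are exactly the $u$ with $\Pl_u(x)\neq 0$. For generic $x$ in the irreducible Richardson variety these are all of $[w,wc]$, since each $\Pl_u$ with $u\in[w,wc]$ is not identically zero on $X^{w\cdot c}_w$ by Fact~\ref{fact:BasicPlucker}\ref{5.2.2}. Hence $\dim \overline{T\cdot x}=\dim P_{[w,wc]}$. It then remains to show that $P_{[w,wc]}$ has dimension $n$.

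To get this, I would use the maximal chain $w=x_0\lessdot_B x_1\lessdot_B\cdots\lessdot_B x_n=wc$ supplied by Theorem~\ref{thm:HasseUnion}, with $x_i=x_{i-1}t_i$ and $t_1\cdots t_n=c$ a minimal reflection factorization. Consecutive vertices differ by $x_i\regweight-x_{i-1}\regweight = x_{i-1}(t_i\regweight-\regweight)$, which is a nonzero multiple of the root $x_{i-1}(r(t_i))=w\beta_i$, where $\beta_i=t_1\cdots t_{i-1}(r(t_i))$ as in the proof of Theorem~\ref{thm:HasseUnion}. The nonzero multiple comes from regularity of $\regweight$. That proof already records that $s_{\beta_1}\cdots s_{\beta_n}=c$ is a minimal reflection factorization, so the $\beta_i$, and hence the $w\beta_i$, are linearly independent. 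Therefore the $n+1$ vertices $x_i\regweight$ are affinely independent, and $\dim P_{[w,wc]}\ge n$. Combined with $\dim \le \dim X^{w\cdot c}_w=n$, this gives $\dim\overline{T\cdot x}=n=\dim X^{w\cdot c}_w$. Irreducibility then forces $\overline{T\cdot x}=X^{w\cdot c}_w$, which is the toric statement.

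The main obstacle I expect is the step "generic $x$ has all of $[w,wc]$ as fixed points of its orbit closure". It rests on the standard fact that $\mu(X)=\mu(\overline{T\cdot x})$ for generic $x$: each $\{\Pl_u\neq0\}$ is a nonempty open subset of the irreducible $X$, and finitely many such sets intersect nontrivially. I would state this explicitly.

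A small secondary point is the linear independence claim for the $\beta_i$. It follows because reflections in a factorization of length $\ell_{\Reflections}(c)=n$ must have independent roots. Otherwise the product would fix a vector orthogonal to their span, with codimension less than $n$, contradicting $\ell_{\Reflections}(c)=n$ via the codimension formula for $\ell_{\Reflections}$.
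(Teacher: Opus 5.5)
Your proof is correct, and its skeleton matches the paper's. Both reduce toricness to $\dim P_{[w,wc]}\ge n$, and both get this bound from $n+1$ affinely independent vertices along a chain from $w$ to $wc$. The differences are in the two ingredients.

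\textbf{The toricness criterion.} The paper cites \cite[Proposition 7.12]{TW15}: $\dim P_{[u,v]}\le \ell(v)-\ell(u)$, with equality iff $[u,v]$ is toric. You instead prove the direction that is actually needed. Take $x$ in the dense open set where every $\Pl_u$ with $u\in[w,wc]$ is nonzero. Composing the orbit map with projection to these extremal coordinates already gives $\dim T\cdot x\ge \dim P_{[w,wc]}$. Irreducibility of $X^{w\cdot c}_w$, together with $\dim X^{w\cdot c}_w=n$, then forces $\overline{T\cdot x}=X^{w\cdot c}_w$. This makes your argument self-contained.

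\textbf{The chain.} The paper uses the weak-order chain $w, ws_1, ws_1s_2,\ldots,wc$ coming from a reduced word of $c$. Its edge directions $ws_1\cdots s_{i-1}(r(s_i))$ are independent by triangularity in the simple roots, so no input about reflection factorizations is needed. Your arbitrary maximal Bruhat chain instead needs the roots of a length-$n$ reflection factorization of $c$ to be independent. You justify this correctly via $\ell_{\Reflections}(c)=n-\dim\mathrm{Fix}(c)$. Alternatively, $\beta_i\in r(t_i)+\operatorname{span}(r(t_1),\ldots,r(t_{i-1}))$ gives a triangularity argument parallel to the paper's.

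\textbf{A harmless slip.} You inherited this from the proof of Theorem~\ref{thm:HasseUnion}. Since $s_{\beta_i}=(t_1\cdots t_i)(t_1\cdots t_{i-1})^{-1}$, it is $s_{\beta_n}\cdots s_{\beta_1}$ that telescopes to $c$, while $s_{\beta_1}\cdots s_{\beta_n}=c^{-1}$. Both are minimal reflection factorizations, of $c$ and $c^{-1}$ respectively, so the independence of the $\beta_i$ is unaffected.
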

\begin{proof}
By \cite[Proposition 7.12]{TW15}, for any Bruhat interval $[u,v]$ one has
\[
\dim P_{[u,v]}\le \ell(v)-\ell(u),
\]
and equality holds if and only if $[u,v]$ is toric.
Setting $u=w$ and $v=wc$ implies that 
\[
    \dim P_{[w,wc]}\le \ell(wc)-\ell(w)=n
\]
Thus it suffices to show that $\dim P_{[w,wc]}\ge n$.

 Write $c=s_1\cdots s_n$ as a reduced product of simple reflections.
Consider the sequence of vertices
\begin{align}\label{eq:seq_of_vertices}
w\cdot\regweight,\quad
ws_1\cdot\regweight,\quad
ws_1s_2\cdot\regweight,\quad \ldots,\quad
ws_1\cdots s_n\cdot\regweight = wc\cdot\regweight    
\end{align}
in $P_{[w,wc]}$.
Consider the vectors $\upsilon_i$ for $1\le i\le n$ obtained by taking consecutive differences:
\[
\upsilon_i \coloneqq ws_1\cdots s_{i-1}\cdot\regweight - ws_1\cdots s_{i}\cdot\regweight
= ws_1\cdots s_{i-1}\bigl(\idem-s_i)\cdot\regweight.
\]
Since $\regweight$ is regular dominant we have $
(\idem-s_i)\cdot\regweight
= ( \regweight, r(s_i)^\vee )\, r(s_i)
$
with $(\regweight, r(s_i)^\vee )>0$.
Thus, 
$\upsilon_i$ is parallel to $
ws_1\cdots s_{i-1}\bigl(r(s_i)\bigr).$
Equivalently, if we translate by $w^{-1}$, the directions are given by:
\[
r(s_1),\quad s_1 r(s_2),\quad s_1s_2 r(s_3),\ \ldots,\ s_1\cdots s_{n-1} r(s_n).
\]

We now check that these $n$ vectors are linearly independent. 
We have for any root $\beta$ that 
\[
s_j(\beta) = \beta - ( \beta, r(s_j)^\vee) \, r(s_j),
\]
which inductively implies that $
s_1\cdots s_{i-1}\bigl(r(s_i)\bigr) $
is a linear combination of $r(s_1),\dots,r(s_i)$ with nonzero coefficient on $r(s_i)$ and no contribution from $r(s_j)$ for $j>i$. 
Hence a routine triangularity argument implies that these vectors are linearly independent.  Left-translating by $w$ does not affect
linear independence, and hence so are the $\upsilon_i$ vectors as well.
Therefore the $n+1$ vertices in~\eqref{eq:seq_of_vertices}
are affinely independent, and we obtain
$\dim P_{[w,wc]} \ge n$, as desired.
\end{proof}

\subsection{The toric complex $\cfl_c$}
\label{sec:complexcfl}

Recall that \emph{$c$-Coxeter flag variety} is the union $$\cfl_c=\bigcup w^{-1}X^{w\cdot c}_w\subset G/B.$$
By \Cref{thm:CoxeterRichIsToric}, each $w^{-1}X^{w\cdot c}_w$ is a $T$-orbit closure in $G/B$ so $\cfl_c$ is a toric complex. 
\begin{thm}\label{thm:FirstCFlFacts} 
The following hold.
    \begin{enumerate}[label=(\arabic*)]
        \item \label{it:7.3.1}
        We have $\cfl_c^T=\NC(W,c)$, and the $T$-invariant curves $\mathbb{P}_{u,v}$ (\Cref{defn:Puv}) contained in $\cfl_c$ are $$\{\mathbb{P}_{u,v}\suchthat u,v\text{ adjacent in }\NC(W,c)\}.$$
        In particular (from the fixed point statement) we have 
        $\cfl_c\subset \pluckervanishing{\NC(W,c)}.$
        \item \label{it:7.3.2}
        Distinct $T$-orbit closures in $\cfl_c$ have distinct fixed point sets $X^T\subset \NC(W,c)$.
        \item \label{it:7.3.3} For $T$-orbit closures $Y$ and $Z$ in $\cfl_c$ we have 
        \[
            Y^T\subset Z^T \Longleftrightarrow Y \subset Z.
        \]
    \end{enumerate}  
\end{thm}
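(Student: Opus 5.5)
The plan is to reduce everything to two earlier facts: the combinatorial decomposition of $\NC(W,c)$ into translated Bruhat intervals (\Cref{thm:HasseUnion}), and the rigidity of translated Richardson varieties (\Cref{fact:rigidRich}). Throughout, write $Z_w \coloneqq w^{-1}X^{w\cdot c}_w$. By \Cref{thm:CoxeterRichIsToric} each $Z_w$ is an irreducible projective toric variety, and $\cfl_c=\bigcup_w Z_w$.

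For \ref{it:7.3.1}, the fixed points are $\cfl_c^T=\bigcup_w Z_w^T=\bigcup_w \movetoid{w}{w\cdot c}$.
\begin{itemize}
\item By the first half of \Cref{thm:HasseUnion}, this union is contained in $\NC(W,c)$.
\item Conversely, every $u\in\NC(W,c)$ lies on a maximal chain from $\idem$ to $c$. By the second half of \Cref{thm:HasseUnion}, that chain lies in some $\movetoid{w}{w\cdot c}$, so $u\in\cfl_c^T$.
\end{itemize}

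For the curves, take an irreducible $T$-invariant curve in $\cfl_c$. It lies in some single $Z_w$, because an irreducible set contained in a finite union of closed sets lies in one of them.
\begin{itemize}
\item By \Cref{prop:invToricSub}, translated by $w$, the $T$-invariant curves of $Z_w$ are the $w^{-1}X^{y}_{x}$ with $[x,y]\subseteq[w,wc]$ of length one. These are exactly the $\mathbb{P}_{w^{-1}x,w^{-1}y}$ for Bruhat covers $x\lessdot_B y$ in $[w,wc]$.
\item \Cref{thm:HasseUnion} identifies such covers with covers $w^{-1}x\lessdot_{\Reflections} w^{-1}y$ in $\NC(W,c)$, since the rank difference is $1$.
\item Conversely, every cover $u\lessdot_{\Reflections}v$ in $\NC(W,c)$ extends to a maximal chain. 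That chain is realized by a Bruhat chain $w\lessdot_B wu'\lessdot_B\cdots$, so $u\lessdot v$ translates to a cover $wu\lessdot_B wv$ in $[w,wc]$. The corresponding curve is then $\mathbb{P}_{u,v}\subset Z_w$.
\end{itemize}
Finally, the inclusion $\cfl_c\subset\pluckervanishing{\NC(W,c)}$ follows from \Cref{fact:BasicPlucker}\ref{5.2.2}. Indeed, if some $\Pl_x$ with $x\notin\NC(W,c)$ were nonvanishing on $\cfl_c$, then $x$ would be a $T$-fixed point of $\cfl_c$, contradicting the fixed point computation above.

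For \ref{it:7.3.3}, the direction $Y\subset Z\Rightarrow Y^T\subset Z^T$ is trivial. For the other direction, let $Z$ be a $T$-orbit closure in $\cfl_c$. Being irreducible, $Z$ lies in some $Z_w$, and it is a $T$-invariant subvariety of the toric variety $Z_w$. By \Cref{prop:invToricSub} (applied to $X^{w\cdot c}_w$ and translated by $w^{-1}$), $Z=w^{-1}X^{y}_{x}$ for some subinterval $[x,y]\subseteq[w,wc]$. By \Cref{fact:rigidRich}, $Z=\pluckervanishing{w^{-1}[x,y]}$ is rigid. Rigidity says precisely that any $T$-invariant $Y$ with $Y^T\subset Z^T$ satisfies $Y\subset Z$.

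Part \ref{it:7.3.2} then follows from \ref{it:7.3.3} applied in both directions: equal fixed point sets force $Y\subset Z$ and $Z\subset Y$.

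The only step needing care is the identification of an arbitrary $T$-orbit closure in $\cfl_c$ as a translated Richardson variety. This uses irreducibility to place it in a single component, together with the face–orbit correspondence of \Cref{prop:invToricSub}. Once that is in place, rigidity does all the work.
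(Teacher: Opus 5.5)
Your proposal is correct and takes the same route as the paper. Part (1) comes from \Cref{thm:HasseUnion}, and parts (2)--(3) come from identifying each $T$-orbit closure in $\cfl_c$ as a translated Richardson variety via \Cref{prop:invToricSub} and then using the rigidity in \Cref{fact:rigidRich}. You simply spell out steps the paper leaves implicit, such as placing an irreducible orbit closure inside a single $w^{-1}X^{w\cdot c}_w$ and deducing (2) from (3).
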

\begin{proof}
\ref{it:7.3.1} follows from \Cref{thm:HasseUnion}.
    For~\ref{it:7.3.2} and ~\ref{it:7.3.3} it suffices to show that $T$-orbit closures in $\cfl_c$ are rigid in the sense of \Cref{thm:PluVan}. By \Cref{prop:invToricSub}, every $T$-orbit closure in $\cfl_c$ is a translated Richardson $w^{-1}X_u^v$ for some $w\in W$ with $\ell(wc)=\ell(w)+\ell(c)$, and so we conclude by \Cref{fact:rigidRich}.
\end{proof}
Later in \Cref{thm:everying_about_cfl} we will improve Theorem~\ref{thm:FirstCFlFacts}\ref{it:7.3.1}, showing the equality $\cfl_c=\pluckervanishing{\NC(W,c)}$.

\subsection{$(W,c)$-polypositroids}
\label{sec:polypos}
We now proceed to identify the faces of the polytopes in $\Complex(\cfl_c)$ from \Cref{sec:Results} as members of the class of $(W,c)$-polypositroids introduced by Lam--Postnikov \cite{LP20}. The results that follow are a substantial generalization of \cite[Theorem 7.6]{BGNST2}.

\begin{defn}[{\cite[Definition 13.1]{LP20}}]
    A Coxeter matroid polytope is called a \emph{$(W,c)$-polypositroid} if it can be defined by inequalities $\xl\cdot \vec{n} \le a$ using vectors $\vec{n}$ with the property that $(\idem-c)\;\vec{n}$ is parallel to a root in $\Phi$.  
\end{defn}
We now show that the moment polytopes arising in this complex are $(W,c)$-polypositroids. 

\begin{thm}[\Cref{thm:polyposSect2}]
\label{thm:polyposSect8}
All faces of polytopes in $\Complex(\cfl_c)$ are $(W,c)$-polypositroids.
\end{thm}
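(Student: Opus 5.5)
Every face of a polytope in $\Complex(\cfl_c)$ is a face of some $w^{-1}P_{[w,w\cdot c]}$, so it is a translated twisted Bruhat interval polytope $w^{-1}P_{[u,v]}$ with $[u,v]\subset[w,w\cdot c]$, by \Cref{prop:invToricSub}. Each is a moment polytope of an irreducible $T$-invariant variety, hence a Coxeter matroid polytope. So the plan is to verify the facet-normal condition. It is enough to treat the full-dimensional-in-its-span polytopes $Q_w\coloneqq w^{-1}P_{[w,w\cdot c]}$, since every face arises as the intersection of $Q_w$ with supporting hyperplanes that are themselves facet hyperplanes of $Q_w$. Concretely, I would show that $Q_w$ is cut out inside the affine span (a translate of $\RR\Phi$ shifted by $\regweight$) by inequalities whose normals $\vec n$ satisfy $(\idem-c)\vec n\parallel$ a root. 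Faces then use the same inequality set, with some inequalities turned into equalities, so they inherit the property.

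\textbf{Step 1: vertex cone description.} By \Cref{thm:HasseUnion}, the vertex set of $Q_w$ is $\movetoid{w}{w\cdot c}\subseteq\NC(W,c)$, and its edges are the Kreweras covers $x\lessdot_{\Reflections}x\tau$, with direction $x\,r(\tau)$. The vertex $\idem\cdot\regweight$ is simple: its $n$ incident edges point in the directions $r(\tau)$ for the atoms $\tau$ of the interval. Likewise $c\cdot\regweight$ is simple. I would compute the facets through $\regweight$ as the cones spanned by $n-1$ of these edge directions. Each such facet normal is the fundamental coweight dual to the basis $\{r(\tau)\}$. Since every facet contains either $\idem$ or $c$ (to be checked), this gives all facets.

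\textbf{Step 2: the polypositroid condition.} Take a facet through $\regweight$ with normal $\vec n$, orthogonal to all edge roots but one. I want to show that $\vec n - c\vec n$ is parallel to a root. The key identity comes from a maximal chain $\idem\lessdot\tau_1\lessdot\cdots\lessdot c$ lying in the facet except for one step. Writing $c=\tau_1\cdots\tau_n$ and using $\vec n\perp\beta_i$ for all $i\neq j$ (where $\beta_i=\tau_1\cdots\tau_{i-1}r(\tau_i)$ as in the proof of \Cref{thm:HasseUnion}), the telescoping
\[
\vec n-c\vec n=\sum_{i}\tau_1\cdots\tau_{i-1}(\idem-\tau_i)\vec n
\]
collapses to the single term $(\vec n,\beta_j^\vee)\beta_j$. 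That term is parallel to the root $\beta_j$. The same computation works at the vertex $c$ using the chain read backwards.

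\textbf{Main obstacle.} The hard part is justifying that every facet of $Q_w$ passes through $\regweight$ or $c\regweight$, and that a suitable chain exists with all but one $\beta_i$ orthogonal to $\vec n$. The obvious choice is to take the chain through the facet's vertices, but I need to confirm those vertices cover all but one reflection step.

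If the "every facet passes through $\idem$ or $c$" claim fails, the fallback is more general. Choose any vertex $x$ of the facet $F$ and a maximal chain from $\idem$ to $c$ passing through $x$. The chain should stay in $F$ for as long as possible, and I would use the lexicographic EL-shelling (\Cref{thm:Dyer}) to arrange that it leaves $F$ exactly once. Each chain step inside $F$ is an edge of $F$, so its root $\beta_i$ is orthogonal to $\vec n$. Exiting $F$ only once then gives the single surviving term, and hence the conclusion.
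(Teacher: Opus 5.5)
Your reduction to the top-dimensional $Q_w$ is correct, and so is your computational core, which matches the paper: if a facet normal $\vec n$ is orthogonal to all but one of the chain roots $\beta_i$, then $\vec n$ is fixed by all but one factor of $c=s_{\beta_n}\cdots s_{\beta_1}$, and $(\idem-c)\vec n$ is a multiple of a root. The gap is the step you flag as the main obstacle, and neither route you offer closes it.

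Step 1 is false as stated. Take $c=s_1s_3s_2$ and $w=1324$. The interval $[1324,3412]$ has four atoms, $3124,2314,1342,1423$, and four coatoms. $P_{[w,wc]}$ is the tetragonal trapezohedron of the paper's figure, whose apexes $w\regweight$ and $wc\regweight$ have degree $4$ in a $3$-polytope. So the facets through $\regweight$ are not cones on $(n-1)$-subsets of a basis of atom roots, and their normals are not dual coweights.

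The fallback cannot work either. Suppose every step of a maximal chain except the $j$th is an edge of a facet $F$, with $1<j<n$. Then every vertex of the chain lies in $F$, so the $j$th edge does too, and $\vec n$ would be orthogonal to the spanning set $\{\beta_i\}$. Hence the exceptional step must be the first or the last. Your method therefore needs exactly the unproved claim: $F$ contains $\idem$ together with a saturated chain from $\idem$ to a coatom, or contains $c$ together with a saturated chain from an atom to $c$. A chain that leaves $F$ once in the middle has several steps outside $F$, and several terms survive.

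The missing idea is already in the result you cite. By \Cref{prop:invToricSub}, each face of $P_{[w,wc]}$ (and hence of its translate $Q_w$) is $P_{[u',v']}$ for a subinterval $[u',v']$. The corresponding $X^{v'}_{u'}$ is a $T$-orbit closure of dimension $\ell(v')-\ell(u')$. Since $[w,wc]$ has rank $n$, the facets are exactly $P_{[w,wv]}$ with $wv\lessdot_B wc$ and $P_{[wu,wc]}$ with $w\lessdot_B wu$. Take a maximal chain of the facet's interval and extend it by the one missing cover; this is the chain you want, with the exceptional step last or first. This is the paper's argument.

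One more correction. In your sum $\sum_i\tau_1\cdots\tau_{i-1}(\idem-\tau_i)\vec n$, the $i$th term is $(\vec n,r(\tau_i)^\vee)\beta_i$, not $(\vec n,\beta_i^\vee)\beta_i$. So the sum collapses only when the exceptional step is the last one. For facets through $c$, use $c=s_{\beta_n}\cdots s_{\beta_1}$ instead. With $\vec n\perp\beta_2,\dots,\beta_n$ this gives $(\idem-c)\vec n=(\vec n,\beta_1^\vee)\,s_{\beta_n}\cdots s_{\beta_2}\beta_1$. That is parallel to a root, but in general not to $\beta_1$.
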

\begin{proof}
Any face of a $(W,c)$-polypositroid is a $(W,c)$-polypositroid, so it suffices to show that the top-dimensional $w^{-1}P_{[w,wc]}$ are $(W,c)$-polypositroids.
By \Cref{prop:invToricSub}, the facets of $w^{-1}P_{[w,wc]}$ are exactly the faces of the form $w^{-1}P_{[w,wv]}$ with $w\le_B wv\lessdot_B wc$ and $w^{-1}P_{[wu,wc]}$ with $w\lessdot_B wu \le_B wc$, so we want to show that the normal vectors $\vec{n}$ to these facets  have the property that $(\idem-c)\vec{n}$ are parallel to roots in $\Phi$.
    
We first consider facets of the form $w^{-1}P_{[w,wv]}$ with $w\le_B wv\lessdot_B wc$. Take any maximal chain 
\[
\mc{C}=w \lessdot_{B} w\tau_1 \lessdot_{B} w\tau_2\tau_1 \lessdot_{B} \ldots \lessdot_{B} w\tau_{n-1}\cdots \tau_1 \lessdot_{B} w\tau_n\cdots \tau_1=wc
\qquad\text{with $\tau_{n-1}\cdots \tau_1 = v$}.
\]
Since $\tau_n\cdots \tau_1$ is a minimal reflection factorization for $c$, the collection of roots $\{r(\tau_1),\dots, r(\tau_n)\}$ is linearly independent.
Translating $\mc{C}$ by $w^{-1}$, the corresponding edges in $w^{-1}P_{[w, wc]}$ are in the linearly independent directions  $r(\tau_1),\ldots,r(\tau_{n-1})$.
Therefore the facet $w^{-1}P_{[w,wv]}$ is spanned by $r(\tau_1),\ldots,r(\tau_{n-1})$. 
The normal vector $\vec{n}$ is therefore fixed by $\tau_{1}, \ldots, \tau_{n-1}$, so  we have 
\[
(\idem-c)\,\vec{n}\;=\;(\idem-\tau_n)\,\vec{n} \;=\; (\vec{n}, r(\tau_{n})^{\vee}) \; r(\tau_{n}),
\]
which is parallel to $r(\tau_n)\in \Phi$.

For facets of the form $w^{-1}P_{[wu,wc]}$ with $w\lessdot_B wu \le_B wc$, we take a maximal chain 
\begin{equation*}
w \lessdot_{B} w\tau_1 \lessdot_{B} w\tau_2\tau_1 \lessdot_{B} \ldots \lessdot_{B} w\tau_{n-1}\cdots \tau_1 \lessdot_{B} w\tau_n\cdots \tau_1=wc
\qquad\text{with $\tau_1 = u$}.
\end{equation*}
Arguing as before we see that the facet $w^{-1}P_{[wu,wc]}$ is spanned by the linearly independent roots given by $r(\tau_2),\ldots,r(\tau_{n})$, and thus the facet normal $\vec{n}$ is fixed by $\tau_2,\dots,\tau_n$.
  
Now we write 
\[
c=\tau\tau_2\cdots \tau_n \text{ where } \tau=(\tau_n\cdots \tau_2)\;\tau_1\;(\tau_n\cdots \tau_2)^{-1}\in \Reflections
.\]
We then have $(\idem-c)\;\vec{n}=(\idem-\tau)\;\vec{n}$, which is parallel to $r(\tau)\in \Phi$.
\end{proof}

\section{Pl\"ucker vanishing}
\label{sec:ClusterCharts}

In this section we characterize the Coxeter flag variety using the combinatorics of noncrossing partitions and clusters. Both \Cref{maintheorem:PlVanish} and \Cref{maintheorem:Paving} are established in \Cref{thm:everying_about_cfl}.  
Recall the map $\mathrm{Clust}^{+}: \NC(W, c) \to \mathrm{Cl}_{c}^{+}$ from Section~\ref{sec:clusternoncrossing}.  
We show in \Cref{fact:invncClosed} that the set $r(\invnc{u})\subseteq r(\inv{u})$ is a closed set of roots, allowing us to make the following definition.

\begin{defn}
For $u\in \NC(W,c)$ we define the \emph{Coxeter Schubert cell} to be
\[
\Xc^u_{\NC}
\coloneqq U_{r(\invnc{u})} uB/B\subset U_{r(\inv{u})}uB/B=BuB/B=\Xc^u
\]
and the \emph{Coxeter Schubert variety} $X^u_{\NC}$ to be the closure of $\Xc^u_{\NC}$.
\end{defn}
\begin{rem}
Recalling \Cref{sec:BruhatDecomp}, there is a $T$-equivariant isomorphism $N_{r(\invnc{u})}\cong \Xc^u_{\NC}$. A similar construction is given by Gelfand--Graev--Postnikov in~\cite{GGP97}, which studies the charts
\[
\bigoplus_{\alpha\in \nctocluster(u)} \mathbb{C}_{-\alpha}\cong u^{-1}\Xc^u_{\NC}\subset U^-B,
\]
where $U^{-} = U_{\Phi^{-}}$ is the unipotent subgroup of the opposite Borel $B^{-}$. These charts in \cite{GGP97} are considered in the coordinate chart compactification of $U^-\cong \mathbb{A}^{\ell(\wnaught)}\subset \mathbb{P}^{\ell(\wnaught)}$, which makes their closures isomorphic to projective spaces. Our closures $X^u_{\NC}$ are not these projective spaces because our charts are compactified under the Pl\"ucker embedding into $\mathbb{P}(V^{\lambda_{reg}})$.
\end{rem}

An \emph{affine paving} of a closed subvariety $X$ is a sequence of closed subvarieties $X_1\subset X_2\subset \cdots \subset X_m=X$ such that $X_k\setminus X_{k-1}$ is isomorphic to an affine space for all $k$.

\begin{thm}[{\Cref{maintheorem:PlVanish}, \Cref{maintheorem:Paving}}]
\label{thm:everying_about_cfl}
We have
\[
\cfl_c=\pluckervanishing{\NC(W,c)}=\bigsqcup_{u\in \NC(W,c)}\Xc^u_{\NC}.
\]
Moreover, for any linear extension $u_1,u_2,\ldots,u_{\cat{W}}$ of the Bruhat order restricted to $\NC(W,c)$,
$$X_i=X^{u_i}\cap \cfl_c=\bigsqcup_{k\le i}\Xc^{u_k}_{\NC}$$
defines an affine paving of $\cfl_{c}$.
\end{thm}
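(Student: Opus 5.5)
We already know $\cfl_c\subset \pluckervanishing{\NC(W,c)}$ by \Cref{thm:FirstCFlFacts}\ref{it:7.3.1}. The plan is to prove the cycle of inclusions
\[
\pluckervanishing{\NC(W,c)}\;\subseteq\;\bigsqcup_{u\in\NC(W,c)}\Xc^u_{\NC}\;\subseteq\;\cfl_c .
\]
The paving statement will then follow from the Bruhat decomposition.

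\textbf{Step 1: cells lie in $\cfl_c$.} Fix $u\in\NC(W,c)$ and $x\in\Xc^u_{\NC}$. I want to show $x\in w^{-1}X^{w\cdot c}_w$ for a suitable $w$. I will take $w$ so that $u$ lies in $\movetoid{w}{w\cdot c}$ and every $\tau\in\invnc{u}$ gives an edge $\tau u\lessdot u$ inside $\movetoid{w}{w\cdot c}$. The natural candidate comes from the $c$-decreasing chain through $u$, via \Cref{prop:uniquecdec}. Combined with \Cref{thm:HasseUnion}, this should let me pick $w$ so that all lower noncrossing covers of $u$ are contained in one translated Coxeter interval.

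Then $wx\in wU_{r(\invnc{u})}uB$. Conjugating by $w$ sends $U_{r(\invnc u)}$ to a product of root subgroups $e_{w\beta}$. I need these $w\beta$ to be positive roots, so that the product lies in $B$; this places $wx$ in $X^{wu}\subset X^{w\cdot c}$. I also need $wx\in X_w$. For this I will use rigidity (\Cref{fact:rigidRich}): $\overline{T\cdot x}$ has fixed points reached by limits along cocharacters, and these are all of the form $u'$ with $u'\le_B u$ obtained from subsets of $\invnc{u}$. So I must show that these fixed points lie in $\movetoid{w}{w\cdot c}$. I will do this using closedness of $r(\invnc{u})$ (\Cref{fact:invncClosed}) and the fact that the corresponding subproducts are again noncrossing.

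\textbf{Step 2: vanishing locus lies in the cells.} Take $x\in\pluckervanishing{\NC(W,c)}$. By the Bruhat decomposition $x\in\Xc^u$ for a unique $u$, and $uB\in\overline{T\cdot x}$ by the antidominant-limit argument, so $u\in\NC(W,c)$. Write $x=\prod_{\beta\in r(\inv u)}e_\beta(a_\beta)\,uB$, ordered so that the factors from $r(\invnc u)$ come last (allowed, since $U_C$ is independent of order). Suppose some $a_\beta\ne0$ with $s_\beta\notin\invnc{u}$, and choose $\beta$ minimal in a suitable height order. A one-parameter limit isolating that coordinate should then produce the fixed point $s_\beta u\notin\NC(W,c)$ in $\overline{T\cdot x}$, contradicting the vanishing of $\Pl_{s_\beta u}$. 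The cleanest route is to use \Cref{fact:BasicPlucker}\ref{5.2.3} and compute the leading term of $\Pl_{s_\beta u}$ on the chart; because of the $ab$ cross terms, I will need a triangularity argument on the coefficients.

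\textbf{Step 3: paving.} Since $X^{u_i}=\bigsqcup_{v\le_B u_i}\Xc^v$ and $\Xc^v\cap\cfl_c=\Xc^v_{\NC}\cong N_{r(\invnc v)}$, the $X_i$ are closed and their successive differences are affine spaces.

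\textbf{Main obstacle.} I expect Step 2 to be the hardest part: showing that nonzero coordinates outside $\invnc u$ force a non-noncrossing fixed point in the orbit closure. If the triangularity argument fails, my fallback is to instead show that the dimensions of $\Xc^u\cap\cfl_c$ match by counting fixed points in the $T$-orbit closures of Step 1.
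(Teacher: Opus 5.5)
\textbf{The gap is in Step 1.} This is where the real content of the theorem lies, and your plan has two problems there.

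First, nothing you cite produces the required $w$. \Cref{thm:HasseUnion} and \Cref{prop:uniquecdec} place individual maximal chains inside translates. But $u$ need not lie on any $c$-decreasing chain; for example, $u=213$ does not, for $c=s_1s_2$ in $S_3$. Also, a chain through $u$ meets at most two of its neighbours. So these results give no single $\movetoid{w}{w\cdot c}$ containing every $\tau u$ with $\tau\in\invnc{u}$. That existence statement is essentially \Cref{prop:magiccatalan}. Its explicit form is \Cref{thm:BruhatMaxIsNC}, which needs all of \Cref{sec:csortability,section:CharacterizeBruhatIntervals,sec:Bruhatmax}.

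Second, even granting such a $w$, the requirement that every $w\beta$ be positive cannot be met, and the conclusion $wx\in X^{wu}$ is false. Take $S_3$ with $c=s_1s_2=231$ and $u=321$, so that $\invnc{u}=\{(1\,3),(2\,3)\}$.
\begin{itemize}
\item The only $w$ with $\ell(wc)=\ell(w)+2$ and $u\in\movetoid{w}{w\cdot c}$ is $w=s_2$.
\item The translate $\movetoid{s_2}{s_2c}=\{123,213,231,321\}$ does contain both neighbours of $u$.
\item Yet $w(\epsilon_2-\epsilon_3)=\epsilon_3-\epsilon_2<0$.
\item For $x\in\Xc^u_{\NC}$ with both coordinates nonzero, $\overline{T\cdot wx}$ has fixed points $[132,321]\not\subseteq[\idem,231]$, so $wx\notin X^{wu}$.
\end{itemize}
This failure is forced in general. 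Suppose every $w\,r(\tau)>0$. Then every $w\tau u$ lies in $[w,wu]$, whose moment polytope has dimension at most $\ell(wu)-\ell(w)=\ell_{\Reflections}(u)$. On the other hand, the vectors $wu\regweight-w\tau u\regweight$ are nonzero multiples of the roots $w\,r(\tau)$, which are linearly independent by \Cref{lem:clusterextend}. For $u\in\NC(W,c)^+$ there are $n$ of them, which forces $u=c$.

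Your $X_w$ half is circular. By rigidity, showing that the fixed points of $\overline{T\cdot x}$ lie in the particular translate $\movetoid{w}{w\cdot c}\subsetneq\NC(W,c)$ is the whole claim. Noncrossingness of subproducts does not give it.

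\textbf{What is missing is a counting argument run in the other direction.} This requires proving your Step 2 first.
\begin{itemize}
\item Take any $w$ with $\ell(wc)=\ell(w)+\ell(c)$, and let $u$ be the Bruhat-maximum of $\movetoid{w}{w\cdot c}$.
\item Then $w^{-1}X^{w\cdot c}_w\cap\Xc^u$ is open dense in this irreducible $n$-dimensional variety. By Step 2 it lies in $\Xc^u_{\NC}\cong\mathbb{A}^{|\invnc{u}|}$.
\item Hence $u\in\NC(W,c)^+$ and $w^{-1}X^{w\cdot c}_w=X^u_{\NC}$ (\Cref{cor:wwcequalsuNC}).
\item There are at least $\cat{W}^+$ distinct translates: every $c$-decreasing chain lies in some translate, and each translate contains exactly one.
\item There are exactly $\cat{W}^+$ cells of dimension $n$. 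So every $X^u_{\NC}$ with $u\in\NC(W,c)^+$ arises.
\item The remaining $u$ are handled inside their minimal standard parabolic subgroup using \Cref{lem:w'c'tow'c}.
\end{itemize}

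\textbf{Step 2 itself follows the paper's route} (\Cref{prop:PVInCells}), but you must choose $\beta$ differently. It has to be an extreme ray of $\operatorname{Cone}(S)$, where $S$ is the support of the coordinates of $x$, not a root chosen by height. Real positive combinations mix heights; for instance $\epsilon_2=\tfrac12(\epsilon_2-\epsilon_1)+\tfrac12(\epsilon_1+\epsilon_2)$ in $\mathrm{B}_2$ combines roots of heights $1$ and $3$. An extreme ray $\beta\notin r(\invnc{u})$ exists precisely because $r(\invnc{u})$ is strongly closed (\Cref{fact:invncClosed}). The weight argument of \Cref{lem:PluckerPower} then gives $\Pl_{s_\beta u}(x)=k\,a_\beta^{m}\neq 0$ with no cross terms, so no triangularity argument is needed. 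With Steps 1 and 2 in place, Step 3 is fine.
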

\begin{cor}\label{cor:newguy}
There are exactly $\cat{W}^+$-many distinct  $w^{-1}X^{w\cdot c}_w$ comprising $\cfl_c$, and if  $u\in \movetoid{w}{w\cdot c}$ is the Bruhat-maximum element, then $u\in \NC(W,c)^+$ and $w^{-1}X_w^{w\cdot c}=X^u_{\NC}$.
\end{cor}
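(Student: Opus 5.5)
The plan is to compare each translated Coxeter Richardson variety $Y=w^{-1}X^{w\cdot c}_w$ with the Coxeter Schubert varieties. The inputs are the paving of \Cref{thm:everying_about_cfl}, the Coxeter matroid property of translated Bruhat intervals, and a dimension count.

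\textbf{Step 1: $Y$ sits inside a single Coxeter Schubert variety.} By \Cref{thm:CoxeterRichIsToric} and the translation invariance of the construction, $Y$ is an irreducible projective toric variety of dimension $n$. It is contained in $\cfl_c$, and $Y^T=\movetoid{w}{w\cdot c}$. This set is a translated Bruhat interval, hence a Coxeter matroid. Taking the identity translate in the definition of Coxeter matroid, $Y^T$ has a unique Bruhat-maximum element $u$, and $u\in\NC(W,c)$ by \Cref{thm:HasseUnion}.

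Since $Y^T\subset[\idem,u]$, rigidity of Schubert varieties (\Cref{fact:rigidRich}) gives $Y\subset X^u\cap\cfl_c$. Choose a linear extension of Bruhat order on $\NC(W,c)$ in which $u=u_i$ comes after every element of $[\idem,u]\cap\NC(W,c)$. Then \Cref{thm:everying_about_cfl} identifies $X^u\cap\cfl_c$ with a disjoint union of the cells $\Xc^{v}_{\NC}$ with $v\le_B u$, in which $\Xc^u_{\NC}$ is the complement of a closed subset. So $Y\cap\Xc^u_{\NC}$ is open in $Y$, and it is nonempty because it contains $uB$. Irreducibility of $Y$ then gives $Y=\overline{Y\cap \Xc^u_{\NC}}\subset X^u_{\NC}$.

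\textbf{Step 2: equality and full support of $u$.} By \Cref{thm:everying_about_cfl} and the $T$-equivariant chart $N_{r(\invnc{u})}\cong\Xc^u_{\NC}$, we have $\dim X^u_{\NC}=|\invnc{u}|=|\nctocluster(u)|\le n$. Since $Y$ is irreducible of dimension $n$ and contained in the irreducible variety $X^u_{\NC}$, we get $Y=X^u_{\NC}$ and $|\nctocluster(u)|=n$. The sets of size $n$ in $\mathrm{Cl}^+_c$ are exactly the maximal positive cones, so by the Biane--Josuat-Verg\`es bijection restricted to $\NC(W,c)^+$ (\Cref{sec:clusternoncrossing}), this forces $u\in\NC(W,c)^+$. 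This proves the second assertion and yields a map $Y\mapsto u$ from the set of distinct translated varieties to $\NC(W,c)^+$. The map is injective because $Y=X^u_{\NC}$ is recovered from $u$.

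\textbf{Step 3: surjectivity.} Let $u\in\NC(W,c)^+$. Then $X^u_{\NC}$ is irreducible of dimension $|\nctocluster(u)|=n$ and is contained in $\cfl_c$, which is a finite union of the irreducible $n$-dimensional varieties $Y$. Hence $X^u_{\NC}\subset Y$ for some such $Y$, and comparing dimensions gives $X^u_{\NC}=Y$. So the map of Step 2 is a bijection, and the number of distinct $w^{-1}X^{w\cdot c}_w$ is $|\NC(W,c)^+|=\cat{W}^+$.

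The main obstacle is Step 1: the openness of the top cell in $X^u\cap\cfl_c$, which depends on choosing the linear extension in \Cref{thm:everying_about_cfl} compatibly with $u$. After that, the argument reduces to dimension counting.
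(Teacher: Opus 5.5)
Your proof is correct and takes the same route as the paper. The paper's short argument identifies the irreducible components of $\cfl_c=\bigsqcup_{u}\Xc^u_{\NC}$ with the top-dimensional $X^u_{\NC}$ for $u\in\NC(W,c)^+$, labelled by their Bruhat-maximum fixed point, and your Steps 1--3 (which repeat the density argument of \Cref{cor:wwcequalsuNC}) make this explicit. One remark: your ``main obstacle'' is not actually an obstacle. $Y\cap\Xc^u$ is open in $Y$ simply because $\Xc^u$ is open in $X^u\supseteq Y$, and $Y\cap\Xc^u=Y\cap\Xc^u_{\NC}$ because $\cfl_c\cap\Xc^u=\Xc^u_{\NC}$, so no particular choice of linear extension is needed.
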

\begin{proof}
    The irreducible components of $\pluckervanishing{\NC(W,c)}$ are the closures of the top-dimensional $X^u_{\NC}$, and the Bruhat-maximum element of $X^u_{\NC}$ is $u$. Because $\dim X^u_{\NC}=|\invnc{u}|\le n$ with equality if and only if $u\in \NC(W,c)^+$, the result follows.
\end{proof}
In later sections we will determine the $u\in \NC(W,c)^+$ with $w^{-1}X^{w\cdot c}_w=X^u_{\NC}$.
\begin{rem}
\label{rem:ChapotonFTriangle}
As is well-known, the existence of an affine paving of a variety $X$ tells us that the $i$th Betti number $H^{2i}(X)=H_{2i}(X)$ is the number of pieces isomorphic to $\mathbb{A}^i$. In the case of $\cfl_c$ this is the the number of positive $c$-clusters of size $i$. These numbers are enumerated by the diagonal of the $F$-triangle of Chapoton \cite[\S 2]{Cha04}.
In type $\mathrm{A}$ \cite[\S 4]{Cha04} one obtains the \textit{ballot numbers} occurring in the Catalan triangle \cite[A009766]{OEIS}. 
In type $\mathrm{B}$ \cite[\S 5]{Cha04} one obtains \cite[A059481]{OEIS}.

The $\mathrm{B}_2$ case is instructive. Using the table in Example~\ref{eg:B2examplewithdiagram} we have that the Betti numbers are $1,2,3$. The Betti numbers of $G/B$  on the other hand are $1,2,2,2,1$. 
This implies that we do not have a surjection $H^\bullet(G/B)$ to $H^\bullet(\cfl_c)$ (see also the examples in \Cref{appendix:SO} and \Cref{appendix:C}).
\end{rem}

The proof of Theorem~\ref{thm:everying_about_cfl} is involved and occupies the next two subsections: in Section~\ref{sec:pluckerstuff} we prove several technical results about Pl\"{u}cker coordinates, and in Section~\ref{sec:cBruhatcells} we relate these results to Coxeter Schubert cells and $\cfl_{c}$.  

\subsection{Coordinate subspaces of $\Xc^u$ associated to strongly closed subsets}
\label{sec:pluckerstuff}

Say that a subset $C\subset \Phi^+$ of roots is \emph{strongly closed} if the only roots in the nonnegative cone spanned by $C$ are $C$ itself.  
Every strongly closed subset $C$ is closed in the sense of Section~\ref{sec:closedsets}, but the converse is not true.  

\begin{eg}
In the type $\mathrm{B}_{2}$ root system $\Phi = \{\pm \epsilon_{1} \pm \epsilon_{2} \} \cup \{\pm \epsilon_{1}, \pm \epsilon_{2}\}$, the set $C = \{\epsilon_{1} + \epsilon_{2}, \epsilon_{1} - \epsilon_{2}\}$ is closed but not strongly closed, as
\[
\epsilon_{1} = \frac{1}{2}\left(\epsilon_{1} + \epsilon_{2}\right) + \frac{1}{2} \left( \epsilon_{1} - \epsilon_{2} \right) \in \operatorname{Cone}(C) \cap \Phi.
\]
This property is highly dependent on the root system: the same set is both closed and strongly closed in type $\mathrm{D}_{2}$ ($\Phi = \{\pm \epsilon_{1} \pm \epsilon_{2} \}$), and is neither closed nor strongly closed in type $\mathrm{C}_{2}$ ($\Phi = \{\pm \epsilon_{1} \pm \epsilon_{2} \} \cup \{\pm2 \epsilon_{1}, \pm 2\epsilon_{2}\}$).
\end{eg}

\begin{fact}[\emph{Cluster cone property}]
\label{fact:invncClosed}
For $u\in \NC(W,c)$ the subset $r(\invnc{u})\subset r(\inv{u})$ is strongly closed.
\end{fact}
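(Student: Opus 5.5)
The plan is to move the problem from $r(\invnc{u})$, which lives in $u$-translated coordinates, to the positive cluster $\nctocluster(u)$. There it becomes a statement about the $c$-cluster fan. Strong closedness is preserved by any linear automorphism of $\mathbb{E}$ that preserves $\Phi$: the map sends nonnegative cones to nonnegative cones and roots to roots. Two such automorphisms are $u$ and $-\idem$.

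\textbf{Step 1 (sign bookkeeping).} Let $\tau\in\invnc{u}$. Since $\tau\in\inv{u}$, the root $r(\tau)$ is positive and $u^{-1}r(\tau)$ is negative. The reflection $u^{-1}\tau u$ corresponds to the root $\pm u^{-1}r(\tau)$. Hence its positive root is
\[
r(u^{-1}\tau u) = -u^{-1}r(\tau).
\]
By the definition of $\invncR{u}$ and of the Biane--Josuat-Verg\`es map, this gives
\[
r(\invnc{u}) = -u\bigl(\nctocluster(u)\bigr).
\]
So it suffices to show that $\nctocluster(u)$ is strongly closed.

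\textbf{Step 2 (the fan argument).} Let $\mathcal{F}=\mathrm{Clust}(u)\in\mathrm{Cl}_c$, so that $\nctocluster(u)=\mathcal{F}\cap\Phi^+$. Suppose $\gamma\in\Phi$ lies in the nonnegative cone of $\nctocluster(u)$.

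First, $\gamma$ is a nonnegative combination of positive roots, so $\gamma\in\Phi^+\subset\Phi_{\ge-1}$.

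Next, use Reading's theorem (generalizing Fomin--Zelevinsky) that the $c$-cluster fan is a complete simplicial fan. Its cones are $\Cone{\mathcal{F}'}$ for $\mathcal{F}'\in\mathrm{Cl}_c$, and its rays are exactly the rays spanned by the almost positive roots. Distinct almost positive roots span distinct rays, since no two roots of the reduced crystallographic system are positive multiples of each other.

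Since $\nctocluster(u)\subseteq\mathcal{F}$, the cone $\Cone{\nctocluster(u)}$ is a face of the simplicial cone $\Cone{\mathcal{F}}$. The ray $\mathbb{R}_{\ge0}\gamma$ is a ray of the fan contained in this face. In a simplicial fan, the only rays of the fan inside a cone are that cone's own generators, because the cones intersect in common faces. Therefore $\gamma\in\nctocluster(u)$, which proves strong closedness.

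\textbf{Step 3 (transport back).} Apply the $\Phi$-preserving linear map $-u$ to conclude that $r(\invnc{u})$ is strongly closed.

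\textbf{Main obstacle.} The combinatorial content sits entirely in Step 2, in the claim that the $c$-cluster fan is a genuine simplicial fan whose rays are exactly the almost positive roots. I would cite this directly from \cite{Reading2007} (see also \cite{FZ03b} for the bipartite case) rather than reprove it. The one thing to check carefully is that the Biane--Josuat-Verg\`es map really lands on $\mathcal{F}\cap\Phi^+$ for an honest Reading $c$-cluster $\mathcal{F}$, as stated in Section~\ref{sec:clusternoncrossing}.
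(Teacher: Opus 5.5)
Your proposal is correct and follows the same route as the paper. Both arguments transport $r(\invnc{u})$ to the positive part $\nctocluster(u)$ of a $c$-cluster by a $\Phi$-preserving linear map, and then use that the $c$-cluster fan is a complete simplicial fan whose rays are the almost positive roots.

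Your sign bookkeeping $r(\invnc{u})=-u\bigl(\nctocluster(u)\bigr)$ is the correct one. The paper writes $-u^{-1}$, which is a harmless slip since any element of $W$ preserves strong closure. You also make explicit a point the paper leaves implicit: a root in $\Cone{\nctocluster(u)}$ is positive, and so it spans a ray of the fan.
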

\begin{proof}
As $r(\invnc{u}) = \{-u^{-1} \alpha \suchthat \alpha \in \mathrm{Clust}^{+}(u)\}$ and the action of $W$ preserves strong closure, it suffices to show that the positive subset of any $c$-clusters are closed.  Recall that the $c$-cluster fan $\mathrm{Cl}_{c}$ is a complete, simplicial fan that includes every almost-positive root as a generating ray.  
Thus, every positive root in each cone of $\mathrm{Cl}_{c}^{+}$ must be one of its generators, for any others would contradict the assumption that the cones form a fan.  
\end{proof}

The aim of this section is to prove the following. 

\begin{prop}
\label{cor:CoordinateSubspace}\label{cor:coordinatesubspace2}
    For $w \in W$ and $C\subset r(\inv{w})$ a strongly closed subset, the isomorphism $N_{w}\cong U_{w}wB/B=\Xc^w$ from Section~\ref{sec:BruhatDecomp} restricts to an isomorphism 
    \[
    N_C\cong U_CwB/B=\Xc^{w} \cap \bigcap_{\substack{\tau\in \inv{w} \\ r(\tau) \notin C}} \{\Pl_{\tau w}=0\}.
    \]
    In particular for $u\in \NC(W,c)$ we have
    $$\Xc_{\NC}^u=\Xc^u\cap \bigcap_{\tau\in \inv{u}\setminus \invnc{u}}\{\Pl_{\tau u}=0\}.$$
\end{prop}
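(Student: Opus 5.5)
We will argue through $T$-orbit closures and the characterization of fixed points by Pl\"ucker functions in \Cref{fact:BasicPlucker}. Write $Z_C\coloneqq \Xc^{w} \cap \bigcap_{\tau\in \inv{w},\, r(\tau)\notin C}\{\Pl_{\tau w}=0\}$. We prove $U_CwB/B\subseteq Z_C$ and then the reverse inclusion. Since $C$ is closed, $U_C$ is a subgroup and \eqref{eq:closediso} already identifies $N_C\cong U_CwB/B$ as the image of the coordinate subspace $N_C\subseteq N_w$. This holds for an order on $r(\inv{w})$ in which the elements of $C$ come first, since $U_w$ is independent of order.

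\textbf{Step 1: $U_CwB/B\subseteq Z_C$.} Take $x=uwB$ with $u\in U_C$. By \Cref{fact:BasicPlucker}\ref{5.2.3} it suffices to show that every $T$-fixed point $vB$ of $\overline{T\cdot x}$ has $v\neq \tau w$ for $\tau\in\inv{w}$ with $r(\tau)\notin C$. The weights of $gv_{\regweight}$ for $g\in U_C$ lie in $w\regweight+\ZZ_{\ge0}C$, because each $E_\beta$ shifts weights by $\beta\in C$. In particular the moment polytope of $\overline{T\cdot x}$ lies in $w\regweight+\operatorname{Cone}(C)$, and it contains the vertex $w\regweight$.

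Suppose $\tau w$ were a vertex. The polytope is a Coxeter matroid polytope, so its edges at the vertex $w\regweight$ are parallel to roots and span a cone containing the whole polytope. The point $\tau w\regweight-w\regweight=-(w\regweight,r(\tau)^\vee)\,r(\tau)$ is a nonzero multiple of $r(\tau)$, with positive coefficient because $\tau\in\inv{w}$. So $r(\tau)\in \operatorname{Cone}(C)$, and strong closedness gives $r(\tau)\in C$, a contradiction. (If the sign convention goes the other way, we compare instead with the lowest weight of $V^{\regweight}$; the argument is symmetric.)

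\textbf{Step 2: $Z_C\subseteq U_CwB/B$.} This is the main obstacle. Take $x\in Z_C$ with coordinates $(a_\beta)$ in $N_w$, using an order that lists $r(\inv{w})\setminus C$ first and then $C$. Suppose $x\notin U_CwB/B$. Among the roots $\beta\notin C$ with $a_\beta\neq0$, choose $\gamma$ minimal in height; more precisely, choose $\gamma$ extremal for a linear functional $\phi$ that is positive on $\Phi^+$ and generic.

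Consider the orbit closure $\overline{T\cdot x}$. Take a cocharacter $\psi$ that pairs with weight $\epsilon$ on roots in $C\cup\{\gamma\}$ and very positively on all other roots of $r(\inv{w})$; this is possible because strong closedness lets us separate $\gamma$ from $C$ by a hyperplane. Then $\lim_{t\to0}\psi(t)^{-1}\cdot x$ should kill all coordinates except those in $C\cup\{\gamma\}$. Further degenerating kills $C$ as well, and we land on a point of the $T$-curve $\{e_\gamma(a)wB\}$, which contains $s_\gamma wB$ in its closure.

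Hence $s_\gamma w\in \overline{T\cdot x}^T$. Since $s_\gamma\in\inv{w}$ and $\gamma\notin C$, this forces $\Pl_{s_\gamma w}(x)\neq0$, contradicting $x\in Z_C$. The delicate point is that the coordinates are not linear: the commutator relations \eqref{equation:ee_rel2} mix coordinates. We therefore choose the ordering, with $C$ last, and the functional $\phi$ so that the $T$-action rescales coordinates by characters, making the isomorphism $T$-equivariant. The limit is then computed coordinatewise in $N_w$, and only the linearity of the chart is used.

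\textbf{Step 3: the final clause.} Apply Steps 1 and 2 with $w=u$ and $C=r(\invnc{u})$, which is strongly closed by \Cref{fact:invncClosed}. The complement of $C$ in $r(\inv{u})$ is exactly $r(\inv{u}\setminus\invnc{u})$, which gives the stated description of $\Xc^u_{\NC}$.
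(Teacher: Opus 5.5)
Your Step~1 is fine. It is really just the weight count: a nonzero $\Pl_{\tau w}$-component of $g\,v_{w\regweight}$ with $g\in U_C$ forces $\tau w\regweight-w\regweight\in\ZZ_{\ge0}C$, and this difference is a positive multiple of $r(\tau)$, so the moment-polytope detour is unnecessary.

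Step~2, however, has a genuine gap. First, a cocharacter pairs \emph{linearly} with roots. You therefore cannot prescribe ``$\epsilon$ on $C\cup\{\gamma\}$ and very positive elsewhere'' when other roots of the support lie in the span of $C\cup\{\gamma\}$, and the two-stage degeneration does not isolate anything. More seriously, the claim $s_\gamma w\in\overline{T\cdot x}^T$ is false for a minimal-height $\gamma$.

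Here is a counterexample in type $\mathrm{C}_2$, with positive roots $\epsilon_1-\epsilon_2,\,2\epsilon_2,\,\epsilon_1+\epsilon_2,\,2\epsilon_1$. Take $w=\wnaught$, the strongly closed set $C=\{2\epsilon_2\}$, and $x=e_{2\epsilon_1}(p)\,e_{\epsilon_1+\epsilon_2}(q)\,e_{2\epsilon_2}(r)\,\wnaught B$; this is an ordering with $C$ last. Every functional positive on $\Phi^+$ selects $\gamma=\epsilon_1+\epsilon_2$. Work in the standard model of $\mathrm{Sp}(4)$, with the form pairing $e_1$ with $e_4$ and $e_2$ with $e_3$ and $B$ upper triangular. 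Then $x$ is the flag $\langle pe_1+qe_2+e_4\rangle\subset\langle pe_1+qe_2+e_4,\ qe_1+re_2+e_3\rangle$, and $s_{\epsilon_1+\epsilon_2}\wnaught B$ is the coordinate flag $\langle e_2\rangle\subset\langle e_1,e_2\rangle$. Hence, up to a nonzero constant and the normalization of root vectors, $\Pl_{s_{\epsilon_1+\epsilon_2}\wnaught}(x)=q^{k_1}(pr-q^2)^{k_2}$, where $k_1,k_2>0$ are the exponents of $\regweight$ in the fundamental weights. This vanishes at $p=q=r=1$ even though $q\neq 0$.

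The culprit is that $\epsilon_1+\epsilon_2=\tfrac12(2\epsilon_1)+\tfrac12(2\epsilon_2)$ is not an extreme ray of the cone spanned by the support. Height is the wrong selection criterion, and reordering the chart does not help.

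The repair is to select $\gamma$ by extremality. Let $S=\{\beta\suchthat a_\beta\neq 0\}$. If $S\not\subseteq C$, then $\operatorname{Cone}(S)$ has an extreme ray $\gamma\notin C$, since otherwise $S\subseteq\operatorname{Cone}(C)\cap\Phi=C$ by strong closedness. Choose a cocharacter pairing to zero with $\gamma$ and positively with every root of $S\setminus\{\gamma\}$. This is possible exactly because $\gamma$ is extreme, and a rational choice exists because the hyperplane $\gamma^\perp$ is rational. Since $T$ scales each chart coordinate $a_\beta$ by the character $\beta$, the limit as $t\to 0$ is $e_\gamma(a_\gamma)wB$. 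Its orbit closure $\mathbb{P}_{w,s_\gamma w}$ contains $s_\gamma wB$, so $\Pl_{s_\gamma w}(x)\neq 0$ by \Cref{fact:BasicPlucker}, contradicting $x\in Z_C$.

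With this change your degeneration argument is a correct alternative to the paper's proof. The paper instead shows in \Cref{lem:PluckerPower}, via the weight count plus a Chevalley-relation computation with $s_\alpha(-x)$, that on a coordinate subspace $N_{C'}$ the function $\Pl_{s_\alpha w}$ equals the monomial $k\,x_\alpha^{-(\alpha^\vee,w\regweight)}$ whenever $\alpha$ is extreme in $\operatorname{Cone}(C')$. It then cuts $\Xc^w$ down one coordinate hyperplane at a time, along a greedy ordering of $r(\inv{w})\setminus C$ by extreme rays. Your version needs only nonvanishing rather than the exact formula. In both arguments, extremality (not height) is the essential hypothesis.
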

\begin{eg}
    We illustrate this proposition and its proof with an example, using the notation and matrices from type $\mathrm{A}$ as in \Cref{appendix:GL}. Take $c=s_3s_2s_1\in S_4$, and $u=\wnaught=4321$. Then
    $$X^{\wnaught}=\begin{bmatrix}1&a&b&c\\0&1&d&e\\0&0&1&f\\0&0&0&1\end{bmatrix}\wnaught B/B=\begin{bmatrix}c&b&a&1\\e&d&1&0\\f&1&0&0\\1&0&0&0\end{bmatrix}B/B$$
    with $a,b,c,d,e,f$ corresponding to positive roots $\epsilon_{i}-\epsilon_j$ for the transpositions given by $$(i,j)=(1\,2),(1\,3),(1\,4),(2\,3),(2\,4),(3\,4)$$  respectively. The Pl\"ucker coordinates associated to $(i\,j)\wnaught$ are given by

    \medskip
    
    \begin{center}
    \begin{tabular}{ c|c|c|c|c|c|c}
        $(i\,j)$& $(1\,2)$ & $(1\,3)$ & $(1\,4)$ & $(2\,3)$& $(2\,4)$ & $(3\,4)$\\\hline $\Pl_{(i\,j)\wnaught}$&$a$&$b(ad-b)$&$c(cd-be)(bf+ae-c-adf)$&$d$&$e(df-e)$&$f$
    \end{tabular}
    \end{center}

    \medskip
    
    each of which is computed as a product of determinants of submatrices using the first $\ell\in\{1,2,3\}$ columns and rows $(i,j)\wnaught\{1,\ldots,\ell\}$.
    We have $\inv{\wnaught}\setminus \invnc{\wnaught}=\{(1\,2),(2\,4),(3\,4)\}$, so $$\Xc^{\wnaught}_{\NC}=\begin{bmatrix}c&b&0&1\\0&d&1&0\\0&1&0&0\\1&0&0&0\end{bmatrix}B/B,$$
    obtained by setting $a=e=f=0$. On the other hand, the corresponding Pl\"ucker equations indexed by $\inv{\wnaught}\setminus \invnc{\wnaught}$ are $a,e(df-e),f$. 
    
    As in the proof, we will order these equations by removing successive extreme ray generators to ensure---as proved in Lemma~\ref{lem:PluckerPower} below---that all but one variable is eliminated.  
    Note that
    \begin{itemize}
        \item $\epsilon_1-\epsilon_2$ is an extreme ray of $\mathbb{R}_{\ge 0}\Phi^+$,
        \item $\epsilon_3-\epsilon_4$ is an extreme ray of $\mathbb{R}_{\ge 0}(\Phi^+\setminus \{\epsilon_1-\epsilon_2\})$, and
        \item $\epsilon_2-\epsilon_4$ is an extreme ray of $\mathbb{R}_{\ge 0}(\Phi^+\setminus \{\epsilon_1-\epsilon_2,\epsilon_3-\epsilon_4\})$,
    \end{itemize}  
    so if we impose vanishing conditions in the same order, i.e. $(1\,2),(3\,4),(2\,4)$ or $a=0$, $f=0$, $e(df-e)=0$, we see that $e(df-e)=0$ becomes $-e^2 = 0$, so $a=f=e(df-e)=0$ defines the same subspace as $a = f = e = 0$.
\end{eg}
The proof follows the next two technical results.  The first, which we suspect is known to experts in the area, describes the behavior of Pl\"{u}cker coordinates under the action of $W$.

\begin{lem}
\label{lem:PluckerPermute}
Let $\alpha \in \Phi^{+}$ and  $w \in W$.  
There exists a nonzero constant $k \in \CC^{\times}$ such that 
\[
\mathrm{Pl}_{s_{\alpha} w}(v) = k x^{-( \alpha^{\vee}, w\regweight )}
\mathrm{Pl}_{w}(s_{\alpha}(-x) v)
\qquad\text{for all $x \in \CC^{\times}$ and $v \in V^{\regweight}$}.
\]
\end{lem}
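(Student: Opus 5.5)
The approach is to use that $s_\alpha(-x)$ normalizes $T$ and acts on the weight-space decomposition of $V^{\regweight}$ as $s_\alpha$. Write $\pi_\mu$ for the projection onto $V^{\regweight}_\mu$ along the weight decomposition. By \Cref{fact:repfacts}\ref{item:repfacts1}, the map $v\mapsto s_\alpha(-x)v$ is a linear isomorphism $V^{\regweight}_\mu\to V^{\regweight}_{s_\alpha\mu}$ for every weight $\mu$. Hence
\[
\pi_{w\regweight}\bigl(s_\alpha(-x)v\bigr)=s_\alpha(-x)\,\pi_{s_\alpha w\regweight}(v),
\]
since the $s_\alpha w\regweight$-component of $v$ is the only one sent into $V^{\regweight}_{w\regweight}$. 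The extremal weight spaces are one-dimensional, so $s_\alpha(-x)v_{s_\alpha w\regweight}=k(x)\,v_{w\regweight}$ for a scalar $k(x)\in\CC^\times$. This gives $\Pl_w(s_\alpha(-x)v)=k(x)\Pl_{s_\alpha w}(v)$, and it remains to show $k(x)$ is a constant times $x^{(\alpha^\vee,w\regweight)}$.

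To find the $x$-dependence, I will relate $s_\alpha(-x)$ to $s_\alpha(-1)$ using a torus element. From \eqref{equation:h_rel}, and the corresponding relation obtained by inverting, I expect
\[
s_\alpha(-x)=h_{\alpha^\vee}(x)^{\pm1}\,s_\alpha(\mp 1)
\]
or a similar expression, for instance $s_\alpha(-x)=s_\alpha(-1)\,h_{\alpha^\vee}(x^{-1})$. I will derive the exact form from \eqref{equation:h_rel} together with $s_\alpha(x)^{-1}=s_\alpha(-x)$, which follows from \eqref{equation:sdefn} and \eqref{equation:ee_rel1}.

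Suppose the form is $s_\alpha(-x)=s_\alpha(-1)h_{\alpha^\vee}(y)$ with $y=x^{\pm1}$. Applying this to $v_{s_\alpha w\regweight}$ and using \eqref{eq:one_param_subgroup}, $h_{\alpha^\vee}(y)$ scales $v_{s_\alpha w\regweight}$ by
\[
y^{(s_\alpha w\regweight,\alpha^\vee)}=y^{-(w\regweight,\alpha^\vee)}.
\]
Then $s_\alpha(-1)$ contributes a fixed constant $k_0$. So $k(x)=k_0\,x^{\pm(\alpha^\vee,w\regweight)}$. If the form instead has $h$ on the left, the same computation gives the same answer, because $h_{\alpha^\vee}(y)$ then acts on $v_{w\regweight}$ by $y^{(w\regweight,\alpha^\vee)}$.

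Rearranging gives
\[
\Pl_{s_\alpha w}(v)=k_0^{-1}x^{\mp(\alpha^\vee,w\regweight)}\Pl_w(s_\alpha(-x)v).
\]
The main obstacle is pinning down the sign of the exponent, that is, getting the precise identity relating $s_\alpha(-x)$, $s_\alpha(-1)$ and $h_{\alpha^\vee}$ from \eqref{equation:h_rel}. As a sanity check I will compute in $\mathrm{SL}_2$ with $\alpha$ simple, $w=\idem$ and $V$ the standard representation. There $s_\alpha(-x)=\begin{bmatrix}0&-x\\x^{-1}&0\end{bmatrix}$, which sends the highest weight vector $e_1$ to $x^{-1}e_2$. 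This gives $\Pl_{\idem}(s_\alpha(-x)v)=-x\,\Pl_{s_\alpha}(v)$, so $\Pl_{s_\alpha}(v)=-x^{-1}\Pl_{\idem}(s_\alpha(-x)v)$. This matches the claimed exponent $-(\alpha^\vee,\regweight)=-1$ and fixes the sign in the general computation.
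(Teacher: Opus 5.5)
Your proposal is correct and follows the paper's proof. Both arguments compute the scalar by which $s_\alpha(-x)$ sends $v_{s_\alpha w\regweight}$ to $v_{w\regweight}$, factor $s_\alpha(-x)$ via \eqref{equation:h_rel} into an $x$-independent Weyl representative times a torus element, and read off the exponent from that torus element's character (you also spell out the weight-space projection step the paper leaves implicit). The sign you leave open is settled directly by substituting $-x$ into \eqref{equation:h_rel}, which gives $s_\alpha(-x)=h_{\alpha^\vee}(-x)s_\alpha(-1)^{-1}=h_{\alpha^\vee}(-x)s_\alpha(1)$; hence $k(x)$ is a constant times $x^{(\alpha^\vee,w\regweight)}$, and the $\mathrm{SL}_2$ check is not needed.
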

\begin{proof}
Recall the fixed extremal weight vectors $\{v_{u\regweight} \suchthat u \in W\}$ from \eqref{eq:weightbasis}, and recall that $s_{\alpha}(-x)$ sends each of these vectors to a scalar multiple of another; we will calculate this scalar for $v_{s_{\alpha} w \regweight}$.  
We begin by applying the Chevalley relations from Section~\ref{sec:Greps}:
\[
s_{\alpha}(-x) v_{s_{\alpha} w \regweight}
\stackrel{\eqref{equation:h_rel}}{=}
h_{\alpha^{\vee}}(-x)s_{\alpha}(-1)^{-1}  v_{s_{\alpha} w \regweight}
\stackrel{\eqref{equation:sdefn}, \eqref{equation:ee_rel1}}{=}
h_{\alpha^{\vee}}(-x)s_{\alpha}(1) v_{s_{\alpha} w \regweight}.
\]
Define  $k \in \CC^{\times}$  by the equation $k s_{\alpha}(1) v_{s_{\alpha} w \regweight} = (-1)^{(\alpha^{\vee}, w \regweight)} v_{w\regweight}$, so that the above is equal to
\begin{align*}
k^{-1}  (-1)^{(\alpha^{\vee}, w \regweight)} h_{\alpha^{\vee}}(-x) v_{w \regweight}
&= k^{-1}   (-1)^{(\alpha^{\vee}, w \regweight)}  (-x)^{( \alpha^{\vee}, w \regweight )} v_{w \regweight} \\
&= k^{-1}  x^{( \alpha^{\vee}, w \regweight )} v_{w \regweight}.
\end{align*}
Thus $\mathrm{Pl}_{w}(s_{\alpha}(-x) v) = k^{-1} x^{( \alpha^{\vee}, w\regweight )} \mathrm{Pl}_{s_{\alpha} w}(v)$, from which the claim follows.
\end{proof}

Before stating the second lemma, we make some general observations about the computation of Pl\"{u}cker coordinates.  
For $w \in W$ and $C = \{\beta_{1}, \ldots, \beta_{|C|}\} \subseteq r(\inv{w})$,  
\[
e_{\beta_1}(x_{\beta_1})\cdots e_{\beta_{|C|}}(x_{\beta_{|C|}}) v_{w\regweight} 
=
\sum_{\vec{a} \in \ZZ_{\ge 0}^{|C|}} x_{\beta_{1}}^{a_{1}} \cdots x_{\beta_{|C|}}^{a_{|C|}}  E_{\beta_{1}}^{a_{1}} \cdots E_{ \beta_{|C|}}^{a_{|C|}} v_{w \regweight}.
\]
With the caveat that all but finitely many summands above are zero,  we observe that the summand for $\vec{a} \in \ZZ_{\ge 0}^{|C|}$ belongs to the $w\regweight + a_{1}\beta_{1} + \cdots + a_{|C|} \beta_{|C|}$ weight space.    
Thus for $u \in W$, 
\begin{equation}
\label{eq:PluckerPrecision}
\Pl_{u}\left( e_{\beta_1}(x_{\beta_1})\cdots e_{\beta_{|C|}}(x_{\beta_{|C|}}) v_{w\regweight} \right)
=
\Pl_{u}\left( 
\sum_{\text{solutions $\vec{a}$}} x_{\beta_{1}}^{a_{1}} \cdots x_{\beta_{|C|}}^{a_{|C|}}  E_{\beta_{1}}^{a_{1}} \cdots E_{ \beta_{|C|}}^{a_{|C|}} v_{w \regweight}
\right),
\end{equation}
where the sum in the right hand side is over all $\vec{a} \in \ZZ_{\ge 0}^{|C|}$ for which 
\[
u \regweight - w \regweight 
=
a_{1}\beta_{1} + \cdots + a_{|C|} \beta_{|C|}.
\]

\begin{lem}
\label{lem:PluckerPower}
Let $w \in W$ and $C\subset r(\inv{w})$.  
For any ordering $\beta_1,\ldots, \beta_{|C|}$ of  $C$ so that $\alpha=\beta_{|C|}$ is an extremal ray generator of $\operatorname{Cone}(C)$, and any collection $\{x_{\beta_{i}} \in \CC \suchthat \beta_{i} \in C\}$, 
\[
\mathrm{Pl}_{s_{\alpha} w} \left(  e_{\beta_1}(x_{\beta_1})\cdots e_{\beta_{|C|}}(x_{\beta_{|C|}}) v_{w\regweight} \right)
=
k x_{\alpha}^{-( \alpha^{\vee}, w\regweight )} 
\]
where $k \in \CC^{\times}$ is the proportionality constant in Lemma~\ref{lem:PluckerPermute} for $w$ and $\alpha$.
\end{lem}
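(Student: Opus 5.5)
Since $s_{\alpha}w\regweight - w\regweight = -(\alpha^{\vee}, w\regweight)\,\alpha$ and $\alpha\in r(\inv{w})$, the coefficient $m \coloneqq -(\alpha^{\vee}, w\regweight)$ is a positive integer. By \eqref{eq:PluckerPrecision}, the coordinate $\Pl_{s_\alpha w}$ of the product only sees the summands indexed by $\vec a\in\ZZ_{\ge 0}^{|C|}$ with
\[
a_1\beta_1+\cdots+a_{|C|}\beta_{|C|} = m\,\alpha .
\]
The first step is to show that the only such solution is $\vec a=(0,\ldots,0,m)$. Since $\alpha$ spans an extremal ray of $\operatorname{Cone}(C)$, there is a linear functional $\phi$ with $\phi(\alpha)=0$ and $\phi>0$ on $\operatorname{Cone}(C)\setminus\RR_{\ge0}\alpha$. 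Apply $\phi$ to both sides: every $\beta_i$ not parallel to $\alpha$ must have $a_i=0$. The only positive root parallel to $\alpha$ among the $\beta_i$ is $\alpha$ itself, because the root system is reduced. Hence $a_{|C|}=m$ and all other $a_i$ vanish.

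Consequently, the $s_\alpha w\regweight$-component of $e_{\beta_1}(x_{\beta_1})\cdots e_{\beta_{|C|}}(x_{\beta_{|C|}}) v_{w\regweight}$ equals the $s_\alpha w\regweight$-component of $e_\alpha(x_\alpha)v_{w\regweight}$. This holds because $E_\alpha$ appears last in the ordered product and acts first on the vector, and the surviving monomial is $x_\alpha^{m}E_\alpha^{m}v_{w\regweight}/m!$. So it suffices to compute $\Pl_{s_\alpha w}(e_\alpha(x)v_{w\regweight})$.

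For $x\neq 0$ we apply Lemma~\ref{lem:PluckerPermute}, which gives
\[
\Pl_{s_\alpha w}(e_\alpha(x)v_{w\regweight}) = k\,x^{m}\,\Pl_w\bigl(s_\alpha(-x)e_\alpha(x)v_{w\regweight}\bigr).
\]
Expanding with \eqref{equation:sdefn} and \eqref{equation:ee_rel1},
\[
s_\alpha(-x)e_\alpha(x) = e_\alpha(-x)e_{-\alpha}(x^{-1})e_\alpha(-x)e_\alpha(x) = e_\alpha(-x)e_{-\alpha}(x^{-1}).
\]
Next, $e_{-\alpha}(x^{-1})v_{w\regweight}=v_{w\regweight}$, because $w^{-1}(-\alpha)$ is positive, so $E_{-\alpha}v_{w\regweight}$ would have weight $w(\regweight+w^{-1}(-\alpha))$, which is not a weight by Fact~\ref{fact:repfacts}. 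Then $e_\alpha(-x)v_{w\regweight}$ has $w\regweight$-component exactly $v_{w\regweight}$, since the higher terms $E_\alpha^j v_{w\regweight}$ with $j\ge1$ change the weight. Therefore $\Pl_w(\cdots)=1$ with the normalization $\Pl_w(v_{w\regweight})=1$, and we obtain $k x_\alpha^{m}$.

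For $x_\alpha=0$ both sides are zero, since the only relevant summand carries $x_\alpha^m$ with $m>0$; alternatively this follows from polynomiality and continuity. The main obstacle is bookkeeping: checking the sign and exponent conventions so that the exponent is $-(\alpha^\vee,w\regweight)$ and the constant is exactly the $k$ of Lemma~\ref{lem:PluckerPermute}, rather than $k$ times $m!$ or a sign. The cleanest way to avoid an extra factor is to keep the reduction in the form $\Pl_{s_\alpha w}(e_\alpha(x)v_{w\regweight})$ and never expand $E_\alpha^m/m!$ explicitly.
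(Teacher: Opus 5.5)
Your proof is correct and follows the paper's argument. Like the paper, you first use \eqref{eq:PluckerPrecision} and the extremality of $\alpha$ to reduce to the single factor $e_\alpha(x_\alpha)$; you then apply Lemma~\ref{lem:PluckerPermute} together with $s_\alpha(-x)e_\alpha(x)=e_\alpha(-x)e_{-\alpha}(x^{-1})$ and the vanishing of $E_{-\alpha}v_{w\regweight}$. The differences are cosmetic: you use an exposing functional where the paper pairs with $\regweight$, and you get $E_{-\alpha}v_{w\regweight}=0$ from $W$-invariance of weight multiplicities rather than by conjugating by $\dot w$ (you also correctly keep the $1/m!$ that the paper's expansion omits).
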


Note that $w^{-1} \alpha$ is a negative root, so the exponent $-( \alpha^{\vee}, w\regweight )$ above is in fact nonnegative.

\begin{proof}
We first reduce to the case wherein $C = \{\alpha\}$ by proving that 
\begin{equation}
\label{eq:PluckerPassoff}
\mathrm{Pl}_{s_{\alpha} w} \left(  e_{\beta_1}(x_{\beta_1})\cdots e_{\beta_{|C|}}(x_{\beta_{|C|}}) v_{w\regweight} \right)
=
\mathrm{Pl}_{s_{\alpha} w} \left(  e_{\alpha}(x_{\alpha}) v_{w\regweight} \right).
\end{equation}
By~\eqref{eq:PluckerPrecision} and the fact that $s_{\alpha}w\regweight = w\regweight - (\alpha^{\vee}, w \regweight) \alpha$, all contributions to the left hand side of~\eqref{eq:PluckerPassoff} must correspond to solutions $\vec{a} \in \ZZ_{\ge 0}^{|C|}$ to the equation
\[
-\big( a_{|C|} + (\alpha^{\vee}, w \regweight)\big) \alpha
= 
a_{1}\beta_{1} + \cdots + a_{|C| - 1} \beta_{|C| - 1}.
\]
By assumption $\operatorname{Cone}(C \setminus \{\alpha\}) \cap \ZZ\alpha = \{\vec{0}\}$, so any possible solution has $a_{|C|} = -(\alpha^{\vee}, w \regweight)$ and $\sum_{i = 1}^{|C|-1} a_{i} \beta_{i} = \vec{0}$.  As each $\beta_{i} \in \Phi^{+}$, the only solution to the latter equation is $a_{1} = \cdots = a_{|C|-1} = 0$; this can be seen for example by taking the inner product of both sides with $\regweight$, as $(\beta_{i}, \regweight) > 0$.  
Therefore the $s_{\alpha}w$-Pl\"{u}cker coordinate comes solely from $x_{\alpha}^{-(\alpha^{\vee}, w \regweight)} E_{\alpha}^{-(\alpha^{\vee}, w \regweight)} v_{w \regweight}$. 
This is also the case for the right hand side of~\eqref{eq:PluckerPassoff}, proving that~\eqref{eq:PluckerPassoff} holds.  

Now assume $C = \{\alpha\}$.  If $x_{\alpha} = 0$, then $e_{\alpha}(x_{\alpha}) v_{w\regweight} =  v_{w\regweight}$ is entirely in the $w\regweight$ weight space and therefore has $s_{\alpha} w$-Pl\"{u}cker coordinate zero.  
Therefore we assume that $x_{\alpha} \neq 0$ and consider $s_{\alpha}(-x_{\alpha})e_{\alpha}(x_{\alpha}) v_{w\regweight}$, with an eye toward applying Lemma~\ref{lem:PluckerPermute}.  

We first claim that $s_{\alpha}(-x_{\alpha}) e_{\alpha}(x_{\alpha}) v_{w\regweight} = e_{\alpha}(-x_{\alpha}) v_{w\regweight}$.
Using the Chevalley relations we have
\[
s_{\alpha}(-x_{\alpha}) e_{\alpha}(x_{\alpha})
\stackrel{\eqref{equation:sdefn}}{=}
e_{\alpha}(-x_{\alpha})e_{-\alpha}(x^{-1}_{\alpha})e_{\alpha}(-x_{\alpha})e_{\alpha}(x_{\alpha})
\stackrel{\eqref{equation:ee_rel1}}{=}
e_{\alpha}(-x_{\alpha})e_{-\alpha}(x^{-1}_{\alpha}).
\]
To prove our claim we  show that $e_{-\alpha}(x^{-1}_{\alpha})$ acts trivially on $v_{w \regweight}$.  
Let $w = s_{\alpha_{1}} \cdots s_{\alpha_{\ell}}$ be a reduced word for $w$.  
As $\regweight$ is regular, there exists a $h \in T$ such that $\dot{w} = s_{\alpha_{1}}(1)s_{\alpha_{2}}(1) \cdots s_{\alpha_{\ell}}(1) h \in N_{G}(T)$ maps $v_{\regweight}$ to $v_{w\regweight}$.  
We therefore have
\[
e_{-\alpha}(x^{-1}_{\alpha}) v_{w\regweight} 
= e_{-\alpha}(x^{-1}_{\alpha}) \dot{w} v_{\regweight} 
\stackrel{\eqref{equation:se_rel}, \eqref{equation:he_rel}}{=} \dot{w} e_{-w^{-1}\alpha}(y) v_{\regweight}
\qquad\text{for some $y \in \CC^{\times}$}. 
\]
Now $v_{\regweight}$ is a highest weight vector, and $-w^{-1} \alpha \in \Phi^{+}$ (as $s_{\alpha} \in \inv{w}$), so $E_{-w^{-1}\alpha} v_{\regweight} = 0$, and therefore $e_{-w^{-1}\alpha}(y)v_{\regweight}  = v_{\regweight}$.  Therefore the above expression is equal to $\dot{w} v_{\lambda} = v_{w\regweight}$.

As a consequence, 
\[
\Pl_w\left(s_{\alpha}(-x_{\alpha})e_{\alpha}(x_{\alpha}) v_{w\regweight}\right) = \mathrm{Pl}_{w} \left(  e_{\alpha}(-x_{\alpha})  v_{w\regweight} \right) = 1.\]  
Lemma~\ref{lem:PluckerPermute} now gives the desired formula for $\mathrm{Pl}_{s_{\alpha} w} \left(  e_{\alpha}(x_{\alpha}) v_{w\regweight} \right)$.
\end{proof}

\begin{proof}[Proof of Proposition~\ref{cor:CoordinateSubspace}]

As $C$ is strongly closed, by greedily removing extreme rays from the cone spanned by $r(\inv{w})$ we can order  $r(\inv{w}) \setminus C$ as $\beta_{|C|+1},\ldots,\beta_{\ell(w)}$ in such a way that, for each $|C| < i \le \ell(w)$, $\beta_{i}$ is an extreme ray of the cone spanned by $r(\inv{w}) \setminus \{\beta_{i+1},\ldots,\beta_{\ell(w)}\}$.  

Now using Lemma~\ref{lem:PluckerPower}, the $s_{\beta_{\ell(w)}}w$-Pl\"{u}cker function  composed with $N_{w} \cong U_{w}wB$ is a power of the $\beta_{\ell(w)}$-coordinate function of $N_{w}$ and as $\regweight$ is regular, this power is not zero.  
Thus the isomorphism descends to $\Xc^{w}  \cap \{\Pl_{\beta_{\ell(w)}}^{\lambda} = 0\}$
and $N_{r(\inv{w}) \setminus \{\beta_{\ell(w)}\}}$.  
Repeating this descent argument for $N_{r(\inv{w}) \setminus D}$ with $D = \{\beta_{|C|+i}, \ldots, \beta_{\ell(w)}\}$ for each $1 \le i < \ell(w)-|C|$, we arrive at the desired isomorphism. 
 \end{proof}

\subsection{Coxeter Schubert cells and $\cfl_c$ via Pl\"ucker vanishing}
\label{sec:cBruhatcells}

We now apply results of the previous section to Coxeter Schubert cells and eventually $\cfl_{c}$.  The proof of Theorem~\ref{thm:everying_about_cfl} is given at the end of the section.  

\begin{prop}
\label{prop:PVInCells}
We have $\pluckervanishing{\NC(W,c)}\subset \bigsqcup_{u\in \NC(W,c)}\Xc^u_{\NC}$.
\end{prop}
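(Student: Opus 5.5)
Take a point $x\in\pluckervanishing{\NC(W,c)}$. By the Bruhat decomposition, $x$ lies in a unique Schubert cell $\Xc^{u}$, and the plan is to show first that $u\in\NC(W,c)$ and then that $x\in\Xc^u_{\NC}$.

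For the first step, recall from the proof of \Cref{fact:rigidRich} that a strictly antidominant cocharacter $\psi$ satisfies $\lim_{t\to0}\psi(t)x=uB$. Hence $uB\in\overline{T\cdot x}$. By \Cref{fact:BasicPlucker}\ref{5.2.3} this means $\Pl_u(x)\neq 0$. Since $x$ lies in the vanishing locus of every $\Pl_w$ with $w\notin\NC(W,c)$, we get $u\in\NC(W,c)$. Equivalently, by \Cref{thm:PluVan}\ref{item1}, the fixed points of $\overline{T\cdot x}$ lie in $\NC(W,c)$.

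For the second step, we would like to apply Proposition~\ref{cor:CoordinateSubspace}, which describes $\Xc^u_{\NC}$ as the locus in $\Xc^u$ where $\Pl_{\tau u}=0$ for all $\tau\in\inv{u}\setminus\invnc{u}$. By definition of $\invnc{u}$, such $\tau$ satisfy $\tau u\notin\NC(W,c)$. So every one of these Pl\"ucker coordinates vanishes on $\pluckervanishing{\NC(W,c)}$. Therefore
\[
x\in\Xc^u\cap\bigcap_{\tau\in\inv{u}\setminus\invnc{u}}\{\Pl_{\tau u}=0\}=\Xc^u_{\NC}.
\]
The hypothesis of Proposition~\ref{cor:CoordinateSubspace} is that $r(\invnc{u})$ is strongly closed, and this is supplied by \Cref{fact:invncClosed}.

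Finally, the union on the right-hand side is disjoint because the Schubert cells $\Xc^u$ are pairwise disjoint and $\Xc^u_{\NC}\subset\Xc^u$. Since all the heavy lifting is packaged in Proposition~\ref{cor:CoordinateSubspace} and \Cref{fact:invncClosed}, I do not expect a real obstacle here.
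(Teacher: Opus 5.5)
Your proposal is correct and follows the paper's proof essentially step for step. The antidominant-cocharacter limit puts $uB$ in $\overline{T\cdot x}$, which forces $u\in\NC(W,c)$, and then \Cref{fact:invncClosed} with \Cref{cor:coordinatesubspace2} places $x$ in $\Xc^u_{\NC}$; the only cosmetic difference is that you phrase the first step as $\Pl_u(x)\neq 0$ via \Cref{fact:BasicPlucker}, whereas the paper notes $\overline{T\cdot x}\subset\pluckervanishing{\NC(W,c)}$ via \Cref{thm:PluVan}, which amounts to the same thing.
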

\begin{proof}
    Let $x\in \pluckervanishing{\NC(W,c)}$ and take $u \in W$ so that $x\in \Xc^u$.  
    Then $\overline{T\cdot x} \subset \pluckervanishing{\NC(W,c)}$ by \Cref{thm:PluVan}. For a strictly antidominant cocharacter $\psi:\mathbb{C}^*\to T$ (as in the proof of \Cref{fact:rigidRich}) we have $u=\lim_{t\to 0}\psi(t)x\in \overline{T\cdot x}$ and so $u\in \NC(W,c)$. 
    The fact that $x\in \Xc^u_{\NC}$ now follows from \Cref{fact:invncClosed} and \Cref{cor:coordinatesubspace2}.
\end{proof}
\begin{cor}
\label{cor:wwcequalsuNC}
If $u\in \movetoid{w}{w\cdot c}$ is the Bruhat-maximum element then  $w^{-1}X^{w\cdot c}_w=X^{u}_{\NC}$.
\end{cor}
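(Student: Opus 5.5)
Set $Y\coloneqq w^{-1}X^{w\cdot c}_w$ and $u$ the Bruhat-maximum of $\movetoid{w}{w\cdot c}=Y^T$. The plan is to show $Y\subseteq X^u_{\NC}$, then get equality from irreducibility and a dimension count.

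\textbf{Step 1: $Y$ lies in the union of Coxeter Schubert cells below $u$.}
By \Cref{thm:FirstCFlFacts}\ref{it:7.3.1} we have $Y\subseteq \cfl_c\subseteq \pluckervanishing{\NC(W,c)}$. \Cref{prop:PVInCells} then gives
\[
Y\subseteq \bigsqcup_{v\in \NC(W,c)}\Xc^v_{\NC}.
\]
Take $y\in Y$ with $y\in\Xc^v$. The antidominant limit argument from \Cref{fact:rigidRich} shows $v\in \overline{T\cdot y}\subseteq Y$, so $v\in Y^T$ and hence $v\le_B u$. Consequently
\[
Y\subseteq \bigsqcup_{v\in Y^T}\Xc^v_{\NC}.
\]

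\textbf{Step 2: the open piece of $Y$ lies in $\Xc^u_{\NC}$.}
Since $u$ is the Bruhat-maximum of $Y^T$, we have $Y\subseteq X^u$ by rigidity (\Cref{fact:rigidRich}). Moreover $Y\cap\Xc^u$ is nonempty and open in $Y$: it contains the generic $T$-orbit, because a generic point lies in the cell of the largest fixed point. By Step 1, $Y\cap\Xc^u\subseteq \Xc^u_{\NC}$. Taking closures, and using that $Y$ is irreducible (it is a toric variety by \Cref{thm:CoxeterRichIsToric}), we get $Y\subseteq X^u_{\NC}$.

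\textbf{Step 3: dimension count.}
We have $\dim Y=n$. On the other hand, $\dim X^u_{\NC}=|\invnc{u}|\le n$, since $\invncR{u}$ gives a cluster's positive part, of size at most $n$. Therefore $Y\cap\Xc^u$ is an $n$-dimensional subvariety of the irreducible affine space $\Xc^u_{\NC}\cong\mathbb{A}^{|\invnc{u}|}$. This forces $|\invnc{u}|=n$, and $Y\cap \Xc^u$ is closed in $\Xc^u$, being the trace of the closed set $Y$. A closed $n$-dimensional subset of $\mathbb{A}^n$ is everything, so $Y\cap\Xc^u=\Xc^u_{\NC}$. Taking closures gives $Y=X^u_{\NC}$.

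\textbf{Main obstacle.}
The delicate point is Step 2: justifying that the dense $T$-orbit of $Y$ lands in the cell $\Xc^u$ of the Bruhat-maximum fixed point, rather than in some $\Xc^v$ with $v<_B u$. First I would argue by contradiction. If the dense orbit $T\cdot y$ lay in $\Xc^v$ with $v\ne u$, then by Step 1 the whole of $Y$ would lie in the closed union $\bigcup_{v'\le_B v}\Xc^{v'}=X^v$. This would force $u\in X^v$, i.e. $u\le_B v$, contradicting $v<_B u$. Hence the dense orbit does lie in $\Xc^u$, which is what Step 2 needs.
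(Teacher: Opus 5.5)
Your argument is correct and follows essentially the same route as the paper. Both proofs place $Y=w^{-1}X^{w\cdot c}_w$ inside $X^u$, show that the dense open piece $Y\cap\Xc^u$ lies in $\Xc^u_{\NC}$ via Pl\"ucker vanishing (the paper cites \Cref{cor:CoordinateSubspace} directly, where you go through \Cref{prop:PVInCells}), and finish with a dimension comparison and closures. Your explicit deduction that $|\invnc{u}|=n$ fills in a point the paper only asserts when it writes $\Xc^u_{\NC}\cong\mathbb{A}^n$. Your ``main obstacle'' is also easier than you make it: $uB\in Y\cap\Xc^u$, and $\Xc^u$ is open in $X^u\supseteq Y$.
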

\begin{proof}
We have $u\in w^{-1}X^{w\cdot c}_w\subset X^u$, and so $w^{-1}X_w^{wc}\cap \Xc^u$ is a dense open subset of the irreducible $n$-dimensional variety $w^{-1}X_w^{wc}$. 
Furthermore by \Cref{thm:FirstCFlFacts}(1) and \Cref{cor:CoordinateSubspace} we have $$(w^{-1}X_w^{wc})\cap \Xc^u\subset  \pluckervanishing{\NC(W,c)}\cap \Xc^u\subset \Xc^u_{\NC},$$ and $\Xc^u_{\NC}\cong \mathbb{A}^n$. 
Therefore $w^{-1}X^{w\cdot c}_w\cap \Xc^u$ is dense in $\Xc^u_{\NC}$, and taking closures we obtain 
\[w^{-1}X^{w\cdot c}_w=X^u_{\NC}.\qedhere\]
\end{proof}

For the next result, we make a few observations about descending to a standard parabolic subgroup $W' \subseteq W$.  
First, if $c' \le_B c$ is a Coxeter element of $W'$, then $\NC(W', c') \subseteq \NC(W, c)$.  Second, if $w' \in W$ has $\ell(w'c') = \ell(w') + \ell(c')$ , then $\movetoid{w'}{w'\cdot c'} \subseteq \NC(W’, c’)$ by \Cref{thm:HasseUnion}.

\begin{cor}
\label{cor:w'w'c'equalsuNC}
    Let $W' \subseteq W$ be a standard parabolic subgroup and let $c'$ be a Coxeter element of $W'$ with $c'\le_B c$.
    If $w'\in W$ and $u\in \movetoid{w'}{w'\cdot c'}\subset \NC(W',c')$ is the Bruhat-maximum element, then $(w')^{-1}X_{w'}^{w'\cdot c'}=X^u_{\NC}$.
\end{cor}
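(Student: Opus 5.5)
The plan is to mimic the proof of \Cref{cor:wwcequalsuNC}, with $c'$ in place of $c$ and with the Plücker vanishing now taken with respect to $\NC(W,c)$. Let $k=\ell(c')$ be the rank of $W'$.

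First we place the variety $(w')^{-1}X^{w'\cdot c'}_{w'}$ inside $\pluckervanishing{\NC(W,c)}$. By \Cref{fact:rigidRich} its $T$-fixed points are $\movetoid{w'}{w'\cdot c'}$. By \Cref{thm:HasseUnion} applied to $(W',c')$, this set lies in $\NC(W',c')$, and we have $\NC(W',c')\subseteq\NC(W,c)$. Hence \Cref{thm:PluVan}\ref{item1} gives $(w')^{-1}X^{w'\cdot c'}_{w'}\subseteq\pluckervanishing{\NC(W,c)}$. The one point needing care is that \Cref{thm:HasseUnion} is stated for $w\in W$ and $c$ a Coxeter element of the ambient group, whereas here $w'\in W$ and $c'$ is a Coxeter element of $W'$. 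To handle this, factor $w'=xw''$ with $x$ a minimal coset representative of $W/W'$ and $w''\in W'$. Left multiplication by $x$ gives a length-additive poset isomorphism from $W'$ (in Bruhat order) onto $xW'$. Therefore $[w',w'c']=x[w'',w''c']$, which gives $\movetoid{w'}{w'\cdot c'}=\movetoid{w''}{w''\cdot c'}$, and the theorem applies to $w''$ inside $W'$. The same reasoning shows that the variety is toric of dimension $k$, either by \Cref{thm:CoxeterRichIsToric} applied in $W'$ or by rerunning its proof.

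Next, $u$ is the Bruhat maximum of the fixed points, so the variety lies in $X^u$ and meets $\Xc^u$ in a dense open subset. By \Cref{prop:PVInCells}, or directly by \Cref{cor:CoordinateSubspace} together with \Cref{fact:invncClosed}, we get
\[
(w')^{-1}X^{w'\cdot c'}_{w'}\cap\Xc^u\subseteq\pluckervanishing{\NC(W,c)}\cap\Xc^u\subseteq\Xc^u_{\NC}\cong\mathbb{A}^{|\invnc{u}|}.
\]

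It remains to compare dimensions, and this is the main obstacle: we need $|\invnc{u}|=k$, where $\invnc{u}$ is computed in $\NC(W,c)$. The closure argument of \Cref{cor:wwcequalsuNC} then applies verbatim: a $k$-dimensional irreducible closed subset of $\mathbb{A}^k$ is dense, and taking closures gives the equality. For the upper bound, note that $u\in W'$ forces $\inv{u}\subseteq W'$, so $|\invnc{u}|$ is at most the number of $\tau\in\inv{u}$ with $\tau u\in\NC(W,c)$. Via $\nctocluster$ these correspond to a positive $c$-cluster contained in $\Phi_{W'}^+$, which has size at most $k$ because its roots are linearly independent. For the lower bound, the $k$ elements of $\movetoid{w'}{w'\cdot c'}$ covered by $u$ in the Bruhat interval correspond to $T$-curves in the variety through $u$. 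They give elements $\tau u\in\NC(W,c)$ with $\tau\in\inv{u}$, and there are $k$ of them since a toric variety of dimension $k$ has $k$ edges at each vertex, here the Bruhat-maximal one. This yields $|\invnc{u}|\ge k$. If the edge count at $u$ turns out to be delicate, an alternative is to compute $\invnc{u}$ directly in $\NC(W',c')$ and check that $\tau u\in\NC(W,c)$ with $\tau u\in W'$ is equivalent to $\tau u\le_{\Reflections}c'$. This holds because $\NC(W,c)\cap W'=\NC(W',c')$, which follows since the fixed space of an element of $W'$ contains the fixed space of $W'$.
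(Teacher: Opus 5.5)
Your proof is correct, but it takes a different route from the paper's. The paper passes to the Levi subgroup $L$ of $W'$. It views $(w')^{-1}X^{w'c'}_{w'}$ inside $L/B_L\cong LB/B$ as part of $\cfl_{c'}$, runs the argument of \Cref{cor:wwcequalsuNC} there for $(W',c')$, and transports the result back to $G/B$. That route is short, but the final transport relies on the Coxeter Schubert cell of $u$ built from $(W',c')$ agreeing with the one built from $(W,c)$. Equivalently, $\invnc{u}$ must be the same whether computed in $\NC(W',c')$ or in $\NC(W,c)$.

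You instead stay in $G/B$ and use $\pluckervanishing{\NC(W,c)}$ and \Cref{prop:PVInCells} directly. You replace that identification by the bound $|\invnc{u}|\le k$, which comes from the linear independence of $\nctocluster(u)\subseteq\Phi^+_{W'}$ inside the $k$-dimensional span of the simple roots of $W'$. This buys a self-contained argument that never compares noncrossing data across $W'$ and $W$. Your factorization $w'=xw''$ also makes explicit the reduction to $w''\in W'$, which the paper leaves tacit. For the equality $[w',w'c']=x[w'',w''c']$ you should add that a Bruhat interval between two elements of $xW'$ stays in $xW'$. This follows because the map sending $z$ to its minimal coset representative is order-preserving.

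Two minor remarks. First, your lower bound $|\invnc{u}|\ge k$ is superfluous. The $k$-dimensional irreducible set $(w')^{-1}X^{w'c'}_{w'}\cap\Xc^u$ is a closed subset of $\Xc^u_{\NC}\cong\mathbb{A}^{|\invnc{u}|}$, which already forces it. (Also, a vertex of a $k$-dimensional polytope has at least $k$ edges, not exactly $k$.) Second, your fallback identity $\NC(W,c)\cap W'=\NC(W',c')$ needs more than the fixed-space remark. It requires that $v\mapsto\mathrm{Fix}(v)$ reflects the order $\le_{\Reflections}$ on $\NC(W,c)$, a theorem of Brady--Watt and Bessis. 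Your main argument does not use it, so this does not affect correctness.
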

\begin{proof}
Let $L\subseteq LB\subseteq G$ be the Levi and parabolic subgroups associated to $W'$, so that $B_{L}\coloneqq B \cap L$ is a Borel subgroup for $L$.  
Then $(w')^{-1}X^{w'c'}_{w'} \subseteq LB/B$, and under $L/B_{L} \cong LB/B$ it must map to $\cfl_{c'}$.  
We can therefore apply the argument in the proof of Corollary~\ref{cor:wwcequalsuNC} above to equate the $\ell(c')$-dimensional varieties \[\wt{(w')^{-1}X^{w'c'}_{w'}} = \wt{X^u_{\NC}},\] where $\wt{\cdot}$ indicates that these varieties are viewed in $L/B_{L}$.  
Transporting back along $L/B_{L}\cong LB/B\subseteq G/B$ identifies $\wt{X^u_{\NC}}$ with $X^u_{\NC}$.
\end{proof}
\begin{lem}
\label{lem:w'c'tow'c}
    If in \Cref{cor:w'w'c'equalsuNC} we have $w' \in W'$, then we have $\ell(w'c)=\ell(w')+\ell(c)$.
\end{lem}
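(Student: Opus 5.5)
The plan is to use the root-theoretic criterion for length additivity: for $x,y\in W$ we have $\ell(xy)=\ell(x)+\ell(y)$ if and only if $x(\gamma)\in\Phi^+$ for every $\gamma\in\Phi^+\cap y(\Phi^-)$, i.e.\ for every root $\gamma=r(\tau)$ with $\tau\in\inv{y}$. Writing $c=s_1\cdots s_n$ with $s_j=s_{\alpha_j}$, these roots are $\gamma_j\coloneqq s_1\cdots s_{j-1}(\alpha_j)$ for $1\le j\le n$, as in \eqref{eq:reflection_factorization}. Let $J\subseteq\Simple$ generate $W'$, and let $\Phi_J$ be the corresponding parabolic root subsystem. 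The same criterion applied to the hypothesis $\ell(w'c')=\ell(w')+\ell(c')$ says that $w'$ keeps positive every inversion root of $c'$. The task is therefore to show that $w'(\gamma_j)\in\Phi^+$ for all $j$.

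\textbf{Step 1: roots outside $\Phi_J$.} Since $w'\in W'$ is a product of reflections $s_\alpha$ with $\alpha\in J$, applying $w'$ to a root only changes its coefficients on simple roots in $J$. If $\gamma\in\Phi^+\setminus\Phi_J$, then $\gamma$ has a strictly positive coefficient on some simple root outside $J$. This coefficient is unchanged in $w'(\gamma)$, so $w'(\gamma)$ is positive. Hence all $\gamma_j\notin\Phi_J$ are handled automatically.

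\textbf{Step 2: roots inside $\Phi_J$.} This is the main point. Suppose $\gamma_j\in\Phi_J$. Fix $i\le j$ with $s_i\notin J$. In the computation $\gamma_j=s_1\cdots s_{j-1}(\alpha_j)$, the letter $s_i$ occurs exactly once, because $c$ is a Coxeter element. Every other reflection applied leaves the $\alpha_i$-coefficient untouched. Thus the $\alpha_i$-coefficient of $\gamma_j$ is exactly the amount added at the moment $s_i$ is applied. In particular $i\neq j$, since otherwise this coefficient would be $1$. So $i<j$, and the amount added must be $0$; that is, $s_i$ fixes the intermediate root $s_{i+1}\cdots s_{j-1}(\alpha_j)$. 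Consequently we may delete $s_i$ from the prefix without changing $\gamma_j$. Deleting all letters of the prefix not in $J$ shows that $\gamma_j=s'_1\cdots s'_{k-1}(\alpha_j)$, where $s'_1,\dots,s'_{k-1}$ are the letters of $J$ preceding $s_j$ in $c$. Because $c'$ is the subword of $c$ consisting of the letters in $J$, this is exactly one of the inversion roots of $c'$. By the hypothesis $\ell(w'c')=\ell(w')+\ell(c')$ it follows that $w'(\gamma_j)\in\Phi^+$.

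Combining the two steps, $w'$ sends every inversion root of $c$ to a positive root, which gives $\ell(w'c)=\ell(w')+\ell(c)$. The only delicate point is the coefficient-tracking argument of Step 2. It relies on the facts that each simple reflection appears exactly once in $c$, and that $s_i(\beta)-\beta$ is a multiple of $\alpha_i$, which is nonzero exactly when $s_i$ moves $\beta$.
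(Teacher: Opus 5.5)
Your proof is correct, but it takes a different route from the paper's.

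The paper argues with words. It first shows the following claim: if $y,z$ lie in the parabolic subgroup generated by $\Simple\setminus\{s_j\}$ and $\ell(yz)=\ell(y)+\ell(z)$, then $ys_jy^{-1}$ lies outside that subgroup. Hence it is not in $\inv{yz}$, and so $\ell(ys_jz)=\ell(yz)+1$. It then inserts the letters of $c$ that are missing from $c'$, one at a time, into the concatenation of reduced words for $w'$ and $c'$. This produces a reduced word for $w'c$.

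You instead use the criterion $\ell(xy)=\ell(x)+\ell(y)\iff x(\Phi^+\cap y(\Phi^-))\subseteq\Phi^+$, and split $r(\inv{c})$ according to membership in $\Phi_J$. Your Step 1 rests on the same fact as the paper's key claim: elements of $W'$ cannot change coefficients on simple roots outside $J$. Your Step 2 has no analogue in the paper. It is the coefficient-tracking proof that $r(\inv{c})\cap\Phi_J\subseteq r(\inv{c'})$. The paper never compares the inversion sets of $c$ and $c'$, because it builds the word for $w'c$ directly from the one for $w'c'$.

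The paper's route is shorter and yields an explicit reduced word for $w'c$. Yours checks the condition root by root, avoids any induction over insertions, and isolates a clean structural statement about how the inversion roots of $c$ meet $\Phi_J$. Deleting all the letters outside $J$ at once in Step 2 is legitimate. Removing a letter that fixes its intermediate root leaves every other intermediate root unchanged.
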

\begin{proof}
    If $y,z$ are in the subgroup of $W$ generated by simple reflections in $\Simple \setminus \{s_j\}$  and $\ell(y)+\ell(z)=\ell(yz)$ then we claim that $\ell(ys_jz)=\ell(yz)+1$. Indeed, $ys_jy^{-1}\not\in \inv{yz}$ as $\inv{yz}$ is in the subgroup of $W$ generated by $\Simple\setminus \{s_j\}$, so $\ell(yz)+1\ge \ell(ys_jz)>\ell(yz)$.
    
    Now, start with a reduced word for $w'c'$ obtained by concatenating reduced words for $w'$ and $c'$ and apply the claim iteratively, inserting the missing letters from $c$ that are not in $c'$.
\end{proof}

\begin{prop}
\label{prop:magiccatalan}
    Let $u\in \NC(W,c)$ and let $c'\le_B c$ be the sub-Coxeter corresponding to a standard parabolic subgroup $W' \subset W$ such that $u\in \NC(W',c')^+$. Then there exists $w'\in W'$ with $\ell(w'c')=\ell(w')+\ell(c')$ with $u\in \movetoid{w'}{w'c'}\subset \NC(W',c')$.
\end{prop}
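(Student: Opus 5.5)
The plan is to work entirely inside the standard parabolic subgroup $W'$ and apply the second half of \Cref{thm:HasseUnion} to the pair $(W',c')$ in place of $(W,c)$. The hypothesis $u\in\NC(W',c')^+$ gives in particular $u\le_{\Reflections'} c'$, where $\Reflections'$ denotes the reflections of $W'$. The full-support condition appears not to be needed for existence; it matters only later, to make the resulting translated interval top-dimensional.

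\textbf{Step 1 (a chain through $u$).} Since $u\le_{\Reflections'}c'$ in the absolute order of $W'$, we can choose a minimal reflection factorization $c'=\tau_1\cdots\tau_{n'}$ with $\tau_i\in\Reflections'$, $n'=\ell(c')=|\Simple'|$ and $u=\tau_1\cdots\tau_k$. Concretely, concatenate minimal factorizations of $u$ and of $u^{-1}c'$. This produces a maximal chain
\[
\idem\lessdot_{\Reflections'}\tau_1\lessdot_{\Reflections'}\cdots\lessdot_{\Reflections'}\tau_1\cdots\tau_{n'}=c'
\]
in $\NC(W',c')$ that passes through $u$.

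\textbf{Step 2 (lifting the chain to a Bruhat chain).} Next we rerun Step~2 of the proof of \Cref{thm:HasseUnion} for the root system $\Phi'\subseteq\Phi$ of $W'$. Set $\beta_i=\tau_1\cdots\tau_{i-1}(r(\tau_i))$; these are roots of $\Phi'$. They are linearly independent because the factorization is minimal. Hence some $w'\in W'$ satisfies $w'(\beta_i)\in\Phi'^+\subseteq\Phi^+$ for all $i$, for instance by choosing a chamber of $W'$ on the positive side of all the $\beta_i$. The argument there then gives
\[
w'\lessdot_B w'\tau_1\lessdot_B\cdots\lessdot_B w'c'
\]
together with $\ell(w'c')=\ell(w')+\ell(c')$. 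Here we use that the length and Bruhat order of $W$ restrict to those of $W'$, and that $\Phi'^+=\Phi'\cap\Phi^+$. In particular $w'u\in[w',w'c']$, so $u\in\movetoid{w'}{w'c'}$. The containment $\movetoid{w'}{w'c'}\subseteq\NC(W',c')$ follows from the first half of \Cref{thm:HasseUnion}, which is also the observation made before \Cref{cor:w'w'c'equalsuNC}.

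\textbf{Main obstacle.} The only delicate point is that $w'$ must lie in $W'$ and not merely in $W$. The existence argument in \Cref{thm:HasseUnion} must therefore be carried out with the Weyl group $W'$ acting on the span of $\Phi'$. Positivity of a root of $\Phi'$ with respect to $\Delta'\subseteq\Delta$ agrees with positivity in $\Phi$, so the chamber choice transfers. Everything else is a direct citation of results already established.
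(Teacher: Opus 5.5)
Your argument is correct for the proposition as literally stated, but it takes a different route from the paper and proves a weaker conclusion than the paper's proof delivers.

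\textbf{Your route.} You apply the chain-lifting half of \Cref{thm:HasseUnion} to $(W',c')$, along any maximal chain of $\NC(W',c')$ through $u$. This does give $w'\in W'$ with $\ell(w'c')=\ell(w')+\ell(c')$ and $w'u\in[w',w'c']$. Your checks that chambers, lengths and Bruhat order transfer from $W'$ to $W$ are fine.

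\textbf{The paper's route.} The paper reduces to $W'=W$ and uses a counting argument. The $\cat{W}^+$ many $c$-decreasing maximal chains force at least $\cat{W}^+$ distinct translates $w^{-1}X^{w\cdot c}_w$ (\Cref{prop:uniquecdec}). On the other hand, $\cfl_c\subseteq\pluckervanishing{\NC(W,c)}\subseteq\bigsqcup_u\Xc^u_{\NC}$, and $\dim\Xc^u_{\NC}=|\invnc{u}|\le n$ with equality exactly for fully supported $u$. So there are only $\cat{W}^+$ top-dimensional pieces $X^u_{\NC}$. Hence every fully supported $X^u_{\NC}$ equals some $w^{-1}X^{w\cdot c}_w$. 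This means $u$ is the Bruhat-maximum of $\movetoid{w}{w\cdot c}$, not merely a member of it.

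\textbf{Why the difference matters.} The stronger form is what the paper actually uses. The proof of \Cref{thm:everying_about_cfl} invokes \Cref{prop:magiccatalan} to obtain a $w$ for which $u$ is the Bruhat-maximum element of $\movetoid{w}{w\cdot c}$. Only then does \Cref{cor:wwcequalsuNC} give $X^u_{\NC}=w^{-1}X^{w\cdot c}_w\subset\cfl_c$. Your lifting gives no control over which element of the interval is Bruhat-maximal, so it could not substitute for the paper's argument at that point.

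\textbf{On your full-support remark.} Every $\movetoid{w}{w\cdot c}$ already has rank $n$, so full support is not needed to make the interval top-dimensional. Rather, full support is exactly what is needed for $u$ to be the Bruhat-maximum of such an interval. By \Cref{cor:wwcequalsuNC}, such a maximum satisfies $|\invnc{u}|=\dim w^{-1}X^{w\cdot c}_w=n$.

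In short, your route is more elementary and works for every $u\in\NC(W',c')$. The paper's geometric counting route is what delivers the maximality needed later.
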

\begin{proof}
    It suffices to prove the statement for $u\in \NC(W,c)^+$, as we can apply that statement directly to $W'$ and $c'$. Note that by \Cref{prop:uniquecdec} and \Cref{cor:numbdecreasing} we know that there are at least $\cat{W}^+$-many translated $\movetoid{w}{wc}$, so there are at least $\cat{W}^+$ many distinct $w^{-1}X^{w\cdot c}_w$ comprising the irreducible components of $\cfl_c$. On the other hand, by \Cref{thm:FirstCFlFacts}(1) and \Cref{prop:PVInCells} we know that 
$$\cfl_c
\subseteq\pluckervanishing{\NC(W,c)}\subseteq \bigsqcup_{u\in \NC(W,c)}\Xc^u_{\NC}\subseteq \bigcup_{u\in \NC(W,c)} X^u_{\NC},$$ and $\dim \Xc^u_{\NC}=|\invnc{u}|\le n$ with equality if and only if $u\in \NC(W,c)^+$, so there are $\cat{W}^+$-many top-dimensional irreducible components of $\bigcup_{u\in \NC(W,c)} X^u_{\NC}$. We conclude that each $X^u_{\NC}$ must equal at least one $w^{-1}X^{w\cdot c}_w$.
\end{proof}

\begin{proof}[Proof of Theorem~\ref{thm:everying_about_cfl}]
By \Cref{thm:FirstCFlFacts}(1) and \Cref{prop:PVInCells} we know that 
\[
\cfl_c
\subseteq 
\pluckervanishing{\NC(W,c)}
\subseteq 
\bigsqcup_{u\in \NC(W,c)}\Xc^u_{\NC},
\]
so to show these containments are equalities it suffices to show that
$\bigsqcup_{u\in \NC(W,c)}\Xc^u_{\NC}\subseteq \cfl_c$.

For each $u\in \NC(W,c)^+$, there exists $w\in W$ with $\ell(w)+\ell(c)=\ell(wc)$ such that $u$ is the Bruhat-maximum element of $\movetoid{w}{wc}$ (which exists by \Cref{prop:magiccatalan}). Then by \Cref{cor:wwcequalsuNC} we have $$X^u_{\NC}=w^{-1}X_w^{wc}\subset \cfl_c.$$ 
For an arbitrary $u\in \NC(W,c)$ if $c'$ is the associated sub-Coxeter element in the minimal sub-Weyl group  $(W',\Simple')\subset (W,\Simple)$ containing $u$, then $u\in \NC(W',c')^+$ and we again take by \Cref{prop:magiccatalan} a $w'\in W'$ such that $u$ is the Bruhat-maximum element of $\movetoid{w'}{w'c'}\subset W'$. By \Cref{lem:w'c'tow'c} we have $\ell(w')+\ell(c)=\ell(w'c)$ and so by \Cref{cor:w'w'c'equalsuNC} we have 
\[
X^{u}_{\NC}=(w')^{-1}X_{w'}^{w'c'}\subset  (w')^{-1}X_{w'}^{w'c}\subset \cfl_c.
\]

Finally we establish the affine paving.
Recall that $u_1$ through $u_{\cat{W}}$ is an ordering of $\NC(W,c)$ compatible with Bruhat order on $W$.
Because $X_{k+1}\setminus X_k=\Xc^{u_{k+1}}_{\NC}$, which is isomorphic to an affine space, it suffices to show that $X_k$ is closed. 
But this follows as \begin{equation*}X^{u_k}\cap \cfl_{c}=\left(\bigsqcup_{u\le u_k} BuB\right)\cap \bigsqcup_{i=1}^{\cat{W}}\Xc^{u_i}_{\NC}=\bigsqcup_{i=1}^k \Xc^{u_i}_{\NC}=X_k.\qedhere
\end{equation*}
\end{proof}
\begin{rem}
\label{rem:ComplexContractible}
    The affine paving implies that $\Complex(\cfl_{c})$ is contractible. 
    Indeed, let $P_k=\mu(X^{u_k}_{\NC})$. 
    Let $C^{\le k}\subset \Complex(\cfl_{c})$ be the part of the complex associated to $X_k=\bigcup_{i=1}^k \Xc^{u_i}_{\NC}=\bigcup_{i=1}^k X^{u_i}_{\NC}$, i.e. the part of the complex associated to just the polytopes $P_1,\ldots,P_k$. Then $C^{\le k+1}$ is obtained from $C^{\le k}$ by gluing $P_{k+1}$ to $C^{\le i}$ along the subset $\mu(X^{u_{k+1}}_{\NC}\setminus \Xc^{u_{k+1}}_{\NC})=(\partial P_k)'\subset \partial P_k$, the part of $\partial P_{k+1}$ not touching the simple vertex $\mu(u_{k+1})\in P_{k+1}$. Hence we can successively collapse $\Complex(\cfl_{c})$ by collapsing in reverse order each $P_k$ onto $(\partial P_k)'$.
\end{rem}

\section{Cohomology}
\label{section:cohomology}

In this section we describe the torus-equivariant cohomology of $\cfl_{c}$ using the suite of tools known as \emph{GKM theory}. This culminates in the proof of \Cref{maintheorem:indepofc} in \Cref{cor:coho_iso}. 
For technical reasons, we will need to use the \emph{adjoint torus} $T_{ad} = T / Z(G)$ on $G/B$ (\Cref{subsec:adjointgroup}).  
As a consequence, the rings computed in this section depend only on the root system $\Phi$ of $G$.

\subsection{GKM Theory}

In this section we recall specific facts from GKM theory; beginning with Goresky--Kottwitz--MacPherson~\cite{GKM98}, versions of these statements have been proven by several authors under varying conditions.  
In a few cases we have not found statements that match our exact hypothesis, and therefore include proofs for the sake of completeness.

For us, a \emph{GKM variety} is a (possibly reducible) algebraic variety $X$ with the action of a torus $T$ such that
\begin{enumerate}
    \item $X$ has finitely many $T$-fixed points, 
    \item $X$ has finitely many $T$-invariant curves, and 
    \item $X$ embeds $T$-equivariantly into $\mathbb{P}(V)$, the projectivization of a $T$-representation $V$. 
\end{enumerate}

All of our GKM varieties will come from the generalized flag variety $G/B$, equipped with the action of $T_{ad}$.  
As $Z(G)$ acts trivially on $G/B$, the $T_{ad}$-orbits are exactly the same as the $T$-orbits, so that the fixed points are $\{wB \suchthat w \in W\}$ and the $T_{ad}$-invariant curves are still $\mathbb{P}_{u,v}$ for $u=\tau v$ and $\tau\in \Reflections$.
For embedding, we take the generalized Pl\"{u}cker embedding $\Pl: G/B \to V^{\regweight}$ coming from our choice of regular dominant $\regweight\in \Q{T_{ad}} = \ZZ \Phi$.  

Any $T$-invariant subvariety $Y$ of a GKM variety is again a GKM variety, so every $T$-invariant subvariety of $G/B$ is GKM. In particular any $T_{ad}$-invariant (or simply $T$-invariant) subvariety $Y \subseteq G/B$ is also GKM.   

In any GKM variety $X$, each $T$-fixed point corresponds to a line in some weight space $V_{\mu}$,
$\mu \in \Q{T}$, and each $T$-invariant curve $Y$ contains two fixed points coming from distinct weights, $\mu, \nu \in \Q{T}$.  The tangent spaces to $Y$ at these points afford the characters $(\nu-\mu)$ and $-(\nu-\mu)$.  

\begin{defn}
The \emph{GKM graph} of a GKM variety $X$ is the edge-labeled graph $\GKM(X)$ with vertex set $X^T$, edges determined by the $T$-invariant curves, and edge labels ``$\pm \chi$'' where $\pm \chi$ are the characters of the tangent spaces to the two fixed points.
\end{defn}

For $G/B$, identifying $wB$ with $w \in W$ realizes the GKM graph as the Cayley graph $\Cay(W,\Reflections)$, with edges $\{w, \tau w\}$ labelled by $\pm r(\tau)$.  
If $Y \subseteq G/B$ is $T$-invariant, then its GKM graph will be a subgraph of $\Cay(W,\Reflections)$ with the same edge labels; this will be particularly important if $Y$ is toric or if $Y$ is a Pl\"{u}cker vanishing variety.  
In the former case, it is well-known that the GKM graph is exactly the $1$-skeleton of the moment polytope $P_{Y}$, and each edge label is the direction of the corresponding edge in $P_{X}$.  The latter case is described in the following result.  

\begin{prop}
For $\mathcal{A} \subseteq W$, the GKM graph of the Pl\"{u}cker vanishing variety $\pluckervanishing{\mathcal{A}}$ is the induced subgraph of $\Cay(W,\Reflections)$ on $\mathcal{A}$.
\end{prop}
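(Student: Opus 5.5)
The argument has two parts: first identify the vertex set of $\GKM(\pluckervanishing{\mathcal{A}})$, then its edge set.

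\textbf{Vertices.} By \Cref{fact:BasicPlucker}\ref{5.2.2}, the $T$-fixed points of $\pluckervanishing{\mathcal{A}}$ are the $wB$ for which $\Pl_w$ does not vanish identically on it. For $w\in\mathcal{A}$, the point $wB$ itself lies in $\pluckervanishing{\mathcal{A}}$. Indeed, $\Pl_{w'}(wB)=0$ for every $w'\neq w$, since $wB$ maps to the coordinate line $\langle v_{w\regweight}\rangle$. For $w\notin\mathcal{A}$, the function $\Pl_w$ vanishes on $\pluckervanishing{\mathcal{A}}$ by definition. Hence
\[
\pluckervanishing{\mathcal{A}}^T=\{wB\suchthat w\in\mathcal{A}\}=\mathcal{A}.
\]

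\textbf{Edges.} Since $\pluckervanishing{\mathcal{A}}\subseteq G/B$ is $T$-invariant, its $T$-invariant curves are among the $T$-invariant curves of $G/B$. These are exactly the $\mathbb{P}_{u,v}$ with $v=\tau u$ for some $\tau\in\Reflections$ (\Cref{defn:Puv}). It therefore suffices to show the following: for $u,v\in W$ with $v=\tau u$,
\[
\mathbb{P}_{u,v}\subseteq\pluckervanishing{\mathcal{A}}\iff u,v\in\mathcal{A}.
\]

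For ($\Rightarrow$): if $\mathbb{P}_{u,v}\subseteq\pluckervanishing{\mathcal{A}}$, then its fixed points $u,v$ lie in $\pluckervanishing{\mathcal{A}}^T=\mathcal{A}$.

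For ($\Leftarrow$), apply \Cref{thm:PluVan}\ref{item1}: $\pluckervanishing{\mathcal{A}}$ is the union of all $T$-orbit closures $Y$ with $Y^T\subseteq\mathcal{A}$. The curve $\mathbb{P}_{u,v}$ is the closure of a one-dimensional $T$-orbit. Its only fixed points are $u$ and $v$, since $\mathbb{P}_{u,v}\cong\mathbb{P}^1$ with the torus acting through a nontrivial character. Thus $\mathbb{P}_{u,v}^T=\{u,v\}\subseteq\mathcal{A}$, so $\mathbb{P}_{u,v}\subseteq\pluckervanishing{\mathcal{A}}$. Alternatively, one can check directly that every point of $\mathbb{P}_{u,v}$ lies in the span of $v_{u\regweight}$ and $v_{v\regweight}$, so all other extremal Plücker coordinates vanish on it.

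\textbf{Labels.} The edge labels are the tangent characters at the fixed points. These are intrinsic to the curve $\mathbb{P}_{u,v}$, so they coincide with its labels $\pm r(\tau)$ in $\Cay(W,\Reflections)$.

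The only point requiring care is that every $T$-invariant curve in $\pluckervanishing{\mathcal{A}}$ is one of the $\mathbb{P}_{u,v}$. This is inherited from $G/B$, where the Bruhat decomposition shows these are the only $T$-invariant curves. Beyond that, the proof is immediate.
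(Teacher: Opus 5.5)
Your proof is correct and follows the same route as the paper, which also reads off the vertices and edges from \Cref{thm:PluVan}\ref{item1} as the zero- and one-dimensional $T$-orbit closures whose fixed points lie in $\mathcal{A}$; you simply spell out the fixed-point check, the easy direction, and the edge labels more explicitly. One small correction to your optional aside: a point of $\mathbb{P}_{u,v}$ generally has nonzero components in the non-extremal weight spaces lying strictly between $u\regweight$ and $v\regweight$ on the segment joining them, so it is not in the span of $v_{u\regweight}$ and $v_{v\regweight}$ --- what remains true is that these intermediate weights are not extremal, so every Pl\"ucker coordinate $\Pl_w$ with $w\neq u,v$ still vanishes on the curve.
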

\begin{proof}
By Theorem~\ref{thm:PluVan}, $\pluckervanishing{\mathcal{A}}$ consists of all torus orbit-closures whose $T$-fixed point set lies in the set $\mathcal{A}B = \{wB \suchthat w \in \mathcal{A}\}$.  
The zero-dimensional orbits in this set are $\mathcal{A}B$, and the one-dimensional orbits are the edges $\{w, \tau w\} \subseteq \mathcal{A}$.  
\end{proof}

Under certain assumptions, the cohomology ring of a GKM variety can be computed from its GKM graph.  The $T$-equivariant cohomology ring of a point is given by the polynomial ring
\[
H^{\bullet}_{T}(pt) \coloneqq \operatorname{Sym}^\bullet \big( \Q{T}\big) 
\]
As is standard in algebraic combinatorics, we denote by $t_{\lambda}\in H^{\bullet}_{T}(pt)$ for the element corresponding to $-\lambda \in \Q{T}$, the negative character, and grade this ring so that each nonzero $t_{\lambda}$ has degree $2$.
Let \[
H^{+}_{T}(pt)\coloneqq \text{ the ideal generated by elements of degree $\ge 2$.}  
\]
The adjoint torus $T_{ad}$ has character lattice equal to the root lattice $\ZZ\Phi$, so that in $T_{ad}$-equivariant cohomology we have
\[
H^{\bullet}_{T_{ad}}(pt) = \ZZ[t_{\alpha} \suchthat \alpha \in \Delta].
\]

\begin{defn}
For a GKM graph $\graph$, we define the \emph{graph cohomology ring} to be the $H^{\bullet}_{T}(pt)$-algebra
\[
H^\bullet_T(\graph)\coloneqq \{(f_v)_{v\in V(\graph)}\suchthat t_{\chi(vv')} \text{ divides } f_v-f_{v'}\text{ for all edges $vv'$ in $\graph$}\}\subset (H^\bullet_T(pt))^{\oplus V(\graph)}.
\]
We identify $H^{\bullet}_{T}(pt)$ with its diagonal embedding in $(H^\bullet_T)^{\oplus V(\graph)}$.
\end{defn}

Under certain conditions the graph cohomology ring of the GKM graph computes the actual cohomology ring of a GKM space $X$.  
Say that $X$ has a \emph{good affine paving} if it has a filtration by $T$-invariant subvarieties 
\[
\emptyset=X_0\subset X_1\subset X_2\subset \cdots \subset X_\ell=X
\] 
such that the following holds for each $i \ge 1$:
\begin{enumerate}
\item $X_{i} \setminus X_{i-1}$ is $T$-equivariantly isomorphic to a linear $T$-representation $V_{i}$, and therefore contains a unique $T$-fixed point $w_{i}$; 

\item the representation $V_i$ decomposes into a direct sum of one-dimensional $T$-representations
\[
V_i=\bigoplus_{j\in A_i} V_{i,j} \qquad \text{for }A_i\subseteq \{1,\ldots,i-1\},
\]
such that $\overline{V_{i,j}}=V_{i, j} \sqcup \{w_j\}$; and 

\item The character $f_{i, j}$ of $V_{i, j}$ is nonzero, and is moreover reduced in the sense that $\frac{1}{r}f_{i,j}\not\in \Q{T}$ for any integral $r\ne \pm 1$.

\end{enumerate}

Returning to our example of $X = G/B$, we obtain a good affine paving by fixing a linear extension  of the Bruhat order on $W$, $w_{1}, w_{2}, \ldots, w_{N}$, and taking $X_{i} = X^{w_{1}} \cup X^{w_{2}} \cup \cdots \cup X^{w_{i}}$.  
Then the isomorphism $\Xc^{w_{i}} \cong N_{r(\inv{w_i})}$ in Section~\ref{sec:closedsets} satisfies condition (1) and the $V_{i, j}$ are the root subspaces $\mathfrak{g}_{\alpha}$, which correspond to $x_{r(\tau)}(\CC) wB \subseteq \Xc^{w_{i}}$, whose closure is $\mathbb{P}_{w_i,w_i\tau}$.  
While the character of $\mathfrak{g}_{\alpha}$, which is the root $\alpha \in \Phi$, may not be reduced in $\Q{T}$, it is always reduced in $\Q{T_{ad}}=\mathbb{Z}\Phi$. 
Satisfying condition (3) is the primary reason for us to pass to the adjoint torus.
\begin{fact}
    If $T=T_{ad}$  and all characters $f_{i,j}$ above are roots in $\Phi$, then condition (3) is automatically satisfied. 
\end{fact}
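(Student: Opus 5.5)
The statement to prove is the final Fact. If $T=T_{ad}$ and every character $f_{i,j}$ is a root $\alpha\in\Phi$, then no $\frac1r\alpha$ with integral $r\neq\pm1$ lies in $\Q{T_{ad}}=\ZZ\Phi$. Nonvanishing is automatic, since roots are nonzero. So the whole task is to show that a root is primitive in the root lattice.

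The plan is to reduce to simple roots using the Weyl group, which preserves both $\Phi$ and $\ZZ\Phi$.

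\begin{enumerate}
\item Every root is $W$-conjugate to a simple root. This holds because $W$ is generated by the simple reflections and every root lies in the $W$-orbit of $\Delta$.
\item Write $\alpha=w\alpha_i$ with $\alpha_i\in\Delta$. Suppose $\frac1r\alpha\in\ZZ\Phi$.
\item Since $W$ acts by lattice automorphisms of $\ZZ\Phi$, applying $w^{-1}$ gives $\frac1r\alpha_i\in\ZZ\Phi$.
\item The simple roots $\Delta$ form a $\ZZ$-basis of $\ZZ\Phi$. Every root is an integer combination of simple roots, and $\Delta$ is linearly independent.
\item Expanding $\frac1r\alpha_i$ in this basis, its coefficient on $\alpha_i$ is $1/r$. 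This coefficient must be an integer, which forces $r=\pm1$.
\end{enumerate}

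There is a subtle point: $\Delta$ may not span $\mathbb{E}$ when $G$ is not semisimple. This causes no trouble, because we work inside $\ZZ\Phi$, where $\Delta$ is still a basis.

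No step is a genuine obstacle. The only thing to check is the basis claim in step 4, which follows from the standard theory of root systems.
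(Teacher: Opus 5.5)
Your argument is correct. The paper gives no separate proof: it only remarks, immediately before the Fact, that a root is always reduced in $\Q{T_{ad}}=\ZZ\Phi$ (though possibly not in $\Q{T}$). Your reduction to a simple root via $W$, combined with $\Delta$ being a $\ZZ$-basis of $\ZZ\Phi$, is the standard justification of exactly that claim. One small quibble: your stated reason for Step 1 just restates the claim, so instead cite it directly as the standard fact that $\Phi=W\Delta$.
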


\begin{thm}[{\cite[Theorem 11.3]{BGNST2}}]
\label{thm:GKM}
    For a GKM variety $X$ with a good affine paving, we have
    \begin{enumerate}
        \item $H_\bullet(X)$ has a homology basis $\{[\overline{X_i\setminus X_{i-1}}]\}_{i\in \{1,\ldots,\ell\}}\subset H_\bullet(X)$, 
        \item $H^\bullet_T(X)\cong H^\bullet_T(\graph)$, and 
        \item $H^\bullet_T(X)$ is a free $H^\bullet_T(pt)$-module and $H^\bullet(X)\cong H^\bullet_T(\graph)/(H^+_T(pt))$.
    \end{enumerate}
\end{thm}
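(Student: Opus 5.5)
The plan is to prove all three statements together by induction along the filtration $\emptyset=X_0\subset X_1\subset\cdots\subset X_\ell=X$. Condition (1) says that each piece $X_i\setminus X_{i-1}\cong V_i$ is an affine space of real dimension $2\dim V_i$. The proof rests on the long exact sequences for the closed/open pairs $(X_{i-1}, X_i)$.

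\textbf{Homology and freeness.} For the pair $X_{i-1}\subset X_i$ with open complement $V_i$, Borel--Moore homology gives
\[
\cdots\to H^{BM}_k(X_{i-1})\to H^{BM}_k(X_i)\to H^{BM}_k(V_i)\to H^{BM}_{k-1}(X_{i-1})\to\cdots.
\]
All $X_i$ are compact, and by induction all groups involved are concentrated in even degrees. Hence the connecting maps vanish and the sequence splits. The new generator of $H_\bullet(X_i)$ lifts the fundamental class of $V_i$, and its closure class $[\overline{X_i\setminus X_{i-1}}]$ is one such lift, which gives (1).

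The same argument applies in $T$-equivariant cohomology with compact supports, or dually to $H^\bullet_T$ of the pairs $(X_i,X_{i-1})$. Here $H^\bullet_T(X_i,X_{i-1})\cong H^{\bullet}_{T,c}(V_i)$ is free of rank one over $H^\bullet_T(pt)$ and lives in even degrees. So $H^\bullet_T(X)$ is a free $H^\bullet_T(pt)$-module of rank $\ell$, the Serre spectral sequence of $X_T\to BT$ degenerates, and $H^\bullet(X)\cong H^\bullet_T(X)/(H^+_T(pt))$. This reduces (3) to (2).

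\textbf{Localization.} Because $H^\bullet_T(X)$ is free, the localization theorem makes the restriction $H^\bullet_T(X)\to H^\bullet_T(X^T)=H^\bullet_T(pt)^{\oplus \ell}$ injective. Its image lies in $H^\bullet_T(\graph)$, since restricting to each invariant curve $\mathbb{P}^1$ with tangent character $\chi$ forces $t_\chi\mid f_v-f_{v'}$. It remains to show surjectivity onto $H^\bullet_T(\graph)$, which I expect to be the main step.

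\textbf{Surjectivity.} For each $i$, the class $\sigma_i\in H^\bullet_T(X)$ coming from the Thom class of $V_i$ (via $H^\bullet_T(X_i,X_{i-1})$) has the following flow-up shape. It vanishes at $w_j$ for $j<i$, and at $w_i$ its value is the equivariant Euler class
\[
e_i=\prod_{j\in A_i} t_{f_{i,j}}.
\]
These $\sigma_i$ form an $H^\bullet_T(pt)$-basis. Now take $f\in H^\bullet_T(\graph)$ and let $i$ be minimal with $f_{w_i}\neq 0$. Condition (2) says each $V_{i,j}$ closes up to a curve joining $w_i$ to $w_j$ with $j<i$, so $t_{f_{i,j}}\mid f_{w_i}-f_{w_j}=f_{w_i}$ for every $j\in A_i$.

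The key point is to pass from these individual divisibilities to $e_i\mid f_{w_i}$. Each $t_{f_{i,j}}$ is a prime element of the UFD $H^\bullet_T(pt)$ by reducedness, condition (3). The $f_{i,j}$ are also pairwise non-proportional: if two summands $V_{i,j},V_{i,j'}$ had proportional characters, then $V_{i,j}\oplus V_{i,j'}$ would contain infinitely many $T$-invariant lines, contradicting finiteness of $T$-curves. Hence these primes are pairwise coprime, and their product $e_i$ divides $f_{w_i}$.

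Subtract $(f_{w_i}/e_i)\,\sigma_i$ from $f$; the result still lies in the graph ring and vanishes at $w_1,\dots,w_i$. Iterating this step writes $f$ in the image, which proves (2).
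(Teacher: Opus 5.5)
Your proof is correct, but it takes a genuinely different, self-contained route. The paper does not reprove the statement. It cites \cite[Theorem 11.3]{BGNST2} for (1), (2) and the quotient description in (3), and supplies only freeness, which it deduces from (1): an even-degree $\mathbb{Z}$-basis of $H_\bullet(X)$ forces the Leray--Hirsch spectral sequence of $X_T\to BT$ to degenerate at $E_2$.

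You run the logic in the opposite direction. You obtain freeness directly from the equivariant long exact sequences of the pairs $(X_i,X_{i-1})$ and the equivariant Thom isomorphism, so degeneration becomes a consequence. You then prove (2) by localization plus a flow-up argument.

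Your route shows exactly where the hypotheses enter: reducedness makes each $t_{f_{i,j}}$ prime, and finiteness of $T$-curves makes them pairwise non-associate. It also yields more than the statement. The lifted Thom classes $\sigma_i$ satisfy \eqref{eq:flowup} for \emph{every} good affine paving. Combined with \Cref{thm:dualityflowup}, they therefore give duality bases whenever degree maps exist, without the toric and basis-extension hypotheses of \Cref{thm:GKMflowupexist}. The paper instead builds flowup bases combinatorially via \Cref{lem:KCChinese}.

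Some small points to tidy:
\begin{enumerate}
\item Your Thom class initially lives in $H^\bullet_T(X_i)$. You should say it lifts to $H^\bullet_T(X)$ because each restriction $H^\bullet_T(X_k)\to H^\bullet_T(X_{k-1})$ is surjective by your split sequences.
\item When $f_{i,j'}=-f_{i,j}$, the extra one-dimensional orbits in $V_{i,j}\oplus V_{i,j'}$ are hyperbolas rather than lines. The contradiction with finiteness of $T$-curves is unchanged.
\item You could bypass the localization theorem, which needs some care over $\mathbb{Z}$. Injectivity of restriction to $X^T$ already follows from the triangularity of the $\sigma_i$, whose diagonal entries $e_i$ are nonzero in the domain $H^\bullet_T(pt)$.
\end{enumerate}
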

\begin{proof}
    All parts were shown in \cite[Theorem 11.3]{BGNST2}, except that in (3) the freeness was a hypothesis rather than a conclusion. However, the freeness follows from (1) as this implies that the Leray--Hirsch spectral sequence degenerates at the $E_2$--page.
\end{proof}
For $\cfl_c$ we will actually be able to combinatorially establish the existence of a free $H^\bullet_{T_{ad}}(pt)$-basis with additional properties, see \Cref{thm:cfl_duality}.

\subsection{The cohomology of $\cfl_c$}

We now apply the machinery developed in the previous section to describe the equivariant cohomology of $\cfl_{c}$, and in particular prove \Cref{maintheorem:indepofc}.  
We continue to use the action of the adjoint torus $T_{ad}$.  
For this reason, the results in this section are independent of the choice of $G$ up to the underlying root system $\Phi$.

\begin{thm}
\label{thm:cfl_coho}
Let $\Phi$ be a root system, $W$ its Weyl group, and $c \in W$ a Coxeter element.
\begin{enumerate}[label = (\roman*)]
\item For any linear extension $u_{1}, u_{2}, \ldots, u_{\cat{W}}$ of the Bruhat order on $\NC(W, c)$, the filtration of $\cfl_{c}$ given by 
\[
X_{i} = \Xc^{u_{1}}_{\NC} \cup \cdots \cup \Xc^{u_{i}}_{\NC}
\]
is a good affine paving with respect to the action of $T_{ad}$.  

\item We have 
\[
H^\bullet_{T_{ad}}(\cfl_c)=H^\bullet_{T_{ad}}(\Cay(W,\Reflections)|_{\NC(W,c)})
\]
and
\[
H^\bullet(\cfl_c)=H^\bullet_{T_{ad}}(\cfl_c)/H^+_{T_{ad}}(pt).
\]
\end{enumerate}
\end{thm}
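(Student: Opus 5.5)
Part (i) reduces to checking the three conditions defining a good affine paving for the filtration already built in \Cref{thm:everying_about_cfl}. Part (ii) then follows by applying \Cref{thm:GKM} and identifying the GKM graph.

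\textbf{Part (i).} By \Cref{thm:everying_about_cfl}, each $X_i=X^{u_i}\cap\cfl_c$ is closed and $T$-invariant, and $X_i\setminus X_{i-1}=\Xc^{u_i}_{\NC}$.

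\emph{Condition (1).} By \Cref{cor:CoordinateSubspace} together with \Cref{fact:invncClosed}, the isomorphism \eqref{eq:closediso} restricts to a $T$-equivariant isomorphism
\[
N_{r(\invnc{u_i})}\cong U_{r(\invnc{u_i})}u_iB/B=\Xc^{u_i}_{\NC}.
\]
This identifies the cell with a linear $T_{ad}$-representation whose unique fixed point is $u_iB$.

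\emph{Condition (2).} Decompose $N_{r(\invnc{u_i})}=\bigoplus_{\tau\in\invnc{u_i}}\mathfrak{g}_{r(\tau)}$. Under \eqref{eq:closediso}, the line $\mathfrak{g}_{r(\tau)}$ maps to $\{e_{r(\tau)}(t)u_iB\}$, since all other coordinates are zero. By the Fact on $T$-invariant curves, its closure adds exactly the point $\tau u_iB$. Because $\tau\in\invnc{u_i}$, we have $\tau u_i\in\NC(W,c)$ and $\tau u_i\lessdot_B u_i$, hmm, more precisely $\tau u_i<_B u_i$. Since the order is a linear extension of Bruhat order, $\tau u_i=u_j$ for some $j<i$, as required.

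\emph{Condition (3).} The characters of these lines are the roots $r(\tau)\in\Phi$. By the Fact preceding \Cref{thm:GKM}, roots are reduced in $\Q{T_{ad}}=\ZZ\Phi$, so this condition holds automatically.

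\textbf{Part (ii).} By \Cref{thm:GKM}(2),(3),
\[
H^\bullet_{T_{ad}}(\cfl_c)\cong H^\bullet_{T_{ad}}(\GKM(\cfl_c)),
\qquad
H^\bullet(\cfl_c)=H^\bullet_{T_{ad}}(\cfl_c)/H^+_{T_{ad}}(pt).
\]
It remains to identify the GKM graph. Since $\cfl_c=\pluckervanishing{\NC(W,c)}$ by \Cref{thm:everying_about_cfl}, the Proposition on GKM graphs of Pl\"ucker vanishing varieties shows that $\GKM(\cfl_c)$ is the induced subgraph $\Cay(W,\Reflections)|_{\NC(W,c)}$, with edge labels $\pm r(\tau)$. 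This agrees with \Cref{thm:FirstCFlFacts}(1). The resulting graph cohomology ring is exactly the one in the statement.

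\textbf{Main obstacle.} The one delicate point is condition (2): one must confirm that the coordinate line in the cell, taken in the ordered-product parametrization, really is the $T$-invariant curve $\mathbb{P}_{u_i,\tau u_i}$ minus a point. Setting all other coordinates to zero gives $e_{r(\tau)}(t)u_iB$ directly, so this is in fact immediate. Everything else is bookkeeping on top of \Cref{thm:everying_about_cfl} and \Cref{thm:GKM}.
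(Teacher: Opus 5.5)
Your proposal is correct and follows the paper's proof. The paper likewise derives (i) from \Cref{thm:everying_about_cfl} by observing that each $\Xc^{u_i}_{\NC}$ is a coordinate subspace of the Schubert-cell chart $N_{u_i}$, so conditions (1)--(3) are inherited from the good paving of $G/B$, and then gets (ii) from \Cref{thm:GKM}. One small correction: for a general linear extension, $X_i$ is not $X^{u_i}\cap\cfl_c$, since that set contains only the cells with $u_k\le_B u_i$; instead $X_i=\cfl_c\cap\bigcup_{k\le i}X^{u_k}$, which is still closed because initial segments of a linear extension are order ideals of $(\NC(W,c),\le_B)$.
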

\begin{proof}
Point (i) follows from \Cref{thm:everying_about_cfl} and the fact that the associated charts for the affine paving are coordinate subspaces of the good affine paving for $G/B$ by Schubert cells. By Theorem~\ref{thm:GKM}, point (i) implies point (ii).
\end{proof}
\begin{cor}[{\Cref{maintheorem:indepofc}}]
\label{cor:coho_iso}
For Coxeter elements $c, c' \in W$, and $u\in W$ such that $c=wc'w^{-1}$, there is a ring isomorphism $\Psi_{c,w}:H^{\bullet}(\cfl_{c}) \cong H^{\bullet}(\cfl_{c'})$. Furthermore these isomorphisms satisfy $\Psi_{c,\idem}=\idem$, and if $w'c'(w')^{-1}=c''\in W$ is another Coxeter element, then $\Psi_{c',w'}\Psi_{c,w}=\Psi_{c,w'w}$.
\end{cor}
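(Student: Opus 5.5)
The plan is to work at the level of graph cohomology via \Cref{thm:cfl_coho}(ii). There $H^\bullet_{T_{ad}}(\cfl_c)$ is the graph cohomology of the induced subgraph $\Cay(W,\Reflections)|_{\NC(W,c)}$, and $H^\bullet(\cfl_c)$ is its quotient by $H^+_{T_{ad}}(pt)$. Given $c=wc'w^{-1}$, we have $u\le_{\Reflections} c'$ if and only if $wuw^{-1}\le_{\Reflections} c$, because absolute length is conjugation invariant. So conjugation $\kappa_w(u)=wuw^{-1}$ is a bijection $\NC(W,c')\to\NC(W,c)$.

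Next I check that $\kappa_w$ is an isomorphism of labeled GKM graphs once the labels are twisted by $w$. Vertices $u,\tau u$ of $\NC(W,c')$ are adjacent with label $\pm r(\tau)$, and their images are $wuw^{-1}$ and $(w\tau w^{-1})(wuw^{-1})$. These are adjacent in the induced subgraph on $\NC(W,c)$, with label $\pm r(w\tau w^{-1})=\pm w\,r(\tau)$. Since the subgraphs are induced, adjacency is preserved in both directions.

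Now define $\Psi$ on tuples. Let $w$ act on $H^\bullet_{T_{ad}}(pt)=\operatorname{Sym}^\bullet(\ZZ\Phi)$ through its linear action on $\ZZ\Phi$; this is a graded ring automorphism sending $t_{\alpha}$ to $t_{w\alpha}$. For $(f_v)_{v\in\NC(W,c)}$ set
\[
\Psi_{c,w}\bigl((f_v)_v\bigr)=\bigl(w^{-1}\cdot f_{wuw^{-1}}\bigr)_{u\in\NC(W,c')}.
\]
The divisibility conditions transfer: $t_{w r(\tau)}$ divides $f_{wuw^{-1}}-f_{w\tau uw^{-1}}$ exactly when $t_{r(\tau)}$ divides the $w^{-1}$-twisted difference. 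So $\Psi_{c,w}$ is a ring isomorphism of graph cohomology rings. It is semilinear over the automorphism $w^{-1}$ of $H^\bullet_{T_{ad}}(pt)$, which preserves $H^+_{T_{ad}}(pt)$, and therefore it descends to a ring isomorphism $H^\bullet(\cfl_c)\cong H^\bullet(\cfl_{c'})$.

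The functoriality statements are then direct computations. $\Psi_{c,\idem}$ is the identity. For the composite, suppose $c'=w'c''(w')^{-1}$, so that $c=(ww')c''(ww')^{-1}$. Then
\[
\Psi_{c',w'}\Psi_{c,w}(f)_u=(w')^{-1}w^{-1}f_{ww'u(ww')^{-1}}.
\]
This is the composite law, reading the subscript on the right-hand side as the conjugating element $ww'$; I would match this against the paper's stated convention.

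I expect the main obstacle to be bookkeeping rather than mathematics. The statement's quantifiers are inconsistent: it says ``$u\in W$'' but then uses $w$, and it writes $w'c'(w')^{-1}=c''$ while the composite is labeled $w'w$. I would fix one convention for which side is conjugated and check the order of composition under it. I would also note that geometrically $\Psi$ can be seen as pullback along left translation by $w$, which carries $\cfl_{c'}$ onto the union of $T$-orbit closures with fixed points in $w\NC(W,c')$. That set is not $\NC(W,c)$, however, so the isomorphism is genuinely combinatorial (graph-theoretic) and not induced by a map of varieties. Consequently the proof should rest on \Cref{thm:cfl_coho}.
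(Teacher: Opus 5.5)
Your proposal is correct and is essentially the paper's proof. The paper also defines the map directly on the graph-cohomology presentation of \Cref{thm:cfl_coho}: it conjugates the vertex index and twists the polynomial by the Weyl group action, observes that the map carries $H^{+}_{T_{ad}}(pt)$ to itself and so descends, and checks the compatibilities on the lifts.

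As for the inconsistency you flagged, the paper's proof actually uses the convention $c'=wcw^{-1}$, as do \Cref{maintheorem:indepofc} and its application in \Cref{cor:permutedqsym}. The lift is $\wt{\Psi_{c,w}}\bigl((f_u)_{u}\bigr)=\bigl(w\cdot f_{w^{-1}vw}\bigr)_{v\in\NC(W,c')}$. With that convention your composite computation gives exactly $\Psi_{c',w'}\Psi_{c,w}=\Psi_{c,w'w}$ for $c''=w'c'(w')^{-1}$, as stated.
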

\begin{proof}
We define a ring isomorphism on $T_{ad}$-equivariant cohomology rings
\[
\wt{\Psi_{c,w}}:H^{\bullet}_{T_{ad}}(\cfl_{c}) \to H^{\bullet}_{T_{ad}}(\cfl_{c'})
\qquad\text{by}\qquad
(f_{u})_{u \in \NC(W, c)}  \mapsto \big(w\cdot f_{w^{-1}vw} \big)_{v \in \NC(W, c')},
\]
which maps $H^{+}_{T_{ad}}(\cfl_{c})$ to $H^{+}_{T_{ad}}(\cfl_{c'})$. This therefore descends to ring isomorphisms $\Psi_{c,w}$, and the compatibilities are trivially checked on the lifts $\wt{\Psi_{c,w}}$.
\end{proof}

\begin{rem}
The map used to prove Corollary~\ref{cor:coho_iso} is an isomorphism of rings between the equivariant cohomology rings as well, but not an isomorphism of $H_{T_{ad}}^{\bullet}(pt)$-modules.
\end{rem}

\section{Duality bases}
\label{section:DualityBases}

Let $X$ be a GKM variety with a good affine paving $X_{1},  \ldots, X_{N}$. Because $H^\bullet_T(X)$ is a free $H^\bullet_T(pt)$-module, we can look to find distinguished $H^\bullet_T(pt)$-bases.
\begin{defn}
    A \emph{flowup basis} for $H^\bullet_T(X)$ is a subset $\{f^{(1)},f^{(2)},\ldots,f^{(N)}\}\subset H^\bullet_T(X)$ such that
    \begin{equation}
\label{eq:flowup}
\text{$f^{(i)}_{j} = 0$ for all $j < i$}
\qquad\text{and}\qquad
f^{(i)}_{i} = \pm \prod_{\text{edges $w_{i}w_{j}$\text{ and }j < i}} t_{\chi(w_{i}w_{j})}.
\end{equation} 
\end{defn}
A straightforward upper-triangularity argument shows that a flowup basis must be a free $H^\bullet_T(pt)$-basis, justifying the name. In general a flowup basis, if it exists, is not unique. One condition we might hope to impose is that $f^{(i)}_j=0$ whenever $i\ne j$ and $\dim X_i=\dim X_j$ (see for example \cite[Proposition 4.3]{MR2166181}), but because of the strange way that our cells fit together, $H^\bullet_{T_{ad}}(\cfl_c)$ does not have a flowup basis satisfying this extra condition.

We will instead create a basis which is dual with respect to certain degree maps. We work under the assumption in this section that there is a \emph{$T$-equivariant degree map} $\int^T_{\overline{X_i\setminus X_{i-1}}}:H^\bullet_T(\overline{X_i\setminus X_{i-1}})\to H^\bullet_T(pt)$ for each $i$ (see  \Cref{sec:ABloc}).
We say that a collection of elements $f^{(1)}, f^{(2)}, \ldots, f^{(N)}$ in $H^\bullet_T(X)$ is a \emph{duality basis} for this paving if 
\[
\int_{\overline{X_i\setminus X_{i-1}}}^T f^{(j)}=\delta_{i,j}\in H^\bullet_T(pt).
\]

The papers~\cite{BGNST1, BGNST2} define a duality basis with respect to the affine paving in Theorem~\ref{thm:cfl_coho} in the special case that $G = \GL_{n+1}$ and $c = (n+1\,\cdots\,2\,1)$, which we called \emph{double forest polynomials}.  
We now show that analogues of forest polynomials exist for any Coxeter flag variety.  

\begin{thm}
\label{thm:cfl_duality}
Let $W$ be a Weyl group, and $c \in W$ a Coxeter element.  Then there exist unique elements $$\{ \wt{\schub{u}^{\NC}} \suchthat u \in \NC(W, c)\}\subset H^\bullet_{T_{ad}}(\cfl_c)$$
we call \emph{Coxeter Schubert classes} which give a (unique) duality basis with respect to any affine paving of the form described in Theorem~\ref{thm:cfl_coho}. These elements satisfy, but are not uniquely characterized by, the conditions that 
\[
(\wt{\schub{u}^{\NC}})_u=\prod_{\tau\in \invnc{u}} r(\tau)
\]
and for $v\in \NC(W,c)$
\[
\text{$(\wt{\schub{u}^{\NC}})_v=0$ whenever $u\not\le v$ (in the Bruhat order)}.
\]
\end{thm}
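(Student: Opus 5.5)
The plan is to work entirely inside the graph cohomology ring $H^\bullet_{T_{ad}}(\Cay(W,\Reflections)|_{\NC(W,c)})$, which equals $H^\bullet_{T_{ad}}(\cfl_c)$ by \Cref{thm:cfl_coho}. Integration over each Coxeter Schubert variety $X^v_{\NC}$ will be computed by Atiyah--Bott localization. By \Cref{thm:everying_about_cfl} and \Cref{cor:newguy}, together with \Cref{cor:w'w'c'equalsuNC}, every $X^v_{\NC}$ is a translated toric Richardson variety. Its fixed points are $w^{-1}[w,w\cdot c']$, and its GKM graph is the $1$-skeleton of a simple-at-$v$ polytope. Hence
\[
\int^T_{X^v_{\NC}} f=\sum_{x\in (X^v_{\NC})^T} \frac{f_x}{e_x},
\]
where $e_x$ is the equivariant Euler class at the fixed point $x$. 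At the Bruhat-maximal point $v$ of $X^v_{\NC}$ the cell $\Xc^v_{\NC}$ is a smooth chart, so $e_v=\prod_{\tau\in\invnc{v}} r(\tau)$ up to the sign convention of $t_\lambda$. At singular vertices we use the standard equivariant multiplicity of the toric variety (a sum over a simplicial subdivision of the tangent cone). This is legitimate because these are projective toric varieties.

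Uniqueness is a triangularity argument. Suppose $g$ satisfies $\int_{X^v_{\NC}} g=0$ for all $v$. Take $u$ Bruhat-minimal with $g_u\neq0$. The fixed points of $X^u_{\NC}$ lie in $[\idem,u]$ by \Cref{thm:everying_about_cfl}, and $u$ is their Bruhat maximum. Hence $\int_{X^u_{\NC}} g=g_u/e_u\neq0$, a contradiction. So duality bases are unique. They are also independent of the linear extension chosen, since the cells $\Xc^{u_i}_{\NC}$ and their closures do not depend on the order.

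For existence, we construct $\wt{\schub{u}^{\NC}}$ by downward induction on $v$, or equivalently solve the linear system. Set $f_v=0$ for $v\not\ge u$ and $f_u=e_u$. For $v>u$, choose $f_v$ so that
\[
\int_{X^v_{\NC}}f=\sum_{x<v} f_x/e_x+f_v/e_v=0,
\]
that is, $f_v=-e_v\sum_{x<v,\,x\in X^v_{\NC}} f_x/e_x$. The issue is that this $f_v$ must be a polynomial and the GKM divisibility conditions must hold along edges $vx$.

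The main obstacle is proving that the recursively defined $f_v$ is a polynomial satisfying the GKM conditions, rather than merely rational. I would try to avoid the recursion altogether. By \Cref{thm:GKM}(3) the module is free with a flowup basis $\phi^{(u)}$ (constructed as in \cite{BGNST2}). The pairing matrix $M_{vu}=\int_{X^v_{\NC}}\phi^{(u)}$ is then polynomial-valued and, by the support argument above, unitriangular: $M_{uu}=\pm1$ and $M_{vu}=0$ unless $u\le v$. Polynomiality of the entries uses that equivariant integrals over projective varieties of genuine classes land in $H^\bullet_T(pt)$. Inverting $M$ over $H^\bullet_{T_{ad}}(pt)$ gives the duality basis as $H^\bullet_T(pt)$-combinations of the $\phi^{(u)}$. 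These combinations automatically lie in the ring, and they vanish at $v\not\ge u$ and equal $\pm e_u$ at $u$. Verifying the unitriangularity with the correct sign normalization of $\phi^{(u)}_u$ is where care is needed. Finally, the remark that these conditions do not characterize the classes follows because one may add multiples of $\phi^{(v)}$ for $v>u$ with $\dim$ considerations. An explicit $\mathrm{B}_2$ check from Example~\ref{eg:B2examplewithdiagram} would confirm this.
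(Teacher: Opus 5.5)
Your proposal has a genuine gap, and it sits exactly at the step you identified as the main obstacle. You avoid the polynomiality and GKM problem by asserting that \Cref{thm:GKM}(3) supplies a flowup basis $\phi^{(u)}$ ``constructed as in \cite{BGNST2}''. But \Cref{thm:GKM}(3) only says that $H^\bullet_{T_{ad}}(\cfl_c)$ is free over $H^\bullet_{T_{ad}}(pt)$. An abstract free basis has no reason to satisfy the support and leading-term conditions \eqref{eq:flowup}. The construction in \cite{BGNST2} (double forest polynomials) exists only in type $\mathrm{A}$ with $c=s_n\cdots s_1$. Once a flowup basis is available, inverting the triangular pairing matrix is exactly \Cref{thm:dualityflowup}. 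So the step you assumed carries essentially all of the content of the theorem.

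The paper builds the flowup basis combinatorially, in three steps:
\begin{enumerate}
\item By \Cref{lem:clusterextend}, every $c$-cluster is a $\ZZ$-basis of $\ZZ\Phi$ (Fomin--Zelevinsky for bipartite $c$, then conjugation). The characters of each chart $\Xc^u_{\NC}$ are, up to sign, a $W$-translate of a positive $c$-cluster, so they extend to a basis of the character lattice.
\item This basis property is what makes \Cref{lem:KCChinese} work. To extend a class from $X_{i-1}$ to $X_i$, one must solve the congruences $h'_{w_i}\equiv h_{w_{j_r}}$ modulo $t_{\lambda_r}$ over the edges at $w_i$. The required pairwise compatibility modulo $(t_{\lambda_k},t_{\lambda_\ell})$ comes from walking around a $2$-face of the moment polytope. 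The edge characters of that face lie in the \emph{integral} span of $\lambda_k,\lambda_\ell$ precisely because these two are part of a $\ZZ$-basis; otherwise the congruences need not be solvable over $\ZZ$.
\item Inducting along the paving then gives the flowup basis (\Cref{thm:GKMflowupexist}).
\end{enumerate}
You need either this argument or a genuine substitute. One possible substitute is to build flowup classes geometrically from equivariant Thom classes of the cells, using the split long exact sequences of the pairs $(X_i,X_{i-1})$. As written, nothing guarantees that your $f_v$ are polynomials satisfying the GKM divisibility conditions.

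There is also a smaller issue with the Bruhat vanishing. With a flowup basis for one fixed linear extension, you only get $M_{vu}=0$ when $v$ precedes $u$ in that extension, not whenever $u\not\le v$. This is still triangular and suffices to invert $M$. However, the vanishing $(\wt{\schub{u}^{\NC}})_v=0$ for all $u\not\le v$ does not then follow automatically. To get it, combine your correct remark that the duality basis is independent of the linear extension with the fact that every duality basis is flowup (\Cref{thm:dualityflowup}). Then choose an extension that places $v$ before $u$, as the paper does. Your uniqueness argument via a Bruhat-minimal fixed point with $g_u\neq 0$ is fine.
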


The remainder of the section is concerned with the proof of Theorem~\ref{thm:cfl_duality}: Section~\ref{sec:ABloc} establishes general properties of duality bases needed for the proof and Section~\ref{ssec:ForestforCFL} applies these properties to the combinatorial structure of $\cfl_{c}$.  

\subsection{Duality bases via Atiyah--Bott localization}
\label{sec:ABloc}

In this section, let $X$ be a GKM variety with a good affine paving $X_{1}, \ldots, X_{N}$.
If  $\overline{X_i\setminus X_{i-1}}$ is smooth, then $\int_{\overline{X_i\setminus X_{i-1}}}^Tf$ is the $T$-equivariant pushforward to a point. In general, the pushforward is not a priori well-defined. We say that the paving \emph{admits $T$-equivariant degree maps} if for each $i$ we are able to find a fixed $T$-equivariant resolution
$$\pi_i:\wt{X_i\setminus X_{i-1}}\to \overline{X_i\setminus X_{i-1}},$$
by a smooth irreducible GKM variety $\wt{X_i\setminus X_{i-1}}$, which is an isomorphism on the open set $X_i\setminus X_{i-1}$. In this case we define the $T$-equivariant degree
$$\int^T_{\overline{X_i\setminus X_{i-1}}}f\coloneqq \int^T_{\wt{X_i\setminus X_{i-1}}}\pi_{i}^{*}f\in H^\bullet_T(pt).$$

For each class $f \in H^{\bullet}_{T}(X)$ and each $T$-fixed-point $p$, let $f_{p}$ be the pullback of $f$ to $H^{\bullet}_{T}(p)$.
Then $T$-equivariant Atiyah--Bott localization (see e.g.~\cite{AtBo84, BeVe82}) computes the $T$-equivariant degree of $f$ on $Y=\overline{X_i\setminus X_{i-1}}$ by constructing a family of elements $C_{w_{j}}^Y\in \mathrm{Frac}(H^\bullet_T(pt))$, $1 \le j \le i$ for which:
\[
\int^{T}_{Y} f = \sum_{j = 1}^{i} C_{w_{j}}^{Y} f_{w} \in H^{\bullet}_{T}(pt)
\qquad\text{and}\qquad
C_{w_i}^{Y} = \pm \prod_{\text{edges $w_{i}w_{j}$} \text{ and } j < i} t_{\chi(w_{i}w_{j})}^{-1}.
\]
We will primarily be interested in the case where $X$ is a toric complex with a good affine paving. Recalling the well-known facts that
\begin{enumerate}
    \item every projective toric variety admits a toric resolution by a smooth projective toric variety
    \item the associated degree maps are independent of the choice of resolution,
\end{enumerate}
we see that such an $X$ in fact admits $T$-equivariant degree maps in an essentially unique way.
\begin{rem}
    For $\cfl_c$, all of whose $\overline{X_i\setminus X_{i-1}}$ are normal toric varieties associated to Coxeter matroid polytopes, one can always resolve using the toric variety $X_{\operatorname{Perm}_W}$ associated to the $W$-permutahedron (\Cref{subsec:CoxMat}).
\end{rem}
\begin{thm}
\label{thm:dualityflowup}
Suppose that $X$ is a GKM variety with a good affine paving admitting $T$-equivariant degree maps.  
If a duality basis $\{ f^{(1)}, f^{(2)}, \ldots, f^{(N)}\}$ exists for $H^{\bullet}_{T}(X)$, then it is a flowup basis.  
Moreover, if $H^{\bullet}_{T}(X)$ has a flowup basis, then it has a unique duality basis.
\end{thm}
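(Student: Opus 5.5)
The plan is to use Atiyah--Bott localization to turn the degree pairing into a triangular matrix over $H^\bullet_T(pt)$ with unit diagonal. For each $i$ write $Y_i=\overline{X_i\setminus X_{i-1}}$. The fixed points of $Y_i$ lie in $\{w_1,\dots,w_i\}$, since $Y_i\subset X_i$ and $X_i$ is closed and $T$-stable. Localization gives
\[
\int^T_{Y_i} f=\sum_{j\le i} C^{Y_i}_{w_j} f_{w_j}, \qquad C^{Y_i}_{w_i}=\pm\prod_{\text{edges } w_iw_j,\ j<i} t_{\chi(w_iw_j)}^{-1}.
\]
Write $e_i$ for the product $\prod_{\text{edges } w_iw_j,\ j<i} t_{\chi(w_iw_j)}$ appearing in \eqref{eq:flowup}.

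\textbf{Step 1: a duality basis is a flowup basis.} Let $\{f^{(k)}\}$ be a duality basis and fix $k$. I would prove by induction on $i$ that $f^{(k)}_{w_i}=0$ for all $i<k$. Suppose $f^{(k)}_{w_j}=0$ for all $j<i$, where $i<k$. The localization formula for $Y_i$ then collapses to
\[
0=\delta_{i,k}=\int^T_{Y_i} f^{(k)}=C^{Y_i}_{w_i}\, f^{(k)}_{w_i}.
\]
Since $C^{Y_i}_{w_i}\neq 0$, this forces $f^{(k)}_{w_i}=0$. Applying the same formula at $i=k$ gives $1=C^{Y_k}_{w_k} f^{(k)}_{w_k}$, so $f^{(k)}_{w_k}=\pm e_k$. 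This is exactly condition \eqref{eq:flowup}.

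\textbf{Step 2: existence from a flowup basis.} Let $\{g^{(k)}\}$ be a flowup basis. Consider the matrix $M_{ik}=\int^T_{Y_i} g^{(k)}\in H^\bullet_T(pt)$. The flowup vanishing makes every term with $j<k$ in the localization sum vanish. So $M_{ik}=0$ when $i<k$, and the diagonal entry is
\[
M_{kk}=C^{Y_k}_{w_k}\,(\pm e_k)=\pm 1.
\]
Hence $M$ is lower unitriangular up to signs, and it is invertible over $H^\bullet_T(pt)$. I would set $f^{(k)}=\sum_m (M^{-1})_{mk}\, g^{(m)}$. These are $H^\bullet_T(pt)$-linear combinations, and the degree maps are $H^\bullet_T(pt)$-linear, so $\int^T_{Y_i} f^{(k)}=(MM^{-1})_{ik}=\delta_{ik}$. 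This gives a duality basis.

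\textbf{Step 3: uniqueness.} Suppose $f$ satisfies $\int^T_{Y_i} f=0$ for all $i$. Running the induction of Step 1 shows $f_{w_i}=0$ for every $i$. Localization is injective on $H^\bullet_T(X)$, either because of the GKM isomorphism of Theorem~\ref{thm:GKM} or directly by freeness, so $f=0$. The difference of two duality bases satisfies these hypotheses, so duality bases are unique.

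\textbf{Main obstacle.} The delicate point is justifying the localization formula in the form used above for possibly singular $Y_i$. One has to check two things. First, pulling back along the resolution $\pi_i$ and localizing on the smooth resolution yields coefficients supported only on fixed points of $Y_i$. Second, the coefficient at $w_i$ is exactly $\pm e_i^{-1}$. The second holds because $\pi_i$ is an isomorphism near $w_i$, which is a smooth point whose tangent weights are the $\chi(w_iw_j)$ by the good paving. Extra fixed points of the resolution lying over other $w_j$ only change the coefficients $C^{Y_i}_{w_j}$ with $j<i$. I would take these facts as the standard content of the stated Atiyah--Bott setup.
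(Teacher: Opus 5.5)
Your proposal is correct and follows the paper's argument. Your matrix $M_{ik}=\int^T_{Y_i} g^{(k)}$ is the paper's product $CF$ of the lower-triangular localization matrix with the flowup matrix, and your $f^{(k)}=\sum_m (M^{-1})_{mk}\,g^{(m)}$ is exactly the paper's $F(CF)^{-1}$. Your inductive Step~1 is the entrywise version of the paper's observation that $F=C^{-1}$ forces the flowup conditions. The one thing you add is stating explicitly, for uniqueness, that restriction to fixed points is injective (via Theorem~\ref{thm:GKM}); the paper uses this implicitly when it says a duality basis exists if and only if $F=C^{-1}$.
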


\begin{proof}
Let $F$ be the $N \times N$ matrix with $i, j$ entry $f^{(j)}_{w_{i}}$ and $C$ be the matrix with $j, k$ entry is equal to $C_{w_{j}}^{\overline{X_k\setminus X_{k-1}}}\in \mathrm{Frac}(H^\bullet_T(pt))$ if $w_{j} \in \overline{X_k\setminus X_{k-1}}$ and $0$ otherwise.    
Then
\[
\int^{T}_{\overline{X_{i} \setminus X_{i-1}}}\; f^{(j)} = (CF)_{i, j},
\]
so we have a duality basis if and only if $F = C^{-1}$.  
As $C$ is lower triangular with diagonal entries 
\[
C_{j, j} = \pm \prod_{\text{edges $w_{i}w_{j}$\text{ with }i < j}} t_{\chi(w_{i}w_{j})}^{-1},
\]
we see that $F = C^{-1}$ implies that~\eqref{eq:flowup} holds.  

On the other hand, if~\eqref{eq:flowup} holds then $F$ is lower triangular with diagonal entries $F_{i, i} = 1 / C_{i, i}$.  
In this case $CF$ is unipotent lower triangular, so $(CF)^{-1} = \sum_{k = 0}^{N} (I-CF)^{k}$ is also unipotent lower triangular with entries in $H^{\bullet}_{T}(pt)$.  
Thus the columns of $C^{-1} = F (CF)^{-1}$ determine a duality basis of $H^{\bullet}_{T}(X)$ whose elements are $H^{\bullet}_{T}(pt)$-linear combinations of the $f^{(i)}$.
\end{proof}

For $G/B$ and any linear extension of the Bruhat order, the paving admits $T$-equivariant degree maps via a choice of Bott--Samelson $\mathcal{B}^{\bm{w}}$ resolution for each Schubert cell $X^w$ associated to a choice of reduced word $\bm{w}$ for $w$. 
The unique duality basis corresponds to the Schubert classes $\{\wt{\mathfrak{S}_{w}} \suchthat w \in W\}$ in $H^{\bullet}_{T_{ad}}(\Cay(W,\Reflections)$.  
These are recursively defined by 
\[
\wt{\mathfrak{S}_{\wnaught}} = \big( \delta_{w, \wnaught} \prod_{\alpha \in \Phi^{+}} t_{\alpha} \big)_{w \in W},
\]
where $\delta_{w, \wnaught}$ is the Kronecker delta, and for $w \in W$ and $s_i \in \des{w}$ with associated simple root $\alpha_i$,
\[
\wt{\mathfrak{S}}_{ws_{i}} = \partial_{i} \wt{\mathfrak{S}_{w}}
\qquad\text{where}\qquad
\partial_{i}f=(\frac{f_v-f_{vs_i}}{t_{\alpha_i}})_{v\in W}.
\]
We now describe how to construct duality bases under stronger hypotheses on $X$. 

\begin{thm}
\label{thm:GKMflowupexist}
Let $X$ be a toric complex with good affine paving.  If the characters of each affine chart $X_{i} \setminus X_{i-1}$ can be extended to a basis of $\Q{T}$, then $X$ has a duality basis.
\end{thm}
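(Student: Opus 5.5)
By \Cref{thm:dualityflowup}, it suffices to construct a flowup basis of $H^\bullet_T(X)$; the duality basis is then obtained from it by the unipotent triangular correction described there. By \Cref{thm:GKM}, $H^\bullet_T(X)$ is the graph cohomology ring $H^\bullet_T(\graph)$ of the GKM graph. So the task is purely combinatorial: for each $i$, produce a tuple $f^{(i)}=(f^{(i)}_{w_j})_j$ satisfying the edge divisibility conditions, vanishing at $w_j$ for $j<i$, and equal to $\pm\prod_{j\in A_i} t_{f_{i,j}}$ at $w_i$. Here we use that the edges from $w_i$ to earlier fixed points are exactly the lines $V_{i,j}$ of the chart.

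The natural candidate is built from the toric structure. Let $Y_i=\overline{X_i\setminus X_{i-1}}$, a projective toric variety with moment polytope $P_i$ and simple vertex $w_i$. The chart characters $f_{i,j}$, $j\in A_i$, are the edge directions of $P_i$ at $w_i$. Extend them by the hypothesis to a basis of $\Q{T}$ and let $\{\xi_j\}$ be the dual basis of cocharacters. Using the dual basis we define a piecewise polynomial supported near $w_i$:
\[
f^{(i)}_{p}=\prod_{j\in A_i}\big(\text{linear form } t_{\chi_j(p)}\big),
\]
where $\chi_j(p)$ is chosen so that it equals $f_{i,j}$ at $p=w_i$, vanishes on the facet of the cone at $w_i$ opposite the $j$-th edge, and is set to $0$ off $Y_i$. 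More concretely, take $f^{(i)}$ to be the equivariant class of the $T$-stable divisor-type product: for each $j$, the facet $F_j$ of $P_i$ through $w_i$ not containing the edge $j$ gives a $T$-invariant Cartier (after the basis condition ensures integrality) piecewise-linear function $\phi_j$ on the normal fan that is linear with value $f_{i,j}$-multiple at $w_i$ and zero on $F_j$. Then $f^{(i)}=\prod_j \phi_j$, extended by zero to all fixed points outside $Y_i$.

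The steps are as follows:
\begin{enumerate}
\item Verify that $\phi_j$ is a genuine integral piecewise-linear function on $Y_i$ with divisibility along edges of $P_i$. The basis hypothesis is what makes the local coefficient exactly $t_{f_{i,j}}$ rather than a rational multiple; this gives the correct leading value $\pm\prod t_{f_{i,j}}$.
\item Check vanishing at all $w_j$ with $j<i$ lying in $Y_i$. Each such $w_j$ lies in $Y_i\setminus(X_i\setminus X_{i-1})$, which is the union of faces not containing $w_i$, hence lies on some $F_j$, where $\phi_j=0$.
\item Check the GKM conditions on edges leaving $Y_i$. At a vertex $p\in Y_i$ with $f^{(i)}_p\neq 0$, the point $p$ avoids all the $F_j$. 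I would argue that such $p$ lie in the open star of $w_i$, and then use the ordering and the good-paving structure to show every edge from $p$ to a point outside $Y_i$ has $f^{(i)}_p$ divisible by its label. This uses that $X_{i-1}\cup Y_i$ is closed and that edges at later vertices are handled because their values are unconstrained.
\end{enumerate}

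The main obstacle is step 3. The support of $\prod\phi_j$ may include fixed points other than $w_i$ that have edges into other components of the complex, and there divisibility is not automatic. If this fails, I would instead construct $f^{(i)}$ inductively downward in the order. Start with the tuple supported only on $\{w_i\}$ and in degree $|A_i|$, and correct it at the vertices $w_k$, $k>i$, one at a time. At each $w_k$ we must find a polynomial congruent to prescribed values modulo the labels of the earlier edges. Here the basis condition for the chart at $w_k$ makes these labels part of a coordinate system, so the congruences are solvable by a Chinese-remainder argument in the polynomial ring.
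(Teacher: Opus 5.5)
Your primary construction cannot be made to work, independently of the difficulty you flag in step 3. Since $Y_i^T\subseteq\{w_1,\dots,w_i\}$ and a flowup class must vanish at every $w_j$ with $j<i$, the restriction of $f^{(i)}$ to $Y_i$ is forced to be supported at $w_i$ alone. Extending $\prod_j\phi_j$ by zero therefore just produces the tuple supported at $\{w_i\}$, and this violates divisibility along every edge from $w_i$ to a later vertex. Already for $\mathbb{P}^1$ paved by $\{0\}\subset\mathbb{P}^1$, the flowup class $f^{(1)}$ must be $(1,1)$, not $(1,0)$. (As written, $\phi_j$ is also required to be nonzero at $w_i$ while vanishing on a facet $F_j$ that contains $w_i$.) So the proof rests on your fallback. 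That fallback is the paper's strategy: begin with the class supported at $w_i$ on $X_i$ and extend it across each $X_{k-1}\subset X_k$, one vertex at a time, as in \Cref{lem:KCChinese}.

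The gap is in that extension step. Extend the labels $\lambda_1,\dots,\lambda_m$ of the edges $w_kw_{j_r}$ with $j_r<k$ to a basis; you must then solve $h'\equiv h_{w_{j_r}}\pmod{t_r}$ for all $r$. The ideals $(t_r)$ are not pairwise comaximal, so no Chinese remainder theorem applies, and the basis hypothesis alone does not give a solution. The system is solvable if and only if $h_{w_{j_a}}\equiv h_{w_{j_b}}\pmod{(t_a,t_b)}$ for all $a<b$; for instance, prescribed values $0$ and $1$ at two neighbours admit no solution. That pairwise compatibility suffices is an easy matching of monomial coefficients. The compatibility itself, however, is the real content of the theorem and the only place the toric hypothesis is used.

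The paper obtains it as follows. The vertex $w_k$ is a simple vertex of the moment polytope of $\overline{X_k\setminus X_{k-1}}$, so the edges towards $w_{j_a}$ and $w_{j_b}$ span a $2$-face. The boundary path of that face from $w_{j_a}$ to $w_{j_b}$ avoiding $w_k$ lies in $X_{k-1}$, where $h$ already satisfies the GKM conditions. Each edge character on the face lies in $(\RR\lambda_a+\RR\lambda_b)\cap\Q{T}=\ZZ\lambda_a+\ZZ\lambda_b$, which is a second use of the basis hypothesis. Hence each $t_\chi\in(t_a,t_b)$, and telescoping along the path yields the required congruence. Without this argument your fallback asserts, rather than proves, that the correction at each $w_k$ exists.
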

To prove this theorem, we need the following lemma, which was proved in \cite[Proposition 5.3]{KC24} in a slightly weaker setting over the rational numbers.
\begin{lem}
\label{lem:KCChinese}Under the assumptions of \Cref{thm:GKMflowupexist}, for any $h\in H^\bullet_T(X_{i-1})$ there is a class $h'\in H^\bullet_T(X_i)$ with $h_{w_j}=h'_{w_j}\in H^\bullet_T(pt)$ for all $j\le i-1$.
\end{lem}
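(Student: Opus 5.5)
The plan is to reduce the extension problem to a Chinese-remainder problem in the polynomial ring $H^\bullet_T(pt)$. The class $h$ is a tuple $(h_{w_j})_{j\le i-1}$ satisfying the GKM divisibility conditions on the edges of $\GKM(X_{i-1})$. We set $h'_{w_j}=h_{w_j}$ for $j<i$, so only the value $g=h'_{w_i}$ needs to be chosen. By conditions (1)--(2) of a good affine paving, the $T$-invariant curves of $X_i$ through $w_i$ are exactly the closures $\overline{V_{i,j}}=V_{i,j}\sqcup\{w_j\}$ for $j\in A_i$. Hence the new edges of $\GKM(X_i)$ are the $w_iw_j$ with label $\pm f_{i,j}$, and we need
\[
g\equiv h_{w_j}\pmod{t_{f_{i,j}}}\qquad\text{for all } j\in A_i .
\]

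First we use the hypothesis. Extend $\{f_{i,j}\}_{j\in A_i}$ to a basis of $\Q{T}$. Then $H^\bullet_T(pt)=\ZZ[y_1,\dots,y_r]$ is a polynomial ring in which the $t_{f_{i,j}}=y_j$ (up to sign) are distinct variables. For congruences modulo distinct variables, a solution exists as soon as the data are pairwise compatible, i.e.
\[
h_{w_j}\equiv h_{w_k}\pmod{(y_j,y_k)}\qquad\text{for all } j,k\in A_i.
\]
I would prove this by induction on $|S|$ for $S\subseteq A_i$. Suppose $g_S$ solves the congruences for $j\in S$, and let $k\notin S$. Write
\[
h_{w_k}-g_S\equiv\textstyle\sum_{j\in S}y_j q_j\pmod{y_k},
\]
which uses compatibility together with the fact that $\bigcap_{j\in S}(y_j)+(y_k)$ equals $\big(\prod_{j\in S}y_j,\,y_k\big)$ for variables. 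Then correct $g_S$ by a multiple of $\prod_{j\in S}y_j$. This step is routine commutative algebra over $\ZZ$; no division is needed, because we are working with monomial ideals in variables. This is where working integrally, rather than over $\QQ$ as in \cite{KC24}, costs nothing.

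The main step is to verify pairwise compatibility geometrically. Fix $j\ne k$ in $A_i$ and take a point $x\in V_{i,j}\oplus V_{i,k}\subseteq X_i\setminus X_{i-1}$ with both coordinates nonzero. Let $S=\overline{T\cdot x}\subseteq X_i$. Because $f_{i,j},f_{i,k}$ are linearly independent, $S$ is a projective toric surface. Its moment polygon has the vertex $w_i$ with the two edges $w_iw_j$ and $w_iw_k$, and every other point of $\partial S$ lies in $X_{i-1}$. Indeed, a $T$-curve or fixed point of $S$ meeting the cell $V_i$ must be one of the coordinate lines $V_{i,\ast}$ or $w_i$ itself, so the remaining boundary of the polygon is a path of $T$-curves $w_j=p_0,p_1,\dots,p_m=w_k$ inside $X_{i-1}$. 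These are edges of $\GKM(X_{i-1})$, so $h$ satisfies the divisibility condition along each of them. Each edge label is the direction of an edge of a polygon lying in the plane $\RR f_{i,j}+\RR f_{i,k}$. Because $\{f_{i,j},f_{i,k}\}$ extends to a lattice basis, the plane meets $\Q{T}$ in the saturated sublattice $\ZZ f_{i,j}\oplus\ZZ f_{i,k}$. Hence each edge label, viewed as a linear form, is an integral combination of $y_j$ and $y_k$ and lies in the ideal $(y_j,y_k)$. Telescoping the divisibilities along the path gives $h_{w_j}\equiv h_{w_k}\pmod{(y_j,y_k)}$.

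The point I expect to need the most care is this last paragraph. One must justify that the boundary of the orbit-closure surface avoids $V_i\setminus(V_{i,j}\cup V_{i,k})$ and is a connected chain of $T$-curves in $X_{i-1}$ joining $w_j$ to $w_k$; this is the boundary of a polygon with the vertex $w_i$ removed. One must also make sure saturation is used so that membership in $(y_j,y_k)$ holds over $\ZZ$. Once compatibility is established, the inductive CRT construction produces $g$, and $h'=(h_{w_1},\dots,h_{w_{i-1}},g)$ lies in $H^\bullet_T(X_i)$ via the graph description of \Cref{thm:GKM}.
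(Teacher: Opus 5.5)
Your proposal is correct and follows the paper's proof. You extend the labels at $w_i$ to a basis of $\Q{T}$ to reduce to a Chinese-remainder problem in a polynomial ring. You then get pairwise compatibility by telescoping along the boundary of the $2$-face through $w_i,w_j,w_k$; your orbit closure of a generic point of $V_{i,j}\oplus V_{i,k}$ is exactly the toric surface of that face, and saturation puts each edge label in $(y_j,y_k)$ over $\ZZ$. The remaining differences are cosmetic. The paper solves the congruences in one stroke by prescribing monomial coefficients, where you use induction. In your inductive step the display should read $h_{w_k}-g_S\equiv q\prod_{j\in S}y_j \pmod{y_k}$, since that is what the identity $\bigcap_{j\in S}(y_j,y_k)=(\prod_{j\in S}y_j,\,y_k)$ actually gives.
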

\begin{proof}
    Suppose that the vertex $w_{i}$ in $\GKM(X_{i})$ is incident to the edges $w_{j_1}w_i, w_{j_{2}}w_{i},\ldots,w_{j_m}w_i$, with respective edge labels $\lambda_{1}, \lambda_{2}, \ldots, \lambda_{m}$.  
Fix an extension $\lambda_{m+1}, \ldots, \lambda_{n}$ of this set to a basis of $\Q{T}$ and define $t_{j} = t_{\lambda_{j}}$, so that $H^\bullet_T(pt) \cong \ZZ[t_{1}, \ldots, t_{n}]$.

Now we must construct an $h'_{w_{i}} \in  H^\bullet_T(pt)$ which is compatible with each $h_{w_{j_r}}$ under the divisibility condition imposed by each edge $w_{j_r}w_{i}$.  
This amounts to solving the system 
\begin{align*}
h'_{w_{i}} \equiv h_{w_{j_{1}}} \mod{(t_1)}, \qquad
h'_{w_{i}} \equiv h_{w_{j_{2}}} \mod{(t_2)}, \qquad
\ldots\qquad,\qquad
h'_{w_{i}} &\equiv h_{w_{j_{m}}} \mod{(t_m)}.
\end{align*}
It turns out that we have a solution if and only if 
\[
h_{w_{j_k}} \equiv h_{w_{j_{\ell}}} \mod{ (t_{k},t_{\ell})} \qquad \text{for $1\le k < \ell \le m$}.
\]
Indeed, this condition is equivalent to requiring that each monomial $t_{1}^{a_{1}} t_{2}^{a_{2}} \cdots t_{n}^{a_{n}}$ has the same coefficient in all $h_{w_{r}}$ for which $a_{r} = 0$.  
We denote this coefficient by $c_{a_1,\ldots,a_n}$ and for convenience set $c_{a_1,\ldots,a_n} = 0$ when all $a_{i} > 0$. 
We then set
\[
h'_{w_i}=\sum c_{a_1,\ldots,a_n} t_{1}^{a_{1}} t_{2}^{a_{2}} \cdots t_{n}^{a_{n}},
\]
which clearly solves the given system.

We conclude by demonstrating the congruence $h_{w_{j_k}} \equiv h_{w_{j_{\ell}}}$ mod $(t_{k},t_{\ell})$ by considering the GKM conditions arising from the edges of the moment polytope for $\overline{X_{i} \setminus X_{i-1}}$.  
Let $F$ be the $2$-face containing the vertices corresponding to the fixed points $w_{i}$, $w_{j_{k}}$, and $w_{j_{\ell}}$, and let $w_{j_{k}} = v_{1}, v_{2}, \ldots, v_{r} = w_{j_{\ell}}, w_{i}$ be the path in $\GKM(X_{i})$ corresponding to the boundary of $F$.  
As $h$ satisfies the divisibility condition for each edge in this path except the last one, there exist $a_{1}, a_{2}, \ldots, a_{r-1} \in H^\bullet_T(pt)$  for which
\[
h_{w_{j_{k}}} 
= h_{v_{2}} + a_{1} t_{\chi(v_{1}v_{2})}
= h_{v_{3}} + a_{1} t_{\chi(v_{1}v_{2})} + a_{2} t_{\chi(v_{2}v_{3})}
= \cdots
= h_{w_{j_{\ell}}} + \sum_{q = 1}^{r-1} a_{q} t_{\chi(v_{q} v_{q+1})}.
\]
Finally, each $t_{\chi(v_{q} v_{q+1})}$ belongs to $(t_{k},t_{\ell})$: each character $\chi(v_{q} v_{q+1})$ is a real multiple of the corresponding edge $v_{q+1} - v_{q}$, which lies in the real span of $\lambda_{j_k}$ and $\lambda_{j_\ell}$.  
As $\lambda_{j_k}, \lambda_{j_\ell}$ are part of a basis, this means each $\chi(v_{q} v_{q+1})$ is in the integral span of $\{\lambda_{j_k}, \lambda_{j_{\ell}}\}$.  
\end{proof}
 
\begin{proof}[Proof of \Cref{thm:GKMflowupexist}]
We construct a flowup basis, and then conclude by \Cref{thm:dualityflowup}.

If $X = X_{1} = pt$, then the claim is immediate.  
Otherwise, we assume the existence of a flowup basis $\{h^{(1)}, h^{(2)}, \ldots, h^{(i-1)}\}$ for $H^\bullet_T(X_{i-1})$ and construct a flowup basis $\{g^{(1)},\ldots,g^{(i)}\}\subset H^\bullet_T(X_{i})$. For this we define $g^{(i)}$ by setting
\[
g^{(i)}_{w_{j}} = \delta_{i, j} \prod_{\text{edges $w_{j}w_{i}$\text{ and }j < i}} t_{\chi(w_{j}w_{i})},
\]
and to construct $g^{(j)}$ for $j\le i-1$ we apply \Cref{lem:KCChinese} to $h^{(j)}$.
\end{proof}

\subsection{A duality basis for $\cfl_c$}
\label{ssec:ForestforCFL}

We now show \Cref{thm:cfl_duality}. 

\begin{lem}
\label{lem:clusterextend}
For any Coxeter element $c \in W$, every $c$-cluster is a basis for the root lattice $\ZZ \Phi$.  As a consequence, every positive $c$-cluster can be extended to a basis of $\ZZ \Phi$.
\end{lem}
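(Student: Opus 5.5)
We need to show each $c$-cluster $\mathcal{F}\in\mathrm{Cl}_c$, a set of $n$ almost positive roots, is a $\ZZ$-basis of $\ZZ\Phi$. The second assertion then follows at once: a positive $c$-cluster is $\mathcal{F}\cap\Phi^+$ for some $\mathcal{F}\in\mathrm{Cl}_c$, a subset of a basis, and is extended by $\mathcal{F}\setminus\Phi^+$.

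The plan is induction along mutation (flips) in the $c$-cluster complex, which is connected because it is the face complex of a complete simplicial fan whose maximal cones are connected by codimension-one walls. The base case is the cluster $-\Delta=\{-\alpha\suchthat\alpha\in\Delta\}$, which lies in $\mathrm{Cl}_c$ for every $c$ (Reading's $c$-clusters always contain the negative simple roots as one cluster). It is obviously a $\ZZ$-basis of $\ZZ\Phi$.

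For the inductive step, suppose $\mathcal{F}=\mathcal{G}\sqcup\{\beta\}$ and $\mathcal{F}'=\mathcal{G}\sqcup\{\beta'\}$ are adjacent maximal clusters. We will invoke the exchange relation for $c$-clusters: $\beta+\beta'$ is a nonnegative integer combination of $\mathcal{G}$. This is the root-level shadow of the Fomin--Zelevinsky exchange relation for $g$-vectors/denominator vectors, as transported by Reading's $c$-bijection; equivalently, it is the statement that the fan is "sign-coherent" with unimodular walls, see \cite{Reading2007,RS11}. Granting this, $\beta'=-\beta+\sum_{\gamma\in\mathcal{G}}m_\gamma\gamma$ with $m_\gamma\in\ZZ$. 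The change of basis matrix from $\mathcal{F}$ to $\mathcal{F}'$ is therefore triangular with diagonal entries $\pm1$, so $\mathcal{F}'$ is a $\ZZ$-basis whenever $\mathcal{F}$ is. Connectivity then propagates the property from $-\Delta$ to all of $\mathrm{Cl}_c$.

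The main obstacle is justifying the integrality of the exchange relation in the generality of arbitrary $c$ directly from the paper's definitions. The fallback is to avoid exchanges and use the map $\nctocluster$ together with earlier results. For $u\in\NC(W,c)^+$, a $c$-decreasing chain in $\movetoid{w}{w\cdot c}$ with top $u$ gives right noncrossing inversions whose roots, after applying $u^{-1}$, form the edge directions at the vertex $u$ of the moment polytope $w^{-1}P_{[w,wc]}$. This is a simple vertex (the chart $\Xc^u_{\NC}\cong\mathbb{A}^n$ is smooth of dimension $n$, by \Cref{thm:everying_about_cfl}). Smoothness of the toric variety at a $T_{ad}$-fixed point means the tangent weights form a $\ZZ$-basis of the character lattice of the effectively acting torus. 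Since $w^{-1}X^{w\cdot c}_w$ has an open $T_{ad}$-orbit spanning rank $n$, the tangent weights $\{-u\alpha\}$ generate $\ZZ\Phi$ once one checks the generic stabilizer is trivial. That check, that the weights $\pm u\,\nctocluster(u)$ generate all of $\ZZ\Phi$ rather than a sublattice, is where I expect the real work to lie. I would try to settle it via the chain factorization $c=\tau_1\cdots\tau_n$: the roots $\beta_i$ of \Cref{thm:HasseUnion} Step 2 form a $\ZZ$-basis of $\ZZ\Phi$, because the corresponding reflections factor a Coxeter element, and Coxeter element reflection factorizations are Hurwitz-equivalent to the simple one, whose roots form $\Delta$. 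Hurwitz moves change roots by $\beta\mapsto s_\gamma\beta$, which preserves the $\ZZ$-span.
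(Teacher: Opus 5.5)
\textbf{There is a genuine gap: neither of your two routes, as written, proves the lemma.}

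\emph{The flip induction.} The exchange relation you grant is the lemma itself in local form. Write $\beta'=a\beta+\sum_{\gamma\in\mathcal G}m_\gamma\gamma$ in the basis $\mathcal F=\mathcal G\sqcup\{\beta\}$, with $a,m_\gamma\in\ZZ$. Then $\mathcal F'=\mathcal G\sqcup\{\beta'\}$ is a $\ZZ$-basis if and only if $a=\pm1$. Since the two cones lie on opposite sides of the wall, $a<0$, so this is exactly the condition $\beta+\beta'\in\ZZ\mathcal G$. Calling it ``unimodular walls'' therefore assumes what must be shown. Your justification does not supply it either:
\begin{itemize}
\item Reading's bijection between cluster variables and almost positive roots is combinatorial, not linear, so it transports no linear identity.
\item Denominator vectors satisfy only a tropical $\max$ identity, which is linear only when one monomial dominates, and that is not automatic.
\item The $g$-vector exchange relation concerns a different fan in weight coordinates. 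That fan is related to the root-coordinate $c$-cluster fan only piecewise-linearly, and such a map need not preserve unimodularity.
\end{itemize}

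\emph{The fallback.} It stops exactly at the crux. A single maximal chain in $\movetoid{w}{w\cdot c}$ runs from $\idem$ to $c$ and uses at most two of the $n$ edges at $u$. So the chain roots $\beta_i$ are not the tangent weights at $u$. You give no argument that a $\ZZ$-basis among the $\beta_i$ forces $r(\invnc{u})$ to generate $\ZZ\Phi$.

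\emph{Missing cases.} The lemma covers every $c$-cluster, but you never treat those containing negative simple roots, i.e.\ those whose positive part is $\nctocluster(u)$ with $u\notin\NC(W,c)^+$.

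\textbf{How to close the gap.} Your Hurwitz idea is the right tool, but apply it to $\invncR{u}$ itself rather than to a chain. Take $u\in\NC(W,c)^+$ with cover reflection factorizations $u=t_1\cdots t_k$ and $u^{-1}c=t_{k+1}\cdots t_n$. The reflections of \Cref{lem:BianeInversion} are
$p_i=(t_{i+1}\cdots t_k)^{-1}t_i(t_{i+1}\cdots t_k)$ and $q_i=(t_{k+1}\cdots t_{k+i-1})\,t_{k+i}\,(t_{k+1}\cdots t_{k+i-1})^{-1}$.
A short induction gives $u=p_k\cdots p_1$ and $u^{-1}c=q_{n-k}\cdots q_1$. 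Hence $c=p_k\cdots p_1\,q_{n-k}\cdots q_1$ is a reduced reflection factorization whose roots are exactly $\nctocluster(u)$. Hurwitz transitivity, together with $s_\gamma\beta=\beta-(\beta,\gamma^\vee)\gamma$, shows these $n$ roots span $\ZZ\Phi$, so they form a basis.

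Your geometric plan can also be finished this way:
\begin{itemize}
\item The $T$-curves along the chain lie in $w^{-1}X^{w\cdot c}_w=X^u_{\NC}$, so the $\beta_i$ are characters of $T_{ad}/K$, where $K$ is the generic stabilizer.
\item Since the $\beta_i$ form a basis of $\ZZ\Phi$, this forces $K=1$.
\item $T_{ad}$ then acts freely on the open orbit of the chart $\Xc^u_{\NC}\cong\mathbb{A}^n$. So its weights $r(\invnc{u})$ form a basis, and hence so does $\nctocluster(u)=-u^{-1}r(\invnc{u})$.
\end{itemize}

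For general $u$, run the same argument in the minimal standard parabolic $W'$ containing $u$, with simple roots $\Delta'$; there $\invncR{u}$ is unchanged. Since $-\alpha$ is $c$-compatible with a positive root exactly when $\alpha$ is not in its support, the full cluster is $\nctocluster(u)\sqcup\{-\alpha\suchthat \alpha\in\Delta\setminus\Delta'\}$, again a basis.

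\textbf{Comparison with the paper.} The paper argues differently: it quotes \cite{FZ03b} for bipartite $c$ and transports to general $c$ by conjugation. Be aware that the identity $r(\invnc{wuw^{-1}})=w\cdot r(\invnc{u})$ used there fails in general, because conjugation need not preserve $|\invnc{u}|$. For example, in type $\mathrm{A}_3$ with $c=s_1s_2s_3$ and $w=s_1$, the element $u=s_2$ has one noncrossing inversion while $s_1s_2s_1$ has two. So a self-contained argument like the Hurwitz one is actually preferable.
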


In order to prove the lemma, we recall the definition of a \emph{bipartite Coxeter element}.
Given a bipartition $\Simple_+\sqcup \Simple_-$ of the Coxeter diagram of $\Simple$, we define the corresponding bipartite Coxeter element $c$ by $\displaystyle\prod_{s\in \Simple_+}s\prod_{s\in \Simple_-}s$.

\begin{proof}
If $c$ is a bipartite Coxeter element this is~\cite[Theorem 1.8]{FZ03b}.  
Otherwise, there exists a $w \in W$ such that $c' = wcw^{-1}$ is a bipartite Coxeter element \cite[Proposition 3.16]{Hum90}, and $wuw^{-1} \in \NC(W, c')$ if and only if $u \in \NC(W, c)$.  
Thus for each $u \in \NC(W, c)$, $r(\invnc{wuw^{-1}}) = w \cdot r(\invnc{u})$, so that $w$ maps $\mathrm{Cl}_{c}$ to $\mathrm{Cl}_{c'}$. 
As the reflection action of $w$ is defined over $\ZZ$ and the image of each $c$-cluster under $w$ is a basis for $\ZZ \Phi$, the proof is complete.
\end{proof}

\begin{proof}[Proof of \Cref{thm:cfl_duality}]
The characters of the affine spaces $\nctocluster(u)$ in the affine pavings are $W$-translates of subsets of $c$-clusters, so \Cref{lem:clusterextend} verifies the necessary basis extension property to apply \Cref{thm:GKMflowupexist}. The duality basis remains the same as we vary the linear extension of $\NC(W,c)$ determining the order the pieces of the affine paving appear in. This implies the duality basis is a flowup basis for every linear extension of $\NC(W,c)$, verifying the remaining vanishing conditions.
\end{proof} 

\section{Cohomology in type $A$ and permuted quasisymmetry}
\label{sec:typeAmain}

Fix $G=\GL_{n+1}$ so that $W=S_{n+1}$, and $G/B=\fl{n+1}$ is the complete flag variety parametrizing complete flags of subspaces
$$\fl{n+1}=\{0\subsetneq \mathcal{F}_1\subsetneq \mathcal{F}_2\subsetneq \cdots \subsetneq \mathcal{F}_n\subsetneq \mathbb{C}^{n+1}\suchthat \dim\mathcal{F}_i=i\}.$$ 
We summarize the work of~\cite{BGNST1,BGNST2} on the \emph{quasisymmetric flag variety} $\hhmp_{n+1}=\cfl_{s_n\cdots s_1}\subset \fl{n+1}$, and then prove \Cref{maintheorem:TypeA} in \Cref{cor:permutedqsym}. 
In these references the diagonal torus $T$ of $\GL_{n+1}$ was used instead of the adjoint torus $T_{ad}=T/\mathbb{C}^*\subset \mathrm{PGL}_{n+1}$. 
In type $A$ this is inessential to all cohomology results as the sequence $1\to \mathbb{C}^*\to T\to T_{ad}\to 1$ splits, and hence if $X$ is a variety with a $T_{ad}$ action we have canonical identifications $$H^\bullet_T(X)=H^\bullet_{T_{ad}}(X)\otimes_{H^\bullet_{T_{ad}}(pt)}H^\bullet_T(pt).$$
Moving forward in this section, we work with $T$ rather than $T_{ad}$.

The noncrossing partitions in $\NC_{n+1}\coloneqq \NC(S_{n+1},s_ns_{n-1}\cdots s_1)$ are combinatorially determined by set partitions $\mathcal{P}$ of $\{1,\ldots,n+1\}$ with an additional \emph{noncrossing} property: if $A,B\in \mathcal{P}$ are distinct blocks and $i<j<k<\ell$, then we do not have $i,k\in A$ and $j,\ell\in B$. 
To such a set partition, the associated element of $\NC_{n+1}$ is given by multiplying the backwards cycles on each of the blocks. 
The finest partition corresponds to the identity, the coarsest partition corresponds to $(n+1,n,\ldots,1)=s_ns_{n-1}\cdots s_1$, and the Kreweras order is identified with the refinement order on set partitions restricted to the noncrossing set partitions.

\begin{figure}[!ht]
    \centering
    \includegraphics[width=0.75\linewidth]{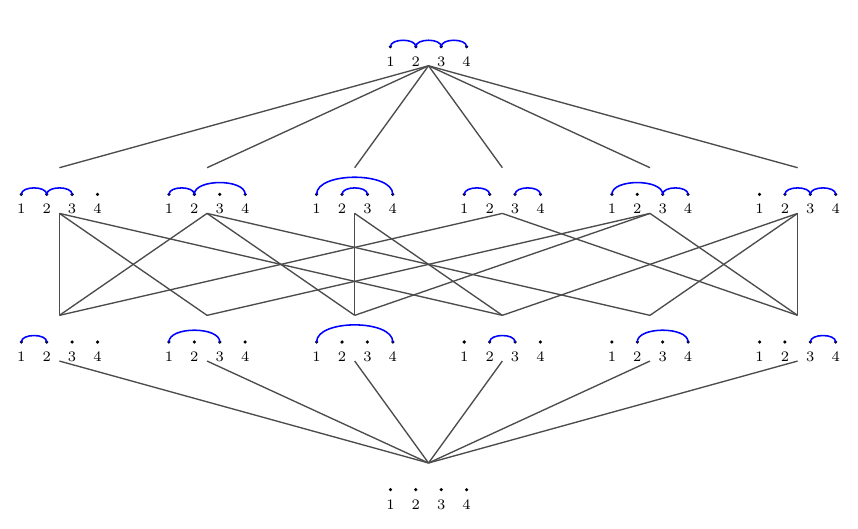}
    \caption{$\NC_4$ drawn as a lattice on noncrossing set partitions}
    \label{fig:kreweras_4}
\end{figure}

The combinatorics of $\hhmp_{n+1}$ centers around the notion of \emph{equivariant quasisymmetry} defined in \cite{BGNST1} for polynomials $f(\xl;\tl)\in \mathbb{Z}[x_1,\ldots,x_{n+1}][t_1,\ldots,t_{n+1}]$.  In particular $\hhmp_{n+1}$ has combinatorially defined Coxeter and equivariant Coxeter Schubert classes given by the forest polynomials and double forest polynomials respectively. 
We refer the reader to \cite{BGNST1,BGNST2} for details about the forest and double forest polynomials.
\begin{defn}
Define the \emph{equivariant Bergeron--Sottile maps} $\rope{i}^-,\rope{i}^+$ by
\begin{align*}\rope{i}^-f=f(x_1,\ldots,x_{i-1},t_i,x_{i},\ldots,x_{n};\tl)
\\
\rope{i}^+f=f(x_1,\ldots,x_{i-1},x_{i},t_i,\ldots,x_{n};\tl)
\end{align*}
    A polynomial $f(\xl;\tl)$ is \emph{equivariantly quasisymmetric} if for $1\le i \le n$ we have $\rope{i}^-f=\rope{i}^+f$, and we denote $\eqsym{n+1}\subset \mathbb{Z}[x_1,\ldots,x_{n+1}][t_1,\ldots,t_{n+1}]$ for the subring of equivariantly quasisymmetric polynomials. 
\end{defn}
Setting $t_i=0$, we have $\rope{i}^-$ and $\rope{i-1}^+$ become identified with the Bergeron--Sottile maps $\rope{i}$ \cite{BS98, NST_a} on the polynomial ring $\mathbb{Z}[x_1,\ldots,x_{n+1}]$ defined by
$$\rope{i}f=f(x_1,\ldots,x_{i-1},0,x_i,\ldots,x_n).$$ 
As shown in \cite{BGNST1} this specialization induces a surjection from $\eqsym{n+1}$ to the ring of \emph{quasisymmetric polynomials} $\qsym{n+1}\subset \mathbb{Z}[x_1,\ldots,x_{n+1}]$ of Gessel \cite{Ges84} and Stanley \cite{StThesis}, defined by either of the equivalent conditions that $\rope{1}f=\cdots=\rope{n+1}f$, or that for any sequence $(a_1,\dots,a_k)$ of positive integers we have equality of the coefficients $[x_{i_1}^{a_1}\cdots x_{i_k}^{a_k}]f=[x_{j_1}^{a_1}\cdots x_{j_k}^{a_k}]f$ for all increasing sequences $1\leq i_1<\cdots<i_k\leq n+1$ and $1\leq j_1<\cdots<j_k\leq n+1$.

Define the ideal $$\eqsymide{n+1}=\langle f(\xl;\tl)-f(\tl;\tl)\suchthat f\in \eqsym{n+1}\rangle\subset \mathbb{Z}[x_1,\ldots,x_{n+1}][t_1,\ldots,t_{n+1}].$$
It was shown in \cite{BGNST2} that the natural map
$H^\bullet_T(\fl{n+1})\to H^\bullet_T(\hhmp_{n+1})
$
is surjective, giving a Borel-type presentation 
\begin{align*}
    \mathbb{Z}[x_1,\ldots,x_{n+1}][t_1,\ldots,t_{n+1}]/\eqsymide{n+1} &\xrightarrow{\cong} H^\bullet_{T}(\cfl_{s_n\cdots s_1})\\
    f(x_{1}, \ldots, x_{n+1}; t_{1}, \ldots, t_{n+1})&\mapsto \big(f(t_{w(1)}, \ldots, t_{w(n+1)}; t_{1}, \ldots, t_{n+1})\big)_{w\in \NC_{n+1}}.
\end{align*}

We now prove \Cref{maintheorem:TypeA}, relating the quasisymmetric coinvariants to the cohomology rings of $\cfl_c$ for other choices of $c$.

\begin{thm}
\label{cor:permutedqsym}
    For $G/B=\fl{n+1}$ and any Coxeter element $c\in S_{n+1}$, the restriction map
$H^\bullet(\fl{n+1})\to H^\bullet(\cfl_c)$ is surjective.  
Moreover, if
$c$ is the cycle $(w(n+1)\,w(n)\,\cdots\,w(1))$ for some $w\in S_{n+1}$ then under this restriction map we can realize $H^\bullet(\cfl_c)$ as a quotient of the presentation in \eqref{eqn:flagcohomology}: 
\[
H^\bullet(\cfl_c)\cong
\mathbb{Z}[x_1,\ldots,x_{n+1}]\Big/
\Big\langle
f(x_{w(1)},\ldots,x_{w(n+1)})-f(0,\ldots,0)\ \Big|\ f\in \qsym{n+1}
\Big\rangle,
\]
the ring of \emph{permuted quasisymmetric coinvariants}.
\end{thm}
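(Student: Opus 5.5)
The plan is to reduce to the special case $c_0=s_n\cdots s_1=(n+1\,n\,\cdots\,1)$ of \Cref{thm:quasi} by translating everything by $w$. If $c=(w(n+1)\,\cdots\,w(1))$ then $c=wc_0w^{-1}$, since conjugating a cycle by $w$ relabels its entries by $w$. Because all the sets involved are defined by absolute order and reflections, conjugation gives $\NC(W,c)=w\,\NC(W,c_0)\,w^{-1}$. The key geometric step is to produce a $T$-equivariant (up to the automorphism of $T$ induced by $w$) identification of $\cfl_{c}$ with a translate of $\cfl_{c_0}$. Since $\cfl_c$ is the rigid Pl\"ucker vanishing variety $\pluckervanishing{\NC(W,c)}$ (\Cref{thm:everying_about_cfl}), left translation by $w$ sends $\cfl_{c_0}$ to $\pluckervanishing{w\NC(W,c_0)}$. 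However, $w\NC(W,c_0)\neq w\NC(W,c_0)w^{-1}$ in general, so left translation alone is not the right map.

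The cleanest route is therefore algebraic, through GKM rings. By \Cref{thm:cfl_coho}, $H^\bullet_T(\cfl_c)$ is the graph cohomology of $\Cay(W,\Reflections)|_{\NC(W,c)}$, and the map $\wt{\Psi}$ of \Cref{cor:coho_iso} is $(f_u)\mapsto (w\cdot f_{w^{-1}vw})_v$, where $w$ acts on $\ZZ[t_1,\dots,t_{n+1}]$ by permuting the $t_i$. Concretely:
\begin{enumerate}
\item I will take the equivariant Borel presentation of $H^\bullet_T(\cfl_{c_0})$ from \cite{BGNST2}, which sends $f(\xl;\tl)\mapsto (f(t_{u(1)},\dots,t_{u(n+1)};\tl))_{u\in\NC_{n+1}}$ with kernel $\eqsymide{n+1}$.
\item I will compose with $\wt{\Psi}_{c_0,w}$. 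The $v$-component is then $f(t_{w(w^{-1}vw)(1)},\dots;w\tl)=f(t_{vw(1)},\dots,t_{vw(n+1)};w\tl)$.
\item I will rewrite this as $g(t_{v(1)},\dots,t_{v(n+1)};\tl)$ for $g(\xl;\tl)=f(x_{w(1)},\dots,x_{w(n+1)};w\tl)$. This shows that the restriction map $\ZZ[\xl][\tl]=H^\bullet_T(\fl{n+1})\to H^\bullet_T(\cfl_c)$, namely $g\mapsto(g(v\tl;\tl))_v$, is surjective with kernel equal to the image of $\eqsymide{n+1}$ under this variable substitution.
\end{enumerate}

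The non-equivariant statement then follows by quotienting by $H^+_T(pt)$, using \Cref{thm:cfl_coho}(ii). The map $\wt\Psi$ preserves $H^+_T(pt)$, and $\eqsym{n+1}$ specializes onto $\qsym{n+1}$ at $\tl=0$ (as recalled from \cite{BGNST1}). The kernel therefore becomes the ideal generated by $f(x_{w(1)},\dots,x_{w(n+1)})-f(0)$ for $f\in\qsym{n+1}$, together with the $t$'s. Surjectivity of $H^\bullet(\fl{n+1})\to H^\bullet(\cfl_c)$ follows from equivariant surjectivity after tensoring down.

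The main obstacle is to check carefully that the ideal $\eqsymide{n+1}+H^+_T(pt)$, after the substitution $x_i\mapsto x_{w(i)}$ and the permutation of the $t$'s, reduces modulo $(\tl)$ exactly to the permuted quasisymmetric ideal. This needs the surjectivity $\eqsym{n+1}\to\qsym{n+1}$ and the fact that permuting $\tl$ does not affect the quotient by $(\tl)$. Sign and convention issues also need attention: whether the translated vertex labels should be $vw$ or $wv$, and whether $c$ corresponds to $w$ or $w^{-1}$. I will fix these by testing on $n=1,2$ before writing the general argument.
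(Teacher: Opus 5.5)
Your proposal is correct and follows essentially the paper's route: you transport the equivariant Borel presentation of $H^\bullet_T(\cfl_{s_n\cdots s_1})$ from \cite{BGNST2} along $\wt{\Psi}$, identify the composite with the restriction map precomposed with the automorphism $f\mapsto f(x_{w(1)},\dots,x_{w(n+1)};t_{w(1)},\dots,t_{w(n+1)})$ (this is the paper's commutative square), and then quotient by $(t_1,\dots,t_{n+1})$ using $\eqsym{n+1}\twoheadrightarrow\qsym{n+1}$. Your explicit check that the $v$-component equals $g(t_{v(1)},\dots,t_{v(n+1)};\tl)$ also makes transparent the commutativity of that square, which the paper only asserts.
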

\begin{proof}
Because $c=ws_n\cdots s_1w^{-1}$, by \Cref{cor:coho_iso}, we have a map
\[
\wt{\Psi_{c,w}}:H^\bullet_{T}(\cfl_{s_n\cdots s_1})\cong H^\bullet_T(\cfl_c)
\qquad\text{given by}\qquad
(f_v)_{v\in \NC_{n+1}}\mapsto (w\cdot f_{w^{-1}vw})_{v\in \NC_{n+1}}.
\]
This map descends to an isomorphism $\Psi_{c,w}:H^\bullet(\cfl_{s_n\cdots s_1})\cong H^\bullet(\cfl_c)$ after quotienting by the ideal $H^+_T(pt) = (t_1,\ldots,t_{n+1})$.
Let $\eqsymide{n+1,w}$ be the image of $\eqsymide{n+1}$ under the automorphism of  $\mathbb{Z}[x_1,\ldots,x_{n+1}][t_1,\ldots,t_{n+1}]$ given by
\begin{align}
    \label{eq:automorphism_from_w}
    f(x_1,\ldots,x_{n+1};t_1,\ldots,t_{n+1})\mapsto f(x_{w(1)},\ldots,x_{w(n+1)};t_{w(1)},\ldots,t_{w(n+1)}).
\end{align}
Then there is a  commutative square
\begin{center}
\begin{tikzcd}
    \mathbb{Z}[x_1,\ldots,x_{n+1}][t_1,\ldots,t_{n+1}]/\eqsymide{n+1}\ar[r,"\sim"]\ar[d,"\sim"]&\mathbb{Z}[x_1,\ldots,x_{n+1}][t_1,\ldots,t_{n+1}]/\eqsymide{n+1,w}\ar[d,"\sim"]\\H^\bullet_T(\cfl_{s_n\cdots s_1})\ar[r,"\sim"]&H^\bullet_T(\cfl_c),
\end{tikzcd}
\end{center}
where the top map is induced by the automorphism in~\eqref{eq:automorphism_from_w}. 
The result then follows by quotienting out the ideal $H^+_T(pt)=(t_1,\ldots,t_{n+1})$.
\end{proof}
\begin{rem}
\label{rem:flowup}
As in the introduction we remark that the map $\Psi_{c,w}$ does not preserve flowup bases.
While the image of the flowup basis for $H^\bullet_T(\cfl_c)$ certainly maps to a basis, conjugation by $w$ only preserves the absolute order and \textit{not} the Bruhat order used for our flowup condition.
\end{rem}

\section{Preliminaries of $c$-sortability}
\label{sec:csortability}

In this section we define and recall key properties of $c$-sortability.  
Throughout, we use the right weak order $\le_{R}$ defined in Section~\ref{sec:rightorder}.
Additionally we adopt the convention that if $\bm w$ is a word in simple reflections, then $w\in W$ denotes the group element it represents.

Fix a reduced word $\bm c$ for $c$ and let $\bm{c}^{\infty}$ denote the string obtained by repeating this reduced word infinitely.
We will depict each copy of $\bm{c}$ in $\bm{c}^{\infty}$ as separated by vertical bars, so for example if $W=S_5$ and $\bm{c}=s_2s_1s_3s_4$ we have
$$\bm{{c^{\infty}}} = s_{2} \; s_{1} \;s_{3} \; s_{4} \;|\; s_{2} \; s_1 \; s_{3} \; s_{4} \;|\; s_{2} \; s_{1} \; s_3 \; s_4\;|\; \cdots$$
For any subword $\bm{x}$ of $\bm{c}^{\infty}$, we refer to the part of $\bm{x}$ in the $i$th copy of $\bm c$ in $\bm{c}^{\infty}$ as the \emph{$i$th syllable of $\bm{x}$}. 
The \emph{$c$-sorting word} $\bm{x_{c^{\infty}}}$ of an element $x\in W$ is the lexicographically first reduced word for $x$ in $\bm{c}^{\infty}$.

\begin{defn}
    A \emph{$c$-sortable element} $x\in W$ is an element such that each syllable of the $c$-sorting word $\bm{x_{c^{\infty}}}$ contains every letter in the following syllable. 
    We denote the set of $c$-sortable elements in $W$ by \emph{$\Sort(W,c)$}. 
\end{defn}

We illustrate these notions by an example.

\begin{eg}
\label{ex:sorting1}
Let $W = S_{5}$ and $\bm{c}=s_2s_1s_3s_{4} \in S_{5}$.  
We display $\bm c^\infty$ with the letters of the sorting word unstruck (and unused letters struck out in gray):
the permutations $x = 35421$ and $y = 53421$ in one-line notation have $c$-sorting words
\begin{align*}
\bm{x_{c^{\infty}}} &= s_{2} \; s_{1} \;s_{3} \; s_{4} \;|\; s_{2} \; {\color{gray} \stkout{s_{1}}} \; s_{3} \; s_{4}\;|\; s_{2} \; {\color{gray}\stkout{s_{1}} \; \stkout{s_{3}}} \; {\color{gray} \stkout{s_{4}}}\;|\; \cdots, \qquad\text{and} \\
\bm{y_{c^{\infty}}} &= s_{2} \; s_{1} \;s_{3} \; s_{4} \;|\; s_{2} \; {\color{gray} \stkout{s_{1}}} \; s_{3} \; s_{4} \;|\; s_{2} \; s_{1} \; {\color{gray}\stkout{s_{3}}} \; {\color{gray} \stkout{s_{4}}}\;|\; \cdots.
\end{align*}
Then $x$ is $c$-sortable, but $y$ is not, as $\{s_{2}, s_{3}, s_{4}\} \not \supseteq \{s_{1}, s_{2}\}$.
\end{eg}
The following lemma shows that the syllable containment property of a reduced word in $\bm{c^{\infty}}$ implies it is already the $c$-sorting word of a $c$-sortable permutation.

\begin{lem}\label{lem:lexfirstirrelevant}
Let $\bm x$ be a reduced subword of $\bm c^\infty$ representing $x\in W$.
If, syllable by syllable, the set of letters used in $\bm x$ is weakly decreasing under inclusion
(i.e.\ each syllable contains every letter appearing in the following syllable),
then $\bm x$ is the $c$-sorting word of $x$. In particular, $x\in\Sort(W,c)$.
\end{lem}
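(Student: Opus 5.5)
Let $\bm x$ be the given reduced subword of $\bm c^\infty$, with syllables $\bm x^{(1)},\bm x^{(2)},\ldots$ and letter sets $S_1\supseteq S_2\supseteq\cdots$. Let $\bm y=\bm{x_{c^{\infty}}}$ be the $c$-sorting word, i.e.\ the lexicographically first reduced word for $x$ inside $\bm c^\infty$. Here "lexicographically first" means the sequence of positions used in $\bm c^\infty$ is lexicographically minimal. The plan is to show that $\bm y=\bm x$ by induction on $\ell(x)$, peeling off one letter at a time from the front.

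\textbf{Step 1: first letters agree.} Let $s$ be the first letter of $\bm x$. Since $S_1\supseteq S_k$ for all $k$, every letter of $\bm x$ lies in $S_1$. Let $s'$ be the first letter of $\bm y$. By lexicographic minimality, the position of $s'$ is weakly earlier than that of $s$. Suppose it is strictly earlier. Then $s'$ sits in the first copy of $\bm c$ at a position before $s$, and $s'$ is a left descent of $x$. It lies in the first copy because the position of $s$ is in the first copy, as $S_1\neq\emptyset$ unless $x=\idem$.

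Two cases arise.
\begin{itemize}
\item If $s'\in S_1$, then $s'$ occurs in $\bm x^{(1)}$ at its own position, which comes before $s$. This contradicts $s$ being the first letter of $\bm x$.
\item If $s'\notin S_1$, then $s'$ does not occur in $\bm x$ at all, so $x$ lies in the standard parabolic subgroup generated by $S_1$. Its left descents then lie in $S_1$, contradicting $s'\notin S_1$.
\end{itemize}
Hence $s'=s$, and both words begin at the same position.

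\textbf{Step 2: pass to a new Coxeter element.} Remove the first letter. Write $\bm c=\bm a\, s\, \bm b$ with the letters before $s$ forming $\bm a$. No letter of $\bm a$ is in $S_1$, since $s$ is the first letter used in syllable one. The remainder of $\bm c^\infty$ after $s$ is then $\bm b\,|\,\bm a s\bm b\,|\cdots$. Deleting from the start of this remainder the letters of $\bm a$ does not change which letters are available. This is because those letters are unused by $\bm x$, and the parabolic argument in Step 1 shows they cannot be used by any competitor in a way that matters. The result is $(\bm c')^\infty$ with $\bm c'=\bm b\,\bm a\, s$, up to the initial truncation, where $c'=scs$.

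\textbf{Step 3: reindex syllables and conclude.} The remainder of $\bm x$ becomes a subword of $\bm c'^\infty$. Its new syllables are: old syllable one minus $s$ together with the portion $\bm a$ of syllable two, then syllables continuing in the same shifted way. Check that the new syllable sets are still weakly decreasing. The first new syllable contains $S_1\setminus\{s\}$ together with the $\bm a$-letters of $S_2$, which are none, since $\bm a\cap S_1=\emptyset$ and $S_2\subseteq S_1$. The second new syllable contains the $\bm b$-part of $S_2$ together with the $\bm a s$-part of $S_3$. These inclusions follow from $S_2\subseteq S_1$ and $S_3\subseteq S_2$. Also, the lexicographically first word for $sx$ in $\bm c'^\infty$, shifted back, is the tail of the lex-first word for $x$. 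Induction then gives that the tails agree, so $\bm y=\bm x$.

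\textbf{Expected obstacle.} The main obstacle is bookkeeping Step 2 carefully, namely that the shift of alphabets preserves the lexicographic comparison. If this proves messy, the fallback is Reading's standard result that $c$-sortability can be defined via any reduced subword of $\bm c^\infty$ with nested syllable supports, proved via the recursion $x\in\Sort(W,c)$ iff $sx\in\Sort(W,scs)$ when $s$ is initial in $c$ and is a descent.
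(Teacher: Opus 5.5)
Your argument is correct in outline, but it takes a longer route than the paper. The paper does not induct. It takes the first position $p$ of $\bm c^\infty$ where $\bm x$ and $\bm{x}_{c^\infty}$ differ, and observes that lexicographic minimality forces $\bm{x}_{c^\infty}$ to use the letter $s$ at $p$ while $\bm x$ skips it. Nestedness then forces $\bm x$ never to use $s$ after $p$. Stripping the common prefix $\bm y$, the element $y^{-1}x$ lies in $W_{\Simple\setminus\{s\}}$ yet has a reduced word beginning with $s$, a contradiction.

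Your Step 1 is exactly this argument at the first position. Your Steps 2--3 exist only to rebuild the same situation after removing one letter, by rotating the Coxeter word and re-checking nestedness. The paper avoids all of that by stripping the entire common prefix at once and arguing with the group element $y^{-1}x$. Your version gains a recursive structure resembling Reading's initial-letter recursion, but it costs bookkeeping, and the bookkeeping as written needs repair.

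\begin{enumerate}
\item The tail of $(\bm a s\bm b)^\infty$ after that first $s$ is literally $(\bm b\bm a s)^\infty$. So no deletion or truncation is needed, and the vague remark about competitors can be dropped.
\item The word $\bm b\bm a s$ represents $(as)^{-1}c(as)$, which equals $scs$ only when $\bm a$ is empty; for example, take $\bm c=s_1s_2$ and $x=s_2$. This is harmless as long as your induction on $\ell(x)$ runs over all Coxeter words simultaneously, since only the word $\bm c'$ matters.
\item The slip that actually breaks a check as written is in Step 3. The first syllable of the tail in $(\bm c')^\infty$ is the $\bm b$-part of old syllable one together with the $\bm a s$-part, not just the $\bm a$-part, of old syllable two. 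So $S_1'=(S_1\setminus\{s\})\cup(S_2\cap\{s\})$. With your $S_1'=S_1\setminus\{s\}$, the inclusion $S_2'\subseteq S_1'$ fails whenever $s\in S_3$. With the correct description, $S_k'$ is the $\bm b$-part of $S_k$ together with the $\bm a s$-part of $S_{k+1}$, and nestedness follows termwise from $S_{k+1}\subseteq S_k$.
\item You assert without proof that the tail of the lex-first word for $x$ is the lex-first word for $sx$ in $(\bm c')^\infty$; this needs a line. By Step 1 the lex-first word begins at the position of that $s$. Prepending this position to any reduced word for $sx$ in the tail gives a reduced word for $x$, so lexicographic comparisons transfer.
\end{enumerate}
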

\begin{proof}
Let $\bm{x}_{c^\infty}$ be the $c$-sorting word of $x$, and assume $\bm x\neq \bm{x}_{c^\infty}$.
Let $p$ be the leftmost position in $\bm c^\infty$ where the chosen subwords differ, so $\bm{x}_{c^\infty}$ uses
$s\coloneqq \bm c^\infty_p$ at position $p$ while $\bm x$ skips it. Writing
\[
\bm{x}_{c^\infty}=\bm y\, s\, \bm z
\qquad\text{and}\qquad
\bm x=\bm y\,\bm w
\]
with $\bm y$ the common chosen prefix before $p$, we claim that $\bm w$ contains no letter $s$.
Indeed, since each simple reflection occurs at most once per syllable of $\bm c^\infty$, skipping $s$ at $p$ means
the syllable of $\bm x$ containing $p$ has no $s$, and by the syllable-containment hypothesis no later syllable can contain $s$.

Let $y,w,z\in W$ be the elements represented by $\bm y,\bm w,\bm z$.
Then $y^{-1}x=w\in W_{\Simple\setminus\{s\}}$, while also $y^{-1}x=sz$ has a reduced word beginning with $s$,
a contradiction. Hence $\bm x=\bm{x}_{c^\infty}$, and $x$ is $c$-sortable by definition.
\end{proof}

We now record some fundamental results due to Reading \cite{Reading2005} that revolve  around the study of $c$-sortability that we shall appeal to in the sequel.

\begin{fact}\leavevmode
\begin{enumerate}
    \item By~\cite[Theorem 1.1]{Reading2005}, there is a unique maximum $c$-sortable element $\sort(x)\in \Sort(W,c)$ below $x$ in the right weak order $\le_R$ called the \emph{downward projection}.
    \item The downward projection determines an equivalence relation on $W$ called \emph{Cambrian equivalence} saying $x\equiv_c y$ if $\sort(x)=\sort(y)$. 
    The proof of~\cite[Theorem 1.1]{Reading2005} shows that the Cambrian equivalence class of $x\in W$ is a $\le_{R}$-interval $[\sort(x),\upsort(x)]$,
    whose maximum $\upsort(x)$ is called the \emph{upward projection}.

    \item  $\upsort(x)$ can be computed (see ~\cite[\S 3]{Reading2005}) using the downward projection $\sort^{(c^{-1})}$ for $c^{-1}$ and the antiautomorphism $x\to x\wnaught$ of the right weak order:
\begin{equation}
\label{eqn:upsort}\upsort(x)=\sort^{(c^{-1})}(x\wnaught)\;\wnaught.\end{equation}
\end{enumerate}
\end{fact}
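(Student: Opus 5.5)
The three items are due to Reading, so the plan is mostly to reduce each item to a result in~\cite{Reading2005} and to supply the short lattice-theoretic arguments that connect them. I would proceed in three steps, one per item.

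\textbf{Items (1) and (2).} Take the existence of $\sort(x)$ directly from \cite[Theorem 1.1]{Reading2005}. That theorem says $\Sort(W,c)$ is a sublattice of $(W,\le_R)$, and that $x\mapsto\sort(x)$ is a lattice homomorphism onto it. Hence the fibres of $\sort$ are the classes of a lattice congruence $\equiv_c$. Every congruence class of a finite lattice is an interval. Indeed, the class is closed under joins and meets, so it has a minimum and a maximum. It is also convex: if $a\le b\le d$ with $a\equiv d$, then $b=b\vee a\equiv b\vee d=d$. The minimum of the class of $x$ is $\sort(x)$, because $\sort(x)\le_R x$, $\sort(\sort(x))=\sort(x)$, and $\sort$ is order preserving. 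Call the maximum $\upsort(x)$.

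\textbf{Item (3).} The map $\phi(x)=x\wnaught$ satisfies $\inv{x\wnaught}=\Reflections\setminus\inv{x}$. Since right weak order is containment of (left) inversion sets, $\phi$ is an antiautomorphism of $(W,\le_R)$. It therefore carries any lattice congruence $\Theta$ to a congruence $\phi(\Theta)$, and it sends the top of each $\Theta$-class to the bottom of the corresponding $\phi(\Theta)$-class. So \eqref{eqn:upsort} follows once we establish the key claim $\phi(\equiv_c)=\equiv_{c^{-1}}$. Granting the claim, the class of $x\wnaught$ under $\equiv_{c^{-1}}$ has bottom $\sort^{(c^{-1})}(x\wnaught)$. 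Applying $\phi^{-1}=\phi$ then gives $\upsort(x)=\sort^{(c^{-1})}(x\wnaught)\wnaught$.

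\textbf{Main obstacle: the key claim.} I expect this to be the hard step. I would use Reading's description of the $c$-Cambrian congruence as the finest lattice congruence generated by contracting one specified edge for each edge of the Coxeter diagram. The choice of edge is dictated by whether $s_i$ precedes $s_j$ in $c$; that is, by an orientation of the diagram. The antiautomorphism $\phi$, which fixes the rank-two parabolic intervals up to the twist by $\wnaught$, takes the relation generated in the dihedral interval for $\{s_i,s_j\}$ to the opposite relation. In other words, it reverses every orientation, and reversing every orientation is exactly passing from $c$ to $c^{-1}$. Checking this reversal carefully in the rank-two intervals is the only genuinely computational point. Alternatively, \cite[\S 3]{Reading2005} states \eqref{eqn:upsort} directly, and I would simply cite it if the orientation bookkeeping becomes awkward.
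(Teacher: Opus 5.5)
Your proposal is correct and follows the paper's route, which simply cites \cite[Theorem 1.1 and \S 3]{Reading2005}. Your added glue is accurate: the fibres of the lattice homomorphism $\sort$ are congruence classes, hence intervals with bottom $\sort(x)$, and the antiautomorphism $x\mapsto x\wnaught$ turns the top of each $\equiv_c$-class into the bottom of the corresponding class of the image congruence.

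The one caution concerns your optional self-contained proof that $x\mapsto x\wnaught$ carries $\equiv_c$ to $\equiv_{c^{-1}}$. Take a diagram edge with $s$ before $t$ in $c$, and let $m(s,t)$ be the order of $st$. Its generator is the relation $t\equiv ts\cdots$ with $m(s,t)-1$ letters, so it contracts $m(s,t)-2$ edges rather than one. Moreover, $x\mapsto x\wnaught$ does not preserve $W_{\{s,t\}}$. It sends this generator to the $c^{-1}$-generator $s\equiv st\cdots$ right-translated by $\wnaught^{\{s,t\}}\wnaught$ into the top interval, where $\wnaught^{\{s,t\}}$ is the longest element of $W_{\{s,t\}}$. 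Showing that these translates generate $\equiv_{c^{-1}}$ is therefore a global forcing statement about the whole weak order, not a rank-two check. That input is exactly what citing \cite[\S 3]{Reading2005} supplies.
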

Figure~\ref{fig:cambrian_classes} depicts the Hasse diagram of the weak order on $S_4$ and highlights the intervals arising from Cambrian equivalence for $c=s_1s_3s_2$. So, for instance, $\{1324,3124,1342,3142,3412\}$ constitutes a single Cambrian class with $1324$ and $3412$ being the bottom and top elements respectively.

\begin{figure}[!ht]
    \centering
    \includegraphics[width=0.6\linewidth]{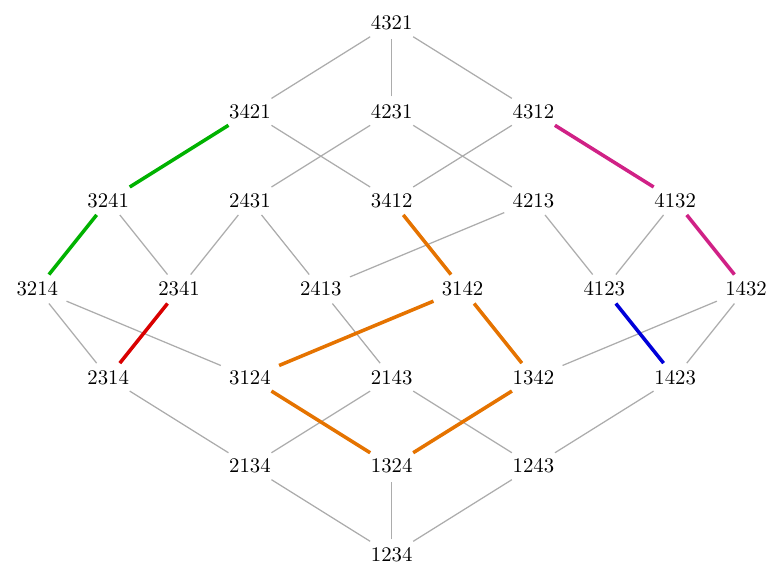}
    \caption{The right weak order on $S_4$ with non-singleton $c$-Cambrian classes emphasized using bolded edges, for $c=s_1s_3s_2$}
    \label{fig:cambrian_classes}
\end{figure}

The longest element $w_\circ\in W$ is $c$-sortable \cite[Corollary 4.4]{Reading2007}, and (recalling the notation of \eqref{eq:reflection_factorization}) the inversion factorization for its $c$-sorting word $\bm{\wnaught}$ determines the \emph{$c$-reflection order} $\le_c$ \cite{ABW07} by
$$\tau_{1}^{\bm{\wnaught}} \prec_c \tau_{2}^{\bm{\wnaught}} \prec_c \cdots \prec_c \tau_{\ell(\wnaught)}^{\bm{\wnaught}}$$
on the set of reflections $\Reflections$. We now describe the connection between $c$-sortability and $c$-reflection orders.

\begin{fact}
\label{fact:reordersort}
    Let $x\in\Sort(W,c)$ with $c$-sorting word $\bm{x}$. Then the sequence $\tau_1^{\bm x},\ldots,\tau_{\ell(x)}^{\bm x}$ can be reordered into a $\le_c$-increasing sequence by repeatedly swapping adjacent pairs of reflections that commute.
\end{fact}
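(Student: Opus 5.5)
The plan is to prove the statement by induction on the rank $n$ and the length $\ell(x)$, following Reading's recursive characterization of $c$-sortability. Let $s$ be the first letter of the fixed reduced word $\bm c$; it is initial in $c$.

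Reading's recursion splits into two cases:
\begin{enumerate}
\item If $s\in\des{x^{-1}}$, i.e.\ $\ell(sx)<\ell(x)$, then $x$ is $c$-sortable if and only if $sx$ is $scs$-sortable. Moreover the $c$-sorting word of $x$ is $s$ followed by the $scs$-sorting word of $sx$, computed in the rotated word $\bm{c'}=s_2^{\bm c}\cdots s_n^{\bm c}s$.
\item If $\ell(sx)>\ell(x)$, then $x$ is $c$-sortable if and only if $x\in W_{\langle s\rangle}\coloneqq W_{\Simple\setminus\{s\}}$ and $x$ is $c_{\langle s\rangle}$-sortable, where $c_{\langle s\rangle}$ is $c$ with $s$ deleted. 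In this case the $c$-sorting word of $x$ is its $c_{\langle s\rangle}$-sorting word.
\end{enumerate}
I would justify these facts directly from the definitions in the syllable language, using \Cref{lem:lexfirstirrelevant}. In case (2) the first syllable of $\bm x$ omits $s$, so by syllable containment no later syllable uses $s$.

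The matching recursion for reflection orders is the next ingredient. I would first show that the $c$-sorting word of $\wnaught$ is, up to commutation moves, $s$ followed by the $scs$-sorting word of $s\wnaught$. Consequently the $c$-reflection order is
\[
s\prec_c s\tau_1's\prec_c s\tau_2's\prec_c\cdots,
\]
where $\tau_1'\prec\tau_2'\prec\cdots$ is the $scs$-reflection order. This follows from the inversion-factorization formula $\tau_j^{\bm w}=s_1^{\bm w}\cdots s_j^{\bm w}\cdots s_1^{\bm w}$ together with the previous paragraph applied to $\wnaught$, which is $c$-sortable. Commutation moves on the word $\bm{\wnaught}$ only swap adjacent commuting inversions, so they do not affect the claim, because we only ever need the order up to such swaps.

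Case (1) is then immediate. The inversion sequence of $\bm x$ is $s$, followed by the $s$-conjugates of the inversion sequence of the $scs$-sorting word of $sx$. By induction on length, the latter can be reordered into $\prec_{scs}$-increasing order by commuting swaps. Conjugation by $s$ preserves commutation and carries $\prec_{scs}$ to the tail of $\prec_c$, and $s$ is $\prec_c$-minimal.

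In case (2) we use induction on rank. The inversions of $x$ are reflections of $W_{\langle s\rangle}$, and we can reorder them into $\prec_{c_{\langle s\rangle}}$-increasing order. What remains is the compatibility lemma
\[
\prec_c\big|_{\Reflections\cap W_{\langle s\rangle}}\;=\;\prec_{c_{\langle s\rangle}}
\quad\text{(up to commuting swaps)}.
\]
I expect this lemma to be the main obstacle. I would try to prove it via Dyer's characterization. The restriction of a reflection order to a standard parabolic subsystem is again a reflection order, and it is determined by a reduced word of $w_{\circ}(W_{\langle s\rangle})$. One would then argue that this reduced word is the subword of $\bm{\wnaught}$ formed by the letters whose inversions lie in $W_{\langle s\rangle}$, and identify it with the $c_{\langle s\rangle}$-sorting word using the syllable structure. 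If this identification is awkward, the fallback is to import it from \cite{ABW07} or \cite[Appendix A]{NJV23}, where the compatibility of $c$-reflection orders with parabolic restriction is established.
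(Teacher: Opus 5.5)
\textbf{There is a genuine gap.} Your recursion for sorting words is fine; both cases follow from \Cref{lem:lexfirstirrelevant}, as you indicate. The problem is the two statements that carry the induction hypothesis back to $\prec_c$. They hold all the content, and neither is established.

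In case (1), applying the recursion to $\wnaught$ gives $\bm{\wnaught}=s\,\mathbf{v}$, where $\mathbf{v}$ is the $scs$-sorting word of $s\wnaught$, a word of length $\ell(\wnaught)-1$. So your $\tau'_j$ are the inversions of $\mathbf{v}$. But $\prec_{scs}$ is defined by the $scs$-sorting word of $\wnaught$, which is a different word. Hence your displayed recursion for $\prec_c$ does not follow from the inversion-factorization formula. What you actually need is a rotation lemma: the $scs$-sorting word of $\wnaught$ is commutation-equivalent to $\mathbf{v}$ followed by the letter $\wnaught s\wnaught$. Already for $W=S_4$ and $\bm c=s_1s_2s_3$ one gets $\mathbf{v}=s_2s_3s_1s_2s_1$, while the $s_2s_3s_1$-sorting word of $\wnaught$ is $s_2s_3s_1s_2s_3s_1$. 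These agree with $\mathbf{v}s_3$ only after a commutation. You also cannot get this from your own induction hypothesis applied to the $scs$-sortable element $s\wnaught$. Its length $\ell(\wnaught)-1$ is not below $\ell(x)$ in general, and unwinding the recursion for $s\wnaught$ reproduces the same requirement for the next rotation of $c$.

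In case (2), the mechanism you propose is false. The letters of $\bm{\wnaught}$ at positions whose inversions lie in $W_{\langle s\rangle}$ need not lie in $\Simple\setminus\{s\}$. In the same example, $\bm{\wnaught}=s_1s_2s_3\,|\,s_1s_2\,|\,s_1$ has inversion sequence $(1\,2),(1\,3),(1\,4),(2\,3),(2\,4),(3\,4)$. The reflections of $W_{\langle s_1\rangle}$ sit at positions $4,5,6$, which carry the letters $s_1s_2s_1$. Dyer's correspondence does give some reduced word for $w_\circ(W_{\langle s\rangle})$. However, showing that this word is commutation-equivalent to the $c_{\langle s\rangle}$-sorting word is the whole content of the lemma.

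So your argument is complete only after importing two compatibility results of essentially the same depth as the statement itself. The paper avoids the recursion entirely. By \cite[Proposition 3.11]{RS11}, the relative order of two noncommuting reflections in the reflection sequence of any $c$-sorting word is forced by a bilinear pairing on their roots. It is therefore the same in $\bm{x}$ as in $\bm{\wnaught}$. Hence every pair in $\inv{x}$ that is out of $\prec_c$-order commutes, and bubble sort reaches the $\prec_c$-increasing order using only adjacent commuting swaps.
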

\begin{proof}
By employing a certain bilinear pairing on the roots, Reading and Speyer show \cite[Proposition 3.11]{RS11} that the relative order in which two \textit{noncommuting} reflections appear in the reflection
sequence of a $c$-sorting word is forced (equivalently, any ambiguity in the sequence comes only from commuting adjacent factors).
In particular, whenever $\tau^{\bm x}_p$ and $\tau^{\bm x}_q$ do not commute, their order for $p<q$ already agrees with $\le_c$.
Thus the only way the list $\tau^{\bm x}_1,\ldots,\tau^{\bm x}_{\ell(x)}$ can fail to be $\le_c$-increasing is via commuting
reflections appearing in the ``wrong'' $\le_c$-order.
The claim now follows.
\end{proof}

\subsection{Cover reflections and skip reflections}
\label{sec:coverandskip}
For the remainder of this section we fix $$x\in \Sort(W,c)\text{ with $c$-sorting word }\bm{x}=\bm{x}_{\bm{c}^{\infty}}\subset \bm{c}^{\infty}.$$

We now proceed to describe how the $c$-sortable elements are related to noncrossing partitions. To this end we need the notion of cover reflections. 
\begin{defn}
    The set of \emph{cover reflections} for $x$ is given by
    \[
    \cov(x)=\{xsx^{-1}\suchthat s\in \des{x}\}\subset \inv{x}.
    \]
\end{defn}

Recalling the inversion factorization notation of \eqref{eq:reflection_factorization} we define
 $$\nc_c(x)=\tau^{\bm x}_{i_1}\cdots \tau^{\bm x}_{i_k}\text{ where }\cov(x)=\{\tau^{\bm x}_{i_1},\ldots,\tau^{\bm x}_{i_k}\}\text{ and }i_1<\cdots<i_k.$$
 By \Cref{fact:reordersort} the order in which the cover reflections appear in the factorization of $\nc_c(x)$ is the $c$-reflection order up to swapping adjacent commuting reflections.
 \begin{fact}[{\cite[Theorem 6.1]{Reading2007}}]\label{fact:nc_c}
The map $x \mapsto \nc_c(x)$ is a bijection
\begin{align*}
    \Sort(W,c)\to \NC(W,c)
    .
\end{align*}
Furthermore, this restricts to a bijection
$$\{x\in \Sort(W,c)\suchthat c\le_R x\}\to \NC(W,c)^+.$$
\end{fact}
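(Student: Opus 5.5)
The first assertion is Reading's theorem \cite[Theorem 6.1]{Reading2007}, and I would take it as given. The plan is to deduce the restriction statement from it by checking that $\nc_c$ is compatible with standard parabolic subgroups and then comparing complements. Concretely, I would show that for $J\subseteq \Simple$ with $c_J$ the restricted Coxeter element,
\[
\nc_c\big(\Sort(W,c)\cap W_J\big)=\NC(W,c)\cap W_J=\NC(W_J,c_J).
\]
The stated bijection then follows from the structure of the first syllable.

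\textbf{Step 1: the first syllable.} Let $x\in\Sort(W,c)$ have $c$-sorting word $\bm x$. If the first syllable of $\bm x$ contains every letter, then $c\le_R x$. Conversely, suppose the first syllable of $\bm x$ omits some $s$. Then by the syllable-containment property no later syllable contains $s$, so $x\in W_{\Simple\setminus\{s\}}$. Since $c\not\le_R x$ would otherwise fail for an element of a proper parabolic, this gives
\[
c\le_R x \iff x \text{ lies in no proper standard parabolic subgroup}.
\]
When $x\in W_J$, its $c$-sorting word is a subword of $\bm c_J^\infty$ with the same syllable-containment property. So by \Cref{lem:lexfirstirrelevant} it is the $c_J$-sorting word of $x$, and $x\in\Sort(W_J,c_J)$. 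The reverse inclusion $\Sort(W_J,c_J)\subseteq\Sort(W,c)\cap W_J$ follows in the same way, since a $c_J$-sorting word is a reduced subword of $\bm c^\infty$ satisfying the hypotheses of \Cref{lem:lexfirstirrelevant}.

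\textbf{Step 2: compatibility of $\nc_c$ with $W_J$.} For $x\in W_J$, the descents, cover reflections and inversion factorization computed from $\bm x$ are the same whether we view $x$ in $W$ or in $W_J$. Hence $\nc_c(x)=\nc_{c_J}(x)\in W_J$. Applying Reading's bijection to $(W_J,c_J)$, I get $\nc_c(\Sort(W,c)\cap W_J)=\NC(W_J,c_J)$. I would then identify $\NC(W_J,c_J)$ with $\NC(W,c)\cap W_J$. The inclusion $\subseteq$ is clear, since $c_J\le_{\Reflections} c$; this follows, for instance, from the subproduct description of $\NC(W,c)$ together with Hurwitz moves. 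For $\supseteq$, I would quote the standard fact that the absolute order on a parabolic subgroup is the restriction of absolute order on $W$, using $\ell_{\Reflections}=\operatorname{codim}\operatorname{Fix}$ and the fact that $c$ acts on $\operatorname{Fix}(W_J)^\perp$ as $c_J$ does. I expect this identification to be the main obstacle. If a direct argument is awkward, I would cite the literature on parabolic noncrossing partitions \cite{BW08}.

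\textbf{Step 3: conclusion.} By Steps 1 and 2, the bijection $\nc_c$ carries the set of $c$-sortable elements lying in some proper standard parabolic onto
\[
\bigcup_{J\subsetneq \Simple}\NC(W,c)\cap W_J,
\]
which is exactly the set of noncrossing partitions that are not fully supported. Taking complements, $\nc_c$ restricts to a bijection from $\{x\in\Sort(W,c)\suchthat c\le_R x\}$ onto $\NC(W,c)^+$.
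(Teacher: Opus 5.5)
Your deduction of the restriction statement from Reading's bijection is a genuinely different route from the paper, which simply cites \cite[Theorem 6.1]{Reading2007} for both parts. Most of it is fine:
\begin{itemize}
\item Steps 1 and 2 correctly give $\Sort(W,c)\cap W_J=\Sort(W_J,c_J)$, with $\nc_c=\nc_{c_J}$ on this set.
\item The complement argument in Step 3 works.
\item The justification in Step 1 that elements of a proper parabolic cannot satisfy $c\le_R x$ is worded confusingly. The point is only that $c\le_R x$ puts every simple reflection into a reduced word for $x$.
\end{itemize}

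The gap is the step you flagged yourself: the inclusion $\NC(W,c)\cap W_J\subseteq\NC(W_J,c_J)$. This is exactly what is needed to show that a non-fully-supported $u$ has a non-fully-supported $c$-sortable preimage, and what you wrote does not justify it.
\begin{itemize}
\item Absolute order on $W_J$ being the restriction of absolute order on $W$ only compares two elements of $W_J$, whereas $c\notin W_J$.
\item The claim that $c$ acts on $\mathrm{Fix}(W_J)^\perp=\operatorname{span}\{\alpha_t\suchthat t\in J\}$ (with $\alpha_t\coloneqq r(t)$) as $c_J$ does is false. $c$ need not even preserve this subspace: in type $\mathrm{A}_2$ with $c=s_1s_2$ and $J=\{s_1\}$, one has $c(\alpha_1)=\alpha_2$.
\end{itemize}

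Here is a direct argument that closes the gap.
\begin{itemize}
\item Reduction to $J=\Simple\setminus\{s\}$. Iterating suffices, since restricting $c_{J'}$ to $J\subseteq J'$ gives $c_J$.
\item Reduction to $s$ last. Write $c=a\,s\,b$ with $a,b\in W_J$, so $c_J=ab$. Conjugation by $b$ preserves $W_J$ and $\le_{\Reflections}$, and replaces $(c,c_J)$ by $(bas,ba)$. So you may assume $c=c_Js$.
\item Setup. Given $u\in\NC(W,c)\cap W_J$, set $v=u^{-1}c$. Then $\ell_{\Reflections}(v)=n-\ell_{\Reflections}(u)$ and $u^{-1}c_J=vs$.
\item Contradiction step. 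Suppose $\ell_{\Reflections}(vs)=\ell_{\Reflections}(v)+1$. Since $\mathrm{Fix}(v)\cap\alpha_s^\perp\subseteq\mathrm{Fix}(vs)$ and $\ell_{\Reflections}=\operatorname{codim}\mathrm{Fix}$, a dimension count gives $\mathrm{Fix}(vs)=\mathrm{Fix}(v)\cap\alpha_s^\perp\subseteq\mathrm{Fix}(v)$. Now $vs\in W_J$ fixes any $\lambda$ with $(\lambda,\alpha_t)=0$ for $t\in J$ and $(\lambda,\alpha_s)>0$, so $v$ fixes $\lambda$ too. The stabilizer of such a $\lambda$ is $W_J$, so $v\in W_J$ and $c=uv\in W_J$, which is absurd.
\item Conclusion. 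Hence $\ell_{\Reflections}(u^{-1}c_J)=\ell_{\Reflections}(v)-1=(n-1)-\ell_{\Reflections}(u)=\ell_{\Reflections}(c_J)-\ell_{\Reflections}(u)$, i.e.\ $u\le_{\Reflections}c_J$. Reflection length in $W_J$ agrees with that in $W$ on $W_J$ (both equal $\operatorname{codim}\mathrm{Fix}$), so $u\in\NC(W_J,c_J)$.
\end{itemize}

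If you prefer to cite \cite{BW08}, the precise statement to invoke is this: on $\NC(W,c)$, the order $\le_{\Reflections}$ coincides with inclusion of moved spaces $\operatorname{Mov}=\mathrm{Fix}^\perp$. Then $\operatorname{Mov}(u)\subseteq\operatorname{span}\{\alpha_t\suchthat t\in J\}=\operatorname{Mov}(c_J)$ gives $u\le_{\Reflections}c_J$ immediately.
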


In the introduction, we write $\nc_{c}(x) = \nc_{c}(\sort(x))$ for any $x \in W$.  
The cover reflections can be recovered from $\nc_{c}(x)$ in the following manner. 

\begin{fact}
[{\cite[Proposition 3.3]{BJV19}}]
\label{fac:bjv_covers_are_generators}
Let $W'$ be the smallest reflection subgroup containing $\nc_c(x)$. With respect to the induced positive system
$\Phi_{W'}^+=\Phi_{W'}\cap \Phi^+$ and simple system $\Delta_{W'}$, one has
$\cov(x)=\{s_\beta:\beta\in\Delta_{W'}\}$, and $\nc_c(x)$ is a Coxeter element of $W'$.
\end{fact}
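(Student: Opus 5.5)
The plan is to show directly that $\cov(x)$ is a simple system for the reflection subgroup $W''$ it generates, and then to prove $W''=W'$. The main point is the second step, where I use a generation theorem for reflection factorizations of Coxeter elements.

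\emph{Step 1: the cover roots are pairwise obtuse.} For $s\in\des{x}$ put $\beta_s\coloneqq -x(r(s))$. Since $\ell(xs)<\ell(x)$, the root $x(r(s))$ is negative, so $\beta_s\in\Phi^+$. Moreover $r(xsx^{-1})=\beta_s$, so $\{\beta_s\suchthat s\in\des{x}\}=r(\cov(x))$. By $W$-invariance of the inner product,
\[
(\beta_s,\beta_t)=(r(s),r(t))\le 0 \qquad\text{for distinct } s,t\in\des{x}.
\]
The $\beta_s$ are also linearly independent, being $x$-images of distinct simple roots.

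\emph{Step 2: they form the simple system of $W''$.} Let $W''=\langle\cov(x)\rangle$. I will invoke the standard criterion of Deodhar and Dyer: a linearly independent set of positive roots with pairwise nonpositive inner products is the canonical simple system $\Delta_{W''}$ of the reflection subgroup it generates. Here the relevant positive system is $\Phi_{W''}\cap\Phi^+$. Concretely, every root of $\Phi_{W''}=W''\{\beta_s\}$ is a nonnegative or nonpositive combination of the $\beta_s$, and the positive ones lie in $\Phi^+$. Consequently $\cov(x)=\{s_\beta\suchthat\beta\in\Delta_{W''}\}$. By definition $\nc_c(x)$ is a product of all the elements of $\cov(x)$, each appearing once, so $\nc_c(x)$ is a Coxeter element of $W''$.

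\emph{Step 3: $W'=W''$.} One inclusion is immediate: $\nc_c(x)\in W''$, so $W'\subseteq W''$.

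For the reverse inclusion, let $k=|\cov(x)|$. Since $\nc_c(x)$ is a Coxeter element of the rank-$k$ group $W''$, its fixed space in $\operatorname{span}\Phi_{W''}$ is $0$, so $\ell_{\Reflections}(\nc_c(x))=k$.

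Now write $\nc_c(x)$ as a product of reflections of $W'$ of minimal length $m$. The fixed-space count gives $m\ge k$. Conversely $W'$ is a reflection subgroup of $W''$, so its root span lies in the $k$-dimensional space $\operatorname{span}\Phi_{W''}$; this gives $m\le\operatorname{rank}W'\le k$. Hence $m=k$, and this factorization is a minimal reflection factorization of the Coxeter element $\nc_c(x)$ of $W''$.

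The main obstacle is to conclude that such a factorization generates all of $W''$. Full rank alone is not enough: in $\mathrm{B}_2$, $\mathrm{A}_1\times\mathrm{A}_1$ is a full-rank proper reflection subgroup. I would cite the theorem of Baumeister--Dyer--Stump--Wegener that any minimal reflection factorization of a Coxeter element of a finite Coxeter group generates that group. Applied in $W''$, it shows the reflections of $W'$ used above generate $W''$, so $W''\subseteq W'$. This completes the identification $W'=W''$, and with it the statement.
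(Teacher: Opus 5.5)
Your argument is correct. The paper does not prove this statement; it quotes it from \cite{BJV19}, where it belongs to the theory of $c$-sortable elements. Your route is self-contained, and it shows the fact does not really depend on $c$:
\begin{itemize}
\item Steps 1 and 2 work for every $x\in W$.
\item The only property of $\nc_c(x)$ you use is that it is a product of all the cover reflections, each taken once, in some order.
\item Step 3 is then the general fact that a Coxeter element of a reflection subgroup $W''$ lies in no smaller reflection subgroup. Its one nontrivial input is the Bessis / Baumeister--Dyer--Stump--Wegener generation theorem.
\end{itemize}
The citation buys brevity, and it matches how the paper uses the fact later: the minimal reflection subgroup is the parabolic subgroup attached to a noncrossing partition. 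Your proof buys transparency and generality.

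Two refinements. First, Step 2 does not need the Deodhar--Dyer criterion. With $J=\des{x}$, the element $x$ is the longest element of $xW_J$, so $x(\Phi_J^+)\subseteq\Phi^-$. Hence $W''=xW_Jx^{-1}$, $\Phi_{W''}\cap\Phi^+=-x(\Phi_J^+)$ and $\Delta_{W''}=-x(\Delta_J)$. In particular $W''$ is a parabolic subgroup.

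Second, your ``immediate'' inclusion $W'\subseteq W''$ assumes that a least reflection subgroup containing $\nc_c(x)$ exists. The statement presupposes this, but it is not automatic: in $\mathrm{B}_2$ the element $-1$ lies in two copies of $\mathrm{A}_1\times\mathrm{A}_1$ that meet in $\{\pm 1\}$, which is not a reflection subgroup. Parabolicity lets you prove existence, as follows. Let $W^\ast$ be any reflection subgroup containing $u=\nc_c(x)$.
\begin{itemize}
\item $u$ acts trivially on $(\operatorname{span}\Phi_{W^\ast})^\perp$, so $\operatorname{span}\Phi_{W''}\subseteq\operatorname{span}\Phi_{W^\ast}$. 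Carter's lemma in $W^\ast$ then gives a factorization of $u$ into $k$ reflections of $W^\ast$.
\item This factorization is reduced in $W$, so its factors fix $\mathrm{Fix}(u)=(\operatorname{span}\Phi_{W''})^\perp$. Their roots therefore lie in $\Phi\cap\operatorname{span}\Phi_{W''}=\Phi_{W''}$.
\item The generation theorem then gives $W''\subseteq W^\ast$.
\end{itemize}
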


We therefore have that the product $\tau_{i_1}^{\bm{x}}\cdots \tau_{i_k}^{\bm{x}}=\nc_c(x)$ is a minimal length reflection factorization, and by \cite[Proposition 3.4]{BJV19}  any two minimal length reflection factorizations $t_1\cdots t_k=t_1'\cdots t_k'=\nc_c(x)$ into the cover reflections in some order are related to each other by commuting adjacent transpositions -- we call any such factorization a \emph{cover reflection factorization} of $\nc_c(x)$.

\begin{eg}
\label{ex:covers1}
Continuing Example~\ref{ex:sorting1} with $W = S_{5}$ and $c=s_2s_1s_3s_{4} \in S_{5}$, and $x = 35421 \in \Sort(S_{5}, c)$, the cover reflections of $x$ are $(4\,5)$, $(2\,4)$, and $(1\,2)$.  Moreover,
\[
\nc_{c}(x) = (2\,4)(1\,2)(4\,5) = (2\,4)(4\,5)(1\,2) = 41352.
\]
The minimal reflection subgroup containing $\nc_{c}(x)$ is the subgroup $S_{\{1, 2, 4, 5\}} \subseteq S_{5}$ comprising all permutations which fix $3$, which has simple generating set $(1\,2), (2\,4)$, and $(4\,5)$.
\end{eg}

\begin{defn}\label{def:skip_position}
Say that $p \in \ZZ_{> 0}$ is a \emph{skip position} for $x$ if $\bm{x}$ does not include the $p$th letter $s = \bm{c}^{\infty}_{p}$, and moreover $\bm{x}$ contains all copies of $s$ in $\bm{c}^{\infty}$ before position $p$. We enumerate the skip positions for $x$ in increasing order (i.e.~from left to right) as $$p_{1} < p_{2} < \cdots < p_{n}.$$   
\end{defn}
See Example~\ref{eg:moreskips} where the circled positions denote skip positions. Each skip position $p_{j}$ determines a special reflection as follows.

\begin{defn}\label{def:skip_reflection}
For $1\leq j\leq n$, let $a_{1} a_{2} \cdots a_{r_{j}}$ be the prefix of $\bm{x}$ consisting of all letters to the left of the skip position $p_{j}$, and let $s_{j} = \bm{c}^{\infty}_{p_{j}}$.
We define the $j$th \emph{skip reflection} (or just ``skip'') by
\[
\phi_{j} = a_{1} \cdots a_{r_{j}} s_{j} a_{r_{j}} \cdots a_{1},
\]
and set $\skips{x}\coloneqq\{\phi_1,\ldots,\phi_n\}$
\end{defn}

We will use the following fact repeatedly.

\begin{prop}
\label{prop:skipfill}
For any $1\leq i_1<\cdots <i_k\leq n$, the product $\phi_{i_{1}}\phi_{i_{2}}\cdots \phi_{i_{k}} x$ has a (possibly non-reduced) word obtained from $\bm{x}\subset \bm{c^{\infty}}$ by filling in the  skips associated to $\phi_{i_1},\ldots,\phi_{i_k}$.
\end{prop}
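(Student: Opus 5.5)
The plan is to argue by induction on $k$, peeling off the leftmost skip reflection first, with each step a direct computation in $W$ from \Cref{def:skip_reflection}. The one identity used throughout is
\[
\phi_j\, a_1\cdots a_{r_j} = a_1\cdots a_{r_j}\, s_j .
\]
It holds because $\phi_j = (a_1\cdots a_{r_j})\, s_j\, (a_1\cdots a_{r_j})^{-1}$.

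For $k=1$, write $\bm{x}$ as the prefix $a_1\cdots a_{r_j}$ followed by the suffix $\bm{b}$ of letters of $\bm{x}$ to the right of $p_j$. Then
\[
\phi_j x = \phi_j\, a_1\cdots a_{r_j}\, b = a_1\cdots a_{r_j}\, s_j\, b .
\]
This is exactly the word obtained from $\bm{x}$ by filling in the letter $s_j=\bm{c}^\infty_{p_j}$ at position $p_j$. That is the claim for a single skip. Reducedness is not asserted, so nothing further is needed.

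For general $k$ with $i_1<\cdots<i_k$, let $\bm{x}'$ be the word obtained from $\bm{x}$ by filling in the skips $p_{i_2},\ldots,p_{i_k}$. By induction, $\bm{x}'$ represents $\phi_{i_2}\cdots\phi_{i_k}x$. Since $p_{i_1}<p_{i_2}<\cdots$, the letters of $\bm{x}'$ to the left of position $p_{i_1}$ coincide with those of $\bm{x}$, namely $a_1\cdots a_{r_{i_1}}$. Hence $\phi_{i_1}$ is still the conjugate of $s_{i_1}$ by this common prefix. Repeating the $k=1$ computation with $\bm{x}'$ in place of $\bm{x}$ inserts $s_{i_1}$ at position $p_{i_1}$. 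The result is $\bm{x}$ with all of $p_{i_1},\ldots,p_{i_k}$ filled in.

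The only subtlety is the order of multiplication. The induction must apply the rightmost factors, those with larger skip positions, first. This guarantees that when $\phi_{i_1}$ acts, the prefix defining it has not been altered by earlier insertions. With this order the remaining steps are routine, and no properties of skip positions beyond their left-to-right ordering are used.
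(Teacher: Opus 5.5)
Your proof is correct and is essentially the paper's argument. The paper writes out all of $\phi_{i_1}\cdots\phi_{i_k}x$ at once and cancels the conjugating prefixes $a_1\cdots a_{r_{i_j}}$ in one telescoping step, using that skip positions increase; your induction does the same cancellation one factor at a time, and rests on the same fact that insertions at later positions leave the prefix defining $\phi_{i_1}$ unchanged.
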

\begin{proof}
Write $\bm x=a_1a_2\cdots a_{\ell(x)}$ and 
take $r_{i_1}, r_{i_2}, \ldots, r_{i_k}$ and $s_{i_{1}}, \ldots, s_{i_{k}}$ as in Definition~\ref{def:skip_reflection}.  Then 
\begin{align*}
\phi_{i_{1}}\phi_{i_{2}}\cdots \phi_{i_{k}} x
&= (a_{1} \cdots a_{r_{i_1}}) s_{i_{1}} (a_{r_{i_1}} \cdots a_{1}) 
\cdots 
(a_{1} \cdots a_{r_{i_k}}) s_{i_{k}} (a_{r_{i_k}} \cdots a_{1}) 
a_{1}a_{2} \cdots a_{\ell(x)} \\
&= a_{1} \cdots a_{r_{i_1}} \; s_{i_{1}} \; a_{r_{i_1}+1} \cdots a_{r_{i_2}}\; s_{i_{2}} \; a_{r_{i_2}+1}\cdots  a_{r_{i_k}} \; s_{i_{k}}  \; a_{r_{i_k}+1}\cdots a_{\ell(x)}
\end{align*}
as desired.
\end{proof}

As a consequence we obtain a minimal reflection factorization for $c$ from the skip reflections.
\begin{cor}
\label{cor:prodc}
We have $\phi_1\cdots \phi_n=c$.
\end{cor}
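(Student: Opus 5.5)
The plan is to apply \Cref{prop:skipfill} with all skips, $(i_1,\ldots,i_n)=(1,\ldots,n)$. This gives a word for $\phi_1\cdots\phi_n x$ obtained from $\bm{x}\subset\bm{c}^\infty$ by filling in every skip position. It then suffices to show that this filled-in word represents $cx$. Multiplying on the right by $x^{-1}$ then yields $\phi_1\cdots\phi_n=c$.

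The key combinatorial point is to understand the skip positions. By \Cref{def:skip_position}, for each simple reflection $s\in\Simple$ there is exactly one skip position carrying the letter $s$, namely the first copy of $s$ in $\bm{c}^\infty$ omitted by $\bm{x}$. Such a copy exists because $\bm{x}$ is finite, so there are exactly $n$ skips, one per simple reflection. Since $x$ is $c$-sortable, the syllables of $\bm{x}$ are weakly decreasing under inclusion. It follows that once $s$ is skipped in syllable $m$, no later syllable contains $s$. Thus $\bm{x}$ uses $s$ exactly in syllables $1,\ldots,m_s-1$, where $m_s$ is the syllable of the skip for $s$.

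Filling in all skips therefore produces the subword of $\bm{c}^\infty$ that uses $s$ in syllables $1,\ldots,m_s$. I would then move each newly inserted letter leftwards to build a full copy of $\bm{c}$ at the front. Concretely, the filled word has the same syllable-containment structure, except that syllable $m_s$ now also contains $s$. Equivalently, the filled word equals $\bm{c}$ followed by a word using $s$ in syllables $2,\ldots,m_s$, shifted back by one syllable. That shifted word reads, syllable by syllable, exactly as $\bm{x}$ does, since $s$ occupies syllables $1,\ldots,m_s-1$ there.

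The main obstacle is justifying this regrouping: the letters of the filled word are positioned in $\bm{c}^\infty$, and shifting by one syllable is a literal identity of words only if the letter sets in each syllable line up. Within a syllable the letters appear in $\bm{c}$-order. The filled word has syllable $k$ equal to $\{s: m_s\ge k\}$ in $\bm{c}$-order, while $\bm{x}$ has syllable $k$ equal to $\{s: m_s\ge k+1\}$. The filled word is therefore literally the concatenation of the syllables $S_1,S_2,\ldots$ with $S_k=\{s:m_s\ge k\}$, and $S_1=\Simple$ gives $\bm{c}$. The remaining syllables $S_2,S_3,\ldots$ are exactly the syllables of $\bm{x}$. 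Hence the filled word is literally the word $\bm{c}\,\bm{x}$, which represents $cx$, completing the argument.
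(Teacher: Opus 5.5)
Your proposal is correct and follows essentially the same argument as the paper. You apply \Cref{prop:skipfill} to all $n$ skips, use syllable containment to see that the filled word uses each $s$ in exactly one more initial syllable than $\bm{x}$ does, and conclude that the filled word is literally $\bm{c}\,\bm{x}$, so $(\phi_1\cdots\phi_n)x=cx$; the phrase about ``moving inserted letters leftwards'' is unnecessary, since, as you then note, this is an identity of words requiring no commutations.
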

\begin{proof}
Let $\bm x'$ be the (possibly nonreduced) word for $(\phi_1\cdots\phi_n)x$ given by Proposition~\ref{prop:skipfill},
obtained from $\bm x\subset \bm c^\infty$ by filling all skip positions.

Fix a simple reflection $s\in \Simple$, and let $m_s$ be the number of occurrences of $s$ used by $\bm x$ in $\bm c^\infty$.
Since the syllables of $\bm x$ are weakly decreasing under inclusion, $\bm x$ uses precisely the first $m_s$ occurrences of $s$
in $\bm c^\infty$. 
Hence the skip position for $s$ is its $(m_s+1)$st occurrence, and filling all skips forces $\bm x'$ to use
the first $m_s+1$ occurrences of $s$.

In particular, every $s\in \Simple$ occurs in the first syllable of $\bm x'$, so the first syllable of $\bm x'$ is exactly $\bm c$,
and  the $(i+1)$st syllable of $\bm x'$ coincides with the $i$th syllable of $\bm x$ for all $i\ge 1$.
Therefore $\bm x'=\bm c\,\bm x$ as subwords of $\bm c^\infty$, and hence $(\phi_1\cdots\phi_n)x=cx$.
The claim follows.
\end{proof}

\subsection{Forced and unforced skip reflections}
Following Reading’s Cambrian theory \cite{Reading2007}, the $c$-sorting word of a $c$-sortable element carries combinatorial data via cover reflections that determines its associated noncrossing partition.  Reading--Speyer \cite{RS11} show that this data can be read directly from the pattern of
omitted letters in $\bm c^\infty$: the skips that are \textit{forced} by reducedness are precisely the cover reflections, and hence encode
$\nc_c(x)$, while the remaining skips are optional insertions, motivating the distinction between forced and unforced skips
\cite[Proposition~5.2]{RS11}.

Like before, we write $x\in \Sort(W,c)$ as  $\bm x=a_1a_2\cdots a_{\ell(x)}$.
\begin{defn}
    Say that a skip $\phi_{j}$ is \emph{unforced} if $ a_{1} \cdots a_{r_{j}}s_{j}$ is a reduced expression in $W$, and  \emph{forced} otherwise. 
    Equivalently, letting $y_j=a_1\cdots a_{r_j}$, the skip $\phi_j$ is unforced iff $\ell(y_js_j)=\ell(y_j)+1$,
and forced iff $\ell(y_js_j)=\ell(y_j)-1$.
    
\end{defn}
If we let $\uskips{x}$ and $\fskips{x}$ denote the respective sets of unforced and forced skips, then we have a decomposition
\[
    \skips{x}=\uskips{x}\sqcup \fskips{x}.
\]
We will enumerate elements of these sets according to their skip positions in $\bm{c}^{\infty}$ as 
\begin{align*}
\fskips{x}&=\{\fs_1,\ldots,\fs_k\},\qquad\qquad\text{where $k \coloneq |\fskips{x}|$, and} \\
\uskips{x}&=\{\ufs_1,\ldots,\ufs_{n-k}\}.
\end{align*}
This is the same order used for $\skips{x}$, so $\phi_1,\ldots,\phi_n$ is a shuffle of the lists $\ufs_1,\ldots,\ufs_{n-k}$ and $\fs_1,\ldots,\fs_k$.

\begin{eg}
\label{eg:moreskips}
As in Example~\ref{ex:sorting1} take $c=s_2s_1s_3s_{4} \in S_{5}$ and $x = 35421 \in \Sort(W, c)$.   
The skip positions are the circled letters in the $c$-sorting word
\begin{align*}
\bm{x} &= s_{2} \; s_{1} \;s_{3} \; s_{4} \;|\; 
s_{2} \; \circled{\color{gray} \stkout{s_{1}}} \; s_{3} \; s_{4}\;|\; 
s_{2} \; {\color{gray}\stkout{s_{1}}} \; \circled{\color{gray}\stkout{s_{3}}} \; \circled{\color{gray} \stkout{s_{4}}}\;|\; 
\circled{\color{gray}\stkout{s_{2}}} \; {\color{gray}\stkout{s_{1}} \; \stkout{s_{3}}} \; {\color{gray} \stkout{s_{4}}}\;|\; \cdots
\end{align*}
The corresponding skip reflections are, in order, $(3\, 4) = \ufs_1$, $(2\, 4) = \fs_1$, $(1\, 2) = \fs_2$, and $(4\,5) = \fs_{3}$.
\end{eg}

The agreement of forced skips and cover reflections in Example~\ref{eg:moreskips} is not accidental.

\begin{fact}\label{fact:fskip}
$\cov(x) = \fskips{x}$, and 
$\fs_1\cdots \fs_k=\nc_c(x)$ is a cover reflection factorization.
\end{fact}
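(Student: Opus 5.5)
We identify the forced skips with the cover reflections directly. Then we check that the ordering by skip position agrees with the $c$-reflection order up to commutations.

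\textbf{Step 1: every forced skip is a cover reflection.} Let $\phi_j$ be forced, with $y_j=a_1\cdots a_{r_j}$ and $\ell(y_js_j)=\ell(y_j)-1$.
\begin{itemize}
\item Then $\phi_j=y_js_jy_j^{-1}\in\inv{y_j}$. Since $y_j\le_R x$, we get $\phi_j\in\inv{x}$, so $\phi_j=\tau^{\bm x}_m$ for some $m\le r_j$.
\item We must show $x^{-1}\phi_jx$ is simple, i.e.\ that deleting $a_m$ from $\bm x$ equals $x$ times a simple reflection.
\item Here I would use the skip structure: $\bm x$ contains every copy of $s_j$ before $p_j$ and, by syllable containment, none after.
\item Comparing $\phi_jx$ (the skip at $p_j$ filled in, \Cref{prop:skipfill}) with $x$, the filled word is nonreduced. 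The exchange condition deletes the letter $a_m$, and the remaining letters $a_{r_j+1}\cdots a_{\ell(x)}$ avoid $s_j$.
\item The aim is to show that after position $m$ the word can be rearranged by commutations so that $a_m$ is the last letter. This is the content of \cite[Proposition~5.2]{RS11}, which I would invoke rather than reprove.
\end{itemize}

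\textbf{Step 2: the two sets coincide.} Conversely, each $s\in\des{x}$ yields a forced skip, again by \cite[Proposition~5.2]{RS11}: the last occurrence of $s$ in $\bm x$ is followed by the skip position for $s$, and that skip is forced. To avoid checking the reverse inclusion in detail, I would use a counting argument:
\begin{itemize}
\item By \Cref{fac:bjv_covers_are_generators}, $|\cov(x)|$ equals the rank of the minimal reflection subgroup $W'$ containing $\nc_c(x)$, which is $\ell_{\Reflections}(\nc_c(x))$.
\item By \Cref{cor:prodc}, $\phi_1\cdots\phi_n=c$ is a minimal reflection factorization. Hence the subproduct $\fs_1\cdots\fs_k$ lies in $\NC(W,c)$ and has absolute length exactly $k$.
\item Injectivity of Step~1 gives $\fskips{x}\subseteq\cov(x)$. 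Matching these sizes then yields equality.
\end{itemize}

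\textbf{Step 3: the product is $\nc_c(x)$.} Both $\fs_1\cdots\fs_k$ and $\nc_c(x)$ are products of the same set $\cov(x)$, each in some order.
\begin{itemize}
\item By \Cref{fact:reordersort}, the order defining $\nc_c(x)$ (by inversion index $i_1<\cdots<i_k$) agrees with $\le_c$ on every noncommuting pair.
\item I would show the same holds for the order by skip position. The skip reflections are read from the same word $\bm c^\infty$, and the Reading--Speyer bilinear-form argument \cite[Proposition~3.11]{RS11} forces the relative order of noncommuting reflections read along $\bm c^\infty$.
\item Then the two orderings differ only by swaps of adjacent commuting reflections, so the products agree.
\item Since $\fs_1\cdots\fs_k$ has absolute length $k$ (Step~2), it is a minimal reflection factorization of $\nc_c(x)$ into cover reflections, i.e.\ a cover reflection factorization.
\end{itemize}

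\textbf{Main obstacle.} The hardest step is Step~1, showing that a forced skip is a cover reflection rather than an arbitrary inversion. This requires genuine control of the $c$-sorting word, and it is where I would lean on \cite[Proposition~5.2]{RS11}. The compatibility of skip-position order with $\le_c$ in Step~3 is secondary but also delicate.
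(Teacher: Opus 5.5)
Your Step~1 matches the paper, which also takes the identification of forced skips with cover reflections from \cite[Proposition~5.2]{RS11}. Step~2 does not work, because the counting never yields the inequality $|\cov(x)|\le k$ that the reverse inclusion requires. You know $|\cov(x)|=\ell_{\Reflections}(\nc_c(x))$ and $\ell_{\Reflections}(\fs_1\cdots\fs_k)=k$, but these are absolute lengths of two elements not yet known to be equal. That equality is Step~3, and Step~3 starts by assuming $\cov(x)=\fskips{x}$, so the argument is circular. On its own, $\fskips{x}\subseteq\cov(x)$ together with $\ell_{\Reflections}(\fs_1\cdots\fs_k)=k$ gives only $k\le|\cov(x)|$, which you already had. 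Nothing you prove rules out a cover reflection lying outside $\skips{x}$ altogether; the unforced skips are not inversions of $x$, but that says nothing about other reflections. The reverse inclusion needs the full set equality in \cite[Proposition~5.2]{RS11}, which is exactly how the paper uses it. Once you cite it that way, the counting is unnecessary.

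Step~3 contains a second unsupported claim. The form of \cite[Proposition~3.11]{RS11} used in \Cref{fact:reordersort} concerns the reflection sequence $\tau^{\bm x}_1,\ldots,\tau^{\bm x}_{\ell(x)}$ of the $c$-sorting word. A forced skip at position $p_j$ equals some $\tau^{\bm x}_m$ with $m\le r_j$, where $m$ is determined by an exchange. You give no reason why $p_j\mapsto m$ preserves the relative order of noncommuting pairs. The $\phi_j$ are not themselves the reflection sequence of a reduced subword of $\bm c^\infty$: filling in skips produces the conjugates $\psi_j$ of \Cref{section:CharacterizeBruhatIntervals}, not the $\phi_j$. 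So ``read from the same word $\bm c^\infty$'' is precisely what would have to be proved.

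You can bypass the order comparison entirely. Once $\cov(x)=\fskips{x}$ is known, both $\nc_c(x)$ and $\fs_1\cdots\fs_k$ are products of the same $k$ reflections with linearly independent roots. Hence both have fixed space $\bigcap_i r(\fs_i)^{\perp}$. Both also lie in $\NC(W,c)$, the latter as a subproduct of $c=\phi_1\cdots\phi_n$ by \Cref{cor:prodc}. By a theorem of Brady--Watt, an element of $\NC(W,c)$ is determined by its fixed space, so the two products coincide. Your absolute-length observation then shows that $\fs_1\cdots\fs_k$ is a cover reflection factorization. This replaces the paper's appeal to \cite{BJV19} for both the ordering and the factorization.
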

\begin{proof}
By \cite[Proposition 5.2]{RS11}, the forced skips are precisely the cover reflections, so
$\cov(x)=\fskips{x}$ as sets.  Let $u\coloneqq \fs_1\cdots \fs_k$, i.e. the product in skip-position order.

By definition, $\nc_c(x)$ is the product of the cover reflections in the order they appear in the reflection sequence
of the $c$-sorting word (equivalently, in $c$-reflection order up to commuting adjacent commuting reflections).
Moreover, \cite[Remark 3.2(2)]{BJV19} implies that for any two noncommuting reflections in $\cov(x)$,
their relative order agrees in any cover reflection factorization. 
Hence the words $u$ and $\nc_c(x)$ differ only by swapping
adjacent commuting reflections, and in particular $u=\nc_c(x)$.

Finally, by \cite[Proposition 3.4]{BJV19} any minimal reflection factorization of $\nc_c(x)$ into the cover reflections
is unique up to swapping adjacent commuting factors, so $\fs_1\cdots \fs_k$ is a cover reflection factorization.
\end{proof}

\begin{prop}
\label{prop:insertunforced}
The following hold.
\begin{enumerate}[label=(\arabic*)]
    \item\label{it1:9.17} We have $x \lessdot_{B} \ufs_{i} x$ for all $1 \le i \le n-k$.
    \item \label{it2:9.17}$\ufs_{i} x$ is $c$-sortable, and $\ufs_{1}, \ldots, \ufs_{i-1}$ are unforced skips of $\ufs_{i} x$.
\end{enumerate}  
\end{prop}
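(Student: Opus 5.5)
Both parts rest on Proposition~\ref{prop:skipfill}: filling a single skip gives an explicit word, and we analyze that word.

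For \ref{it1:9.17}, let $\ufs_i=\phi_j$ with skip position $p_j$. Write $\bm x=a_1\cdots a_{\ell(x)}$ and $y_j=a_1\cdots a_{r_j}$. By Proposition~\ref{prop:skipfill}, $\phi_j x$ is represented by the word $\bm x'$ obtained from $\bm x$ by inserting $s_j$ at position $p_j$:
\[
\bm x' = a_1\cdots a_{r_j}\, s_j\, a_{r_j+1}\cdots a_{\ell(x)}.
\]
It has length $\ell(x)+1$, so it suffices to show $\bm x'$ is reduced; then $x<_B \phi_j x$ with length difference one, which is a Bruhat cover. The prefix $y_j s_j$ is reduced because the skip is unforced.

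To show all of $\bm x'$ is reduced, I will check that $\bm x'$ satisfies the syllable containment hypothesis of Lemma~\ref{lem:lexfirstirrelevant}. Since $\bm x$ uses every earlier copy of $s_j$ and no later one (by syllable containment), inserting $s_j$ into its syllable keeps the syllables weakly decreasing, provided $s_j$ lies in the previous syllable. It does, because $\bm x$ uses all copies of $s_j$ before $p_j$. The containment is then preserved. However, Lemma~\ref{lem:lexfirstirrelevant} assumes reducedness, so reducedness of $\bm x'$ must be proved separately. This is the main obstacle.

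For that step I would argue by inversions. Reducedness of $\bm x'$ is equivalent to
\[
\phi_j\notin \inv{x}.
\]
Indeed, if $\phi_j x$ were shorter than $x$, the exchange condition would delete a letter of $\bm x$, and I expect this to be incompatible with $y_js_j$ being reduced. Concretely, I would use Fact~\ref{fact:reordersort} together with the Reading--Speyer characterization \cite[Proposition~5.2]{RS11}. Their proof shows that an unforced skip reflection is not an inversion of $x$; it is precisely the situation in which their inserted letter keeps the word reduced. If this citation does not literally give the statement, the fallback is the following. Suppose $\phi_j\in\inv{x}$. Then $\phi_j=\tau^{\bm x}_q$ for some $q$. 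If $q\le r_j$, this contradicts the reducedness of $y_js_j$. If $q>r_j$, then $x$ would have two reduced words exhibiting an $s_j$ beyond the syllable support, contradicting the uniqueness argument in the proof of Lemma~\ref{lem:lexfirstirrelevant} (namely, the parabolic-subgroup contradiction used there).

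For \ref{it2:9.17}, once $\bm x'$ is a reduced subword of $\bm c^\infty$ with weakly decreasing syllables, Lemma~\ref{lem:lexfirstirrelevant} shows that $\bm x'$ is the $c$-sorting word of $\ufs_i x$, and hence $\ufs_i x$ is $c$-sortable. Now take $i'<i$, corresponding to a skip $\phi_{j'}$ with $j'<j$, so $p_{j'}<p_j$. Its position is still a skip position for $\bm x'$, since all earlier copies of its letter are still used. Its prefix in $\bm x'$ coincides with the prefix $y_{j'}$ in $\bm x$. Therefore both the skip reflection and the unforced condition $\ell(y_{j'}s_{j'})=\ell(y_{j'})+1$ are unchanged. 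Hence $\ufs_1,\ldots,\ufs_{i-1}$ remain unforced skips of $\ufs_i x$.
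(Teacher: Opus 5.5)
Your proposal is correct. Part (2), and the reduction of (1) to the single inequality $\ell(\ufs_i x)>\ell(x)$, match the paper. The difference is how that inequality is obtained.

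The paper simply cites \cite[Lemma 5.7]{RS11}. It does not use Proposition 5.2 of that paper here; it uses Proposition 5.2 only for forced skips being cover reflections. Your fallback instead proves the inequality directly, so you should lead with it rather than with the speculative citation.

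Stated cleanly, the fallback runs as follows. Since $\bm x$ is reduced, $\inv{x}=\{\tau^{\bm x}_1,\ldots,\tau^{\bm x}_{\ell(x)}\}$.
\begin{itemize}
\item For $q\le r_j$, these are the inversions of $y_j$. If $\phi_j\in\inv{y_j}$, then $\ell(y_js_j)=\ell(\phi_jy_j)<\ell(y_j)$, contradicting unforcedness.
\item For $q>r_j$, one has $\tau^{\bm x}_q=y_j\,(a_{r_j+1}\cdots a_q\cdots a_{r_j+1})\,y_j^{-1}$. Every $a_m$ with $m>r_j$ lies in $\Simple\setminus\{s_j\}$, because the syllable containing $p_j$ lacks $s_j$ and hence so do all later syllables. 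So $\tau^{\bm x}_q=\phi_j$ would force $s_j\in W_{\Simple\setminus\{s_j\}}$, which is impossible.
\end{itemize}
Your phrase about $x$ having ``two reduced words exhibiting an $s_j$'' is a muddled description of the second case. The parabolic-subgroup contradiction is all that is needed, applied to the conjugated suffix as above.

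Your route gives a self-contained proof using only reducedness, syllable containment, and the definition of unforced. Since $\inv{y_j}\subseteq\inv{x}$, it also shows that $\phi_j\in\inv{x}$ exactly when $\phi_j$ is forced. The paper's version is shorter but rests on an external lemma.
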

\begin{proof}
    Fix $i$. 
    By Proposition~\ref{prop:skipfill}, the product $\ufs_i x$ admits an expression $\bm x'$ obtained from the
$c$-sorting word $\bm x\subset \bm c^\infty$ by filling the skip corresponding to $\ufs_i$ (i.e.\ inserting the skipped letter
at its skip position). 
In particular, $\bm x'$ has length $\ell(x)+1$, so $\ell(\ufs_i x)\le \ell(x)+1$.
On the other hand, \cite[Lemma 5.7]{RS11} gives $\ell(\ufs_i x)>\ell(x)$, hence $\ell(\ufs_i x)=\ell(x)+1$.
Therefore $\bm x'$ is reduced and $x\lessdot_B \ufs_i x$, proving~\ref{it1:9.17}.

Since $\bm x'$ is a reduced subword of $\bm c^\infty$ and is obtained from $\bm x$ by adding a single letter in one syllable,
it still satisfies the syllable-containment condition. 
Thus Lemma~\ref{lem:lexfirstirrelevant} implies that $\bm x'$ is the
$c$-sorting word of $\ufs_i x$, so $\ufs_i x$ is $c$-sortable.

Finally, for any $j<i$, the skip position of $\ufs_j$ occurs before that of $\ufs_i$, so the prefix of $\bm x'$ up to that position
agrees with the corresponding prefix of $\bm x$. 
Hence the reduced/nonreduced status of the word $a_1\cdots a_{r_j}s_j$
is unchanged, and $\ufs_j$ remains an unforced skip for $\ufs_i x$. 
This proves~\ref{it2:9.17}.
\end{proof}

\begin{prop}
\label{prop:skipscommute}
Let $\phi_{i}, \phi_{j} \in \skips{x}$ with $i < j$.  If $\phi_{i}$ is forced and $\phi_{j}$ is unforced, then $\phi_{i} \phi_{j} = \phi_{j} \phi_{i}$.
\end{prop}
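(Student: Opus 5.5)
The plan is to work with explicit words. Write $\bm x = a_1 a_2\cdots a_{\ell(x)}$. Let $\phi_i$ have skip position $p_i$ with prefix $y_i = a_1\cdots a_{r_i}$ and skipped letter $s_i$. Let $\phi_j$ have skip position $p_j>p_i$ with prefix $y_j = a_1\cdots a_{r_j}$ and skipped letter $s_j$. Then $y_j = y_i z$, where $z = a_{r_i+1}\cdots a_{r_j}$. Set $\phi_i = y_i s_i y_i^{-1}$ and $\phi_j = y_i z s_j z^{-1} y_i^{-1}$. Conjugating by $y_i$, the claim becomes
\[
s_i \cdot (z s_j z^{-1}) = (z s_j z^{-1})\cdot s_i .
\]
That is, the reflection $z s_j z^{-1}$ should commute with the simple reflection $s_i$. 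It suffices to show that $s_i$ fixes the root $z\, r(s_j)$, or equivalently that this root is orthogonal to $r(s_i)$.

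\textbf{Step 1: use forcedness.} Since $\phi_i$ is forced, $\ell(y_i s_i) < \ell(y_i)$. So $s_i$ is a descent of $y_i$, and $y_i\, r(s_i)$ is a negative root.

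\textbf{Step 2: use unforcedness.} Since $\phi_j$ is unforced, $y_j s_j = y_i z s_j$ is reduced, so $y_i z\, r(s_j) \in \Phi^+$. Moreover, $z$ contains no letter $s_i$. Indeed, $\bm x$ skips $s_i$ at $p_i$, and by syllable containment (as in the proof of \Cref{lem:lexfirstirrelevant}) no later syllable of $\bm x$ uses $s_i$. Hence $z \in W_{\Simple\setminus\{s_i\}}$.

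\textbf{Step 3: the key computation.} Let $\beta = z\, r(s_j)$, which is a positive root by reducedness of $zs_j$.

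First suppose $s_j = s_i$. Then $\bm x$ would need to use $s_i$ at the occurrence before $p_j$, which is impossible unless $p_j$ is the next occurrence. But $p_j$ is then not a skip position, because the definition requires all earlier copies of $s_j$ to be used. So $s_j \neq s_i$, and $\beta$ lies in the root subsystem of $W_{\Simple\setminus\{s_i\}}$. In particular $\beta$ has zero coefficient on $r(s_i)$.

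Now suppose $(\beta, r(s_i)^\vee) \neq 0$. Since $\beta$ has no $r(s_i)$ component, its pairing with $r(s_i)^\vee$ is $\le 0$, so the pairing is strictly negative. Then $s_i\beta = \beta + m\, r(s_i)$ with $m > 0$, and this is a positive root. Apply $y_i$: we know $y_i\beta > 0$ and $y_i r(s_i) < 0$. This alone is not a contradiction, so I also plan to use that $y_i s_i z s_j$ would give the reflection-order information of \Cref{fact:reordersort}.

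A cleaner route uses the word. The forced skip means $y_i = y' \cdot s_i$-tail, so $s_i$ can be moved to the end of $y_i$. Then $y_i s_i z s_j = y_i' z s_j$, where $y_i' = y_i s_i$. Unforcedness of $\phi_j$ for $x$ should then contrast with the constraint that the letter $s_i$ is absent, and comparing $\ell(y_i s_i z s_j)$ in two ways should force $s_i$ and $zs_jz^{-1}$ to commute.

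\textbf{Expected obstacle.} I expect the main obstacle to be this final step: ruling out a negative pairing between $\beta$ and $r(s_i)$. If the direct length argument fails, I will instead invoke Reading–Speyer's bilinear form $\omega_c$ from \cite[Proposition 3.11]{RS11} as used in \Cref{fact:reordersort}, together with \cite[Lemma 5.7]{RS11}. The aim is to show that a noncommuting pair consisting of a cover reflection followed by a later unforced skip contradicts the forced order of noncommuting reflections in a $c$-sorting word.
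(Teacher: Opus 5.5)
Your reduction is fine. After conjugating by $y_i$, it suffices to show $(\beta, r(s_i)^\vee)=0$ for $\beta=z\,r(s_j)$. Your side observations are also correct: $s_j\neq s_i$, $z\in W_{\Simple\setminus\{s_i\}}$, and $\beta>0$, so $(\beta,r(s_i)^\vee)\le 0$.

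However, the proof stops exactly where the content lies. You never rule out $(\beta,r(s_i)^\vee)<0$, and the ``cleaner route'' and the appeal to the form $\omega_c$ are placeholders rather than arguments. Moreover, the step cannot be completed from the data you have extracted. You use forcedness only as $s_i\in\des{y_i}$, unforcedness only as $\ell(y_izs_j)=\ell(y_iz)+1$, and the fact that $z$ avoids $s_i$. All three hold for $W=S_3$, $y_i=s_1$, $z=\idem$, $s_i=s_1$, $s_j=s_2$, yet there $\beta=r(s_2)$ pairs to $-1$ with $r(s_1)^\vee$. This configuration never arises with $p_i<p_j$ in an actual $c$-sorting word, but nothing in your argument sees why. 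So you need a genuinely global property of $c$-sortable elements.

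The missing ingredient is that forced skips are cover reflections (\Cref{fact:fskip}, from \cite[Proposition 5.2]{RS11}). It must be applied not to $x$, but to the right elements.

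\begin{enumerate}
\item Truncate $\bm x$ just after $p_j$. The prefix $y_j=y_iz$ is $c$-sortable, and $\phi_i$ is still a forced skip of it.
\item By \Cref{prop:insertunforced}, $y_js_j=y_izs_j$ is $c$-sortable, and $\phi_i$ is still forced there.
\item Hence $\phi_i=y_is_iy_i^{-1}$ is a cover reflection of both $y_iz$ and $y_izs_j$. That is, $z^{-1}s_iz$ and $(zs_j)^{-1}s_i(zs_j)$ are simple reflections.
\item By your own observation, $z$ and $zs_j$ lie in $W_{\Simple\setminus\{s_i\}}$. So their inverses send $r(s_i)$ to a positive root with $r(s_i)$-coefficient $1$, which is simple only if it equals $r(s_i)$.
\item Therefore $z$ and $s_j$ both commute with $s_i$, and $(\beta,r(s_i)^\vee)=(r(s_j),r(s_i)^\vee)=0$.
\end{enumerate}

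This is essentially the paper's proof. After the same truncation, the paper phrases the last step as $s=s_js's_j$ with $s\in\des{x}$ and $s'\in\des{xs_j}$.
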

\begin{proof}
We may truncate $\bm x$ immediately after the skip position of $\phi_j$; this does not change $\phi_i$ or $\phi_j$
since both are defined using prefixes before that position. In particular, we may assume the skip position of $\phi_j$
occurs after all letters of $\bm x$, so $\phi_j = x s_j x^{-1}$ and hence $\phi_j x = x s_j$.

Since $\phi_i$ is forced, it is a cover reflection by Fact~\ref{fact:fskip}, so $
    \phi_i=x s x^{-1}$  for some $s\in\des{x}$.
Now consider the element $x s_j = \phi_j x$, which is $c$-sortable by \Cref{prop:insertunforced}.
Appending $s_j$ does not change the prefix data for the earlier skip position $i$,
so $\phi_i$ is still a forced skip for $x s_j$, hence again a cover reflection of $x s_j$.
Thus 
$\phi_i = (x s_j)\, s'\, (x s_j)^{-1}$  for some  $s'\in\des{x s_j}$.

Equating the two expressions for $\phi_i$ gives
\[
x s x^{-1} = x s_j s' s_j x^{-1},
\quad\text{so}\quad
s = s_j s' s_j.
\]
Since $s$ and $s'$ are simple reflections, the conjugate $s_j s' s_j$ can be simple only if $s_j$ commutes with $s'$,
in which case $s_j s' s_j=s'$. Therefore $s=s'$ and $s$ commutes with $s_j$.
Conjugating by $x$ shows $\phi_i$ commutes with $\phi_j$.
\end{proof}

Applying these commutation relations to the identity in \Cref{cor:prodc} gives us the following alternate factorizations of $c$ into skip reflections.
\begin{cor}
\label{cor:ufsthenfs} $\ufs_1\cdots \ufs_{n-k}\fs_1\cdots \fs_k=c$.
\end{cor}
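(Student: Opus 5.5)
We start from \Cref{cor:prodc}, which gives $c=\phi_1\phi_2\cdots\phi_n$ with the skip reflections in skip-position order. This word is a shuffle of the unforced list $\ufs_1,\ldots,\ufs_{n-k}$ and the forced list $\fs_1,\ldots,\fs_k$, and within each list the relative order is already the skip-position order. The goal is to rearrange this word into the form $\ufs_1\cdots\ufs_{n-k}\fs_1\cdots\fs_k$ by moving every unforced skip to the left of every forced skip. Throughout, we only perform swaps that do not change the product.

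We do this with a bubble-sort argument. Suppose the current word is not yet in the target form. Then somewhere a forced skip $\phi_i$ sits immediately to the left of an unforced skip $\phi_j$. Every swap we perform preserves the relative order within each of the two lists. So positions in the current word still reflect the original skip-position indices within each type, and for this pair we have $i<j$ in the original indexing.

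\Cref{prop:skipscommute} applies exactly in this situation: $i<j$, with $\phi_i$ forced and $\phi_j$ unforced. It gives $\phi_i\phi_j=\phi_j\phi_i$, so swapping the pair leaves the product unchanged. Each such swap strictly decreases the number of (forced, unforced) inversions. After finitely many swaps all unforced skips therefore precede all forced skips, still in their original internal orders. This yields $c=\ufs_1\cdots\ufs_{n-k}\fs_1\cdots\fs_k$.

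The only point needing care is that \Cref{prop:skipscommute} is stated for the original indices $i<j$ and not for adjacency in an intermediate word. This is handled by the invariant above: at every stage, any forced skip lying to the left of an unforced skip was also to its left originally. A swap only exchanges such a pair, so it never reverses an originally correct order, and the invariant persists. No further obstacle is expected.
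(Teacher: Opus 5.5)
Your proof is correct and is the paper's own argument: the paper obtains this corollary by applying the commutation relations of \Cref{prop:skipscommute} to the factorization $\phi_1\cdots\phi_n=c$ from \Cref{cor:prodc}. Your bubble-sort invariant (a forced skip currently to the left of an unforced one was already to its left originally, so $i<j$) just spells out why each adjacent swap is covered by that proposition's hypothesis.
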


\section{Characterizing equivalent translated intervals}
\label{section:CharacterizeBruhatIntervals}
We now determine when $\movetoid{w'}{w'\cdot c}=\movetoid{w''}{w''\cdot c}$ using Cambrian equivalence by identifying certain distinguished decreasing chains in the $c$-reflection order. The main theorem of this section was also independently shown by Defant--Sherman-Bennett--Williams~\cite[Theorem~1.9 and Corollary~1.10]{DSBW25}. To transition between the condition $\ell(wc)=\ell(w)+\ell(c)$ and $c$-sortable combinatorics, we need the following transformation.
\begin{defn}
    For each $w \in W$, define $w_{op}\coloneqq w^{-1}\wnaught.$
\end{defn}
Note that $w\mapsto w_{op}$ is a bijection, with inverse $x\mapsto \wnaught x^{-1}$.
\begin{prop}
\label{prop:numbcambclasses}\leavevmode
\begin{enumerate}[label=(\arabic*)]
    \item \label{it1:10.2} $\ell(wc)=\ell(w)+\ell(c)$ is equivalent to $c\le_R w_{op}$.
    \item \label{it2:10.2} $\{x\suchthat c\le_R x\}$ is the union of $\cat{W}^+$-many Cambrian equivalence classes.
\end{enumerate}
\end{prop}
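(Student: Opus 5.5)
For part \ref{it1:10.2}, I will translate length additivity through the map $w\mapsto w_{op}=w^{-1}\wnaught$. Recall $\ell(y\wnaught)=\ell(\wnaught)-\ell(y)$ for all $y$, and $\ell(y^{-1})=\ell(y)$. With $x=w_{op}$ we have $w=\wnaught x^{-1}$, so
\[
wc=\wnaught x^{-1}c=\wnaught (c^{-1}x)^{-1}.
\]
Hence $\ell(wc)=\ell(\wnaught)-\ell(c^{-1}x)$, and likewise $\ell(w)=\ell(\wnaught)-\ell(x)$. The condition $\ell(wc)=\ell(w)+\ell(c)$ therefore becomes $\ell(x)=\ell(c)+\ell(c^{-1}x)$. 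By the description of right weak order in Section~\ref{sec:rightorder}, this is exactly $c\le_R x$. This step is routine.

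For part \ref{it2:10.2}, I will first show that $\{x\suchthat c\le_R x\}$ is a union of Cambrian classes. Each class is a $\le_R$-interval $[\sort(x),\upsort(x)]$, so it suffices to show that $c\le_R x$ implies $c\le_R \sort(x)$. Since $c$ is $c$-sortable (its $c$-sorting word is the first syllable), $c\le_R x$ and the defining maximality of $\sort(x)$ give $c\le_R\sort(x)$. Conversely, if $c\le_R\sort(x)\le_R y$ then $c\le_R y$. So the set is the union of the classes whose bottom element $z=\sort(x)$ satisfies $c\le_R z$ and is $c$-sortable.

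The count then follows from Fact~\ref{fact:nc_c}. The classes are indexed by their bottom elements, which are exactly $\{z\in\Sort(W,c)\suchthat c\le_R z\}$. The map $\nc_c$ sends this set bijectively onto $\NC(W,c)^+$, which has $\cat{W}^+$ elements.

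The only point needing care is the use of $\sort(x)$ being the maximum $c$-sortable element weakly below $x$: we need $c$ itself to be $c$-sortable, which is immediate from Lemma~\ref{lem:lexfirstirrelevant}. I expect no real obstacle.
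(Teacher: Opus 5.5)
Your proposal is correct and follows essentially the same route as the paper. Part (1) uses the identity $\ell(y\wnaught)=\ell(\wnaught)-\ell(y)$ applied to $w$ and $wc$. Part (2) uses $c\le_R x\Rightarrow c\le_R\sort(x)$ (because $c$ is $c$-sortable) together with the restricted bijection of Fact~\ref{fact:nc_c} onto $\NC(W,c)^+$; your explicit check that $\{x\suchthat c\le_R x\}$ is a union of whole Cambrian classes simply spells out what the paper's one-line equivalence $c\le_R x\Leftrightarrow \nc_c(\sort(x))\in\NC(W,c)^+$ encodes.
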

\begin{proof}
    \ref{it1:10.2} follows because $c\le_R w_{op}$ is equivalent to $\ell(c)+\ell(c^{-1}w^{-1}\wnaught)=\ell(w^{-1}\wnaught)$, and we can apply the identity $\ell(y^{-1}\wnaught)=\ell(\wnaught)-\ell(y)$ to both sides for $y=wc$ and $y=w$ respectively. 
    For~\ref{it2:10.2}, by \Cref{fact:nc_c} and the fact that $c\le_R x$ implies $c\le_R \sort(x)$ by definition of $\sort(x)$, we have that \[
    c\le_R x\Longleftrightarrow \nc_c(\sort(x))\in \NC(W,c)^+.\] 
    Because $|\NC(W,c)^+|=\cat{W}^+$  we conclude.
\end{proof}

\begin{thm}
\label{thm:Equivwwc}
    For $w', w''\in \{w\suchthat \ell(wc)=\ell(w)+\ell(c)\}$, or equivalently with $(w')_{op},(w'')_{op}\in \{x\suchthat c\le_R x\}$, the following are equivalent.
    \begin{enumerate}
        \item $(w')_{op}\equiv_c (w'')_{op}$
        \item $\movetoid{w'}{w'c}=\movetoid{w''}{w''c}$
        \item $\movetoid{w'}{w'c}$ and $\movetoid{w''}{w''c}$ have the same $c$-decreasing chain.
    \end{enumerate}
    In particular, there are $\cat{W}^+$-many distinct translated Bruhat intervals $\movetoid{w}{wc}$ in $\NC(W, c)$.
\end{thm}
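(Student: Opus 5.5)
The plan is to prove the cycle of implications $(2)\Rightarrow(3)\Rightarrow(1)\Rightarrow(2)$ and then count.

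\textbf{$(2)\Rightarrow(3)$.} This is immediate from \Cref{prop:uniquecdec}: equal translated intervals have the same unique $c$-decreasing maximal chain.

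\textbf{$(3)\Rightarrow(1)$.} This will be done by counting. Let $\mathcal{D}$ be the map sending the Cambrian class of $x=w_{op}$ (with $c\le_R x$) to the $c$-decreasing chain of $\movetoid{w}{wc}$. The first step is to show that $\mathcal{D}$ is \emph{well defined}, i.e.\ $(1)\Rightarrow(3)$; then $(3)\Rightarrow(1)$ amounts to injectivity. By \Cref{prop:numbcambclasses}\ref{it2:10.2} there are $\cat{W}^+$ classes. By \Cref{prop:uniquecdec} every $c$-decreasing chain lies in some $\movetoid{w}{wc}$, so $\mathcal{D}$ is surjective onto the $\cat{W}^+$ decreasing chains of \Cref{cor:numbdecreasing}. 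A surjection between finite sets of equal size is a bijection, which gives injectivity.

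\textbf{$(1)\Rightarrow(3)$ and $(1)\Rightarrow(2)$.} To prove well-definedness I will describe the decreasing chain explicitly in terms of $\sort(x)$. I conjecture that it is the chain obtained by multiplying $\ufs_1\cdots\ufs_{n-k}\fs_1\cdots\fs_k=c$ (\Cref{cor:ufsthenfs}) in a suitable order, built from the skip reflections of $\sort(x)$.

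First, reduce to $c$-sortable $x$. The Cambrian class is a weak-order interval $[\sort(x),\upsort(x)]$, so it suffices to show that the interval $\movetoid{w}{wc}$ is unchanged under each weak cover $x\lessdot_R xs$ within the class. Translating back, $w_{op}=x$ gives $w=\wnaught x^{-1}$, and $xs$ corresponds to $sw$ with $\ell(sw)=\ell(w)-1$.

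The key lemma for that reduction is the following. If $s$ is a left descent of $w$ and $s\notin w\,\NC(W,c)$-directions (so that left multiplication by $s$ does not move the interval $[w,wc]$ across a wall), then $[sw,swc]=s[w,wc]$, and hence the two translated intervals coincide. This is the case exactly when $w^{-1}r(s)$ is not among the edge directions of $\movetoid{w}{wc}$. I expect that this condition should be equivalent to $x$ and $xs$ being Cambrian equivalent, which I would check via Reading's characterization of $\sort$ by the cover reflections and skips.

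Concretely, I will show that $\movetoid{w}{wc}$ depends only on $\sort(x)$. For $x$ sortable, I will try to prove that the Bruhat-maximal element of $\movetoid{w}{wc}$ is $\nc_c(x)$ and that the interval is generated by the reflections $\invncR{\nc_c(x)}$. That would give $(1)\Rightarrow(2)$ directly, and together with $(2)\Rightarrow(3)$ it gives $(1)\Rightarrow(3)$. Alternatively, it suffices to show that the chain is determined by $\sort(x)$ through its skip data.

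\textbf{Counting.} Finally, $(1)\Leftrightarrow(2)$ together with \Cref{prop:numbcambclasses}\ref{it2:10.2} gives exactly $\cat{W}^+$ distinct intervals.

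The main obstacle is the well-definedness step: showing that the translated interval, or its decreasing chain, is constant on Cambrian classes. This requires translating weak-order moves above $c$ into Bruhat-interval statements via \Cref{prop:insertunforced} and \Cref{prop:skipscommute}.
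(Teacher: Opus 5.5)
Your counting skeleton is correct and matches the paper's. $(2)\Rightarrow(3)$ follows from \Cref{prop:uniquecdec}. Once the map $\mathcal{D}$ from Cambrian classes above $c$ to $c$-decreasing chains is known to be well defined, surjectivity plus the common cardinality $\cat{W}^+$ gives $(3)\Rightarrow(1)$. The gap is that well-definedness, i.e.\ $(1)\Rightarrow(3)$ or $(1)\Rightarrow(2)$, is the entire content of the theorem, and you only conjecture it. Reducing to weak covers inside a class is legitimate, since classes are $\le_R$-intervals. For a cover $w_{op}\lessdot_R w_{op}s$, the element attached to $w_{op}s$ is $s'w$ with $s'=\wnaught s\wnaught$. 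The right criterion for $\movetoid{w}{wc}=\movetoid{s'w}{s'wc}$ is that $s'$ be a left descent of \emph{every} element of $[w,wc]$, equivalently that $s$ be a right ascent of every element of $[c^{-1}w_{op},w_{op}]$. The lifting property, \Cref{fact:rigidRich} and a dimension count then give $s'[w,wc]=[s'w,s'wc]$. Your edge-direction condition is not equivalent to this: an edge of $P_{[w,wc]}$ can cross the wall $r(s')^{\perp}$ without being parallel to $r(s')$. More to the point, showing that the correct condition holds whenever $w_{op}\equiv_c w_{op}s$ is exactly what must be proved, and ``I would check via Reading's characterization'' supplies no argument.

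Your alternative route does not close the gap either. Knowing the Bruhat-maximal element of $\movetoid{w}{wc}$ only when $w_{op}$ is $c$-sortable says nothing about the translate attached to a non-sortable member of the same class. Counting shows that the sortable elements hit all $\cat{W}^+$ translates, not that a non-sortable element avoids the translate of a sortable element from another class. Moreover, in the paper that identification, \Cref{thm:BruhatMaxIsNC}, is itself derived from this theorem by passing to $\upsort(w_{op})$.

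What is needed is an invariant of the class that can be read off $[w,wc]$ for an \emph{arbitrary} $w_{op}$ in it. The paper constructs one. Take $x=\sort(w_{op})$, let $\phi_1,\dots,\phi_n$ be the skip reflections of $c^{-1}x$, and set $\psi_i=\phi_1\cdots\phi_{i-1}\phi_i\phi_{i-1}\cdots\phi_1$.
\begin{itemize}
\item Filling skips one at a time (\Cref{prop:insertunforced}, \Cref{lem:lexfirstirrelevant}) gives a Bruhat chain from $c^{-1}x$ to $x$ with labels $\psi_1,\dots,\psi_n$.
\item Appending a reduced word $\bm{z}$ for $x^{-1}w_{op}$ lifts this to a chain in $[c^{-1}w_{op},w_{op}]$. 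The key input is that the skipped letter $s_i$ is not in $\inv{b_iz}$, where $b_i$ is the part of $c^{-1}x$ after the $i$th skip. This follows from the $\le_R$-maximality of $\sort(c^{-1}w_{op})=c^{-1}x$.
\item Transporting by $v\mapsto\wnaught v^{-1}$ and reordering commuting labels (\Cref{fact:reordersort}) gives the $c$-decreasing chain of $\movetoid{w}{wc}$. Its label set $\{\psi_1,\dots,\psi_n\}$ depends only on $x$.
\end{itemize}
Your guess that the labels come from the skips of $\sort(w_{op})$ themselves is false. Take $W=S_4$, $c=s_3s_2s_1$ and $w=s_2$. Then $\sort(w_{op})=s_3s_2s_1s_2$ has skip reflections $(2\,3),(2\,4),(1\,2)$. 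However, the $c$-decreasing chain of $\movetoid{w}{wc}$ is $1234\lessdot_{\Reflections}2134\lessdot_{\Reflections}4132\lessdot_{\Reflections}4123$, with labels $(1\,2),(1\,4),(3\,4)$.
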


\begin{proof}
By \Cref{cor:shapeequiv} and \Cref{cor:newguy} there are exactly $\cat{W}^+$-many translates $\movetoid{w}{wc}$, and by \Cref{prop:uniquecdec} each of these contains a unique $c$-decreasing chain. By \Cref{prop:numbcambclasses} there are exactly $\cat{W}^+$-many $c$-decreasing chains in $\NC(W,c)$, which establishes the equivalence between (2) and (3). By Corollary~\ref{cor:numbdecreasing} the number of $c$-Cambrian equivalence classes in $\{x\suchthat c\le_R x\}$ is also equal to $\cat{W}^+$, so to show (1) and (3) are equivalent it therefore suffices to show the following.

\smallskip
\noindent\textbf{Goal.} For $w$ with $w_{op}\in\{x\suchthat c\le_R x\}$, the $c$-decreasing (reflection-labeled) maximal chain in $\movetoid{w}{wc}$ depends only on the $c$-Cambrian class of $w_{op}$.

\smallskip
Fix such a $w$, and set
\[
x\coloneqq \sort(w_{op}).
\]
Then $x$ depends only on the $c$-Cambrian class of $w_{op}$.  
Moreover $c\le_R x$ and, by \Cref{lem:lexfirstirrelevant}, deleting the first syllable $c$ from the $c$-sorting word $\bm{x}$ produces the $c$-sorting word $\bm{c^{-1}x}$ for $c^{-1}x$, so in particular $c^{-1}x\in \Sort(W,c)$.

Let
\[
    \skips{c^{-1}x}=\{\phi_1,\ldots,\phi_n\}.
\]
Define reflections $\psi_1,\ldots,\psi_n$ by
\[
\psi_i\coloneqq \phi_1\cdots \phi_{i-1}\,\phi_i\,\phi_{i-1}\cdots \phi_1\qquad (1\le i\le n).
\]
A $c$-decreasing maximal chain is determined by its set of labels, so it is enough to prove that the unique $c$-decreasing maximal chain in $\movetoid{w}{wc}$ is labeled by $\psi_1,\ldots,\psi_n$ (in $c$-reflection order).  The remainder of the proof is organized in three steps.

\smallskip
\noindent\textbf{Step 1: a distinguished Bruhat chain in $[c^{-1}x,x]$.}
Set
\[
x_i\coloneqq \psi_i\psi_{i+1}\cdots \psi_n\,x \qquad (1\le i\le n),
\qquad\text{and}\qquad x_{n+1}:=x.
\]
Then $x_1=c^{-1}x$ and $x_{i+1}=\psi_i x_i$ for $1\le i\le n$.  We claim that
\[
x_1
\lessdot_{B}
x_2
\lessdot_{B}
\cdots
\lessdot_{B}
x_{n+1}
\]
is a maximal Bruhat chain in the interval $[c^{-1}x,x]$.

To verify the covering relations, write the $c$-sorting word $\bm{c^{-1}x}$ (as a subword of $\bm c^\infty$) and fix $1\le i\le n$.  Let $\bm{a_i}\bm{b_i}$ denote the portions of $\bm{c^{-1}x}$ occurring before and after the $i$th skip position, and let the letter at that skip position be $\bm{s_i}$ so that, on the level of group elements,
\[
\phi_i\,c^{-1}x=a_i s_i b_i.
\]
Let $\bm{a_i'}$ be the word obtained from $\bm{a_i}$ by filling in the first $i-1$ skips of $\bm{c^{-1}x}$.  Then
\[
\phi_1\cdots \phi_{i-1}\,c^{-1}x \;=\; a_i' b_i,
\qquad\text{hence}\qquad
x_i=\psi_i\cdots \psi_n x=\phi_1\cdots\phi_{i-1}c^{-1}x=a_i'b_i.
\]

We show by induction on $i$ that $\bm{a_i'}\bm{b_i}$ is a reduced $c$-sorting word for a $c$-sortable element (namely $x_i$).  
For $i=1$ this is exactly $\bm{c^{-1}x}$.  Assuming the claim for $\bm{a_i'}\bm{b_i}$, the $i$th skip position of $c^{-1}x$ is also a skip position for $\bm{a_i'}\bm{b_i}$, and inserting the letter $\bm{s_i}$ fills this position:
\[
\bm{a_{i+1}'}\bm{b_i}=\bm{a_i'}\bm{s_i}\bm{b_i}.
\]
This insertion is unforced, because $\bm{a_i'}\bm{s_i}$ is a prefix of $\bm{x}$ and hence reduced.  
Therefore \Cref{prop:insertunforced} implies that $\bm{a_{i+1}'}\bm{b_i}$ remains reduced.  Finally, \Cref{lem:lexfirstirrelevant} implies that $\bm{a_{i+1}'}\bm{b_{i+1}}$ is again a $c$-sorting word of a $c$-sortable element.  
This completes the induction and establishes that each step $x_i\lessdot_B x_{i+1}$ is a Bruhat cover.

\smallskip
\noindent\textbf{Step 2: lifting the chain to $[c^{-1}w_{op},w_{op}]$.}
Since $x\le_R w_{op}$, let $z\coloneqq x^{-1}w_{op}$ and choose a reduced word $\bm{z}$ for $z$ so that $\bm{x}\bm{z}$ is a reduced word for $w_{op}$.   
For each $i$ set
\[
y_i\coloneqq \psi_i\psi_{i+1}\cdots \psi_n\,w_{op}\qquad (1\le i\le n),
\qquad\text{and}\qquad y_{n+1}\coloneqq w_{op}.
\]
Then $y_1=c^{-1}w_{op}$ and $y_{i+1}=\psi_i y_i$.

We claim that
\[
y_1
\lessdot_{B}
y_2
\lessdot_{B}
\cdots
\lessdot_{B}
y_{n+1}
\]
is a maximal Bruhat chain in $[c^{-1}w_{op},w_{op}]$.  Using the notation from Step~1, we have
\[
y_i=\psi_i\cdots \psi_n w_{op}= (a_i'b_i)\,z,
\]
so it suffices to prove that $\bm{a_i'}\bm{b_i}\bm{z}$ is reduced for each $i$.  Proceed by induction on $i$, with the base case $i=1$ given by the reduced word $\bm{c^{-1}x}\bm{z}$ for $c^{-1}w_{op}$.

Assume $\bm{a_i'}\bm{b_i}\bm{z}$ is reduced.  Since $y_{i+1}=\psi_i y_i$ and $\psi_i a_i'=a_i's_i$, the word
\[
\bm{a_{i+1}'}\bm{b_i}\bm{z}=\bm{a_i'}\bm{s_i}\bm{b_i}\bm{z}
\]
represents $y_{i+1}$ and has one more letter than $\bm{a_i'}\bm{b_i}\bm{z}$.  Thus it is enough to show that $\psi_i\notin \inv{a_i'b_iz}$ (equivalently, that left-multiplication by $\psi_i$ increases length by $1$).
Using
\[
\inv{a_i'b_iz}=\inv{a_i'} \,\cup\, (a_i')\inv{b_iz}(a_i')^{-1},
\]
and the fact that $\psi_i\notin \inv{a_i'}$ (since $\psi_i a_i'=a_i's_i$ and $\bm{a_i'}\bm{s_i}$ is reduced from Step~1), it remains to show that $\psi_i\notin (a_i')\inv{b_iz}(a_i')^{-1}$, i.e.\ that $s_i\notin \inv{b_iz}$.  As before, by \Cref{lem:lexfirstirrelevant}, we have $\sort(c^{-1}w_{op})=c^{-1}\sort(w_{op})=c^{-1}x$. If $s_i\in \inv{b_iz}$, then one can delete the letter of $\bm{b_i}\bm{z}$ corresponding to this inversion and prepend $\bm{s_i}$, contradicting the $\le_R$-maximality of the $c$-sorting word $\bm{c^{-1}x}=\bm{a_i}\bm{b_i}$ for $c^{-1}x=\sort(c^{-1}w_{op})$.  Hence $s_i\notin \inv{b_iz}$, completing the induction and proving the Bruhat covering claims.

\smallskip
\noindent\textbf{Step 3: transporting to the interval $\movetoid{w}{wc}$.}
Applying the map $v\mapsto \wnaught v^{-1}$ to the chain in Step~2 yields a maximal chain
\[
w 
\lessdot_{B}
w \psi_{n}
\lessdot_{B}
w \psi_{n}\psi_{n-1}
\lessdot_{B}
\cdots
\lessdot_{B}
w \psi_{n}\psi_{n-1}\cdots \psi_{1},
\]
whose reflection labels are, in order, $\psi_n,\ldots,\psi_1$.

These same reflections appear (in reverse order) in the reflection sequence associated to the $c$-sorting word of $w_{op}$, corresponding to the final occurrence of each simple reflection.  Therefore, by \Cref{fact:reordersort}, we can repeatedly swap adjacent commuting reflections to obtain a reordering
\[
\psi_1',\psi_2',\ldots,\psi_n'
\]
that is in reverse $c$-reflection order.  Since only commuting reflections are swapped, the chain
\[
w 
\lessdot_{B}
w \psi_{n}'
\lessdot_{B}
w \psi_{n}'\psi_{n-1}'
\lessdot_{B}
\cdots
\lessdot_{B}
w \psi_{n}'\psi_{n-1}'\cdots \psi_{1}'
\]
remains inside the same Bruhat interval $[w,wc]$, hence is the unique $c$-decreasing maximal chain in $\movetoid{w}{wc}$ by \Cref{prop:uniquecdec}.  In particular, its label set is $\{\psi_1,\ldots,\psi_n\}$, which depends only on $x=\sort(w_{op})$, and hence only on the $c$-Cambrian class of $w_{op}$.

This proves the Goal.  
Combining this with the counting discussion at the start of the proof yields the equivalences (1)--(3), and the final assertion follows.
\end{proof}

\begin{eg}
\label{ex:B4-fullchains}
Work in type $B_4$ in one-line notation $w(1)\,w(2)\,w(3)\,w(4)$, writing $\overline{i}\coloneqq -i$.
Let $s_0,s_1,s_2,s_3$ be the standard simple reflections, where $s_0$ negates the first entry.
Fix ${\bm c}=s_0s_1s_2s_3$. 

Say we take
$w_{op}=\overline{3}\;\overline{4}\;2\;\overline{1}$, so that $c\le_R w_{op}$.
Its $c$-sorting word $\bm{w_{op}}$ is $
s_0\,s_1\,s_2\,s_3\;\;s_1\,s_2\;\;s_0\,s_1\;\;s_0\,s_1$.
The downward Cambrian projection is $
x=\sort(w_{op})=4\;3\;2\;\overline{1},$
whose $c$-sorting word $\bm x$ is given by
$\bm{x}=s_0\,s_1\,s_2\,s_3\;\;s_1\,s_2\,s_1$.
Deleting the initial syllable $\bm c$ from $\bm{x}$ gives $c^{-1}x=3\;2\;1\;4$ with $c$-sorting word $\bm{c^{-1}x}=s_1s_2s_1.$
The skip positions of $c^{-1}x$ are $\{1,4,7,10\}$ with simple reflections at these positions being $s_0$, $s_3$, $s_2$, and $s_1$ respectively.
We thus get 
\[
    \skips{c^{-1}x}=\{\phi_1,\dots,\phi_4\}=\{s_0,s_1s_2s_3s_2s_1,s_1,s_2\}=\{\overline{1}\;2\;3\;4, 4\;2\;3\;1, 2\;1\;3\;4, 1\;3\;2\;4 \}.
\]
The conjugated reflections 
$\psi_i=\phi_1\cdots\phi_{i-1}\,\phi_i\,\phi_{i-1}\cdots\phi_1$ are then computed to equal
\[
\psi_1=\overline{1}\;2\;3\;4,\qquad
\psi_2=\overline{4}\;2\;3\;\overline{1},\qquad
\psi_3=1\;4\;3\;2,\qquad
\psi_4=1\;2\;4\;3.
\]
The Bruhat chain in $[c^{-1}x,x]$ constructed in Step~1 in the proof is given by
\[
3\;2\;1\;4
\xrightarrow{\psi_1}
3\;2\;\overline{1}\;4
\xrightarrow{\psi_2}
3\;2\;4\;\overline{1}
\xrightarrow{\psi_3}
3\;4\;2\;\overline{1}
\xrightarrow{\psi_4}
4\;3\;2\;\overline{1}.
\]
These reflections then give the following Bruhat chain in $[c^{-1}w_{op},w_{op}]$ as per Step~2 in the proof.
\[
\overline{2}\;\overline{3}\;1\;4
\xrightarrow{\psi_1}
\overline{2}\;\overline{3}\;\overline{1}\;4
\xrightarrow{\psi_2}
\overline{2}\;\overline{3}\;4\;\overline{1}
\xrightarrow{\psi_3}
\overline{4}\;\overline{3}\;2\;\overline{1}\xrightarrow{\psi_4}
\overline{3}\;\overline{4}\;2\;\overline{1}.
\]
We now transport to the translated interval $\movetoid{w}{wc}$ as per Step~3 in the proof.
We have $w_\circ=\overline{1}\;\overline{2}\;\overline{3}\;\overline{4}$ and so
$w\coloneqq w_\circ w_{op}^{-1}=4\;\overline{3}\;1\;2$.
Then $wc=\overline{3}\;1\;2\;\overline{4}$.
and applying the inverse of the $op$-map to the previous chain gives the maximal chain in $[w,wc]$:
\[
w=4\;\overline{3}\;1\;2
\xrightarrow{\psi_4}
4\;\overline{3}\;2\;1
\xrightarrow{\psi_3}
4\;1\;2\;\overline{3}
\xrightarrow{\psi_2}
3\;1\;2\;\overline{4}
\xrightarrow{\psi_1}
\overline{3}\;1\;2\;\overline{4}=wc.
\]
Thus the labels in the translated interval $\movetoid{w}{wc}$ are exactly $\psi_1,\psi_2,\psi_3,\psi_4$ (appearing along the chain as $\psi_4,\psi_3,\psi_2,\psi_1$), as in the proof.
We reconcile this with the $c$-reflection order for our choice of $c$. 
The $c$-sorting word for $\wnaught$ is $\bm c^4$ and in the resulting reflection order, the $\psi_1$ through $\psi_4$ appear in positions $1$, $4$, $15$ and $16$ respectively. 
Thus the chain constructed last is the $c$-decreasing maximal chain in $[w,wc]$.
\end{eg}

\begin{cor}
\label{cor:intervals_cprime}
Let $W' \subseteq W$ be a standard parabolic subgroup and let $c'\le_B c$ be the corresponding Coxeter element.  
If $w \in W$ has $\sort(w_{op}) \in W'$, then denoting $x\in W$ for the element with $x_{op}=\sort(w_{op})$, we have $\movetoid{w}{wc'} = \movetoid{x}{xc'}$.
\end{cor}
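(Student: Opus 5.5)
The plan is to reduce everything to the parabolic subgroup $W'$ and then apply \Cref{thm:Equivwwc} to the pair $(W',c')$. Throughout I assume, as the statement implicitly requires, that $\ell(wc')=\ell(w)+\ell(c')$.

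\textbf{Step 1: the translated interval only sees the $W'$-component.} Factor $w=v\cdot u$ with $u\in W'$ and $v$ the minimal-length representative of the coset $wW'$. Since $c'\in W'$, we have $wc'=v\cdot(uc')$ and $\ell(uc')=\ell(u)+\ell(c')$. The projection $y\mapsto y^{W'}$ to minimal coset representatives is order-preserving for Bruhat order. So every $y\in[w,wc']$ satisfies $y^{W'}=v$, and hence $[w,wc']\subseteq vW'$. Left multiplication by $v$ is a Bruhat-order isomorphism $W'\to vW'$ (a standard fact, via the subword property on reduced words of the form $\bm v\bm u$). Therefore $[w,wc']=v[u,uc']$, and so
\[
\movetoid{w}{wc'}=\movetoid{u}{uc'},
\]
where the right-hand side is an interval inside $W'$. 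Applying the same reasoning to $x$ gives $\movetoid{x}{xc'}=\movetoid{u_x}{u_xc'}$, with $u_x$ the $W'$-component of $x$. It remains to show $\movetoid{u}{uc'}=\movetoid{u_x}{u_xc'}$ inside $W'$.

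\textbf{Step 2: identify the $W'$-components via $op$ maps.} Let $w_\circ'$ be the longest element of $W'$, and write $u_{op'}\coloneqq u^{-1}w_\circ'$. Factor $w_\circ=w_\circ^{W'}\cdot w_\circ'$. Since $x_{op}=\sort(w_{op})\in W'$, we get $x=w_\circ x_{op}^{-1}=w_\circ^{W'}\cdot(w_\circ'x_{op}^{-1})$. Hence $u_x=w_\circ'x_{op}^{-1}$, i.e. $(u_x)_{op'}=x_{op}$. On the other side, $w_{op}=u^{-1}v^{-1}w_\circ$. I will check that $w_{op}$ factors length-additively as $u_{op'}\cdot y$, where $y\coloneqq(w_\circ')^{-1}v^{-1}w_\circ$ ranges over a set of elements all of which are minimal in their cosets $W'y$. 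This is a length count using $\ell(v^{-1}w_\circ)=\ell(w_\circ)-\ell(v)$. It identifies $u_{op'}$ as the left $W'$-parabolic component of $w_{op}$.

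\textbf{Step 3: Cambrian compatibility with parabolics, then \Cref{thm:Equivwwc}.} I will invoke Reading's compatibility of $c$-sortable projections with standard parabolic restriction, namely $\sort^{(c)}(z)_{W'}=\sort^{(c')}(z_{W'})$ for left parabolic components, together with the fact that $c$-sortable elements of $W'$ are exactly the $c'$-sortable ones. Since $\sort(w_{op})=x_{op}$ already lies in $W'$, these give $\sort^{(c')}(u_{op'})=x_{op}=\sort^{(c')}((u_x)_{op'})$. Hence $u_{op'}\equiv_{c'}(u_x)_{op'}$ in $W'$. Both satisfy the hypothesis of \Cref{thm:Equivwwc} for $(W',c')$: we have $\ell(uc')=\ell(u)+\ell(c')$ from Step 1, and $c'\le_R x_{op}$ gives the same property for $u_x$. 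The implication (1)$\Rightarrow$(2) of that theorem then yields $\movetoid{u}{uc'}=\movetoid{u_x}{u_xc'}$, and combining with Step 1 finishes the proof.

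\textbf{Main obstacle.} I expect the hardest step to be Step 3, specifically pinning down the precise form of the parabolic compatibility of $\sort$ with the correct side of the parabolic decomposition. If Reading's statement is phrased for the other side, I would instead argue directly. The plan there is to show that the $c$-sorting word of $x_{op}$ uses only letters of $W'$, and that it is a prefix (in right weak order) of the $W'$-component of $w_{op}$. The argument would use \Cref{lem:lexfirstirrelevant} and the fact that $\sort(z)\le_R z$ forces $\sort(z)\le_R z_{W'}$ whenever $\sort(z)\in W'$.
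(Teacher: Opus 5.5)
Your proposal is correct and follows essentially the paper's argument: reduce to $W'$ by a parabolic factorization, identify $x_{op}$ as the $c'$-sortable projection of the $W'$-component of $w_{op}$, and apply \Cref{thm:Equivwwc} to $(W',c')$. For the identification step the paper uses your fallback argument ($x_{op}\le_R w_{op}$ forces $x_{op}\le_R w'_{op}$, and $c$- and $c'$-sortability coincide in $W'$) rather than citing Reading's compatibility. The coset reduction is done on the other side of the $op$-map: you factor $w=v\cdot u$ and use the order isomorphism $W'\to vW'$, while the paper factors $w_{op}=w'_{op}\cdot m$ and lifts covers by right multiplication by $m$. Your use of the order-preserving projection to coset representatives also explicitly checks that the whole interval stays in one coset. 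You should add the one-line justification that $c'\le_R u_{op'}$ together with $c'$ being $c'$-sortable gives $c'\le_R x_{op}$.
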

\begin{proof}
We first recall the well-known fact~\cite[\S 1.10]{Hum75} that $W'$ has a complete set of right coset representatives $M$ such that for every $m \in M$ and every $v \in W'm$, we have $v = v' \cdot m$ for $v' = vm^{-1} \in W'$.  
If $u \in W'$ has $u \le_{R} v$, then applying the above to $u^{-1}v$ shows that $u \le_{R} v'$.  
Furthermore, if $v \lessdot_{B} \tau v$ is a Bruhat cover in $W'$, then $vm \lessdot_{B} \tau vm$ is a Bruhat cover in $W$.  

Now factor $w_{op} = w'_{op} \cdot m$.    
As $c'$-sortablility and $c$-sortability coincide in $W'$, the $\le_{R}$-preserving property implies that $x_{op}$ is the maximal $c'$-sortable below $w'$. Thus Theorem~\ref{thm:Equivwwc} implies that   $[(c')^{-1}x_{op}, x_{op}]$ and $[(c')^{-1}w_{op}', w_{op}']$ are isomorphic as edge-labeled posets with each $v \lessdot_{B} \tau v$ labeled by $\tau$.  
By the cover-preserving property, $[(c')^{-1}w_{op}', w_{op}']$ is isomorphic in the same way to $[(c')^{-1}w_{op}, w_{op}]$.
Transporting intervals as in Step 3 of the proof above, we have $\movetoid{x}{x c'} = \movetoid{w}{wc'}$. 
\end{proof}

\section{Clusters, noncrossing inversions, and Bruhat maximal elements}
\label{sec:Bruhatmax}

We now give a novel characterization of Reading's map from $c$-sortable elements to noncrossing partitions; see~\Cref{fact:nc_c}.  
The following result implies~ \Cref{maintheorem:BruhatMax}, as we have established in \Cref{cor:newguy} that $w^{-1}X^{w\cdot c}_w=X^u_{\NC}$ for $u\in w^{-1}[w,w\cdot c]$ the Bruhat-maximum element, and in Corollaries~\ref{cor:w'w'c'equalsuNC} and~\ref{cor:intervals_cprime} that all cases are equivalent to considering noncrossing partitions which are fully supported in an appropriate standard parabolic subgroup.  

\begin{thm}
\label{thm:BruhatMaxIsNC}
    The Bruhat-maximal element of $\movetoid{w}{w\cdot c}$ is $u=\nc_c(\sort(w_{op}))$, so the Bruhat maximal elements of $w^{-1}[w,w\cdot c]$ and $(w')^{-1}[w',w'\cdot c]$ are equal if and only if $w_{op} \equiv_{c} w_{op}'$.
    
    In particular, the $n$ elements adjacent to $u$ in $\movetoid{w}{w\cdot c}\subset \NC(W,c)$ are $\{\tau u\suchthat \tau\in \invnc{u}\}.$
\end{thm}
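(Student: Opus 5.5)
Throughout, fix $w$ with $\ell(wc)=\ell(w)+\ell(c)$ and set $x\coloneqq\sort(w_{op})$, so that $c\le_R x$. Let $\phi_1,\ldots,\phi_n$ be the skips of $c^{-1}x$ and $\psi_i=\phi_1\cdots\phi_{i-1}\phi_i\phi_{i-1}\cdots\phi_1$, exactly as in the proof of \Cref{thm:Equivwwc}.

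\textbf{Reduction.} By \Cref{thm:Equivwwc} the interval $\movetoid{w}{w\cdot c}$ depends only on the Cambrian class of $w_{op}$. Hence we may replace $w$ by $\wnaught x^{-1}$, i.e. assume $w_{op}=x$ is itself $c$-sortable. We will then compute the Bruhat-maximum element of $\movetoid{w}{wc}$ directly.

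\textbf{Locating the maximum.} Under $v\mapsto \wnaught v^{-1}$ the interval $[w,wc]$ corresponds anti-isomorphically to $[c^{-1}x,x]$. The Bruhat-maximum $u$ of $\movetoid{w}{wc}=w^{-1}[w,wc]$ is characterized (by \Cref{cor:newguy} and \Cref{cor:wwcequalsuNC}) as the unique element $u$ with $w^{-1}X^{wc}_w=X^u_{\NC}$. Its neighbours in the interval are therefore $n$ elements $\tau u$ with $\tau u\lessdot_B u$ lying in $\NC(W,c)$, i.e. $\tau\in\invnc{u}$, and $|\invnc{u}|=n$ forces these to be all of them. So the ``in particular'' clause follows once $u$ is identified. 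The last statement about equality of maxima for $w,w'$ then follows from the bijectivity of $\nc_c$ (\Cref{fact:nc_c}) together with the definition of $\equiv_c$.

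\textbf{Computing $u$.} The natural candidate path to the top is obtained from the chain of Step~3 of \Cref{thm:Equivwwc}, reordered via \Cref{cor:ufsthenfs}. Write $c=\ufs_1\cdots\ufs_{n-k}\fs_1\cdots\fs_k$ for the skips of $c^{-1}x$. By \Cref{prop:skipscommute} we may move the unforced skips to the front, producing a maximal chain in $\movetoid{w}{wc}$ whose first $n-k$ steps use conjugates of the $\ufs_i$ and whose last $k$ steps have product equal to $\fs_1\cdots\fs_k=\nc_c(c^{-1}x)$ by \Cref{fact:fskip}. I would rather aim directly at the element
\[
u=\nc_c(x)=\prod \cov(x),
\]
and argue that $w u$ is the Bruhat-maximum of $[w,wc]$.

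Translating back through the op-map, this amounts to showing that $\wnaught (wu)^{-1}=u^{-1}x$ is the Bruhat-minimum of $[c^{-1}x,x]$ viewed appropriately. Equivalently, every $y\in[c^{-1}x,x]$ satisfies $y\ge_B u^{-1}x$, where $u^{-1}x$ is obtained from $x$ by deleting its cover reflections, i.e. removing the final letters at descents.

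The cleanest route is via \Cref{prop:insertunforced}: show that $[c^{-1}x,x]$ contains the element $x\cdot(\text{descents removed})$, and that this element is Bruhat-minimal among elements of the form $(\text{subproduct of skips})\,c^{-1}x$, using \Cref{prop:skipfill} to write everything as subwords of $\bm{c}^\infty$. Concretely, the elements of $[c^{-1}x,x]$ are $v\,c^{-1}x$ for $v\in\movetoid{\cdot}{\cdot}$, and the maximum of $\movetoid{w}{wc}$ corresponds to the longest such $v$ compatible with Bruhat order. By the noncrossing rank function \eqref{eq:rankmatch}, and since $u\in\NC(W,c)$, I will instead check that $\invncR{u}$ matches the labels of the $n$ edges at the top vertex. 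These labels should be exactly the $n$ cover reflections of $x$ conjugated appropriately, which form $\nctocluster(u)$ by the Biane--Josuat-Verg\`es bijection.

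\textbf{Main obstacle.} The hard step is proving that the top element of the translated interval is $\nc_c(x)$ and not merely some fully supported noncrossing partition. I would attempt a counting argument. The map $[\text{class of } w_{op}]\mapsto\max\movetoid{w}{wc}$ is injective onto $\NC(W,c)^+$: it is injective because the maxima index distinct components by \Cref{cor:newguy}, and there are $\cat{W}^+$ classes. So it suffices to verify a single compatible property, for instance that $u\le_T c$ lies in the top-level chain. Concretely, I would show that $\nc_c(x)$ occurs in the $c$-decreasing chain labelled $\psi_1,\ldots,\psi_n$ at rank $n$ after the forced labels, with the forced labels $\fs_i$ contributing $\nc_c(x)$ by \Cref{fact:fskip}. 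Failing that, I would fall back on the direct Bruhat comparison via subwords of $\bm{c}^\infty$.
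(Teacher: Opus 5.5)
Your reduction to the case $w_{op}=x$ $c$-sortable matches the paper. Your argument for the ``in particular'' clause is also sound, and it does not even need the identification of $u$. If $u$ is the Bruhat maximum of $\movetoid{w}{wc}$, each neighbour is some $\tau u\in\NC(W,c)$ with $\tau u<_B u$ (not necessarily a Bruhat cover), hence $\tau\in\invnc{u}$, and $|\invnc{u}|=\dim X^u_{\NC}=n$. You only need to add the reverse bound: $u$ is a vertex of the $n$-dimensional polytope $w^{-1}P_{[w,wc]}$, whose edges are the Hasse edges, so it has at least $n$ neighbours. The ``equal iff $w_{op}\equiv_c w'_{op}$'' clause likewise follows from \Cref{thm:Equivwwc} and \Cref{cor:newguy} alone.

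The gap is the main assertion $u=\nc_c(x)$, and none of your routes to it works.
\begin{itemize}
\item Your reformulation is false. Left translation by $w^{-1}$ does not preserve Bruhat order, so the maximum of the set $w^{-1}[w,wc]$ is not the translate of an extremal element of $[w,wc]$. The minimum of $[c^{-1}x,x]$ is $c^{-1}x$, so ``every $y\in[c^{-1}x,x]$ satisfies $y\ge_B u^{-1}x$'' fails unless $u=c$. All that is true is $c^{-1}x\le_B u^{-1}x\le_B x$, i.e.\ $u\in\movetoid{w}{wc}$.
\item The counting fallback fails too. The maps $[\text{class}]\mapsto\max$ and $[\text{class}]\mapsto\nc_c(\sort(\cdot))$ are both bijections onto $\NC(W,c)^+$, but nothing makes them agree. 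Membership of $\nc_c(x)$ in the interval, or on its $c$-decreasing chain, cannot pick out the maximum. In type $\mathrm{A}_2$ with $c=s_1s_2$ and $w=s_2$, one has $\movetoid{w}{wc}=\{\idem,s_1,s_1s_2s_1,c\}$ with maximum $s_1s_2s_1=\nc_c(w_{op})$. Yet the fully supported $c$ also lies in it and on its decreasing chain.
\item You conflate the skips of $c^{-1}x$ with those of $x$: your $\fs_1\cdots\fs_k$ is $\nc_c(c^{-1}x)$, not $\nc_c(x)$. Also, $x$ has only $k=\ell_{\Reflections}(u)$ cover reflections, not $n$.
\end{itemize}

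What is missing is a two-sided local certificate at $u=\nc_c(x)$. One shows that $u\in\movetoid{w}{wc}$ and that \emph{every} neighbour of $u$ there is Bruhat-below $u$. The Coxeter-matroid property then finishes: since edges are parallel to roots, the unique Bruhat maximum is the unique vertex with no Bruhat-larger neighbour. The paper's steps are as follows.
\begin{itemize}
\item Membership comes from $x[\idem,s_k\cdots s_1]=[u^{-1}x,x]$ (\Cref{lem:covinanyorder}, with the $s_i$ the descents attached to the forced skips of $x$), plus the chain of unforced insertions from $c^{-1}x$ to $u^{-1}x$.
\item The $k$ neighbours of $u$ lying below it in absolute order are read off from that Boolean interval as the $up_i$.
\item The $n-k$ neighbours above $u$ require switching to $w'$ with $w'_{op}=\upsort(x)$. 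By \Cref{lem:covstop}, the cover reflections of $\upsort(x)\wnaught$ are the unforced skips of $x$, and this gives the $uq_j$.
\item Finally, \Cref{prop:specificcoverfactorizations} and \Cref{lem:BianeInversion} identify the $p_i,q_j$ with $\invncR{u}$. So all these neighbours are $\tau u$ with $\tau\in\invnc{u}\subseteq\inv{u}$.
\end{itemize}
Your proposal has no mechanism producing the upper neighbours of $\nc_c(x)$ from the unforced skips of $x$, and that is the crux of the proof.
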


We illustrate Theorem~\ref{thm:BruhatMaxIsNC}  before laying the groundwork for the proof.

\begin{eg}
    We take $W=S_5$ and $c=s_2s_1s_3s_4$ as in Example~\ref{eg:moreskips}. For $w=12534$ it is seen that $\ell(wc)=\ell(w)+\ell(c)$. One checks that $w_{op}=35421$, which is $c$-sortable and thus satisfies $\sort(w_{op})=w_{op}$. Using the forced skips computed in Example~\ref{eg:moreskips} together with Fact~\ref{fact:fskip} we see that $u=\nc_c(\sort(w_{op}))=41352$. A direct computation tells us that 
    \[
\movetoid{w}{wc}
=
\left\{
\begin{array}{cccc}
12345,&12354,&13245,&13254,\\
13425,&13452,&14325,&14352,\\
21345,&21354,&31245,&31254,\\
31425,&31452,&41325,&41352
\end{array}
\right\},
\]
and that $u=41352$ is indeed the Bruhat-maximal element.  
Finally, Theorem~\ref{thm:BruhatMaxIsNC} says that the elements of $\movetoid{w}{wc}$
adjacent to $u$ are precisely the four permutations $\tau u$ with $\tau\in \invnc{u}$, namely
\[
(2\,4)u=21354,\qquad
(1\,4)u=14352,\qquad
(2\,5)u=41325,\qquad
(3\,4)u=31452.
\]
We remark that in the Bruhat interval $[w,wc]$, the element $wu$ equals $31542$, and among the four $w\tau u$ for $\tau\in \invnc{u}$, one covers $wu$ while the remaining are covered by $wu$.
\end{eg}

Biane and Josuat-Verg\`es describe the right noncrossing inversion set $\invncR{u}$ used to define $\nctocluster$ explicitly in terms of cover reflection factorizations.  
For $u \in \NC(W, c)$, note that $u^{-1}c \in \NC(W,c)$ as well. Indeed, if $c = \tau_{1} \cdots \tau_{n}$ with $u = \tau_{1} \cdots \tau_{k}$, then $u^{-1}c = \tau_{k+1} \cdots \tau_{n}$, and we have a minimal factorization $c = \tau_{k+1} \cdots \tau_{n} \sigma_{1} \cdots \sigma_{k}$ where $\sigma_{i} = (u^{-1}c)^{-1} \tau_{i} (u^{-1}c)$.

\begin{fact} [\cite{BJV19}]
\label{lem:BianeInversion}
    Let $u\in \NC(W,c)^+$.  Fix cover reflection factorizations
\[
u=t_1\cdots t_k
\qquad\text{and}\qquad
u^{-1}c=t_{k+1}\cdots t_n.
\]
    The set of right noncrossing inversions $\invncR{u}=\{u^{-1}\tau u\suchthat \tau\in \invnc{u}\}$ of $u$ are given by
    \begin{enumerate}
        \item the reflections $p_1,\ldots,p_k$ such that
        $$t_1\cdots t_k p_i=t_1\cdots \wh{t_i}\cdots t_k,$$
        \item the reflections $q_1,\ldots,q_{n-k}$ such that
        $$q_it_{k+1}\cdots t_n=t_{k+1}\cdots \wh{t_{k+i}}\cdots t_n.$$
    \end{enumerate}
\end{fact}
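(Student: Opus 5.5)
The plan is to reduce the statement to a sign computation on roots, combined with a counting argument. By definition $\invncR{u}$ is the set of reflections $\rho$ such that $u\rho<_B u$ and $u\rho\in\NC(W,c)$, where the Bruhat condition means $u(r(\rho))\in\Phi^-$. Since $u\in\NC(W,c)^+$, the bijection $\nctocluster$ sends $u$ to a maximal positive $c$-cluster, so $|\invncR{u}|=n$. It therefore suffices to prove two things. First, the $n$ reflections $p_1,\ldots,p_k,q_1,\ldots,q_{n-k}$ are pairwise distinct. Second, each of them lies in $\invncR{u}$. Equality of the two sets then follows from the cardinality count.

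For membership in $\NC(W,c)$ there is nothing to do beyond unwinding definitions. By construction $up_i=t_1\cdots\wh{t_i}\cdots t_k$ is a subproduct of the minimal factorization $c=t_1\cdots t_n$. For the $q_i$, note that $uq_i=c\,(q_iu^{-1}c)^{-1}$, and $q_iu^{-1}c$ is obtained from $t_{k+1}\cdots t_n$ by deleting one factor. Hence $uq_i=t_1\cdots t_k\,\tau$, where $\tau$ is a Hurwitz conjugate of $t_{k+i}$, so again $uq_i\le_{\Reflections} c$.

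For distinctness, the roots $r(t_1),\ldots,r(t_n)$ are linearly independent because $c=t_1\cdots t_n$ is a minimal reflection factorization. Explicitly,
\[
r(p_i)=\pm\, t_k\cdots t_{i+1}\,r(t_i)
\qquad\text{and}\qquad
r(q_i)=\pm\, t_{k+1}\cdots t_{k+i-1}\,r(t_{k+i}).
\]
By triangularity these $n$ roots are again linearly independent, so the reflections are distinct.

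The Bruhat condition is where the real work lies. For the $p_i$, apply \Cref{fac:bjv_covers_are_generators} to $u$: the $t_j$ are the simple reflections of the reflection subgroup $W'$ generated by $\cov(\cdot)$, and $u=t_1\cdots t_k$ is a Coxeter element of $W'$. Standard Coxeter-element combinatorics inside $W'$ then gives two facts. First, $t_k\cdots t_{i+1}r(t_i)$ is a positive root. Second,
\[
u\bigl(t_k\cdots t_{i+1}r(t_i)\bigr)=t_1\cdots t_i\,r(t_i)=-\,t_1\cdots t_{i-1}\,r(t_i),
\]
which is negative, because $t_1\cdots t_{i-1}r(t_i)$ is an inversion root of the Coxeter element of $W'$ and hence positive. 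Thus $up_i<_B u$.

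For the $q_i$ the analogous computation rewrites $u\,r(q_i)$ as
\[
u\,r(q_i)=-\,c\,t_n\cdots t_{k+i+1}\,r(t_{k+i}).
\]
Here $t_n\cdots t_{k+i+1}r(t_{k+i})$ is positive by the same argument applied to $u^{-1}c$ in its own reflection subgroup. So I must show that $c$ sends this particular positive root to a positive root. I expect this to be the main obstacle, and it is where full support of $u$ (equivalently $c\le_R$ the relevant sortable element) should enter.

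My first attempt at the obstacle would be to realize $u$ as the Bruhat-maximal element of some $\movetoid{w}{w\cdot c}$, which exists by \Cref{prop:magiccatalan}. Translating by $w$, the condition $u\,r(q_i)\in\Phi^-$ becomes a statement about a Bruhat chain in $[w,w\cdot c]$ passing through $wu$. One could then use the skip-reflection description from \Cref{sec:csortability}: by \Cref{cor:ufsthenfs} and \Cref{fact:fskip}, $c=\ufs_1\cdots\ufs_{n-k}\fs_1\cdots\fs_k$. In that description the unforced skips play the role of the $t_{k+j}$, and \Cref{prop:insertunforced} supplies exactly the needed length increases. Failing this, I would fall back on citing \cite{BJV19} directly for the sign computation.
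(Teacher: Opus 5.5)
Most of your argument is correct, but it leaves out the one step that carries the content of the statement. The correct parts are:
\begin{itemize}
\item the count $|\invncR{u}|=n$ from the quoted bijection $\nctocluster$;
\item membership of $up_i$ and $uq_i$ in $\NC(W,c)$ via Hurwitz moves;
\item distinctness by triangularity against the independent roots $r(t_1),\ldots,r(t_n)$;
\item the sign check for the $p_i$ inside $W'$.
\end{itemize}
The paper itself gives no proof of this Fact; it only cites \cite{BJV19}.

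The missing step is showing that $c\,\gamma_i\in\Phi^+$ for $\gamma_i=t_n\cdots t_{k+i+1}r(t_{k+i})$. This is equivalent to $u\,r(q_i)\in\Phi^-$, and it is the only place where full support is used. Your sketched route does not close it. \Cref{prop:magiccatalan} produces some $w$ for which $u$ is the Bruhat-maximum of $\movetoid{w}{w\cdot c}$, but nothing ties that $w$ to the sortable $x$ with $\nc_c(x)=u$ whose skips give the $t_{k+i}$. The paper makes that link in \Cref{thm:BruhatMaxIsNC}, and its proof invokes this very Fact, so going through it is circular. Falling back on \cite{BJV19} ``for the sign computation'' amounts to citing the statement itself.

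The missing step is in fact short. Since $s_{\gamma_i}=(t_n\cdots t_{k+i+1})\,t_{k+i}\,(t_{k+i+1}\cdots t_n)$, right multiplication by $s_{\gamma_i}$ deletes $t_{k+i}$ from $u^{-1}c=t_{k+1}\cdots t_n$, so
\[
c\,s_{\gamma_i}=t_1\cdots t_k\,t_{k+1}\cdots\widehat{t_{k+i}}\cdots t_n .
\]
This is a product of $n-1$ reflections with prefix $u$. Since $\ell_{\Reflections}(cs_{\gamma_i})\ge n-1$, the factorization is minimal, and therefore $u\le_{\Reflections} cs_{\gamma_i}$.

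Now suppose $c\gamma_i\in\Phi^-$. Because $\gamma_i\in\Phi^+$, this gives $\ell(cs_{\gamma_i})<\ell(c)$. Strong exchange on a reduced word $s_{a_1}\cdots s_{a_n}$ for $c$ then gives $cs_{\gamma_i}=s_{a_1}\cdots\widehat{s_{a_j}}\cdots s_{a_n}$, which lies in $W_{\Simple\setminus\{s_{a_j}\}}$.

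Next, anything below an element $y$ of a standard parabolic subgroup $W_J$ in absolute order lies in $W_J$. Indeed, a reflection $\tau\le_{\Reflections} y$ has $r(\tau)$ in the moved space of $y$. That space lies in the span of $\{r(s)\suchthat s\in J\}$, and this forces $\tau\in W_J$.

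Hence $u\in W_{\Simple\setminus\{s_{a_j}\}}$, contradicting $u\in\NC(W,c)^+$. So $c\gamma_i\in\Phi^+$, and $u\,r(q_i)=-c\gamma_i\in\Phi^-$. With this inserted, your argument becomes a complete derivation of the Fact from the cardinality statement for $\nctocluster$, which is legitimately available since the paper quotes that bijection earlier.
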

Of all cover reflection factorizations of $u$ and $u^{-1}c$ from \Cref{lem:BianeInversion}, there are two which are most useful for us. They will be determined by the forced and unforced skips of the $c$-sortable $x\in \Sort(W,c)$ with $c\le_R x$ (\Cref{fact:nc_c}) for which $\nc_c(x)=u$.

\begin{lem}
\label{lem:covstop}
For $w\in W$, the element $\upsort(w_{op})w_{\circ}$ is $c^{-1}$-sortable, and
\[
\cov\big(\upsort(w_{op})w_{\circ}\big)
=
\uskips{\sort(w_{op})},
\]
i.e.\ its set of cover reflections is the unforced skip set of $\sort(w_{op})$.
\end{lem}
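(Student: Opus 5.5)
We reduce everything to $c^{-1}$-sortability via the upward projection formula \eqref{eqn:upsort}. Set $x \coloneqq \sort(w_{op})$. Applying \eqref{eqn:upsort} gives
\[
\upsort(w_{op})\wnaught=\sort^{(c^{-1})}(w_{op}\wnaught)\,\wnaught\,\wnaught=\sort^{(c^{-1})}(w_{op}\wnaught),
\]
which is $c^{-1}$-sortable by definition. This settles the first assertion, and it remains to identify the cover reflections of $y\coloneqq \upsort(w_{op})\wnaught$.

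For the cover reflections we use the interval structure of the Cambrian class $[x,\upsort(w_{op})]$. Write $z\coloneqq \upsort(w_{op})$. Since $z\wnaught\cdot s=z s'\wnaught$ with $s'=\wnaught s\wnaught$, we have $\ell(z\wnaught s)<\ell(z\wnaught)$ iff $\ell(zs')>\ell(z)$. Hence
\[
\cov(z\wnaught)=\{zs'z^{-1} \suchthat s'\notin\des{z}\},
\]
the set of ``upper cover reflections'' of $z$ (reflections $\tau$ with $z\lessdot_R \tau z$ in weak order). Since $z$ is the top of its Cambrian class, each such upper cover leaves the class. The claim therefore becomes: the reflections $\tau$ with $z\lessdot_R \tau z=zs'$ are exactly the unforced skips $\ufs_1,\ldots,\ufs_{n-k}$ of $x$.

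We would prove this in two steps.
\begin{enumerate}
\item Show that each unforced skip $\ufs_i$ of $x$ satisfies $z\lessdot_R \ufs_i z$. By \Cref{prop:insertunforced}, $\ufs_i x$ is $c$-sortable and covers $x$ in Bruhat order, obtained by filling a skip. Using \Cref{prop:skipscommute} and the fact that skips are defined from prefixes, we would show that $\ufs_i$ is not an inversion of any element of the class, and that $\ufs_i$ still appears as a cover-type reflection at the top. The cleanest route is Reading's description of Cambrian classes: an element $v\ge_R x$ lies in the class of $x$ iff $\inv{v}$ avoids the ``lower wall'' reflections that bound the cone of $x$. Those walls are precisely the skip reflections. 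The forced skips (the cover reflections of $x$) bound the class from below, and the unforced skips bound it from above \cite[Prop.~5.2, Lemma~5.7]{RS11}.
\item Count. Because $y$ is $c^{-1}$-sortable, \Cref{fac:bjv_covers_are_generators} shows that $\cov(y)$ is a simple system of a reflection subgroup, and hence consists of $\ell_{\Reflections}(\nc_{c^{-1}}(y))$ reflections. We would show this number equals $n-k$ by comparing with \Cref{cor:ufsthenfs}: the factorization $c=\ufs_1\cdots\ufs_{n-k}\fs_1\cdots\fs_k$ exhibits $\ufs_1\cdots\ufs_{n-k}=c\,u^{-1}$ as an element of absolute length $n-k$. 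Combined with step 1, which gives $\uskips{x}\subseteq\cov(y)$, equality follows. Alternatively, one can exploit the symmetry that the Cambrian fan has $n$ walls per maximal cone, so the number of upper covers of $z$ is $n$ minus the number of lower covers of $x$, namely $n-k$.
\end{enumerate}

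The main obstacle is step 1, namely identifying the upper walls of the Cambrian cone of $x$ with its unforced skips. We expect to take this from Reading--Speyer's result that the cone of $x$ in the Cambrian fan has facet normals given by the skip reflections of $x$, with forced and unforced skips on opposite sides. If a direct citation is unavailable, we would induct on $\ell(w_{op})-\ell(x)$, moving up the class one weak-order cover at a time and checking with \Cref{prop:skipfill} that the set of skip reflections is preserved.
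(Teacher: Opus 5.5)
\textbf{There is a genuine gap, in Step 1.} Your handling of $c^{-1}$-sortability matches the paper, and your reformulation is correct. With $z=\upsort(w_{op})$ and $y=z\wnaught$, the set $\cov(y)=\{zs'z^{-1}\suchthat s'\notin\des{z}\}$ consists of the reflections labelling the upper weak-order covers of $z$. Step 1, which you need for the inclusion $\uskips{x}\subseteq\cov(y)$, is the only nontrivial direction, and it is not established.
\begin{itemize}
\item \cite[Theorem~6.3]{RS11} says the class of $x=\sort(w_{op})$ is $\{v\ge_R x\suchthat \inv{v}\cap\uskips{x}=\emptyset\}$. So each $\ufs_i$ spans a facet of the Cambrian cone $C$ with $C$ on its upper side. (It is the upper walls, not the lower ones, that are avoided.)
\item That theorem does not say the top chamber $zD$ has a wall on that facet. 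Yet this is exactly the assertion $z\lessdot_R \ufs_i z$.
\item Nothing you cite supplies it. \cite[Proposition~5.2, Lemma~5.7]{RS11} concern $x$, not $z$, and \Cref{prop:skipscommute} does not bear on it.
\item The fallback induction through the class is not meaningful, because skip reflections are only defined for $c$-sortable elements.
\end{itemize}
Step 2 cannot compensate. \Cref{fac:bjv_covers_are_generators} gives $|\cov(y)|=\ell_{\Reflections}(\nc_{c^{-1}}(y))$, but the absolute length of $\ufs_1\cdots\ufs_{n-k}=cu^{-1}$ tells you nothing about $\nc_{c^{-1}}(y)$ unless you already know $\cov(y)=\uskips{x}$. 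The alternative ``$n$ walls per cone'' count assumes that upper covers of $z$ biject with upper walls of $C$, which is Step 1 again.

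\textbf{What the cone description does give, and how to close the gap.} It gives the other inclusion cheaply. An upper cover $zs'$ leaves the class, and $\inv{zs'}=\inv{z}\sqcup\{zs'z^{-1}\}$ still contains $\fskips{x}\subseteq\inv{x}\subseteq\inv{z}$. So the violated inequality forces $zs'z^{-1}\in\uskips{x}$, giving $\cov(y)\subseteq\uskips{x}$. For equality, the missing idea (and the paper's argument) is to apply \cite[Theorem~6.3]{RS11} a second time, to $c^{-1}$ and the $c^{-1}$-sortable element $y$. Let $\mathcal{I}$ be the Cambrian class of $x$ and $D$ the dominant chamber. Then the class of $y$ is $\mathcal{I}\wnaught$, with cone $\bigcup_{v\in\mathcal{I}}v\wnaught D=-C$. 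Both descriptions present the same simplicial cone by $n$ half-spaces whose normals are linearly independent positive roots; they are independent because the skips multiply to a Coxeter element in a minimal reflection factorization. Hence the set of facets on whose nonpositive side the cone lies is intrinsic. For $-C$ these facets are the hyperplanes $r(\ufs_j)^{\perp}$. For the description attached to $y$ they are given by the forced skips of $y$, which equal $\cov(y)$ by \Cref{fact:fskip} applied to $c^{-1}$. This yields $\cov(y)=\uskips{x}$ directly, making both of your steps unnecessary.
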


\begin{proof}
By \eqref{eqn:upsort} we have
\[
\upsort^{(c)}(w^{-1})=\sort^{(c^{-1})}(w^{-1}w_{\circ})\,w_{\circ},
\]
so in particular $\upsort(w_{op})w_{\circ}\in \Sort(W,c^{-1})$.

Let $x\coloneqq \sort(w_{op})$, and let $\mathcal I$ be the $c$-Cambrian class of $w_{op}$ (equivalently of $x$).
Then $x$ is the unique $c$-sortable element in $\mathcal I$.
Write $\{\fs_1,\ldots,\fs_k\}$ and $\{\ufs_1,\ldots,\ufs_{n-k}\}$ for the forced and unforced skips of $x$.
Let $D=\RR_{\ge 0}\Lambda^{+}$ be the dominant chamber.
By \cite[Theorem~6.3]{RS11} we have
\[
\bigcup_{z\in \mathcal{I}}zD
=
\left( \bigcap_{i=1}^k\{\langle v, r(\fs_i)\rangle\le 0 \} \right)
\cap
\left(\bigcap_{i=1}^{n-k} \{\langle v,r(\ufs_i)\rangle \ge 0\}\right).
\]
Thus the data of forced versus unforced skips for $x$ are encoded by which side of each of these
$n$ facet hyperplanes the cone lies on.

Now consider the image class $\mathcal I w_{\circ}$ under the anti-automorphism $z\mapsto zw_{\circ}$.
Since $w_{\circ}D=-D$, we have
\[
\bigcup_{z\in \mathcal{I}w_{\circ}}zD
=
\bigcup_{z\in \mathcal{I}}zw_{\circ}D
=
-\bigcup_{z\in \mathcal{I}}zD.
\]
Therefore the defining inequalities for the cone $\bigcup_{z\in \mathcal{I}w_{\circ}}zD$ are obtained from
those for $\bigcup_{z\in \mathcal{I}}zD$ by reversing all inequality signs. Equivalently, in the class
$\mathcal I w_{\circ}$ the roles of ``forced'' and ``unforced'' skips are interchanged.
Thus for the unique $c^{-1}$-sortable element in the class
$\mathcal I w_{\circ}$, namely $\upsort(w_{op})w_{\circ}$, we have
\[
\cov\big(\upsort(w_{op})w_{\circ}\big)=\{\ufs_1,\ldots,\ufs_{n-k}\},
\] as claimed.
\end{proof}

\begin{prop}
\label{prop:specificcoverfactorizations}
    If $u\in \NC(W,c)^+$ and $c\le_R x\in \Sort(W,c)$ is such that $\nc_c(x)=u$ (\Cref{fact:nc_c}), then we have
    \begin{enumerate}[label=(\arabic*)]
        \item \label{it1:11.5} a cover reflection factorization  $u=t_1\cdots t_k$ is given by $t_i=\fs_i$,
        \item \label{it2:11.5} a cover reflection factorization $u^{-1}c=t_{k+1}\cdots t_n$ is given by $t_{k+i}=c^{-1}\ufs_ic$.
    \end{enumerate} 
    Here $\fs_1,\dots,\fs_k$ and $\ufs_1,\dots,\ufs_{n-k}$ are the forced and unforced skips of $x$ respectively.
\end{prop}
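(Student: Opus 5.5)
Part \ref{it1:11.5} follows directly from \Cref{fact:fskip}. That fact gives $\cov(x)=\fskips{x}$ and says $\fs_1\cdots\fs_k=\nc_c(x)=u$ is a cover reflection factorization. So nothing beyond quoting it is needed, except to note that $k=\ell_{\Reflections}(u)$: by \Cref{fac:bjv_covers_are_generators} the cover reflections are the simple generators of the minimal reflection subgroup containing $u$, in which $u$ is a Coxeter element.

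For part \ref{it2:11.5} we first show that the product equals $u^{-1}c$. \Cref{cor:ufsthenfs} gives $\ufs_1\cdots\ufs_{n-k}\,\fs_1\cdots\fs_k=c$, hence $\ufs_1\cdots\ufs_{n-k}\,u=c$. Conjugating by $c$,
\[
u^{-1}c \;=\; c^{-1}(\ufs_1\cdots\ufs_{n-k})c \;=\; (c^{-1}\ufs_1c)\cdots(c^{-1}\ufs_{n-k}c).
\]
Since $u\le_{\Reflections}c$ and $\ell_{\Reflections}(u^{-1}c)=n-k$, this is a minimal reflection factorization of $u^{-1}c$.

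It remains to show that $\{c^{-1}\ufs_i c\}$ is exactly the set of cover reflections of the $c$-sortable element $y$ with $\nc_c(y)=u^{-1}c$, and that the product is taken in an admissible order. By \Cref{fac:bjv_covers_are_generators}, the cover reflections are the simple generators $\Delta_{W''}$ of the minimal reflection subgroup $W''$ containing $u^{-1}c$. By \cite[Proposition 3.4]{BJV19}, any minimal factorization of $u^{-1}c$ into these generators is a cover reflection factorization. So it suffices to show that the reflections $c^{-1}\ufs_ic$ are the canonical simple generators of $W''$.

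To do this I would use \Cref{lem:covstop}, which identifies $\{\ufs_i\}=\uskips{x}$ with $\cov(\upsort(x)\wnaught)$ for the $c^{-1}$-sortable element $\upsort(x)\wnaught$. By \Cref{fac:bjv_covers_are_generators} applied to $(W,c^{-1})$, the $\ufs_i$ are the simple generators of the minimal reflection subgroup $W'''$ containing $\ufs_1\cdots\ufs_{n-k}=cu^{-1}$. Because this subgroup is a parabolic subgroup canonically attached to the element, conjugation by $c$ carries it to the minimal reflection subgroup containing $c^{-1}(cu^{-1})c=u^{-1}c$, which is $W''$. So the main obstacle is positivity: conjugation by $c$ need not send positive roots to positive roots, and we need $c^{-1}\Delta_{W'''}=\Delta_{W''}$.

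To handle this, I would use the Kreweras-complement structure. First, $\ufs_1\cdots\ufs_{n-k}=cu^{-1}$ is a noncrossing partition for $c$, being the left complement of $u$. Next, the map $v\mapsto c^{-1}vc$ preserves $\NC(W,c)$, and for a reflection $t\le_{\Reflections} c$ one has $c^{-1}tc\in\Reflections$ with root $\pm c^{-1}r(t)$. I expect a sign to be negative only when $c^{-1}$ sends a root of $W'''$ to a negative root, which requires the root to lie in $\inv{c^{-1}}$. I would rule this out using the cone description of the Cambrian class from \cite[Theorem~6.3]{RS11}, as in the proof of \Cref{lem:covstop}. If that fails, the fallback is to reflect the orientation via the $c$-reflection order: the unforced skips come before the forced ones, as in \Cref{cor:ufsthenfs}, which would show that $c^{-1}\ufs_i c$ are the cover reflections of the $c$-sortable element representing $u^{-1}c$.
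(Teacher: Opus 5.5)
Your reduction matches the paper's. Part (1) is \Cref{fact:fskip}, and the identity $u^{-1}c=(c^{-1}\ufs_1c)\cdots(c^{-1}\ufs_{n-k}c)$ comes from \Cref{cor:ufsthenfs}. \Cref{lem:covstop} together with \Cref{fac:bjv_covers_are_generators} (for $c^{-1}$) shows that the $r(\ufs_i)$ form the simple system of $W'''=\langle \ufs_1,\ldots,\ufs_{n-k}\rangle$ relative to $\Phi^+$. However, there is a genuine gap: the step you yourself call the main obstacle is never proved. You need the roots of the reflections $c^{-1}\ufs_ic$ to form the simple system of $W''=c^{-1}W'''c$ relative to $\Phi^+\cap\Phi_{W''}$, which amounts to showing that each $c^{-1}r(\ufs_i)$ is positive. ``I expect'', an unspecified appeal to \cite[Theorem~6.3]{RS11}, and a fallback via the $c$-reflection order are not arguments. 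Beyond the product identity, this sign question is the entire content of part (2).

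The paper closes the gap by passing to $y=\upsort(x)\wnaught\in\Sort(W,c^{-1})$, whose cover reflections are the $\ufs_i$. Since $c\le_R x\le_R\upsort(x)$, one has $\ell(c^{-1}y)=n+\ell(y)$. Hence $c^{-1}y$ is $c^{-1}$-sortable with sorting word $\bm{c^{-1}}\bm{y}$ (\Cref{lem:lexfirstirrelevant}), and its skips include the $c^{-1}\ufs_ic$. These are inversions of $c^{-1}y$ because $\ell(c^{-1}\ufs_iy)<\ell(c^{-1}y)$, so they are forced, i.e.\ cover reflections of $c^{-1}y$. Then \Cref{fac:bjv_covers_are_generators} delivers positivity for free: their roots lie in a simple system relative to $\Phi^+$, so they are the simple reflections of the standard parabolic subgroup they generate, which is $W''$.

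Your own route can also be finished directly. Since $c\le_R x$, the $c$-sorting word $\bm x=a_1a_2\cdots$ begins with the full syllable $a_1\cdots a_n=\bm c$, so every skip position lies beyond it. For $\ufs_i$ with prefix $a_1\cdots a_r$ and skipped letter $s$, this gives $c^{-1}r(\ufs_i)=a_{n+1}\cdots a_r\bigl(r(s)\bigr)$. That root is positive because $a_{n+1}\cdots a_rs$ is a factor of the reduced word $a_1\cdots a_rs$. Positivity of the images of the simple roots forces $c^{-1}(\Phi^+\cap\Phi_{W'''})\subseteq\Phi^+$, hence equality with $\Phi^+\cap\Phi_{W''}$ by counting, so $c^{-1}\Delta_{W'''}=\Delta_{W''}$. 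You then conclude with \Cref{fac:bjv_covers_are_generators}.
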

\begin{proof}
Part~\ref{it1:11.5} has been shown in \Cref{fact:fskip}. 
For~\ref{it2:11.5}, note that  we have $\ufs_1\cdots \ufs_{n-k}\fs_1\cdots \fs_{k}=c$ by \Cref{cor:ufsthenfs}.
Rewriting this as
\[
\fs_1\cdots \fs_k\,(c^{-1}\ufs_1c)\cdots(c^{-1}\ufs_{n-k}c)=c
\]
and using $u=\fs_1\cdots \fs_k$ yields
\[
u^{-1}c=(c^{-1}\ufs_1c)\cdots(c^{-1}\ufs_{n-k}c).
\]
Thus it remains to show that the reflections $c^{-1}\ufs_i c$ are the set of cover reflections associated to the noncrossing element $u^{-1}c$.

By \Cref{lem:covstop} applied for $w_{op}$ equal to $x$, the reflections $\ufs_1,\ldots,\ufs_{n-k}$ are the cover reflections of
\[
y\coloneqq \upsort(x)\wnaught\in \Sort(W,c^{-1}).
\] 
    We know that $c\le_R x\le_R  \upsort(x)$ so $\ell(c^{-1})+\ell(y)=\ell(c^{-1}y)$, and so $c^{-1}y\in \Sort(W,c^{-1})$, with $c^{-1}$-sorting word obtained by appending $c^{-1}$ onto the front of the $c^{-1}$-sorting word for $y$. 
    This shows that $c^{-1}\ufs_1c,\ldots, c^{-1}\ufs_{n-k}c$ are still skip reflections of $c^{-1}y$, and we claim that they are a subset of the cover reflections. 
    Indeed,
    $$(c^{-1}\ufs_i c) (c^{-1}y)=c^{-1}\ufs_iy,$$
    which has smaller length than $\ell(c^{-1}y)=n+\ell(y)$. This is because $\ell(\ufs_iy)<\ell(y)$ due to $\ufs_i$ being a cover reflection of $y$.

    Finally, since the cover reflections of a noncrossing element are the simple generators relative to the induced simple system $\Delta_{W'}$ for the minimal reflection subgroup $W'$
containing that noncrossing element (see Fact~\ref{fac:bjv_covers_are_generators}), we see that $c^{-1}y$ is the Coxeter element for the reflection subgroup $W'$ generated by $\cov(y)$, and so $u^{-1}c$ is the Coxeter element associated to the standard parabolic subgroup $W''$ of $W'$ with simple generators the reflections $c^{-1}\ufs_1c,\ldots,c^{-1}\ufs_{n-k}c$. Therefore $W''\subset W$ is the minimal reflection subgroup containing the $c$-noncrossing element $u^{-1}c$, establishing that the $c^{-1}\ufs_ic$ are exactly the cover reflections of $u^{-1}c$.
\end{proof}

\begin{eg}
\label{eg:S8-nontrivial}
Let $W=S_8$ with $
c \;=\; s_2 s_5 s_1 s_3 s_6 s_7 s_4$.
Set $x \coloneqq c^2$. 
We have that $c \le_R x$ and that $x$ is $c$--sortable.
Let $\leq_c$ denote the reflection order coming from the $c$--sorting word of $\wnaught$.
The cover reflections, ordered by $\leq_c$, are the transpositions $
(5\,8),\ (3\,4),\ (2\,5),\ (1\,7).$
Therefore
\[
u \;=\; \nc_c(x)
\;=\;
(5\,8)(3\,4)(2\,5)(1\,7)
\;=\;
78432615.
\]
In particular $u\in \NC(W,c)^+$.

The Kreweras complement of $u$ is $
u^{-1}c = 47365128$, and
the unique $c$-sortable $x'\in\Sort(W,c)$ such that $
\nc_c(x') = u^{-1}c$ is $
x' = 36417258$.
Its cover reflections are $
\cov(x')=\{(1\,4),(2\,7),(4\,6)\}.$
Ordered by the same reflection order $\leq_c$, we have
$
(1\,4)\ \leq_c\ (2\,7)\ \leq_c\ (4\,6),
$
and therefore
\[
u^{-1}c
=
(1\,4)(2\,7)(4\,6),
\]
which one can check agrees with  the conjugated unforced skips of $x$.
\end{eg}

\subsection{The Bruhat maximal element}
\label{sec:bruhatmax}

The proof of Theorem~\ref{thm:BruhatMaxIsNC} follows the next two lemmas.

\begin{lem}
\label{lem:BooleanTrick}
Fix $w \in W$ and let $d = s_{1} \cdots s_{k}$ be the product of $k$ distinct simple reflections.  
\begin{enumerate}[label=(\arabic*)]
    \item \label{it1:11.6} If $\ell(wd) = \ell(w)+k$ and $w [\idem,d]\subset [w, wd]$ then $w[\idem,d]=[w, wd]$.
    \item \label{it2:11.6} If $\ell(wd) = \ell(w)-k$ and $w [\idem,d]\subset [wd, w]$ then $w[\idem,d]=[wd, w]$.
\end{enumerate}
\end{lem}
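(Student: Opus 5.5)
The plan is to compare cardinalities: given the containment, it suffices to show that $|[w,wd]| \le 2^k = |w[\idem,d]|$. First, $[\idem,d]$ is Boolean of size $2^k$. Since $d$ is a product of distinct simple reflections, its subwords are exactly the subproducts $s_{i_1}\cdots s_{i_j}$ with $i_1<\cdots<i_j$. These are pairwise distinct because their supports are distinct. So $|w[\idem,d]|=2^k$, and it remains to bound $|[w,wd]|$ by $2^k$ in case \ref{it1:11.6}.

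For the upper bound in \ref{it1:11.6}, fix a reduced word $\bm w$ for $w$. Because $\ell(wd)=\ell(w)+k$, the concatenation $\bm w\, s_1\cdots s_k$ is reduced. Using the subword property of Bruhat order, any $v\le_B wd$ is represented by a subword, and the natural attempt is to show that $v\ge_B w$ forces that subword to use all of $\bm w$. Done naively this is false, so I would instead use the toric bound and argue via the moment polytope. By \cite[Proposition 7.12]{TW15}, $\dim P_{[w,wd]}\le k$. The containment gives $P_{w[\idem,d]}\subseteq P_{[w,wd]}$, and the vertices $w s_{i_1}\cdots s_{i_j}\regweight$ span a $k$-dimensional set by the triangularity argument of Theorem~\ref{thm:CoxeterRichIsToric}. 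Therefore $[w,wd]$ is toric, with moment polytope $P_{[w,wd]}$ of dimension $k$.

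A cleaner route to the count is to use the Möbius/EL structure. Bruhat intervals of rank $k$ are Eulerian, and by Theorem~\ref{thm:Dyer} every element lies on a maximal chain. The subposet $w[\idem,d]$ is isomorphic to the Boolean lattice $B_k$ under the induced order, since the covers $wv\lessdot_B wvt$ for Boolean subproducts are visible. The key claim is that an interval of rank $k$ containing a rank-preserving copy of $B_k$ with the same bottom and top must be that $B_k$. I would prove this by induction on $k$ using the toric description. In a toric interval every element of rank $1$ gives an edge of the simple vertex $w$ of the $k$-dimensional polytope $P_{[w,wd]}$. Toric Richardson varieties have the property that the bottom vertex has exactly $k$ neighbours. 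The $k$ neighbours $ws_i$ already lie in $w[\idem,d]$, so they exhaust the atoms. Applying \Cref{prop:invToricSub}, every element lies in a face $P_{[w,v']}$ with $[w,v']$ a toric subinterval of smaller rank, to which induction applies. This shows that every element of $[w,wd]$ is in $w[\idem,d]$.

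Part \ref{it2:11.6} follows from \ref{it1:11.6} by setting $w' = wd$ and $d' = d^{-1}=s_k\cdots s_1$, which is again a product of distinct simple reflections. Then $\ell(w'd')=\ell(w)=\ell(w')+k$ and $w'[\idem,d'] = wd[\idem,d^{-1}] = w[\idem,d]$, because $[\idem,d^{-1}]=[\idem,d]^{-1}$ and $d x^{-1}$ ranges over $[\idem,d]$ as $x$ does, by the Boolean structure. The main obstacle is making the simplicity claim precise: I must justify that the bottom vertex $w$ of a toric $P_{[w,v]}$ of rank $k$ has exactly $k$ Bruhat atoms. I would do this via smoothness of the opposite Schubert cell chart at $w$, using that $X^v_w\cap \Xc_w$ is an affine toric chart whose $T$-weights are the edges at $w$.
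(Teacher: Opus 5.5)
Your argument has a genuine gap at exactly the step you flag as the main obstacle. It is not true that the bottom vertex of a toric Bruhat interval polytope of rank $k$ has exactly $k$ neighbours. Likewise, $X^{v}_{w}\cap\Xc_w$ is the affine toric chart at $w$, but it need not be smooth or even have a simplicial cone. The paper's own \Cref{fig:two-models-panels} gives a counterexample. Take $W=S_4$, $c=s_1s_3s_2$ and $w=1324=s_2$. The interval $[w,wc]=[1324,3412]$ is toric of rank $3$ by \Cref{thm:CoxeterRichIsToric}. Yet $w$ has the four atoms $3124,1342,2314,1423$ (that is, $s_2s_1,s_2s_3,s_1s_2,s_3s_2$), and $P_{[w,wc]}$ is the tetragonal trapezohedron, whose apex at $w$ has degree $4$. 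So toricity alone cannot force the atoms of $[w,wd]$ to be the $ws_i$. In your sketch the containment hypothesis is used only to see that the $ws_i$ are among the atoms, which is too weak. Your subsequent induction is also incomplete: you would need every element of $[w,wd]$ other than $wd$ to lie on a face of the special form $P_{[w,wd']}$ with $d'$ a proper subproduct of $d$, and you do not establish this.

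The missing idea is rigidity, which is how the paper argues. By \Cref{fact:rigidRich}, $w^{-1}X^{wd}_w=\pluckervanishing{w^{-1}[w,wd]}$ is rigid, so $[\idem,d]\subseteq w^{-1}[w,wd]$ upgrades to $X^{d}_{\idem}\subseteq w^{-1}X^{wd}_w$. Both varieties are irreducible of dimension $k$, hence equal, and taking $T$-fixed points gives (1).

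Your abandoned subword idea can also be rescued combinatorially. The hypothesis gives $ws_i>_B w$ for all $i$, so $w$ is the minimal-length element of $wW_J$ for $J=\{s_1,\ldots,s_k\}$. Since $x\mapsto x^J$ is order-preserving, every $x\in[w,wd]$ satisfies $w\le_B x^J\le_B (wd)^J=w$. Now take a reduced subword of $\bm w\bm d$ representing $x$. Its $\bm w$-part $a$ lies in $wW_J$, so $\ell(a)\ge\ell(w)$, and hence it is all of $\bm w$. This gives $w^{-1}x\le_B d$.

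Separately, your reduction of (2) to (1) is wrong: $wd[\idem,d^{-1}]\ne w[\idem,d]$ in general. For $d=s_1s_2$ with $s_1s_2\ne s_2s_1$ and $x=s_1$, one gets $dx^{-1}=s_1s_2s_1\notin[\idem,d]$. Instead, apply (1) to $\wnaught w$ with the same $d$, using that $x\mapsto \wnaught x$ reverses Bruhat order. Alternatively, simply rerun the rigidity argument with $[wd,w]$.
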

\begin{proof}
\ref{it2:11.6} is the special case of~\ref{it1:11.6} with $w$ replaced with $ws_k\cdots s_1$. 
The hypothesis  $w[\id,d]\subseteq [w,wd]$ in~\ref{it1:11.6} is equivalent to $[\id,d]\subseteq w^{-1}[w,wd]$.
Interpreting these as $T$-fixed point sets of Richardson varieties and applying the rigidity statement
(\Cref{fact:rigidRich}) gives an inclusion
\[
X_{\id}^{d}\subseteq w^{-1}X_{w}^{wd}.
\]
Both Richardson varieties are irreducible of dimension $\ell(d)=k$, hence they are equal.
Taking $T$-fixed points yields the claim.
\end{proof}
\begin{lem}
\label{lem:covinanyorder}
    For $w \in W$ and distinct $s_1,\ldots,s_k\in \des{w}$, multiplication by $w$ gives an anti-isomorphism from the interval $[\idem,s_{1}\cdots s_{k}]$ to $[ws_{1}\cdots s_{k}, w]$.
\end{lem}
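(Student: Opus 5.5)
Set $d = s_1\cdots s_k$. The plan is to show that the map $v \mapsto wv$ sends $[\idem,d]$ bijectively onto $[wd,w]$ and reverses Bruhat order. We will reduce this to \Cref{lem:BooleanTrick}\ref{it2:11.6}. That lemma converts the containment $w[\idem,d]\subset[wd,w]$, together with the length condition $\ell(wd)=\ell(w)-k$, into an equality of sets. Order reversal then comes from the standard fact that $[\idem,d]$ is Boolean.

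\textbf{Step 1: the length condition.} Since $s_1,\ldots,s_k$ are distinct simple reflections, they generate a standard parabolic subgroup $W_J$ with $J=\{s_1,\ldots,s_k\}$. Each $s_i$ is a descent of $w$, so $J\subseteq \des{w}$. By the parabolic factorization recalled in the proof of \Cref{cor:intervals_cprime}, applied on the right, we can write $w = w^J\cdot w_{\circ,J}$ with length additivity, where $w_{\circ,J}$ is the longest element of $W_J$. Every $v\in W_J$ satisfies $v\le_R w_{\circ,J}$, so
\[
\ell(wv)=\ell(w)-\ell(v)\qquad\text{for all } v\in W_J .
\]
Taking $v=d$ gives $\ell(wd)=\ell(w)-k$, because $d$ is a product of distinct simple reflections and hence reduced of length $k$.

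\textbf{Step 2: the containment $w[\idem,d]\subset[wd,w]$.} The elements of $[\idem,d]$ are the subwords $d_A=\prod_{i\in A}s_i$ for $A\subseteq[k]$, taken in the original order. These are $2^k$ distinct elements, since their supports differ, and $[\idem,d]$ is Boolean. Each $d_A$ lies in $W_J$, so $\ell(wd_A)=\ell(w)-|A|$ by Step 1.

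To see $wd_A\le_B w$: the identity $\ell(w d_A^{-1}\cdot d_A)$-additivity, i.e. $w = (wd_A)\cdot d_A^{-1}$ with $\ell(w)=\ell(wd_A)+\ell(d_A^{-1})$, says $wd_A\le_R w$, which implies $wd_A\le_B w$.

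To see $wd\le_B wd_A$: write $d = d_A\, d'$, where $d'$ is not literally a subword but $d_A^{-1}d\in W_J$ has length $k-|A|$ by counting supports. Then $wd = (wd_A)(d_A^{-1}d)$ with $\ell(wd)=\ell(wd_A)-\ell(d_A^{-1}d)$, which follows from Step 1 applied to both elements. Hence $wd\le_R wd_A$ in the reversed direction; more precisely $wd_A = wd\cdot (d^{-1}d_A)$ is length additive, so $wd\le_R wd_A$.

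If handling the non-subword factor $d_A^{-1}d$ becomes awkward, we will argue inside $W_J$ instead. The map $v\mapsto w^J v$ is a length-additive embedding of $W_J$ into $W$ that preserves Bruhat order, so everything reduces to $W_J$ with $w=w_{\circ,J}$. There, left multiplication by $w_{\circ,J}$ is the standard Bruhat anti-automorphism.

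\textbf{Step 3: equality and order reversal.} With Steps 1 and 2 in hand, \Cref{lem:BooleanTrick}\ref{it2:11.6} gives $w[\idem,d]=[wd,w]$. Left multiplication by $w$ is injective, so the map is a bijection. It remains to check that it reverses order. Both posets are graded of rank $k$, and the map sends rank $|A|$ to co-rank $|A|$. A cover $d_A\lessdot d_{A\cup\{i\}}$ differs by a reflection, so $wd_{A\cup\{i\}}$ and $wd_A$ differ by a reflection and their lengths differ by one; this forces $wd_{A\cup\{i\}}\lessdot_B wd_A$. Conversely, $[wd,w]$ has exactly $2^k$ elements and is graded, and the images of the covers of the Boolean lattice are covers in $[wd,w]$. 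Matching cover counts, or rerunning the argument with the inverse map $v\mapsto w^{-1}v$ by symmetry, shows the inverse also reverses order.

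The main obstacle is Step 2, in particular the lower bound $wd\le_B wd_A$. This is why we plan to reduce to $W_J$ and use the anti-automorphism $v\mapsto w_{\circ,J}v$ there.
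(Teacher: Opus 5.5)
Your first argument in Step~2 for $wd\le_B wd_A$ is wrong and has to be discarded. The length of $d_A^{-1}d$ is not determined by supports, because $d_A\le_B d$ does not imply $d_A\le_R d$. A counterexample: in $S_3$, take $d=s_1s_2$ and $A=\{2\}$. Then $d_A^{-1}d=s_2s_1s_2$ has length $3$, not $k-|A|=1$. With $w=\wnaught$ this gives $wd=s_1$ and $wd_A=s_2s_1$, and $s_1\not\le_R s_2s_1$.

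So $wd\le_B wd_A$ is a genuinely Bruhat relation, and your fallback is the argument you actually need:
\begin{itemize}
\item[(a)] $d_A\le_B d$ holds inside $W_J$.
\item[(b)] Left multiplication by $w_{\circ,J}$ reverses Bruhat order on $W_J$, so $w_{\circ,J}d\le_B w_{\circ,J}d_A$.
\item[(c)] The map $v\mapsto w^Jv$ preserves Bruhat order on $W_J$. This holds because $\bm{w^J}$ followed by any reduced word in $J$ is reduced, so the subword property transfers.
\end{itemize}
In Step~3, drop the ``matching cover counts'' option; nothing you have shown controls the number of covers in $[wd,w]$. The inverse-map argument is the valid one. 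Once the set equality is known, every element of $[wd,w]$ is some $wd_A$ of length $\ell(w)-|A|$. Hence every cover in $[wd,w]$ pulls back to a reversed cover in $[\idem,d]$.

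With these repairs your proof is correct, and it follows a different route from the paper. The paper gets the containment $w[\idem,d]\subseteq[wd,w]$ by repeatedly applying the lifting property from \cite{BjBr05} to pairs of descents. It then gets equality from Lemma~\ref{lem:BooleanTrick}, which rests on rigidity of Richardson varieties, and leaves the order reversal implicit. Your factorization $w=w^J\cdot w_{\circ,J}$ gives the lengths $\ell(wd_A)=\ell(w)-|A|$ directly and makes order reversal an explicit cover check.

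Pushed slightly further, your route removes the geometry entirely:
\begin{itemize}
\item[(i)] The projection $y\mapsto y^J$ is Bruhat-monotone, so $[wd,w]\subseteq w^JW_J$.
\item[(ii)] The map $v\mapsto w^Jv$ is a poset isomorphism $W_J\to w^JW_J$. Order reflection holds because a reduced subword of $\bm{w^J}\bm{v}$ representing an element of $w^JW_J$ must use all of $\bm{w^J}$.
\end{itemize}
The lemma then reduces to the classical fact that $v\mapsto w_{\circ,J}v$ is an anti-automorphism of Bruhat order on $W_J$. This gives a purely combinatorial proof that does not need Lemma~\ref{lem:BooleanTrick}.
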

\begin{proof}
By \cite[Corollary 2.8(i)]{BjBr05} we have that if $s_i$ and $s_j$ are distinct simple reflections so that $ws_i<_B w$ and $ws_j<_B w$, then $ws_is_j<_B ws_i$ and $ws_is_j<_B ws_j$.
Applying this repeatedly implies that for all $\{i_1<i_2<\cdots <i_j\}\subseteq [k]$, we have $ws_1s_2\cdots s_k <_B ws_{i_1}\cdots s_{i_j}<_B w$.
Since by the subword property the interval $[\idem,s_1\cdots s_k]$ contains all elements of the form $s_{i_1}\cdots s_{i_j}$ where $1\leq i_1<\cdots <i_j\leq k$, the claim now follows from \Cref{lem:BooleanTrick}.
\end{proof}

\begin{proof}[Proof of Theorem~\ref{thm:BruhatMaxIsNC}]
By Theorem~\ref{thm:Equivwwc} we may assume that $w_{op} = w^{-1}\wnaught$ is $c$-sortable, so that $w_{op} = \sort(w_{op})$.  
Take $w'$ so that $(w')_{op} = \upsort(w_{op})$.  
Then by \Cref{thm:Equivwwc} we have
$$
\movetoid{w}{wc} = \movetoid{w'}{w'c}.
$$
We will prove the following three claims.  
Let $p_1,\ldots, p_k,q_1,\ldots,q_{n-k}$ be as in \Cref{lem:BianeInversion}, associated to the specific cover reflection factorizations from \Cref{prop:specificcoverfactorizations}. Then
\begin{enumerate}
    \item $u\in \movetoid{w}{wc}=\movetoid{w'}{w'c}$
    \item The elements covered by $w u$ in $[w ,w u]$ are $w up_1,\ldots,wu p_k$. 
    \item The covers of $w'u$ in $[w'u,w'c]$ are the elements $w'uq_1,\ldots, w'uq_{n-k}$. 
\end{enumerate}
Suppose that (1)-(3) are established. 
Then by Theorem~\ref{thm:Equivwwc}, the adjacent elements of $\movetoid{w}{wc}$ which are adjacent to $u\in \movetoid{w}{wc}$ are 
\[
\{up_1,\ldots,up_{k},uq_1,\ldots,uq_{n-k}\} =\{\tau u\suchthat \tau\in \invnc{u}\}.
\]
In particular, each element of $\movetoid{w}{wc}$ which is adjacent to $u$ precedes it in the Bruhat order.  
This characterizes the unique Bruhat-maximum element in any Coxeter matroid, and $\movetoid{w}{wc}$ is a Coxeter matroid, so $u$ must be this maximum, completing the proof. 

We now show (1). Let $s_1,\ldots,s_k\in \des{w_{op}}$ be the descents associated to the forced skips $\fs_1,\ldots,\fs_k$ of $w_{op}$, i.e. with $\fs_i w_{op}=w_{op} s_i$. Then as $u=\fs_1\cdots \fs_k$, by \Cref{lem:covinanyorder} applied to $w_{op}$ and the descents $s_k,\ldots,s_1$ we have 
\begin{equation}
\label{eqn:uinversexx}
w_{op}[\idem,s_k\cdots s_1]=[w_{op}s_k\cdots s_1,w_{op}]=[u^{-1} w_{op},w_{op}],
\end{equation}
and hence $u^{-1}w_{op}\le w_{op}$. 
As $c\le_R w_{op}$, we have that $c^{-1}w_{op}$ is $c$-sortable and the conjugates $c^{-1}\ufs_1c,\ldots,c^{-1}\ufs_{n-k}c$ are an ordered subset of the unforced skips of $c^{-1}w_{op}$.
We can therefore apply \Cref{prop:insertunforced} repeatedly followed by \Cref{cor:ufsthenfs} to get
\begin{align*}
c^{-1}w_{op}\lessdot_B (c^{-1}\ufs_{n-k}c)(c^{-1}w_{op})\lessdot_B  \cdots \lessdot_B \prod_{i=1}^{n-k}(c^{-1}\ufs_ic)(c^{-1}w_{op})=u^{-1}w_{op}.
\end{align*}
This shows that $c^{-1}w_{op}\le u^{-1}w_{op}\le w_{op}$, which after inverting and multiplying by $\wnaught$ on the left shows $w\le wu \le wc$ as desired.

We record for future use that, because we have now established (1), we have the sequence
\begin{equation}
\label{eqn:seqofequiv}
[c^{-1}w_{op}',u^{-1}w_{op}'] 
\xrightarrow{op}
[w'u, w'c]
\xrightarrow{\text{shape}}
[wu, wc]
\xrightarrow{op}
[c^{-1} w_{op}, u^{-1} w_{op}]
\end{equation}
between length $n-k$ Bruhat intervals, where the first and last are anti-equivalences and the second is an equivalence.

Now we show (2). 
By Proposition~\ref{prop:insertunforced}, the elements covering $w_{op} s_{k}\cdots s_{1}$ in \eqref{eqn:uinversexx} are given by $w_{op}s_{k} \cdots \hat{s_{i}} \cdots s_{1}$, where $\wh{s_i}$ denotes omission, so the elements covering $u^{-1}w_{op}$ in \eqref{eqn:uinversexx} are 
\[
w_{op}s_k\cdots \wh{s_i}\cdots s_1=\fs_k\cdots \wh{\fs_i}\cdots \fs_1 w_{op}=p_iu^{-1}w_{op}.
\]
Inverting and multiplying by $\wnaught$ on the left then shows that the elements covered by $wu$ in this interval are exactly the $wup_i$ for $i=1,\ldots,k$.

Finally, we show (3).  
The strategy is similar to (2), but with added complications to account for the reversal in order.  
Lemma~\ref{lem:covstop} states that the cover reflections of $w_{op}'\wnaught$ are  $\ufs_1,\ldots,\ufs_{n-k}$, the unforced skips of $w_{op}$. 
Let $s_1',\ldots,s_{n-k}'$ be the associated descents of $w_{op}'\wnaught$, so that $(w_{op}'\wnaught)s_i'=\ufs_i(w_{op}'\wnaught)$. 
Noting that $\ufs_1\cdots \ufs_{n-k}=cu^{-1}$ by \Cref{fact:fskip} and \Cref{cor:ufsthenfs}, we thus have by \Cref{lem:covinanyorder} that
\[
w_{op}'\wnaught[\idem,s_1'\cdots s_{n-k}']=[w_{op}'\wnaught s_1'\cdots s_{n-k}',w_{op}'\wnaught]=[(cu^{-1})w_{op}'\wnaught,w_{op}'\wnaught].
\]
Writing $s_i''=\wnaught s_i'\wnaught$, we have $s_1'',\ldots,s_i''$ are the ascents of $w_{op}'$ and  $w_{op}'s_i''=\ufs_iw_{op}'$.  Therefore
$$
w_{op}'[\idem,s_1''\cdots s_{n-k}'']=[w_{op}',w_{op}'s_1''\cdots s_{n-k}'']=[w_{op}',(cu^{-1})w_{op}'],
$$
providing an analogue of \eqref{eqn:uinversexx} which we will use to prove (3).

We claim that
$c^{-1}w_{op}'[\idem,s_1''\cdots s_{n-k}'']= [c^{-1}w_{op}',u^{-1}w_{op}']$ (note the right hand interval is the leftmost length $n-k$ Bruhat interval from \eqref{eqn:seqofequiv}). 
This will follow from Lemma~\ref{lem:BooleanTrick} if we can show that $c^{-1}w_{op}'[\idem,s_1''\cdots s_{n-k}'']\subset [c^{-1}w_{op}',u^{-1}w_{op}']$, or equivalently
$$
\left\{\prod_{j=1}^{\ell}(c^{-1}\ufs_{i_j}c)(c^{-1}w_{op}')\suchthat 1\le i_1<i_2<\cdots <i_\ell \le n-k \right\}\subset [c^{-1}w_{op}',u^{-1}w_{op}']
$$
Applying the sequence of equivalences and anti-equivalence in \eqref{eqn:seqofequiv}
to this condition, it suffices to show that each $z = \prod_{j=1}^{\ell}(c^{-1}\ufs_{i_j}c)(c^{-1}w_{op})$ is contained in $[c^{-1} w_{op}, u^{-1} w_{op}]$.  
To see this, note that by applying \Cref{prop:insertunforced} repeatedly, a reduced word for $z$ is obtained by filling in the unforced skips of $c^{-1}w_{op}$ corresponding to each $c^{-1}\ufs_{i_j}c$, so we see that $c^{-1}w_{op}$ is naturally a subword of $z$, and $u^{-1}w_{op}=\prod_{i=1}^{n-k}(c^{-1}\ufs_i c)(c^{-1}w_{op})$ has a reduced word which contains the reduced word for $z$.

Finally, by \Cref{lem:covinanyorder} the covers of $u^{-1}w_{op}'$ in $[c^{-1}w_{op}',u^{-1}w_{op}']$ are the elements 
\begin{align*}
c^{-1}w_{op}'s_1''\cdots \wh{s_i''}\cdots s_{n-k}''
&=(c^{-1}\ufs_{1}c)\cdots \wh{(c^{-1}\ufs_ic)}\cdots (c^{-1}\ufs_{n-k}c)(c^{-1}w_{op}) \\
&= (q_{i} u^{-1}c)(c^{-1}w_{op})\\
&=q_iu^{-1}w_{op}'.
\end{align*}
Inverting and multiplying by $\wnaught$ on the left shows that the covers of $w'u$ in $[w'u,w'c]$ are given by the elements $w'uq_1,\ldots, w'uq_{n-k}$ as desired.
\end{proof}

\appendix

\section{Type $A$ Examples}
\label{appendix:GL}
\subsection{The type $\mathrm{A}$ flag variety}
Our choice of $G,B,B^-,T$ for type $\mathrm{A}$ that we use in this section is $G=\GL_{n+1}$, $B,B^-$ are upper and lower triangular matrices respectively, and $T=B\cap B^-$ are the diagonal matrices.  The complete flag variety $\GL_{n+1}/B$ parametrizes complete flags of subspaces
$\{0\subsetneq V_1\subsetneq\cdots \subsetneq V_n\subsetneq \mathbb{C}^{n+1}\suchthat \dim V_i=i\}$. A coset $MB$ is determined by $M$ up to invertible forward column operations. To recover the flag from $M$ we take $V_i$ to be the span of the first $i$ columns.

We denote $\epsilon_i\in \Q{T}$ for the $i$th standard character written additively so that $\Q{T}=\bigoplus_{i=1}^{n+1} \mathbb{Z}\epsilon_i $. The Weyl group is the group $S_{n+1}=\langle s_1,\ldots,s_n\rangle$ of permutations on $\{1,\ldots,n+1\}$, where $s_i=(i,i+1)$. The root system is $\Phi=\{\epsilon_i-\epsilon_j\suchthat i\ne j\}$, and the positive roots are $\Phi^+=\{\epsilon_i-\epsilon_j\suchthat i<j\}$. 
The weight lattice is $\mathbb{Z}^{n+1}$ and the root lattice is $\{(x_1,\dots,x_{n+1})\suchthat \sum x_i=0\}\subset \mathbb{Z}^{n+1}$. 

The fundamental dominant weights are $\omega_i=\epsilon_1+\cdots+\epsilon_i$, with associated Pl\"ucker functions \[
\Pl_{w}^{\omega_i}(M)=\det M_{w(1),\ldots,w(i)},
\] 
where $M_A$ is the submatrix of $M$ with rows chosen from $A$ and columns chosen from $1,\ldots,|A|$. 
Since each $\omega_i$ lies in the character lattice $ \operatorname{Char}(T)$, we can take the regular dominant weight $\regweight=\sum \omega_i=(n,n-1,\ldots,0)$. 
The associated Pl\"ucker functions for $\regweight$ are given by $$\Pl_w(M)=\prod_{i=1}^{n}\det M_{w(1),\ldots,w(i)}.$$
\subsection{Classical presentation of Schubert cells}

Each permutation in $w\in S_{n+1}$ corresponds to the permutation matrix in $\GL_{n+1}$ with $1$'s in the entries $(w(i),i)$ and $0$ elsewhere. 
We can represent $BwB$ as a matrix with entries in $\{0,1,\ast\}$, where $\ast\in \mathbb{C}$ is a free entry (an omitted entry is by default zero). 
To do so, we take the permutation matrix associated to $w$, and replace each $0$ which has no $1$ either to the left or above it with a $\ast$.

The 6 charts $\Xc^u=BwB=U_wwB$ for $w\in S_3$ are given by
    $$\underbrace{\begin{bmatrix}1&&\\&1&\\&&1\end{bmatrix}}_{\Xc^{123}}, \underbrace{\begin{bmatrix}\ast&1&\\1&&\\&&1\end{bmatrix}}_{\Xc^{213}}, \underbrace{\begin{bmatrix}1&&\\&\ast&1\\&1&\end{bmatrix}}_{\Xc^{132}},\underbrace{\begin{bmatrix}\ast&\ast&1\\1&&\\&1&\end{bmatrix}}_{\Xc^{231}},\underbrace{\begin{bmatrix}\ast&1&\\\ast&&1\\1&&\end{bmatrix}}_{\Xc^{312}},\underbrace{\begin{bmatrix}\ast&\ast&1\\\ast&1&\\1&&\end{bmatrix}}_{\Xc^{321}}.$$
As an example, if we write $\Xc^{321}$ as
$\begin{bmatrix}c&a&1\\b&1&0\\1&0&0\end{bmatrix}$, then the Pl\"ucker functions in this chart are
$$(\Pl_{123},\Pl_{213},\Pl_{132},\Pl_{231},\Pl_{312},\Pl_{321})=(c(c-ab),b(c-ab),-ac,-b,-a,-1).$$
We see that these do not induce an injection on $\Xc^{321}$ -- for example if $c=\pm 1, a=0, b=0$.
\subsection{Examples of type $A$ Coxeter flag varieties}
For $c=s_2s_1=132$ we have
$$\underbrace{\begin{bmatrix}1&&\\&1&\\&&1\end{bmatrix}}_{\Xc_{\NC}^{123}}, \underbrace{\begin{bmatrix}\ast&1&\\1&&\\&&1\end{bmatrix}}_{\Xc_{\NC}^{213}}, \underbrace{\begin{bmatrix}1&&\\&\ast&1\\&1&\end{bmatrix}}_{\Xc_{\NC}^{132}},\qquad\qquad,\underbrace{\begin{bmatrix}\ast&1&\\\ast&&1\\1&&\end{bmatrix}}_{\Xc_{\NC}^{312}},\underbrace{\begin{bmatrix}\ast&\ast&1\\0&1&\\1&&\end{bmatrix}}_{\Xc_{\NC}^{321}}.$$
For $c=s_1s_2=231$ we have
    $$\underbrace{\begin{bmatrix}1&&\\&1&\\&&1\end{bmatrix}}_{\Xc_{\NC}^{123}}, \underbrace{\begin{bmatrix}\ast&1&\\1&&\\&&1\end{bmatrix}}_{\Xc_{\NC}^{213}}, \underbrace{\begin{bmatrix}1&&\\&\ast&1\\&1&\end{bmatrix}}_{\Xc_{\NC}^{132}},\underbrace{\begin{bmatrix}\ast&\ast&1\\1&&\\&1&\end{bmatrix}}_{\Xc_{\NC}^{231}},\qquad\qquad,\underbrace{\begin{bmatrix}\ast&0&1\\\ast&1&\\1&&\end{bmatrix}}_{\Xc_{\NC}^{321}}.$$

    For $c=s_3s_2s_1=4123$ we have
    $$\underbrace{\begin{bmatrix}1&&&\\&1&&\\&&1&\\&&&1\end{bmatrix}}_{\Xc_{\NC}^{1234}}, \underbrace{\begin{bmatrix}\ast&1&&\\1&&&\\&&1&\\&&&1\end{bmatrix}}_{\Xc_{\NC}^{2134}},\underbrace{\begin{bmatrix}1&&&\\&\ast&1&\\&1&&\\&&&1\end{bmatrix}}_{\Xc_{\NC}^{1324}},\underbrace{\begin{bmatrix}1&&&\\&1&&\\&&\ast&1\\&&1&\end{bmatrix}}_{\Xc_{\NC}^{1243}}$$
    $$\underbrace{\begin{bmatrix}\ast&1&&\\\ast&&1&\\1&&&\\&&&1\end{bmatrix}}_{\Xc_{\NC}^{3124}},\underbrace{\begin{bmatrix}1&&&\\&\ast&1&\\&\ast&&1\\&1&&\end{bmatrix}}_{\Xc_{\NC}^{1423}},\underbrace{\begin{bmatrix}\ast&1&&\\1&&&\\&&\ast&1\\&&1&\end{bmatrix}}_{\Xc_{\NC}^{2143}},\underbrace{\begin{bmatrix}\ast&\ast&1&\\0&1&&\\1&&&\\&&&1\end{bmatrix}}_{\Xc_{\NC}^{3214}},\underbrace{\begin{bmatrix}1&&&\\&\ast&\ast&1\\&0&1&\\&1&&\end{bmatrix}}_{\Xc_{\NC}^{1432}}$$
    $$\underbrace{\begin{bmatrix}\ast&1&&\\\ast&&1&\\\ast&&&1\\1&&&\end{bmatrix}}_{\Xc_{\NC}^{4123}},\underbrace{\begin{bmatrix}\ast&\ast&1&\\0&1&&\\\ast&&&1\\1&&&\end{bmatrix}}_{\Xc_{\NC}^{4213}},\underbrace{\begin{bmatrix}\ast&1&&\\\ast&&\ast&1\\0&&1&\\1&&&\end{bmatrix}}_{\Xc_{\NC}^{4132}},\underbrace{\begin{bmatrix}\ast&\ast&\ast&1\\0&1&&\\0&&1&\\1&&&\end{bmatrix}}_{\Xc_{\NC}^{4231}},\underbrace{\begin{bmatrix}\ast&\ast&0&1\\0&\ast&1&\\0&1&&\\1&&&\end{bmatrix}}_{\Xc_{\NC}^{4321}}$$
    
    For $c=s_2s_1s_3=3142$ we have
    $$\underbrace{\begin{bmatrix}1&&&\\&1&&\\&&1&\\&&&1\end{bmatrix}}_{\Xc_{\NC}^{1234}}, \underbrace{\begin{bmatrix}\ast&1&&\\1&&&\\&&1&\\&&&1\end{bmatrix}}_{\Xc_{\NC}^{2134}},\underbrace{\begin{bmatrix}1&&&\\&\ast&1&\\&1&&\\&&&1\end{bmatrix}}_{\Xc_{\NC}^{1324}},\underbrace{\begin{bmatrix}1&&&\\&1&&\\&&\ast&1\\&&1&\end{bmatrix}}_{\Xc_{\NC}^{1243}}$$
    $$\underbrace{\begin{bmatrix}\ast&1&&\\\ast&&1&\\1&&&\\&&&1\end{bmatrix}}_{\Xc_{\NC}^{3124}},\underbrace{\begin{bmatrix}1&&&\\&\ast&0&1\\&\ast&1&\\&1&&\end{bmatrix}}_{\Xc_{\NC}^{1432}},\underbrace{\begin{bmatrix}\ast&1&&\\1&&&\\&&\ast&1\\&&1&\end{bmatrix}}_{\Xc_{\NC}^{2143}},\underbrace{\begin{bmatrix}\ast&\ast&1&\\0&1&&\\1&&&\\&&&1\end{bmatrix}}_{\Xc_{\NC}^{3214}},\underbrace{\begin{bmatrix}1&&&\\&\ast&\ast&1\\&1&&\\&&1&\end{bmatrix}}_{\Xc_{\NC}^{1342}}$$
    $$\underbrace{\begin{bmatrix}\ast&1&&\\\ast&&0&1\\\ast&&1&\\1&&&\end{bmatrix}}_{\Xc_{\NC}^{4132}},\underbrace{\begin{bmatrix}\ast&\ast&0&1\\0&1&&\\\ast&&1&\\1&&&\end{bmatrix}}_{\Xc_{\NC}^{4231}},\underbrace{\begin{bmatrix}\ast&1&&\\\ast&&\ast&1\\1&&&\\&&1&\end{bmatrix}}_{\Xc_{\NC}^{3142}},\underbrace{\begin{bmatrix}\ast&\ast&\ast&1\\0&1&&\\1&&&\\&&1&\end{bmatrix}}_{\Xc_{\NC}^{3241}},\underbrace{\begin{bmatrix}\ast&\ast&1&\\0&\ast&&1\\1&&&\\&1&&\end{bmatrix}}_{\Xc_{\NC}^{3412}}$$

\section{Type $B$ Examples}
\label{appendix:SO}
\subsection{Type $\mathrm{B}$ flag variety}
We write the coordinates on $\mathbb{C}^{2n+1}$ in order as $x_{\bar{n}},\ldots,x_0,\ldots,x_n$, and fix the symmetric form $x_0^2+\sum x_ix_{\bar{i}}$. In type $\mathrm{B}$ we take $G=SO(2n+1)$, $B,B^-\subset G$ the subsets of upper and lower triangular matrices, and $T=B\cap B^-\subset G$ are the diagonal matrices.
Recall that a subspace $V\subset \mathbb{C}^{2n+1}$ is isotropic if $\langle v,w\rangle=0$ for all $v,w\in V$. For Type $\mathrm{B}$ the flag variety is $$SO(2n+1)/B=\{0\subsetneq V_1\subsetneq \cdots \subsetneq V_{n}\subsetneq \mathbb{C}^{2n+1}\suchthat V_i\text{ is isotropic and }\dim V_i=i\}.$$

For $i\in \{-n,\ldots,n\}$ we denote $\epsilon_i$ for the \emph{negative} of the standard character associated to $i$, and $\Q{T}=\bigoplus_{i=-n}^n \mathbb{Z}\epsilon_i/\langle \epsilon_{-i}+\epsilon_i\rangle $. The root system is $\Phi=\{\pm \epsilon_{j}\pm \epsilon_{i}\suchthat 1\le i<j\le n\}\cup \{\pm \epsilon_{i}\suchthat 1\le i \le n\}$, and the simple roots are $\alpha_{0} = \epsilon_{1}$ and $\alpha_{i} = -\epsilon_{i} + \epsilon_{i+1}$ for $i \in \{1, \ldots, n-1\}$.  
The associated Weyl group is $H_n$, the permutations of $\{\pm1,\ldots, \pm n\}$ with $w(-i)=-w(i)$. These are called the \emph{signed permutations}, and $W$ is called the \emph{hyperoctahedral group}. As is standard we write $\bar{i}$ for $-i$, and we write the generators $s_0=(1,\bar{1})$ and $s_i=(i,i+1)(\bar{i},\overline{i+1})$. Every permutation can be written in one-line notation as $w=w(1)w(2)\cdots w(n)$.

The rows and columns of the matrices in $SO(2n+1)$ are labeled in order by $\overline{n},\ldots,0,\ldots,n$. An element of $SO(2n+1)/B$ is determined by its restriction to the first $n$ columns (the ones labeled $\bar{n},\ldots,\bar{1}$), and a $(2n+1)\times n$ matrix represents an element of $SO(2n+1)$ if it has rank $n$, and for column vectors $[a_{\bar{n}},\ldots,a_n]^T$ and $[b_{\bar{n}},\ldots,b_n]^T$ we have $a_0b_0+\sum_{i=1}^n(a_{i}b_{\bar{i}}+a_{\bar{i}}b_i)=0$. Two matrices are equivalent if one can be obtained from the other by forward column operations, and such a matrix is determined by the flag of isotropic subspaces where $V_i$ is the span of the first $i$ columns. 

The fundamental dominant weights are 
\[
\omega_{0} = \frac{1}{2} \sum_{i = 1}^{n} \epsilon_{i}
\qquad\text{and, for $1 \le i \le n-1$, }\qquad
\omega_{i}=\epsilon_{i+1} + \cdots + \epsilon_{n}.
\]
Each of $2\omega_0,\omega_{1}, \ldots, \omega_{n-1}$ lie in the character lattice $\Q{T}$, with associated Pl\"{u}cker coordinates
\begin{equation}
\label{eq:BFundPlucker}
\det M_{\overline{w(n)},\ldots,\overline{w(i+1)}},
\end{equation}
where $M_{A}$ is the submatrix consisting of the first $|A|$ columns (indexed by $\overline{n},\ldots,\overline{n-|A|+1}$) and the rows indexed by $A$.  
Therefore if we take the regular dominant weight $\lambda_{reg}=2\omega_{0} + \omega_{1} + \cdots + \omega_{n-1}$, the associated Pl\"ucker function is
\begin{equation}
\label{eq:BRegPlucker}
\Pl_w=\prod_{i=1}^n \det M_{\overline{w(n)},\ldots,\overline{w(i+1)}}.
\end{equation}

\subsection{Classical presentation of Schubert cells}
We learned the following presentation from \cite{A18}.
The chart $BwB$ can be written as an $(2n+1)\times n$ matrix containing $0,1,\ast,\otimes$, where $0$ is often omitted from the notation, $\ast\in \mathbb{C}$ is a free variable, and $\otimes$ are variables which are the unique polynomials in the $\ast$ that ensure isotropy between the column they are in and previous columns. Concretely, we put $1$ in the entry $(w(i),i)$ for $i\in \{-n,\ldots,-1\}$, and then we put $\ast$ in all entries which are not below and not to the right of a $1$, and then finally convert all $\ast$ to $\otimes$ which are \emph{weakly} to the right of $(\overline{w(i)},i)$ for any $i\in \{-n,\ldots,-1\}$. For $n=2$ the $8$ charts $M(w)=BwB$ for $w\in H_2$, with rows labeled by $\bar{2},\bar{1},1,2$ and columns labeled by $\bar{2},\bar{1}$ are given by

$$\underbrace{\begin{bmatrix}1&\\&1\\&\\&\\&\end{bmatrix}}_{\Xc^{12}},\underbrace{\begin{bmatrix}1&\\&\otimes\\&\ast\\&1\\&\end{bmatrix}}_{\Xc^{\bar{1}2}},\underbrace{\begin{bmatrix}\ast&1\\1&\\&\\&\\&\end{bmatrix}}_{\Xc^{21}},\underbrace{\begin{bmatrix}\ast&1\\\otimes&\\\ast&\\1&\\&\end{bmatrix}}_{\Xc^{2\bar{1}}},\underbrace{\begin{bmatrix}\ast&\otimes\\1&\\&\ast\\&\otimes\\&1\end{bmatrix}}_{\Xc^{\bar{2}1}},\underbrace{\begin{bmatrix}\otimes&\otimes\\\ast&1\\\ast&\\\ast&\\1&\end{bmatrix}}_{\Xc^{1\bar{2}}},\underbrace{\begin{bmatrix}\ast&\otimes\\\otimes&\otimes\\\ast&\ast\\1&\\&1\end{bmatrix}}_{\Xc^{\bar{2}\bar{1}}},\underbrace{\begin{bmatrix}\otimes&\otimes\\\ast&\otimes\\\ast&\ast\\\ast&1\\1&\end{bmatrix}}_{\Xc^{\bar{1}\bar{2}}}.$$
\subsection{Examples of type $B$ Coxeter flag varieties}
For the Coxeter $s_1s_0=\bar{2}1$ we have the charts
$$\underbrace{\begin{bmatrix}1&\\&1\\&\\&\\&\end{bmatrix}}_{\Xc_{\NC}^{12}},\underbrace{\begin{bmatrix}1&\\&\otimes\\&\ast\\&1\\&\end{bmatrix}}_{\Xc_{\NC}^{\bar{1}2}},\underbrace{\begin{bmatrix}\ast&1\\1&\\&\\&\\&\end{bmatrix}}_{\Xc_{\NC}^{21}},\quad\quad,\underbrace{\begin{bmatrix}\ast&\otimes\\1&\\&\ast\\&\otimes\\&1\end{bmatrix}}_{\Xc_{\NC}^{\bar{2}1}},\underbrace{\begin{bmatrix}\otimes&\otimes\\\ast&1\\\ast&\\0&\\1&\end{bmatrix}}_{\Xc_{\NC}^{1\bar{2}}},\underbrace{\begin{bmatrix}\ast&\otimes\\\otimes&\otimes\\\ast&0\\1&\\&1\end{bmatrix}}_{\Xc_{\NC}^{\bar{2}\bar{1}}},\quad\quad.$$
For the Coxeter $s_2s_1s_0=\bar{3}12$ we have the $20$ charts
$$\underbrace{\begin{bmatrix}1&&\\&1&\\&&1\\&&\\&&\\&&\\&&\end{bmatrix}}_{\Xc_{\NC}^{123}},\underbrace{\begin{bmatrix}1&&\\&\ast&1\\&1&\\&&\\&&\\&&\\&&\end{bmatrix}}_{\Xc_{\NC}^{213}},\underbrace{\begin{bmatrix}\ast&1&\\1&&\\&&1\\&&\\&&\\&&\\&&\end{bmatrix}}_{\Xc_{\NC}^{132}},\underbrace{\begin{bmatrix}\ast&0&1\\\ast&1&\\1&&\\&&\\&&\\&&\\&&\end{bmatrix}}_{\Xc_{\NC}^{321}},\underbrace{\begin{bmatrix}1&&\\&1&\\&&\otimes\\&&\ast\\&&1\\&&\\&&\end{bmatrix}}_{\Xc_{\NC}^{\bar{1}23}},\underbrace{\begin{bmatrix}1&&\\&\otimes&\otimes\\&\ast&1\\&\ast&\\&0&\\&1&\\&&\end{bmatrix}}_{\Xc_{\NC}^{1\bar{2}3}},\underbrace{\begin{bmatrix}\otimes&&\\\ast&1&\\\ast&&1\\\ast&&\\0&&\\0&&\\1&&\end{bmatrix}}_{\Xc_{\NC}^{12\bar{3}}},\underbrace{\begin{bmatrix}1&&\\&\ast&\otimes\\&\otimes&\otimes\\&\ast&0\\&1&\\&&1\\&&\end{bmatrix}}_{\Xc_{\NC}^{\bar{2}\bar{1}3}}$$
$$\underbrace{\begin{bmatrix}\ast&\otimes&\otimes\\\otimes&\otimes&\otimes\\0&\ast&1\\0&0&\\0&0&\\1&&\\&1&\end{bmatrix}}_{\Xc_{\NC}^{1\bar{3}\bar{2}}},\underbrace{\begin{bmatrix}\ast&0&\otimes\\\ast&1&\\\otimes&&\otimes\\\ast&&0\\1&&\\&&\otimes\\&&1\end{bmatrix}}_{\Xc_{\NC}^{\bar{3}2\bar{1}}},\underbrace{\begin{bmatrix}\ast&0&\otimes\\\otimes&\otimes&\otimes\\0&1&\\\ast&&0\\\ast&&\otimes\\1&&\\&&1\end{bmatrix}}_{\Xc_{\NC}^{\bar{3}1\bar{2}}},\underbrace{\begin{bmatrix}\ast&\ast&1\\1&&\\&1&\\&&\\&&\\&&\\&&\end{bmatrix}}_{\Xc_{\NC}^{312}},\underbrace{\begin{bmatrix}\ast&\ast&\otimes\\1&&\\&\otimes&\otimes\\&\ast&0\\&1&\\&&\otimes\\&&1\end{bmatrix}}_{\Xc_{\NC}^{\bar{3}\bar{1}2}},\underbrace{\begin{bmatrix}\ast&1&\\1&&\\&&\otimes\\&&\ast\\&&1\\&&\\&&\end{bmatrix}}_{\Xc_{\NC}^{\bar{1}32}},\underbrace{\begin{bmatrix}\ast&0&\otimes\\\ast&\otimes&\otimes\\\otimes&\otimes&\otimes\\0&\ast&0\\1&&\\&1&\\&&1\end{bmatrix}}_{\Xc_{\NC}^{\bar{3}\bar{2}\bar{1}}},
$$
$$
\underbrace{\begin{bmatrix}\otimes&\otimes&\otimes\\\ast&\ast&1\\\ast&1&\\\ast&&\\0&&\\0&&\\1&&\end{bmatrix}}_{\Xc_{\NC}^{21\bar{3}}}, 
\underbrace{\begin{bmatrix}\ast&\otimes&\otimes\\1&&\\&\ast&1\\&\ast&\\&0&\\&\otimes&\\&1&\end{bmatrix}}_{\Xc_{\NC}^{1\bar{3}2}},
\underbrace{\begin{bmatrix}\ast&0&\otimes\\\ast&1&\\1&&\\&&\ast\\&&\otimes\\&&\otimes\\&&1\end{bmatrix}}_{\Xc_{\NC}^{\bar{3}21}},
\underbrace{\begin{bmatrix}1&&\\&\ast&\otimes\\&1&\\&&\ast\\&&\otimes\\&&1\\&&\end{bmatrix}}_{\Xc_{\NC}^{\bar{2}13}},
\underbrace{\begin{bmatrix}\ast&\ast&\otimes\\1&&\\&1&\\&&\ast\\&&\otimes\\&&\otimes\\&&1\end{bmatrix}}_{\Xc_{\NC}^{\bar{3}12}}.
$$

\section{Type $C$ Examples}
\label{appendix:C}
\subsection{The type $\mathrm{C}$ flag variety}
We write the coordinates on $\mathbb{C}^{2n}$ in order as $x_{\overline{n}},\ldots,x_{\overline{1}},x_1,\ldots,x_n$, and fix the symplectic form $\sum dx_i\wedge dx_{\bar{i}}$. In type $\mathrm{C}$ we take $G=Sp(2n)$, $B,B^-\subset G$ the subsets of upper and lower triangular matrices, and $T=B\cap B^-\subset G$ are the diagonal matrices.

Recall that a subspace $V\subset \mathbb{C}^{2n}$ is isotropic if $\langle v,w\rangle=0$ for all $v,w\in V$. For type $\mathrm{C}$ the flag variety is $$Sp(2n)/B=\{0\subsetneq V_1\subsetneq \cdots \subsetneq V_n\subsetneq \mathbb{C}^{2n}\suchthat V_i\text{ is isotropic and }\dim V_i=i\}.$$
For $i\in \{-n,\ldots,n\}$ we denote $\epsilon_i$ for the \emph{negative} of the standard character associated to $i$, and $\Q{T}=\bigoplus_{i=-n}^n \mathbb{Z}\epsilon_i/(\epsilon_{-i}+\epsilon_i)$. The root system is $\Phi=\{\pm \epsilon_i-\pm \epsilon_j\suchthat 1\le i<j\le n\}\cup \{\pm 2\epsilon_i\suchthat 1\le i \le n\}$, and the simple roots are $\alpha_{0} = 2\epsilon_{0}$ and $\alpha_{i} = -\epsilon_{i} + \epsilon_{i+1}$ for $i \in \{2, \ldots, n-1\}$.
The Weyl group is $H_n$ like for type $\mathrm{B}$.

The rows and columns of the matrices in $Sp_{2n}$ are labeled in order $\bar{n},\ldots,\bar{1},1,\ldots,n$. An element of $Sp_{2n}/B$ is determined by its restriction to the first $n$ columns, and a $2n\times n$ matrix represents an element of $Sp_{2n}$ if it is rank $n$, and for column vectors $[a_{\bar{n}},\ldots,a_{\bar{1}},a_1,\ldots,a_n]^T$ and $[b_{\bar{n}},\ldots,b_{\bar{1}},b_1,\ldots,b_n]^T$ we have $\sum_{j=1}^n a_jb_{\bar{j}}=\sum_{j=1}^na_{\bar{j}}b_j$.  Two matrices are equivalent if one can be obtained from the other by forward column operations, and such a matrix is determined by the flag of isotropic subspaces where $V_i$ is the span of the first $i$ columns. 
The fundamental weights are given by $\omega_{i} = \epsilon_{i+1} + \cdots + \epsilon_{n}$ for each $0 \le i \le n-1$, so the fundamental Pl\"ucker coordinates are given by the determinantal expressions in~\eqref{eq:BFundPlucker} for all $i$, and with $\regweight = \sum_{i=0}^{n-1} \omega_{i}$, the $\regweight$-Pl\"{u}cker coordinate is given by~\eqref{eq:BRegPlucker}.

\subsection{Classical presentation of Schubert cells}We learned the following presentation from \cite{A18}.
The chart $BwB$ can be written as an $(2n)\times n$ matrix containing $0,1,\ast,\otimes$, where $0$ is often omitted from the notation, $\ast\in \mathbb{C}$ is a free variable, and $\otimes$ are variables which are the unique polynomials in the $\ast$ that ensure isotropy between the columns. Concretely, we put $1$ in the entry $(w(i),i)$ for $i\in \{-n,\ldots,-1\}$, and then we put $\ast$ in all entries which are not below and not to the right of a $1$, and then finally convert all $\ast$ to $\otimes$ which are \emph{strictly} to the right of $(\overline{w(i)},i)$ for any $i\in \{-n,\ldots,-1\}$.

    For $n=2$ the $8$ charts $M(w)=BwB$ for $w\in H_2$, with rows labeled by $\bar{2},\bar{1},1,2$ and columns labeled by $\bar{2},\bar{1}$ are given by
    $$\underbrace{\begin{bmatrix}1&\\&1\\&\\&\end{bmatrix}}_{\Xc^{12}},\underbrace{\begin{bmatrix}1&\\&\ast\\&1\\&\end{bmatrix}}_{\Xc^{\bar{1}2}},\underbrace{\begin{bmatrix}\ast&1\\1&\\&\\&\end{bmatrix}}_{\Xc^{21}},\underbrace{\begin{bmatrix}\ast&1\\\ast&\\1&\\&\end{bmatrix}}_{\Xc^{2\bar{1}}},\underbrace{\begin{bmatrix}\ast&\ast\\1&\\&\otimes\\&1\end{bmatrix}}_{\Xc^{\bar{2}1}},\underbrace{\begin{bmatrix}\ast&\otimes\\\ast&1\\\ast&\\1&\end{bmatrix}}_{\Xc^{1\bar{2}}},\underbrace{\begin{bmatrix}\ast&\ast\\\ast&\otimes\\1&\\&1\end{bmatrix}}_{\Xc^{\bar{2}\bar{1}}},\underbrace{\begin{bmatrix}\ast&\ast\\\ast&\otimes\\\ast&1\\1&\end{bmatrix}}_{\Xc^{\bar{1}\bar{2}}}.$$
    For the Coxeter $s_1s_0=\bar{2}1$ we have the charts
    $$\underbrace{\begin{bmatrix}1&\\&1\\&\\&\end{bmatrix}}_{\Xc_{\NC}^{12}},\underbrace{\begin{bmatrix}1&\\&\ast\\&1\\&\end{bmatrix}}_{\Xc_{\NC}^{\bar{1}2}},\underbrace{\begin{bmatrix}\ast&1\\1&\\&\\&\end{bmatrix}}_{\Xc_{\NC}^{21}},\quad\quad,\underbrace{\begin{bmatrix}\ast&\ast\\1&\\&\otimes\\&1\end{bmatrix}}_{\Xc_{\NC}^{\bar{2}1}},\underbrace{\begin{bmatrix}\ast&\otimes\\\ast&1\\0&\\1&\end{bmatrix}}_{\Xc_{\NC}^{1\bar{2}}},\underbrace{\begin{bmatrix}\ast&\\\ast&\otimes\\1&\\&1\end{bmatrix}}_{\Xc_{\NC}^{\bar{2}\bar{1}}},\quad\quad.$$
For the Coxeter $s_2s_1s_0$ we have the $20$ charts
$$\underbrace{\begin{bmatrix}1&&\\&1&\\&&1\\&&\\&&\\&&\end{bmatrix}}_{\Xc_{\NC}^{123}},\underbrace{\begin{bmatrix}1&&\\&\ast&1\\&1&\\&&\\&&\\&&\end{bmatrix}}_{\Xc_{\NC}^{213}},\underbrace{\begin{bmatrix}\ast&1&\\1&&\\&&1\\&&\\&&\\&&\end{bmatrix}}_{\Xc_{\NC}^{132}},\underbrace{\begin{bmatrix}\ast&0&1\\\ast&1&\\1&&\\&&\\&&\\&&\end{bmatrix}}_{\Xc_{\NC}^{321}},\underbrace{\begin{bmatrix}1&&\\&1&\\&&\ast\\&&1\\&&\\&&\end{bmatrix}}_{\Xc_{\NC}^{\bar{1}23}},
\underbrace{\begin{bmatrix}1&&\\&\ast&\otimes\\&\ast&1\\&0&\\&1&\\&&\end{bmatrix}}_{\Xc_{\NC}^{1\bar{2}3}},
\underbrace{\begin{bmatrix}\ast&&\\\ast&1&\\\ast&&1\\0&&\\0&&\\1&&\end{bmatrix}}_{\Xc_{\NC}^{12\bar{3}}},
$$
$$
\underbrace{\begin{bmatrix}1&&\\&\ast&0\\&\ast&\otimes\\&1&\\&&1\\&&\end{bmatrix}}_{\Xc_{\NC}^{\bar{2}\bar{1}3}},
\underbrace{\begin{bmatrix}\ast&0&\otimes\\0&\otimes&\otimes\\0&\ast&1\\0&0&\\1&&\\&1&\end{bmatrix}}_{\Xc_{\NC}^{1\bar{3}\bar{2}}},
\underbrace{\begin{bmatrix}\ast&0&0\\\ast&1&\\\ast&&\otimes\\1&&\\&&\otimes\\&&1\end{bmatrix}}_{\Xc_{\NC}^{\bar{3}2\bar{1}}},
\underbrace{\begin{bmatrix}\ast&0&0\\\ast&\otimes&\otimes\\0&1&\\\ast&&\otimes\\1&&\\&&1\end{bmatrix}}_{\Xc_{\NC}^{\bar{3}1\bar{2}}},
\underbrace{\begin{bmatrix}\ast&\ast&1\\1&&\\&1&\\&&\\&&\\&&\end{bmatrix}}_{\Xc_{\NC}^{312}},
\underbrace{\begin{bmatrix}\ast&\ast&0\\1&&\\&\ast&\otimes\\&1&\\&&\otimes\\&&1\end{bmatrix}}_{\Xc_{\NC}^{\bar{3}\bar{1}2}},
$$
$$
\underbrace{\begin{bmatrix}\ast&1&\\1&&\\&&\ast\\&&1\\&&\\&&\end{bmatrix}}_{\Xc_{\NC}^{\bar{1}32}},\underbrace{\begin{bmatrix}\ast&0&0\\\ast&\ast&\otimes\\0&\otimes&\otimes\\1&&\\&1&\\&&1\end{bmatrix}}_{\Xc_{\NC}^{\bar{3}\bar{2}\bar{1}}},
\underbrace{\begin{bmatrix}\ast&\otimes&\otimes\\\ast&\ast&1\\\ast&1&\\0&&\\0&&\\1&&\end{bmatrix}}_{\Xc_{\NC}^{21\bar{3}}},
\underbrace{\begin{bmatrix}\ast&\ast&\otimes\\1&&\\&\ast&1\\&0&\\&\otimes&\\&1&\end{bmatrix}}_{\Xc_{\NC}^{1\bar{3}2}},\underbrace{\begin{bmatrix}\ast&0&\ast\\\ast&1&\\1&&\\&&\otimes\\&&\otimes\\&&1\end{bmatrix}}_{\Xc_{\NC}^{\bar{3}21}},\underbrace{\begin{bmatrix}1&&\\&\ast&\ast\\&1&\\&&\otimes\\&&1\\&&\end{bmatrix}}_{\Xc_{\NC}^{\bar{2}13}},\underbrace{\begin{bmatrix}\ast&\ast&\ast\\1&&\\&1&\\&&\otimes\\&&\otimes\\&&1\end{bmatrix}}_{\Xc_{\NC}^{\bar{3}12}}.
$$

\section{Type $D$ Examples}
\label{appendix:D}
\subsection{The type $\mathrm{D}$ flag variety}
We write the coordinates on $\mathbb{C}^{2n}$ in order as $x_{\bar{n}},\ldots,x_{\bar{1}},x_1,\ldots,x_n$, and fix the symmetric form $\sum x_ix_{\bar{i}}$. In type $\mathrm{D}$ we take $G=SO(2n)$, $B,B^-$ the subsets of upper and lower triangular matrices, and $T=B\cap B^-\subset G$ are the diagonal matrices.

Recall that a subspace $V\subset \mathbb{C}^{2n}$ is isotropic if $\langle v,w\rangle=0$ for all $v,w\in V$. For type $\mathrm{D}$ the flag variety is
$$SO(2n)/B=\{0\subsetneq V_1\subsetneq \cdots \subsetneq V_n\subsetneq \mathbb{C}^{2n}\suchthat V_i\text{ is isotropic and }\dim V_i=i\}.$$
For $i\in \{-n,\ldots,n\}$ we denote $\epsilon_i$ for the \emph{negative} of the standard character associated to $i$, and $\Q{T}$ is the index $2$ sublattice in the type $\mathrm{B}_{n}$ consisting of elements with even coordinate sum lattice. The root system is $\Phi=\{\pm \epsilon_i-\pm \epsilon_j\suchthat 1\le i<j\le n\}$, and the simple roots are $\alpha_{\hat{1}} = \epsilon_{1} + \epsilon_{2}$ and $\alpha_{i} = -\epsilon_{i} + \epsilon_{i+1}$ for $i \in \{1, \ldots, n-1\}$.  
The associated Weyl group is $H_n^{even}=\langle s_{\hat{1}},s_1,\ldots,s_{n-1}\rangle$, where $s_{\hat{1}}=s_0s_1s_0$.  This is the set of permutations $w \in H_{n}$ with $\#\{w(1),\ldots,w(n)\}\cap \{1,\ldots,n\}$ even.

The rows and columns of the matrices in $SO(2n)$ are labeled in order by $\overline{n},\ldots,\overline{1},1,\ldots,n$. An element of $SO(2n)/B$ is determined by its restriction to the first $n$ columns (the ones labeled $\bar{n},\ldots,\bar{1}$), and a $2n\times n$ matrix represents an element of $SO(2n)$ if it has rank $n$, and for column vectors $[a_{\bar{n}},\ldots,a_n]^T$ and $[b_{\bar{n}},\ldots,b_n]^T$ we have $\sum_{i=1}^n(a_{i}b_{\bar{i}}+a_{\bar{i}}b_i)=0$. Two matrices are equivalent if one can be obtained from the other by forward column operations, and such a matrix is determined by the flag of isotropic subspaces where $V_i$ is the span of the first $i$ columns.

The fundamental dominant weights are 
\[
\begin{array}{rl}
\omega_{\hat{1}} &\hspace{-0.9em}= \frac{1}{2}(\epsilon_{1} + \epsilon_{2} + \cdots + \epsilon_{n}),\\
\omega_{1} &\hspace{-0.9em}= \frac{1}{2}(-\epsilon_{1} + \epsilon_{2} + \cdots + \epsilon_{n}), \\
\end{array}
\qquad\text{and, for $2 \le i \le n-1$,}\qquad
\omega_i= \epsilon_{i+1} + \cdots + \epsilon_{n}
\]
Each of $\omega_1+\omega_{\hat{1}},\omega_{2}, \ldots, \omega_{n-1}$ lie in the character lattice $\Q{T}$, with associated  Pl\"ucker coordinates given by the determinantal expressions in~\eqref{eq:BFundPlucker} for $i = 1, 2, \ldots, n-1$, and with $\regweight = (\omega_1+\omega_{\hat{1}})+\sum_{i=2}^{n-1} \omega_{i}$, the $\regweight$-Pl\"{u}cker coordinate is given by
\[
\Pl_w=\prod_{i=1}^{n-1} \det M_{\overline{w(n)},\ldots,\overline{w(i+1)}}.
\]

\subsection{Classical presentation of Schubert cells}
We adapt the presentations in types $B$ and $C$ we learned from \cite{A18} to type $D$.
The chart $BwB$ can be written as an $2n\times n$ matrix containing $0,1,\ast,\otimes$, where $0$ is often omitted from the notation, $\ast\in \mathbb{C}$ is a free variable, and $\otimes$ are variables which are the unique polynomials in the $\ast$ that ensure isotropy between the column they are in and previous columns. Concretely, we put $1$ in the entry $(w(i),i)$ for $i\in \{-n,\ldots,-1\}$, and then we put $\ast$ in all entries which are not below and not to the right of a $1$, and then finally convert all $\ast$ to $\otimes$ which are \emph{weakly} to the right of $(\overline{w(i)},i)$ for any $i\in \{-n,\ldots,-1\}$. For $n=2$ the $4$ charts $M(w)=BwB$ for $w\in H_2^{even}$, with rows labeled by $\bar{2},\bar{1},0,1,2$ and columns labeled by $\bar{2},\bar{1}$ are given by

$$\underbrace{\begin{bmatrix}1&\\&1\\&\\&\end{bmatrix}}_{\Xc_{\NC}^{12}},\underbrace{\begin{bmatrix}\ast&1\\1&\\&\\&\end{bmatrix}}_{\Xc_{\NC}^{21}},\underbrace{\begin{bmatrix}\ast&\otimes\\\otimes&\otimes\\1&\\&1\end{bmatrix}}_{\Xc_{\NC}^{\bar{2}\bar{1}}},\underbrace{\begin{bmatrix}\otimes&\otimes\\\ast&\otimes\\\ast&1\\1&\end{bmatrix}}_{\Xc_{\NC}^{\bar{1}\bar{2}}}.$$

For $c=s_2s_1s_{\wh{1}}=\bar{1}\bar{3}2$ we have 
$$\underbrace{\begin{bmatrix}1&&\\&1&\\&&1\\&&\\&&\\&&\end{bmatrix}}_{\Xc_{\NC}^{123}},\underbrace{\begin{bmatrix}1&&\\&\ast&1\\&1&\\&&\\&&\\&&\end{bmatrix}}_{\Xc_{\NC}^{213}},\underbrace{\begin{bmatrix}\ast&1&\\1&&\\&&1\\&&\\&&\\&&\end{bmatrix}}_{\Xc_{\NC}^{132}},\underbrace{\begin{bmatrix}\ast&0&1\\\ast&1&\\1&&\\&&\\&&\\&&\end{bmatrix}}_{\Xc_{\NC}^{321}},\underbrace{\begin{bmatrix}1&&\\&\ast&\otimes\\&\otimes&\otimes\\&1&\\&&1\\&&\end{bmatrix}}_{\Xc_{\NC}^{\bar{2}\bar{1}3}},\underbrace{\begin{bmatrix}\ast&\otimes&\otimes\\\otimes&\otimes&\otimes\\\ast&0&1\\\ast&0&\\1&&\\&1&\end{bmatrix}}_{\Xc_{\NC}^{1\bar{3}\bar{2}}},\underbrace{\begin{bmatrix}\ast&0&\otimes\\\ast&1&\\\otimes&&\otimes\\1&&\\&&\ast\\&&1\end{bmatrix}}_{\Xc_{\NC}^{\bar{3}2\bar{1}}}$$
$$\underbrace{\begin{bmatrix}\ast&\ast&\otimes\\1&&\\&\otimes&\otimes\\&1&\\&&\otimes\\&&1\end{bmatrix}}_{\Xc_{\NC}^{\bar{3}\bar{1}2}},\underbrace{\begin{bmatrix}\otimes&\otimes&\otimes\\\ast&1&\\\ast&&\otimes\\\ast&&1\\0&&\\1&&\end{bmatrix}}_{\Xc_{\NC}^{\bar{1}2\bar{3}}},\underbrace{\begin{bmatrix}\ast&\otimes&\otimes\\\ast&\ast&1\\\otimes&\otimes&\\1&&\\&0&\\&1&\end{bmatrix}}_{\Xc_{\NC}^{2\bar{3}\bar{1}}},\underbrace{\begin{bmatrix}\ast&\ast&1\\1&&\\&1&\\&&\\&&\\&&\end{bmatrix}}_{\Xc_{\NC}^{312}},\underbrace{\begin{bmatrix}1&&\\&\otimes&\otimes\\&\ast&\otimes\\&\ast&1\\&1&\\&&\end{bmatrix}}_{\Xc_{\NC}^{\bar{1}\bar{2}3}},\underbrace{\begin{bmatrix}\ast&\otimes&\otimes\\\ast&\ast&\otimes\\1&&\\&\otimes&\otimes\\&0&1\\&1&\end{bmatrix}}_{\Xc_{\NC}^{\bar{2}\bar{3}1}},\underbrace{\begin{bmatrix}\ast&\otimes&\otimes\\1&&\\&\ast&\otimes\\&\ast&1\\&\otimes&\\&1&\end{bmatrix}}_{\Xc_{\NC}^{\bar{1}\bar{3}2}}$$

\bibliographystyle{hplain}
\bibliography{main.bib}
\end{document}